\newtheorem{thm}{Theorem}[section]
\newtheorem{lem}{Lemma}[section]
\newtheorem{prop}[lem]{Proposition}
\newtheorem{cor}[lem]{Corollary}
\newtheorem{defn}[lem]{Definition}
\newtheorem{rem}[lem]{Remark}
\newtheorem{conj}{Conjecture}[section]
\newtheorem{conj/q}{Conjecture/Open Question}[section]
\numberwithin{equation}{section}
\newcommand{\bR}{ \mathbb{R}} 
\newcommand \eps{\varepsilon}
\newlength{\originalbase}
\newcommand{\spacing}[1]{\setlength{\baselineskip}{#1\originalbase}}
\begin{document}               

\newcommand{\avint}{{- \hspace{-3.5mm} \int}}

\spacing{1}

\title{ On the constant scalar curvature K\"ahler metrics (II)\\ ---existence results}
\author{Xiuxiong Chen, Jingrui Cheng}

\date{\today}

\maketitle
\begin{abstract}
In this paper, we generalize our apriori estimates on cscK(constant scalar curvature K\"ahler) metric equation  \cite{cc1} to more general scalar curvature type equations (e.g., twisted cscK metric equation). As applications, under the assumption that the automorphism group is discrete, we prove the celebrated Donaldson's conjecture that the non-existence of cscK metric is equivalent to the existence of a destabilized geodesic ray where the $K$-energy is non-increasing. Moreover, we prove that the properness of $K$-energy in terms of $L^1$ geodesic distance $d_1$ in the space of K\"ahler potentials implies the existence of cscK metric.  Finally, we prove that weak minimizers of the $K$-energy in $(\mathcal{E}^1, d_1)$ are smooth.
The continuity path proposed in \cite{chen15} is instrumental in this proof. 
\end{abstract}

\tableofcontents

\section{Introduction}

This is the second of a series of papers discussing constant scalar curvature K\"ahler metrics. In this paper, for simplicity, we will only consider the case $Aut_0(M, J) = 0.\;$ Here $Aut_0(M,J)$ denotes the identity component of the automorphism group  and $Aut_0(M,J)=0$ means the group is discrete.  Under this assumption, we prove Donaldson's conjecture (mentioned above in the abstract) as well as the existence part of properness conjecture in this paper.
Our main method is to adopt the continuity path introduced in \cite{chen15} and we need to prove that the set of parameter $t\in [0,1]$ the continuity path is both open (c.f. \cite{chen15} ) and closed under suitable geometric constraints.  The apriori estimates obtained in \cite{cc1} 
and their modifications (where the scalar curvature takes twisted form as in the twisted path introduced in \cite{chen15}) are the crucial technical ingredients needed in this paper.  In the sequel of this paper,  
we will prove a suitable generalization of both conjectures for general automorphism groups (i.e. no longer assume they are discrete).\\

We will begin with a brief review of history of this problem. In 1982 and 1985, E. Calabi published
two seminal papers \cite{calabi82} \cite{calabi85} on extremal K\"ahler metrics where he proved some fundamental theorems on extremal K\"ahler metrics. His initial vision is that there should be a unique canonical  metric in each
K\"ahler class.   Levine's example(c.f \cite{Levine}) however shows that there is a K\"ahler class in iterated blowup of $\mathbb{C}\mathbb{P}^2$ which admits no extremal K\"ahler metrics.  More examples and obstructions are found over the last few decades and huge efforts are devoted to formulate the right conditions (in particular the algebraic conditions) under which we can ``realize" Calabi's original dream in a suitable format.
The well known Yau-Tian-Donaldson conjecture is one of the important formulations now which states that on projective manifolds, the cscK metrics exist in a polarized K\"ahler class if and only if this class is $K$-stable. It is widely expected among experts that the stability condition needs to be strengthened to a stronger notion such as uniform stability or stability through filtrations, in order to imply the existence of cscK metrics. We will have more in-depth discussions on this issue in the next paper in this series.   \\

In a seminal paper \cite{Dona96}, S. K. Donaldson proposed a beautiful program in K\"ahler geometry, aiming in particular to attack Calabi's renowned problem of existence of cscK metrics. In this celebrated program,   Donaldson took the point of view that the space of K\"ahler metrics is formally a symmetric space of non-compact type and
the scalar curvature function is the moment map from the space of almost complex structure compatible with a fixed symplectic form to the Lie algebra of certain infinite dimensional sympletic structure group which is exactly the space of all real valued smooth functions in the manifold.
With this in mind, Calabi's problem of finding a cscK metric is reduced to finding a
zero of this moment map in the infinite dimensional space setting. From this beautiful new point of view, S. K.  Donaldson proposed a network of problems in K\"ahler geometry which have inspired many exciting developments over the last two decades, culminating in the recent resolution of Yau's stability conjecture on K\"ahler-Einstein metrics \cite{cds12-1} \cite{cds12-2} \cite{cds12-3}. \\

 Let $\mathcal{H}$ denote the space of K\"ahler potentials 
in a given K\"ahler class $(M, [\omega])$.  T. Mabuchi\cite{Mabuchi},  S. Semmes \cite{semmes} and S. K. Donaldson \cite{Dona96}  set up an $L^2$ metric in the space of K\"ahler potentials:
\[
    \|\delta \varphi\|^2_\varphi = \displaystyle \int_M\; (\delta \varphi)^2 \omega_\varphi^n,\qquad \forall \; \delta \varphi \in T_\varphi \mathcal{H}.
\]
Donaldson \cite{Dona96} conjectured that $\mathcal{H}$ is a genuine metric space with the pathwise distance defined by this $L^2$ inner product.  In \cite{chen991}, the first named author established the existence of $C^{1,1}$
geodesic segment between any two smooth K\"ahler potentials and proved this conjecture of S.K. Donaldson. He went on to prove (together with E. Calabi) that such a space
is necessarily non-positively curved in the sense of Alexandrov\cite{calabi-chen}.  More importantly,  S. K. Donaldson proposed the following
 conjecture to attack the existence problem:
\begin{conj} \cite{Dona96} Assume $Aut_0(M, J)=0 $. Then the following statements are equivalent:
\begin{enumerate}
\item There is no constant scalar curvature K\"ahler metric in $\mathcal{H}$;
\item  There is a potential $\varphi_0 \in \mathcal{H}_0$ and there exists a geodesic ray $\rho(t) (t \in [0,\infty))$ in $\mathcal{H}_0$, initiating from $\varphi_0$ such
that the $K$-energy is non-increasing;
\item   For any K\"ahler potential $\psi \in \mathcal{H}_0$, there exists a geodesic ray $\rho(t) (t \in [0,\infty))$ in $\mathcal{H}_0$, initiating from $\psi$ such
that the $K$-energy is non-increasing.
\end{enumerate}
\end{conj}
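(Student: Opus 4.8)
I would prove the equivalence via the cycle $(3)\Rightarrow(2)\Rightarrow(1)\Rightarrow(3)$. The implication $(3)\Rightarrow(2)$ is immediate: fix any $\varphi_0\in\mathcal H_0$ and apply $(3)$ with $\psi=\varphi_0$. For $(2)\Rightarrow(1)$ I would argue contrapositively: if a cscK metric exists in $\mathcal H$ then, since $Aut_0(M,J)=0$, the $K$-energy $\mathcal K$ is convex along $C^{1,1}$ geodesics, attains its minimum only at cscK potentials, and is in fact $d_1$-proper (a known consequence of the existence of a cscK metric when $Aut_0(M,J)=0$); hence along any unit-speed geodesic ray $\rho$ emanating from $\mathcal H_0$ one has $d_1(\rho(0),\rho(t))=t\to\infty$ and so $\mathcal K(\rho(t))\to+\infty$, contradicting that $\mathcal K\circ\rho$ be non-increasing. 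All the real content is in $(1)\Rightarrow(3)$.

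\textbf{The continuity path.} Given $\psi\in\mathcal H_0$, I would run Chen's continuity path from \cite{chen15} attached to the reference form $\omega_\psi:=\omega+\sqrt{-1}\,\partial\bar\partial\psi$, in the form: for $t\in(0,1]$ seek $\varphi_t\in\mathcal H$ with
\[
 t\,\bigl(R(\omega_{\varphi_t})-\underline R\bigr)\;=\;(1-t)\,\bigl(\mathrm{tr}_{\omega_{\varphi_t}}\omega_\psi-n\bigr),
\]
($R$ the scalar curvature, $\underline R$ its average), so that a solution at $t=1$ is a cscK metric while $\varphi\equiv\psi$ solves the limit equation at $t=0$. For $t\in(0,1)$ this is a twisted cscK equation with positive twisting form $\tfrac{1-t}{t}\,\omega_\psi$, and its solutions are precisely the critical points — hence, by convexity along geodesics, the global minimizers — of the twisted $K$-energy $\mathcal K_t:=\mathcal K+\tfrac{1-t}{t}\,\mathbf J_{\omega_\psi}$, where $\mathbf J_{\omega_\psi}\ge 0$ is the Aubin-type energy that vanishes exactly at $\psi$. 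Let $S\subseteq[0,1]$ be the set of $t$ for which $\varphi_t$ exists. Then $0\in S$; $S$ is open by the openness result of \cite{chen15} (the linearization is an invertible fourth-order elliptic, Lichnerowicz-type, operator, using $Aut_0(M,J)=0$); and if $t_k\to t^\ast$ with $d_1(\psi,\varphi_{t_k})$ bounded, then the entropy decomposition of the $K$-energy (Chen--Tian) bounds the entropy of $\varphi_{t_k}$ from above, whereupon the a priori estimates of \cite{cc1} — in the twisted form established in this paper — give uniform $C^\infty$ bounds and a smooth subsequential limit solving the equation at $t^\ast$, so $t^\ast\in S$. Under hypothesis $(1)$, therefore, $t^\ast:=\sup S$ cannot be attained ($t^\ast=1$ would produce a cscK metric, $t^\ast<1$ would contradict openness), $t^\ast>0$ because $S$ contains a neighbourhood of $0$, and necessarily $d_1(\psi,\varphi_t)\to\infty$ as $t\to t^\ast$.

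\textbf{Extracting the destabilizing ray.} Since $\varphi_t$ minimizes $\mathcal K_t$ and $\mathbf J_{\omega_\psi}\ge0$,
\[
 \mathcal K(\varphi_t)\;\le\;\mathcal K_t(\varphi_t)\;\le\;\mathcal K_t(\psi)\;=\;\mathcal K(\psi)+\tfrac{1-t}{t}\,\mathbf J_{\omega_\psi}(\psi),
\]
and, since $t^\ast>0$, the coefficient $\tfrac{1-t}{t}$ stays bounded for $t$ near $t^\ast$; hence $\mathcal K(\varphi_t)\le\Lambda$ for a constant $\Lambda$ independent of $t$. Next I would take the unit-speed $C^{1,1}$ geodesic $\rho_t:[0,\ell_t]\to\mathcal H$, $\ell_t:=d_1(\psi,\varphi_t)$, from $\psi$ to $\varphi_t$ (existence by \cite{chen991}) and extract, as $t\to t^\ast$, a limiting unit-speed geodesic ray $\rho:[0,\infty)\to\mathcal E^1$ with $\rho(0)=\psi$, using the compactness of $(\mathcal E^1,d_1)$ on energy-bounded families. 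Along each $\rho_t$ the $K$-energy is convex, so for $0\le s\le\ell_t$ one has $\mathcal K(\rho_t(s))\le\bigl(1-\tfrac{s}{\ell_t}\bigr)\mathcal K(\psi)+\tfrac{s}{\ell_t}\mathcal K(\varphi_t)\le\bigl(1-\tfrac{s}{\ell_t}\bigr)\mathcal K(\psi)+\tfrac{s}{\ell_t}\Lambda$; letting $t\to t^\ast$ (so $\ell_t\to\infty$) and invoking $d_1$-lower semicontinuity of $\mathcal K$, I obtain $\mathcal K(\rho(s))\le\mathcal K(\psi)=\mathcal K(\rho(0))$ for all $s\ge0$. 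Finally $s\mapsto\mathcal K(\rho(s))$ is convex, and a convex function on $[0,\infty)$ bounded above by its value at $0$ is non-increasing; this yields the required ray and proves $(3)$. The regularity needed to regard $\rho$ as a bona fide geodesic ray in $\mathcal H_0$ would come from the smoothness results for the $\mathcal E^1$-theory developed elsewhere in the paper.

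\textbf{Main obstacle.} The crux is the a priori estimate for the twisted path equation — the promised extension of \cite{cc1} to scalar-curvature equations carrying the twisting term $\tfrac{1-t}{t}\,\mathrm{tr}_{\omega_\varphi}\omega_\psi$ — since it is exactly this estimate that converts an entropy (equivalently $K$-energy) bound into full regularity and thereby forces the dichotomy ``cscK metric exists, or $d_1(\psi,\varphi_t)\to\infty$''. A secondary difficulty is the $(\mathcal E^1,d_1)$ compactness used to pass from the segments $\rho_t$ to the limiting ray, together with the regularity of that ray.
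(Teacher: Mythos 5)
Your overall strategy — the continuity path, the $K$-energy bound on solutions via the minimizing property of twisted $K$-energy, the $(\mathcal E^1,d_1)$ compactness to extract a limiting ray, and the convexity argument forcing $K$ to be non-increasing — is essentially the paper's proof of Theorem~\ref{t3.1} (the $(1)\Rightarrow(3)$ direction). The paper twists with $\chi=\omega_0$ rather than $\omega_\psi$, but that is immaterial. There is, however, a genuine gap at the very end. You claim that the limiting ray $\rho$ can be regarded as a geodesic ray in $\mathcal H_0$ via ``smoothness results for the $\mathcal E^1$-theory developed elsewhere in the paper.'' No such result exists. The regularity theorem the paper proves (Theorem~\ref{t4.1}) concerns smoothness of $K$-energy \emph{minimizers} in $\mathcal E^1$, not of finite energy geodesic rays; and the ray you construct is only a locally finite energy geodesic ray in $\mathcal E^1_0$. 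The paper explicitly acknowledges this obstacle (see the remarks following Conjecture 1.1) and deliberately \emph{reformulates} Donaldson's conjecture in $\mathcal E^1_0$ with locally finite energy geodesic rays in place of $\mathcal H_0$; that reformulation is Theorem~\ref{t1.1new}~=~Theorem~\ref{t3.1}, and that is what the paper proves. Your argument therefore establishes the $\mathcal E^1_0$ version, not the statement exactly as written, and the claim that the final ray lands in $\mathcal H_0$ is unsupported.

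A secondary, fixable slip: with the paper's normalization of $J_\chi$ (formula (\ref{J-chi})) one has $J_{\omega_\psi}(0)=0$, while $\psi$ is the critical point and hence the global minimizer of $J_{\omega_\psi}$; thus $J_{\omega_\psi}(\psi)\le 0$ and $J_{\omega_\psi}$ need not be nonnegative, so your inequality $\mathcal K(\varphi_t)\le\mathcal K_t(\varphi_t)$ does not follow. The correct route, which is what the paper does (and which also removes the need for $\frac{1-t}{t}$ to stay bounded), is to use $J_{\omega_\psi}(\varphi_t)\ge J_{\omega_\psi}(\psi)$ together with the minimizing property of $\varphi_t$ for $\mathcal K_t$ to conclude $\mathcal K(\varphi_t)\le\mathcal K(\psi)$ directly.
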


In the above, $\mathcal{H}_0=\mathcal{H}\cap\{\phi:I(\phi)=0\}$, where the functional $I$ is defined by (\ref{IJ}).
The reason we need to use $\mathcal{H}_0$ is to preclude the trivial geodesic $\rho(t)=\varphi_0+ct$ where $c$ is a constant.

     In the original writing of S. K. Donaldson, he didn't specify the regularity of these geodesic rays in this conjecture. In this paper, we avoid this issue by working in the space $\mathcal{E}^1$ in which the potentials have only very weak regularity but the notion of geodesic still makes sense. Moreover, Theorem 4.7 of \cite{Darvas1602} shows the definition of $K$-energy can be extended to the space $\mathcal{E}^1$.  The precise version of the result we prove is the following:
        
        \begin{thm}(Theorem \ref{t3.1})\label{t1.1new} Assume $Aut_0(M, J)=0 $.  Then the following statements are equivalent:
\begin{enumerate}
\item There is no constant scalar curvature K\"ahler metric in $\mathcal{H}$;
\item  There is a potential $\varphi_0 \in \mathcal{E}^1_0$ and there exists a locally finite energy geodesic ray $\rho(t) (t \in [0,\infty))$ in $\mathcal{E}_0^1$, initiating from $\varphi_0$ such
that the $K$-energy is non increasing;
\item   For any K\"ahler potential $\psi \in \mathcal{E}^1_0$, there exists a locally finite energy geodesic ray $\rho(t) (t \in [0,\infty))$ in $\mathcal{E}_0^1$, initiating from $\psi$ such
that the $K$-energy is non increasing.
\end{enumerate}
        \end{thm}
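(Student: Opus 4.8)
The plan is to establish the cycle of implications $(3)\Rightarrow(2)\Rightarrow(1)\Rightarrow(3)$. The implication $(3)\Rightarrow(2)$ is trivial. For $(2)\Rightarrow(1)$ I argue contrapositively: if a cscK metric exists and $Aut_0(M,J)=0$, then the $K$-energy is $d_1$-proper on $\mathcal{E}^1_0$ (a known consequence of the regularity theory of weak minimizers of the $K$-energy; alternatively it can be extracted from the estimates of \cite{cc1}). A locally finite energy geodesic ray $\rho(t)$ in $\mathcal{E}^1_0$, once normalized to unit speed, is a $d_1$-geodesic with $d_1(\rho(0),\rho(t))=t\to\infty$, so properness forces $K(\rho(t))\to+\infty$, incompatible with $K$ being non-increasing along $\rho$. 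I also record here two facts from the pluripotential theory of $(\mathcal{E}^1,d_1)$ that the rest of the argument needs: $K$ is $d_1$-lower semicontinuous, and $K$ is convex along locally finite energy geodesics (Darvas, Berman--Darvas--Lu; the convexity may also be obtained from \cite{cc1} combined with Berman--Berndtsson).

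The substance is $(1)\Rightarrow(3)$. Fix $\psi\in\mathcal{E}^1_0$. I would use the continuity path of \cite{chen15}: a scalar-curvature type (``twisted cscK'') equation with parameter $t\in[0,1]$, solvable trivially at $t=0$ and equal to the cscK equation at $t=1$. Let $\varphi_t\in\mathcal{H}$ denote a solution, normalized into $\mathcal{H}_0$. Along this path $t\mapsto K(\varphi_t)$ is non-increasing by a direct computation (\cite{chen15}), so $K(\varphi_t)\le K(\varphi_0)=:C_0$ wherever the path is defined. Openness of the set $S\subset[0,1]$ of solvable parameters is from \cite{chen15}; closedness of $S$ under an a priori bound on the entropy of $\omega_{\varphi_t}$ is the content of the estimates of \cite{cc1} together with their twisted versions developed in the present paper. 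Since there is no cscK metric, $1\notin S$; put $T=\sup S$. If $\{\varphi_t\}_{t<T}$ stayed inside some $d_1$-ball about $\psi$, then, writing the $K$-energy as entropy plus $d_1$-continuous terms and using $K(\varphi_t)\le C_0$, the entropy of $\omega_{\varphi_t}$ would be uniformly bounded, and the a priori estimates would yield a smooth solution at $t=T$; if $T<1$ this contradicts openness of $S$ and $T=\sup S$, while if $T=1$ the solution $\varphi_1$ would be a cscK metric — both impossible. Hence $d_1(\psi,\varphi_t)\to\infty$ as $t\to T$, so we may pick $t_j\uparrow T$ with $\ell_j:=d_1(\psi,\varphi_{t_j})\to\infty$ and $K(\varphi_{t_j})\le C_0$.

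To manufacture the ray, connect $\psi$ to each $\varphi_{t_j}$ by the locally finite energy geodesic segment, parametrized by arclength as $\gamma_j:[0,\ell_j]\to\mathcal{E}^1$ with $\gamma_j(0)=\psi$. By the standard construction of a geodesic ray as a limit of geodesic segments with a fixed endpoint (Darvas--Rubinstein, Chen--Tang), after passing to a subsequence the $\gamma_j$ converge, uniformly on every bounded time interval, to a locally finite energy geodesic ray $\rho:[0,\infty)\to\mathcal{E}^1$ with $\rho(0)=\psi$; after the standard normalization (subtracting a function affine in $s$, which preserves the geodesic equation) we may take $\rho$ to lie in $\mathcal{E}^1_0$. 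For fixed $s\ge0$ and $j$ large, convexity of $K$ along $\gamma_j$ gives
\[
K(\gamma_j(s))\le\Bigl(1-\tfrac{s}{\ell_j}\Bigr)K(\psi)+\tfrac{s}{\ell_j}K(\varphi_{t_j})\le K(\psi)+\tfrac{s}{\ell_j}\bigl(C_0-K(\psi)\bigr),
\]
so $\limsup_j K(\gamma_j(s))\le K(\psi)$; by $d_1$-lower semicontinuity of $K$ along $\gamma_j(s)\to\rho(s)$ we conclude $K(\rho(s))\le K(\psi)=K(\rho(0))$ for every $s\ge0$. A convex function on $[0,\infty)$ that never exceeds its value at the origin is non-increasing; hence $K$ is non-increasing along $\rho$, which is statement $(3)$, closing the cycle.

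The principal obstacle is the closedness step: promoting an entropy bound along the twisted continuity path to uniform higher-order estimates, i.e. extending the a priori estimates of \cite{cc1} from the cscK equation to general scalar-curvature type equations carrying a twist term — this is the technical heart of the paper. Two subsidiary points also require care: detecting the failure of closedness through the blow-up of $d_1(\psi,\varphi_t)$ (which relies on the decomposition of the $K$-energy into entropy plus $d_1$-continuous terms together with the monotonicity of $K$ along the path), and justifying the segment-to-ray limit and the interchange of this limit with convexity and lower semicontinuity of the extended $K$-energy inside the low-regularity space $(\mathcal{E}^1,d_1)$, which rests on the foundations developed by Darvas and Berman--Darvas--Lu. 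Finally, $(2)\Rightarrow(1)$ depends on the fact that existence of a cscK metric with discrete automorphism group forces $d_1$-properness of the $K$-energy, which is itself nontrivial.
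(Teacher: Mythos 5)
Your proposal is correct and follows essentially the same route as the paper: $(3)\Rightarrow(2)$ is trivial; $(2)\Rightarrow(1)$ goes through the Berman--Darvas--Lu properness theorem (Theorem \ref{t4.2n}); and $(1)\Rightarrow(3)$ runs the twisted continuity path of \cite{chen15}, detects failure of closedness via blow-up of $d_1(\psi,\varphi_t)$, connects $\psi$ to the path by finite energy geodesic segments, and extracts a limiting ray along which convexity plus $d_1$-lower semicontinuity of the $K$-energy give the non-increasing property. The only cosmetic differences are that the paper derives $K(\varphi_{t_j})\le C_0$ directly from the minimizer property $t_j K+(1-t_j)J_{\omega_0}$ plus $J_{\omega_0}(\varphi_{t_j})\ge J_{\omega_0}(0)$, rather than invoking monotonicity of $t\mapsto K(\varphi_t)$ as you do (both are valid), and that the paper gets $\rho(s)\in\mathcal{E}^1_0$ for free from $I$ being affine along geodesics with endpoints in $\mathcal{H}_0$, so no renormalization of the ray is needed.
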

In the above, the space $ \mathcal{E}^1$ is the abstract metric completion of the space $\mathcal{H}$ under the Finsler metric $d_1$ in $\mathcal H$ (see section 2 for more details) and the notion of finite energy geodesic segment was introduced in \cite{BBEGZ} (c.f.  \cite{Darvas1402}).
Also $\mathcal{E}_0^1=\mathcal{E}^1\cap\{\phi:I(\phi)=0\}$, where the functional $I$ is defined as in (\ref{IJ}). The idea of using locally finite energy geodesic ray is inspired by the recent beautiful work
of Darvas-He \cite{Darvas-He} on Donaldson conjecture in Fano manifold where they use Ding functional instead of the $K$-energy functional.  From our point of view, both the restriction to canonical
K\"ahler class and the adoption of Ding functional are more of analytical nature.  \\

Inspired by Donaldson's conjecture, the first named author introduced the following notion of geodesic stability \cite{chen05}.
\begin{defn}\label{t1.1n} (c.f. Definition (3.10) in \cite{chen05}) Let $\rho(t): [0,\infty) \rightarrow {\mathcal {E}}_0^1$ be a locally finite energy geodesic ray with unit speed. One can define an invariant $\yen([\rho])$ as
\[
\yen([\rho]) = \displaystyle \lim_{k\rightarrow \infty}\; K(\rho(k+1))-K(\rho(k)).
\]
\end{defn}
One can check that this is well defined, due to the convexity of $K$-energy along geodesics (c.f. Theorem \ref{t2.4new}). Indeed, from the convexity of $K$-energy along locally finite energy geodesic ray, one actually has $K(\rho(k+1))-K(\rho(k))$ is increasing in $k$.
\begin{defn}\label{d1.2n}(c.f. Definition (3.14) in \cite{chen05}) \sloppy Let $\varphi_0\in\mathcal{E}_0^1$ with $K(\varphi_0)<\infty$, $( M,[\omega])$ is called geodesic stable at $\varphi_0$(resp. geodesic-semistable) if for all locally finite energy geodesic ray initiating from $\varphi_0$, their $\yen$ invariant is always strictly positive(resp. nonnegative).
$(M,[\omega])$ is called geodesic stable(resp. geodesic semistable) if it is geodesic stable(resp. geodesic semistable) at any $\varphi\in\mathcal{E}_0^1$. \end{defn}

\begin{rem} It is possible to define the $\yen$ invariant for a locally finite energy geodesic ray in $\mathcal{E}_0^p$ with $p>1$.   Note that a geodesic segment in $\mathcal{E}_0^p$ is automatically a geodesic segment in $\mathcal{E}_0^q$ for any $q \in [1,p].\;$ Following the preceding definition, one can also define geodesic stability in $\mathcal{E}_0^p (p>1).\;$  Note that for a locally given finite energy geodesic ray in   $\mathcal{E}_0^p (p>1), $ the actual value of $\yen$ invariant  in $\mathcal{E}_0^p$ might
differ by a positive multiple from the $\yen$ invariant considered in $\mathcal{E}_0^1$. However, it will not affect the sign of the $\yen$ invariant for a particular locally finite energy geodesic ray. On the other hand, the collection of locally finite energy
geodesic ray in  $\mathcal{E}_0^p (p>1)$ might be strictly contained in the collection of geodesic rays in $\mathcal{E}_0^1.\;$  Therefore, the notion of geodesic stability in the $\mathcal{E}_0^1$ is strongest
while the notion of geodesic stability in $\mathcal{E}_0^\infty$ is the weakest.  Without going into technicality, we may define geodesic stability in $\mathcal{E}_0^\infty$ as the $\yen$ invariant being strictly positive for any locally finite energy geodesic ray which lies in $\displaystyle \bigcap_{p\geq 1}  \mathcal{E}_0^p.\;$  \\
\end{rem}

An intriguing question motivated from above remark is whether geodesic stability in $\mathcal{E}_0^{\infty}$ (in the sense defined in the above remark) implies geodesic stability in $\mathcal{E}_0^1$?
The first named author believes the answer is affirmative. We will discuss this question and other stability notions in algebraic manifolds in greater detail in our next paper and refer interested readers to the following works and references therein:
J. Ross \cite{Ross05}, G. Sz$\acute{\text{e}}$kelyhidi \cite{Sz06},  Berman-Boucksom-Jonsson \cite{BBJ15},  R. Dervan \cite{Dervan171}.\\

  Using the notion of geodesic stability, we can re-formulate Theorem \ref{t1.1n} as
\begin{thm}\label{t1.2n} Suppose $Aut_0(M, J)=0$. Then  $(M,[\omega])$ admits a cscK metric if and only if it is geodesic stable.
\end{thm}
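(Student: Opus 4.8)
The plan is to deduce Theorem \ref{t1.2n} directly from Theorem \ref{t1.1new} by translating the three equivalent statements there into the language of the $\yen$ invariant and geodesic stability. The content of Theorem \ref{t1.2n} is exactly the equivalence ``(1) fails $\iff$ geodesic stable'', i.e. ``cscK exists $\iff$ for every $\varphi_0\in\mathcal{E}_0^1$ with $K(\varphi_0)<\infty$, every locally finite energy geodesic ray from $\varphi_0$ has $\yen([\rho])>0$''. So the real work is a bookkeeping argument linking ``$K$-energy non-increasing along $\rho$'' with ``$\yen([\rho])\le 0$''.

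First I would record the monotonicity fact already noted after Definition \ref{t1.1n}: by convexity of $K$ along locally finite energy geodesics (Theorem \ref{t2.4new}), the sequence $a_k:=K(\rho(k+1))-K(\rho(k))$ is non-decreasing in $k$, so $\yen([\rho])=\lim_k a_k=\sup_k a_k$ is well defined in $(-\infty,+\infty]$. From this two elementary observations follow. (i) If $K$ is non-increasing along $\rho$ — meaning $K(\rho(t))$ is a non-increasing function of $t$ — then every $a_k\le 0$, hence $\yen([\rho])\le 0$, so $(M,[\omega])$ is \emph{not} geodesic stable (indeed not even at $\varphi_0$). (ii) Conversely, if $\yen([\rho])\le 0$ for some unit-speed ray $\rho$, then by monotonicity $a_k\le 0$ for all $k$; combined with convexity of $t\mapsto K(\rho(t))$ this forces $K(\rho(t))$ to be non-increasing on $[0,\infty)$ (a convex function on $[0,\infty)$ whose increments over unit intervals are all $\le0$ is non-increasing). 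Thus for a fixed base point $\varphi_0$, ``there exists a locally finite energy geodesic ray from $\varphi_0$ along which $K$ is non-increasing'' is equivalent to ``$(M,[\omega])$ fails to be geodesic semistable at $\varphi_0$'', and a fortiori implies failure of geodesic stability at $\varphi_0$; one should be slightly careful about the unit-speed normalization, but rescaling the parameter of a geodesic ray does not change whether $K$ is non-increasing nor the sign of $\yen$, so no generality is lost.

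With these translations in hand the theorem is immediate. If a cscK metric exists in $\mathcal{H}$, then statement (1) of Theorem \ref{t1.1new} fails, hence (by the equivalence (1)$\Leftrightarrow$(3) there) for \emph{some} (in fact one checks: for no) base point there is a ray with $K$ non-increasing — more precisely the negation of (3) holds, so there exists $\psi\in\mathcal{E}_0^1$ admitting no such ray; but we want the stronger conclusion that \emph{no} ray from \emph{any} point is $K$-non-increasing, which is the negation of (2). Since (1) fails, (2) fails, i.e.\ for every $\varphi_0\in\mathcal{E}_0^1$ every locally finite energy geodesic ray from $\varphi_0$ has $K$ \emph{not} non-increasing, hence by observation (i)'s converse has $\yen([\rho])>0$; restricting to $\varphi_0$ with $K(\varphi_0)<\infty$ as in Definition \ref{d1.2n}, this says precisely that $(M,[\omega])$ is geodesic stable. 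Conversely, if $(M,[\omega])$ is geodesic stable, then for every $\varphi_0$ (with finite $K$-energy) every ray has $\yen([\rho])>0$, so no ray is $K$-non-increasing; in particular statement (2) of Theorem \ref{t1.1new} fails, hence statement (1) fails, i.e.\ a cscK metric exists. One minor gap to fill is the case $K(\varphi_0)=+\infty$: for such base points $K(\rho(t))=+\infty$ for all $t$ along any ray (using that finite-energy geodesics stay in $\mathcal{E}^1$ and the behavior of $K$), so these contribute no $K$-non-increasing rays and can be safely excluded, matching the hypothesis $K(\varphi_0)<\infty$ in Definition \ref{d1.2n}.

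The main obstacle is not analytic — all the hard analysis is in Theorem \ref{t1.1new} (equivalently Theorem \ref{t3.1}) — but rather making sure the logical quantifiers line up: Theorem \ref{t1.1new} phrases non-existence as ``there exists a base point with a bad ray'' in (2) and ``for all base points there is a bad ray'' in (3), whereas geodesic stability is a ``for all base points, for all rays'' statement. The reconciliation rests on the equivalence (1)$\Leftrightarrow$(2)$\Leftrightarrow$(3): once (1) holds, \emph{all three} hold simultaneously, and once (1) fails, the negation of (2) — which is exactly the universally quantified statement ``every ray from every point has $\yen>0$'' modulo the convexity translation above — must hold. So the proof is: invoke Theorem \ref{t3.1}, apply observations (i) and (ii) to convert between ``$K$ non-increasing'' and ``$\yen\le 0$'', handle the $K(\varphi_0)=\infty$ base points, and conclude. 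I expect the write-up to be short, a matter of a page or less.
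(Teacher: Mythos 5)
Your proposal is essentially correct and the logical skeleton is sound: reduce everything to Theorem~\ref{t1.1new} plus the elementary translation between ``$K$ non-increasing along $\rho$'' and ``$\yen([\rho])\le 0$'' via convexity. The paper's proof, however, does the ``cscK $\Rightarrow$ geodesic stable'' direction more directly: it does not pass through the negation of item~(2) of Theorem~\ref{t1.1new} but instead reuses the coercivity estimate from the proof of Theorem~\ref{t4.2n}, namely that the existence of a cscK potential $\varphi_0$ gives $K(\psi)\geq\eps d_1(\psi,\varphi_0)+K(\varphi_0)$ for all $\psi\in\mathcal{E}_0^1$ with $d_1(\psi,\varphi_0)\geq1$. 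From this one reads off directly that for any unit-speed locally finite energy geodesic ray $\rho$, $\liminf_t\frac{K(\rho(t))-K(\rho(0))}{t}\ge\eps$, hence $\yen([\rho])\ge\eps>0$. That computation is worth keeping, because it yields the strictly stronger conclusion that $\yen$ is bounded below by a \emph{uniform} positive constant independent of the ray, which your route (deducing only $\yen>0$ ray by ray from ``$K$ is not non-increasing'') does not give. For the converse direction both proofs are the same: use Theorem~\ref{t3.1} to produce a $K$-non-increasing ray from a base point of finite energy, note $\yen\le0$, and contradict stability.

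Two small points to repair. First, your fix for the $K(\varphi_0)=+\infty$ case --- the assertion that then $K(\rho(t))=+\infty$ for all $t$ --- is not justified: convexity of $t\mapsto K(\rho(t))$ only forces the set $\{t:K(\rho(t))<\infty\}$ to be an interval, and nothing rules out that interval being $(0,\infty)$. The cleaner way to dispose of this is either to invoke item~(3) of Theorem~\ref{t1.1new}/Theorem~\ref{t3.1} (which already carries the restriction $K(\phi)<\infty$) rather than item~(2) when arguing the sufficient direction, or to note that if a $K$-non-increasing ray from an infinite-energy base point ever enters $\{K<\infty\}$ at time $t_0$ one may simply restart at $\rho(t_0)$, which still lies in $\mathcal{E}^1_0$ since $I$ is affine along finite energy geodesics, thereby producing a $K$-non-increasing ray from a finite-energy base point. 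Second, a terminological slip: ``exists a ray with $K$ non-increasing'' is equivalent to failure of geodesic \emph{stability} at $\varphi_0$ (some ray has $\yen\le0$), not failure of geodesic \emph{semistability} (some ray has $\yen<0$); your ``a fortiori'' then runs the wrong way. Neither issue affects the final conclusion, and after these corrections the argument matches the paper's in spirit.
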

      Given the central importance of the notion of $K$-energy in Donaldson's beautiful program, the first named author proposed the following conjecture,  shortly after \cite{chen991}:
      \begin{conj}\label{conj1.2} Assume $Aut_0(M,J)=0$. The existence of constant scalar curvature K\"ahler metric is equivalent to the properness of $K$-energy in terms of geodesic
      distance.
      \end{conj}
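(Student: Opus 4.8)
The plan is to prove Conjecture \ref{conj1.2} by establishing both implications separately, with the harder (and novel) direction being that properness of the $K$-energy with respect to $d_1$ implies the existence of a cscK metric. For the easy direction, if a cscK metric $\varphi_c \in \mathcal{H}_0$ exists, then since $Aut_0(M,J)=0$ the $K$-energy is strictly convex along finite energy geodesics through $\varphi_c$ (this is by now standard; one uses the convexity from Theorem \ref{t2.4new} together with a uniqueness-type argument to rule out the degenerate case), and a standard argument upgrading strict convexity along geodesics emanating from the minimizer yields a coercivity estimate $K(\varphi) \geq \epsilon\, d_1(\varphi, \varphi_c) - C$; this gives properness.

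For the main direction, the plan is to run the continuity path from \cite{chen15} — writing it schematically as a one-parameter family of twisted cscK type equations indexed by $t \in [0,1]$, where at $t=1$ one solves a reference equation (e.g. a twisted cscK metric with a large twisting term whose solvability is known) and at $t=0$ the equation becomes the genuine cscK equation. The strategy is the classical openness–closedness dichotomy on the interval. Openness of the set of solvable $t$ is quoted directly from \cite{chen15}. Closedness is where the apriori estimates enter: I would invoke the generalized estimates advertised in the abstract — the extensions of \cite{cc1} to scalar-curvature-type equations with a twisting term — to get $C^0$, then $C^2$ (complex Hessian / Laplacian), and then higher-order bounds on the solutions $\varphi_t$, uniformly as $t \to t_0$, provided one has an a priori $C^0$ bound and a bound keeping the metrics from degenerating. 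The crucial geometric input that feeds these estimates is precisely properness: along the continuity path the $K$-energy (or the relevant twisted energy) is controlled, and properness in $d_1$ converts this into a bound on $d_1(\varphi_t, 0)$, hence, combined with an entropy/energy bound, into the uniform estimates needed to pass to the limit. So the logical skeleton is: properness $\Rightarrow$ uniform energy/entropy bounds along the path $\Rightarrow$ (via the modified \cite{cc1} estimates) uniform $C^{k}$ bounds $\Rightarrow$ closedness $\Rightarrow$ the path reaches $t=0$ $\Rightarrow$ cscK metric exists.

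The main obstacle I expect is the closedness step, specifically extracting the a priori estimates under only the weak hypothesis furnished by properness. Two points need care. First, properness is stated in terms of $d_1$, a Finsler (not $C^0$ or $C^2$) quantity on the completed space $\mathcal{E}^1$; one must show that a $d_1$-bound together with a $K$-energy bound (equivalently, an entropy bound via the Chen–Tian-type decomposition of the $K$-energy) actually implies a genuine $C^0$ estimate for the potentials $\varphi_t$ — this is the bridge between the metric-space formulation and the PDE estimates, and it is where the entropy term $\int \log(\omega_\varphi^n/\omega^n)\,\omega_\varphi^n$ in the $K$-energy must be leveraged. Second, the estimates of \cite{cc1} were derived for the cscK equation itself; one has to check that the twisting term appearing along the \cite{chen15} path is benign — e.g. that it is nonnegative, or bounded in the relevant norms uniformly in $t$ — so that the integration-by-parts and maximum-principle arguments go through with constants independent of $t$. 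Once the uniform $C^0$ estimate is in hand, the rest of the regularity bootstrap should follow the template of \cite{cc1} with only notational changes.

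A secondary subtlety is making sure the endpoint analysis is consistent with working in $\mathcal{E}^1_0$ rather than $\mathcal{H}$: the normalization $I(\varphi)=0$ must be propagated along the path (this is harmless, just subtract the appropriate constant), and the limiting potential obtained at $t=0$, a priori only in $\mathcal{E}^1$, must be shown to be smooth — but this is exactly the content of the uniform higher-order estimates, so no separate regularity theory is needed. I would present the argument so that it simultaneously sets up the infrastructure for the final theorem of the paper (smoothness of weak $K$-energy minimizers), since the same continuity path and the same estimates are the engine there.
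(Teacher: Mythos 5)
Your hard direction matches the paper's strategy closely: run the continuity path of \cite{chen15} starting from the trivial endpoint, quote openness, use the fact that solutions along the path are minimizers of the corresponding twisted functionals to control the twisted energy, convert this via properness to a $d_1$-bound, convert the $d_1$-bound plus the decomposition of $K$-energy to an entropy bound on $\log(\omega_\varphi^n/\omega_0^n)$, and feed that entropy bound into the generalized a priori estimates (Section~3) to close the path. The orientation of the parameter is reversed relative to the paper (the paper puts the cscK equation at $t=1$, with $\varphi\equiv 0$ solving the $t=0$ equation $\operatorname{tr}_\varphi\omega_0=n$; there is no need for a ``reference twisted cscK metric with a large twist'' at the free endpoint), and the precise input to the a priori estimates is the entropy bound alone, from which $\|\varphi\|_0$ is an \emph{output}, not a separate prerequisite --- but your second paragraph does acknowledge this bridge, so these are cosmetic.

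The one place where your mechanism does not reflect what actually carries the argument is the easy direction. You assert that $Aut_0(M,J)=0$ forces strict convexity of the $K$-energy along finite energy geodesics through the cscK potential, and then upgrade this to coercivity. The paper does not establish, and does not need, any such strict convexity; in fact the $K$-energy can a priori be affine along finite energy geodesics, and ruling this out is exactly what the Berman--Berndtsson type arguments cannot do directly. What the paper uses (reproducing the Berman--Darvas--Lu argument) is a compactness-plus-uniqueness contradiction: if coercivity fails, pick $\psi_i$ with slope $\varepsilon_i\to 0$, take the point $\phi_i$ at $d_1$-distance $1$ along the geodesic to $\psi_i$, use convexity to bound $K(\phi_i)$, invoke the BBEGZ compactness (Lemma~\ref{l2.6new}) to extract a $d_1$-limit $\phi_\infty$, use lower semicontinuity of $K$ to make $\phi_\infty$ a minimizer, and then apply the \emph{weak--strong uniqueness} of \cite{Darvas1605} (every minimizer in $\mathcal{E}^1$ is a smooth cscK potential, unique up to $Aut_0$) to force $\phi_\infty=\varphi_c$, contradicting $d_1(\phi_\infty,\varphi_c)=1$. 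Your phrase ``a uniqueness-type argument to rule out the degenerate case'' gestures at this, but framing the conclusion as ``strict convexity'' is not justified and would leave the reader unable to reconstruct the proof. Finally, when you write that the twisted energy is ``controlled'' along the path, the specific mechanism worth stating is that each $\varphi_i$ minimizes $t_iK_\beta+(1-t_i)J_{\omega_0}$, and $0$ is a critical point (hence minimizer) of $J_{\omega_0}$, so evaluating at $0$ yields $\sup_i K_\beta(\varphi_i)<\infty$; this is what the properness hypothesis then bites on.
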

   Here ``properness" means that the $K$-energy tends to $+\infty$ whenever the geodesic distance tends to infinity (c.f. Definition \ref{d4.1}). The original conjecture naturally chose the distance introduced in \cite{Dona96}
      which we now call $L^2$ distance.  After a series of fundamental work of T. Darvas on this subject (c.f \cite{Darvas1403}  \cite{Darvas1402}), we now learn  that the $L^1$ geodesic distance is a natural choice for the properness conjecture. Indeed, we prove
      \begin{thm}(Theorem \ref{t2.2} and \ref{t4.2n}) \label{t1.2nn} Assume $Aut_0(M,J)=0$. The existence of constant scalar curvature K\"ahler metric is equivalent to the properness of $K$-energy in terms of the $L^1$
      geodesic distance.
      \end{thm}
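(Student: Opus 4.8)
This is an equivalence, and I would prove the two implications separately. The direction ``existence $\Rightarrow$ properness'' (Theorem~\ref{t2.2}) is the soft one and is essentially contained in the convexity theory of the $K$-energy, so I treat it second. The substantial direction is ``properness $\Rightarrow$ existence'' (Theorem~\ref{t4.2n}), and for it the plan is to run the continuity path of \cite{chen15}: a family of equations, parametrized by $t\in[0,1]$, whose solution at $t=0$ exists and is smooth, whose solution at $t=1$ is a cscK metric, and whose solution $\varphi_t$ for $t\in(0,1)$ satisfies a twisted constant-scalar-curvature equation $S(\omega_{\varphi_t})=\underline S+(1-t)\,\theta(\varphi_t)$, where $\theta$ is the gradient of a non-negative $J$-type functional $J_\theta$, so that $\varphi_t$ minimizes the (still geodesically convex) twisted $K$-energy $\mathcal M_t:=K+(1-t)J_\theta$. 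Let $T\subset[0,1]$ be the set of $t$ for which the equation is solvable with $\varphi_t$ of this form. Openness of $T$ is proved in \cite{chen15} (the hypothesis $Aut_0(M,J)=0$ entering to invert the linearized operator at $t=1$). Everything then reduces to showing $T$ is closed.

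For closedness I would establish uniform a priori estimates on $\varphi_t$ in four steps. First, an energy bound: since $\varphi_t$ minimizes $\mathcal M_t$ and $J_\theta\ge 0$, one gets $K(\varphi_t)\le\mathcal M_t(\varphi_t)\le\mathcal M_t(0)=K(0)+(1-t)J_\theta(0)\le C_0$, a bound independent of $t$. Second, the properness hypothesis $K(\cdot)\ge\delta\,d_1(0,\cdot)-C$ forces a uniform bound $d_1(0,\varphi_t)\le C_1$, hence (by Darvas's comparison of $d_1$ with the energy functionals, cf. \cite{Darvas1402}) uniform bounds on $I(\varphi_t)$, $J(\varphi_t)$ and on a $\sup$-normalization of $\varphi_t$. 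Third, writing the Chen--Tian formula $K=H+(\text{pairing terms})$, where $H$ is the entropy $\int_M\log\frac{\omega_{\varphi}^n}{\omega^n}\omega_{\varphi}^n$ and the pairing terms are controlled by $C(1+J)$, one obtains a uniform entropy bound $H(\varphi_t)\le C_2$. Fourth, one feeds this entropy bound into the a priori estimates of \cite{cc1}, extended in the present paper to twisted scalar-curvature type equations, to conclude uniform $C^k$ bounds on $\varphi_t$ for every $k$ (equivalently $c^{-1}\omega\le\omega_{\varphi_t}\le c\omega$ with all derivatives controlled), uniformly in $t\in[0,1]$ since the twist coefficient $(1-t)$ is bounded. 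Taking a limit along $t\to1$ in $T$ then produces a smooth cscK metric, proving closedness and hence solvability at $t=1$.

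For ``existence $\Rightarrow$ properness'': let $u_0\in\mathcal H_0$ be cscK. By convexity of $K$ along finite-energy geodesics (Theorem~\ref{t2.4new}), $u_0$ is a global minimizer of $K$ on $\mathcal E^1_0$; write $m:=K(u_0)$. Suppose $K$ were not $d_1$-proper, so there are $u_j\in\mathcal E^1_0$ with $\ell_j:=d_1(u_0,u_j)\to\infty$ and $K(u_j)\le C$. Joining $u_0$ to $u_j$ by the finite-energy geodesic reparametrized to unit speed, $\rho_j\colon[0,\ell_j]\to\mathcal E^1_0$ with $\rho_j(0)=u_0$, convexity of $K$ along $\rho_j$ together with $K(\rho_j(0))=m\le K(\rho_j(\ell_j))$ gives, for each fixed $s$,
\[
K(\rho_j(s))-m\ \le\ \frac{s}{\ell_j}\bigl(K(u_j)-m\bigr)\ \le\ \frac{s}{\ell_j}(C-m)\ \longrightarrow\ 0 .
\]
By compactness of unit-speed finite-energy geodesic rays emanating from a fixed point (cf. \cite{Darvas-He}, \cite{BBEGZ}), a subsequence of $\rho_j$ converges locally uniformly in $d_1$ to a unit-speed finite-energy geodesic ray $\rho\colon[0,\infty)\to\mathcal E^1_0$; by $d_1$-lower semicontinuity of $K$ (cf. \cite{BBEGZ}), $K(\rho(s))\le\liminf_j K(\rho_j(s))\le m$ for all $s$, so $K\equiv m$ along $\rho$ and $\yen([\rho])=0$. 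But then every $\rho(s)$ is a weak minimizer of $K$, hence smooth by the regularity of weak minimizers proved in this paper, hence a cscK metric; by uniqueness of cscK metrics when $Aut_0(M,J)=0$ this forces $\rho(s)\equiv u_0$, contradicting that $\rho$ has unit speed. Hence $K$ is $d_1$-proper.

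\textbf{Main obstacle.} The hard part is the closedness step in the first implication, specifically the a priori estimates for the \emph{twisted} scalar-curvature equation: one must re-derive the $C^0$, Laplacian, and higher-order estimates of \cite{cc1} in the presence of the twist term $(1-t)\theta(\varphi_t)$ and make them uniform as $t\to1$, the most delicate point being the entropy estimate that converts the $d_1$/energy bounds into an $L\log L$-bound on the volume ratio. This is where the bulk of the paper's technical work lies. The rigidity input in the second implication (smoothness of weak minimizers plus uniqueness of cscK modulo $Aut_0$) is also nontrivial, but it is either classical or supplied elsewhere in the paper.
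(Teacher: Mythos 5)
Your proposal follows the same overall strategy as the paper, but you have swapped the two theorem labels: in the paper, Theorem~\ref{t2.2} is ``properness $\Rightarrow$ existence'' (proved via the continuity path), and Theorem~\ref{t4.2n} is ``existence $\Rightarrow$ properness.'' Beyond that relabeling, the continuity-path direction matches the paper's argument step by step: bound the twisted $K$-energy of the solution $\varphi_t$ by its value at $0$, use properness to bound $d_1(0,\varphi_t)$, convert this (via the comparison in \cite{Darvas1402}, Theorem 5.5, and the Chen--Tian decomposition (\ref{1.2nn})) into an entropy bound, then feed the entropy bound into the twisted a priori estimates of Section~3 and pass to the limit. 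One small inaccuracy: you say $Aut_0(M,J)=0$ is needed ``to invert the linearized operator at $t=1$,'' but the paper never invokes openness at $t=1$; it shows $\sup S=1$ and that $1\in S$ purely by the closedness/compactness argument (Lemma~\ref{l2.7}), and Lemma~\ref{l2.2} only gives openness on $[0,1)$ without any automorphism hypothesis. The hypothesis $Aut_0=0$ is used only in the converse direction.

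For ``existence $\Rightarrow$ properness,'' the paper's proof is slightly leaner than yours: rather than extracting a full geodesic ray and showing $K$ is constant along it, the paper fixes $\phi_i=c^i(1)$ at unit distance from the cscK potential $\varphi_0$, uses convexity to get $K(\phi_i)\le K(\varphi_0)+\eps_i$, applies the compactness Lemma~\ref{l2.6new} and $d_1$-lower semicontinuity of $K$ to obtain a minimizer $\phi_\infty$ at distance $1$, and then invokes the weak--strong uniqueness result of \cite{Darvas1605} (every minimizer of $K$ in $\mathcal E^1$ is smooth cscK and differs from $\varphi_0$ by an element of $Aut_0$) to force $\phi_\infty=\varphi_0$, a contradiction. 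Your geodesic-ray variant also works, but two remarks: (i) the ``compactness of unit-speed finite-energy geodesic rays'' is not a citable black box; in the paper's setting it is obtained pointwise in $s$ by Lemma~\ref{l2.6new} and a diagonal argument (this is exactly what is done in Section~6 of the paper), so you would need to spell that out; (ii) you cite ``the regularity of weak minimizers proved in this paper'' for the smoothness of $\rho(s)$, whereas the paper cites \cite{Darvas1605} directly. Your dependency is not circular (the paper's Theorem~\ref{t4.1} depends on the \emph{other} implication, Theorem~\ref{t2.2}, not on Theorem~\ref{t4.2n}), but it is a longer detour than necessary, and note that what you actually need at this step is the stronger statement from \cite{Darvas1605} that the minimizer differs from $\varphi_0$ by an automorphism, not merely that it is smooth.
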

      
      Note that the direction that existence of cscK implies properness has been established by Berman-Darvas-Lu\cite{Darvas1605} recently. For the converse (namely the existence part), Darvas and Rubinstein have reduced this problem in \cite{DR} to a question of regularity of minimizers.
In our paper, we will use continuity method to bypass this question and establish existence of cscK metrics.\\

  For properness conjecture, we remark that there is a more well known formulation due to G. Tian where he conjectured that the existence
      of cscK metrics is equivalent to the propeness of $K$-energy in terms of Aubin functional $J$ (c.f. Definition (\ref{IJ})).  One may say that Tian's conjecture is more of analytical nature while Conjecture 1.2 above fits into Donaldson's geometry program in the space of K\"ahler potentials
      more naturally.  According to T. Darvas (c.f. Theorem 5.5 of \cite{Darvas1403}),  Aubin's $J$ functional and the $L^1$ distance are equivalent. Therefore, these two
      properness conjectures are equivalent.  Nonetheless, the formulation in conjecture 1.2  is essential to our proof.\\


Theorem \ref{t1.2nn} also holds for twisted cscK metric as well (c.f. Theorem \ref{t2.2} \ref{t4.2n}), which is the solution to the equation 
$$
t(R_{\varphi}-\underline{R})=(1-t)(tr_{\varphi}\chi-\underline{\chi}).$$ In the above, $0<t\leq 1$, $\chi$ is a fixed K\"ahler form, and $\underline{R}$, $\underline{\chi}$ are suitable constants determined by the K\"ahler classes $[\omega_0]$, $[\chi]$.

Now we recall an important notion introduced in \cite{chen15}:
\begin{equation}\label{1.1nn}
R([\omega_0],\chi)=\sup\{t_0\in[0,1]:\textrm{ the above equation can be solved for any $0\leq t\leq t_0$.}\}
\end{equation}
In the same paper, the first named author conjectured that this is an invariant of the K\"ahler class $[\chi]$. In this paper, 
as a consequence of Theorem \ref{t2.2} and \ref{t4.2n},  we will show that if $\chi_1$ and $\chi_2$ are two K\"ahler forms in the same class, then one has
 $$R([\omega_0],\chi_1)=R([\omega_0],\chi_2), $$ so that the quantity $R([\omega_0],[\chi])$ is well-defined and gives rise to an invariant between two K\"ahler classes $[\omega_0],\,[\chi].\;$ Moreover, when the $K$-energy is bounded from below, the twisted path (\ref{2.12}) can be solved for any $t<1$, as long as $t=0$ can be solved. Thus in this case we have

\begin{thm} Let $\chi$ be a K\"ahler form. If the $K$-energy is bounded from below on $(M,[\omega])$, then $R([\omega_0],[\chi]) = 1$ if and only if $R([\omega_0],[\chi]) >0. $
\end{thm}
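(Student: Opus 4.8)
The plan is to set this up as a standard continuity-method argument (openness and closedness) along the twisted path. Write $S\subseteq[0,1]$ for the set of $t$ at which $t(R_\varphi-\underline{R})=(1-t)(tr_\varphi\chi-\underline{\chi})$ is solvable, so that $R([\omega_0],[\chi])=\sup\{t_0:[0,t_0]\subseteq S\}$ and $S$ already contains the interval $[0,R)$. The ``only if'' direction being trivial, I would assume $R>0$ and prove $R=1$, i.e.\ $[0,1)\subseteq S$. Since $S\cap[0,1)$ is nonempty and, by the openness of the continuity path proved in \cite{chen15}, relatively open in $[0,1)$, the content is closedness: if $t_j\uparrow T<1$ with $t_j\in S$, then $T\in S$. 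For this I must produce \emph{uniform} a~priori estimates for the solutions $\varphi_{t_j}$, which I normalize by $\sup_M\varphi_{t}=0$ (legitimate, since the equation is unchanged under $\varphi\mapsto\varphi+c$).

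The key is a uniform entropy bound. Recall that the twisted path is the Euler--Lagrange equation of the twisted $K$-energy, which is affine in $t$: $\mathcal K_t=tK+(1-t)\mathcal J_\chi$, where $\mathcal J_\chi$ is the energy functional of the $t=0$ equation (so $\mathcal K_1=K$), and which is convex along geodesic segments (Theorem~\ref{t2.4new} and its twisted analogue). Fix, once and for all, a parameter $t_0$ with $0<t_0<R$ (the only place where $R>0$ enters the argument); then $t_0\in S$, and since $Aut_0(M,J)=0$, Theorem~\ref{t2.2} and~\ref{t4.2n} (which also cover the twisted equation) apply at level $t_0$ and give that $\mathcal K_{t_0}$ is proper with respect to $d_1$, in particular $\mathcal K_{t_0}\ge\varepsilon_0 J-C_{t_0}$ for some $\varepsilon_0>0$. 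The elementary observation is that, for $t_0<t<1$,
\[
\mathcal K_t=\frac{1-t}{1-t_0}\,\mathcal K_{t_0}+\frac{t-t_0}{1-t_0}\,K,
\]
a genuine convex combination of $\mathcal K_{t_0}$ and $K$. Since $\varphi_t$ is a smooth critical point of the geodesically convex functional $\mathcal K_t$, it is a global minimizer of $\mathcal K_t$, so $\mathcal K_t(\varphi_t)\le\mathcal K_t(0)\le A:=\max\{K(0),0\}$. Feeding in the lower bounds $K\ge-C_0$ (the hypothesis) and $\mathcal K_{t_0}\ge\varepsilon_0 J-C_{t_0}$ and rearranging gives
\[
J(\varphi_t)\le\frac{(1-t_0)(A+C_0+C_{t_0})}{(1-t)\,\varepsilon_0},
\qquad
K(\varphi_t)\le\frac{(1-t_0)(A+C_{t_0})}{t-t_0}.
\]
For $t$ in a left neighborhood $[T',T]$ of $T$ with $t_0<T'$ (if $T\le t_0$ there is nothing to prove, since then $T\in[0,R)\subseteq S$), and using $T<1$, both right-hand sides are uniformly bounded. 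A bound on $J(\varphi_t)$ together with $\sup_M\varphi_t=0$ controls all the energy-type functionals appearing in the Chen--Tian formula for $K$, so the upper bound on $K(\varphi_t)$ converts into a uniform entropy bound $\int_M\log\big(\omega_{\varphi_t}^n/\omega^n\big)\,\omega_{\varphi_t}^n\le C$.

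With a uniform entropy bound and a uniform $d_1$-bound in hand, I would invoke the a~priori estimates of \cite{cc1}, in the twisted form developed in this paper: they yield uniform $C^\infty$ bounds on $\varphi_t$ and $C^{-1}\omega\le\omega_{\varphi_t}\le C\omega$ for $t\in[T',T)$, the coefficient $(1-t)/t$ of the equation remaining bounded there. A compactness argument then produces $\varphi_{t_j}\to\varphi_T$ in $C^\infty$ along a subsequence, with $\varphi_T$ a smooth solution of the twisted equation at $t=T$; hence $T\in S$. Together with openness this forces $[0,1)\subseteq S$, i.e.\ $R([\omega_0],[\chi])=1$, and with the trivial converse the theorem follows.

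The main obstacle is exactly the uniform entropy bound of the second paragraph. Everything is arranged so that the coercivity constant $(1-t)\varepsilon_0/(1-t_0)$ and the denominator $t-t_0$ remain positive throughout $(t_0,1)$ but degenerate only as $t\to1$, which is precisely why the argument yields $R=1$ and stops short of a cscK metric. Two inputs carry the weight: the (twisted) properness theorem applied at the single level $t_0\in(0,R)$, and the twisted version of the \cite{cc1} estimates, which turn the entropy and $d_1$ bounds into the compactness needed to close the path; the routine-but-nonempty point is to verify that those estimates hold with uniform constants when the twisting form $\chi$ is held fixed.
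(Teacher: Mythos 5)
Your proof is correct, but it reaches the crucial uniform entropy bound by a genuinely different route from the paper's. The paper has no dedicated proof of this theorem; the intended argument (visible in the corollary concluding Section 4 and in the proof of Lemma~\ref{l4.1}) is that $K$-energy bounded below together with coercivity of $J_\chi$ --- which follows from the $t=0$ solvability (equivalently $R>0$) via the Collins--Sz\'ekelyhidi result cited there --- makes $K_{\chi,t}=tK+(1-t)J_\chi$ proper for every $t<1$, so that Theorem~\ref{t2.2} (properness implies existence) can be applied level by level. You instead run a single continuity path along (\ref{2.12}) and derive the entropy bound from the affine identity $\mathcal{K}_t=\frac{1-t}{1-t_0}\mathcal{K}_{t_0}+\frac{t-t_0}{1-t_0}K$ combined with coercivity of $\mathcal{K}_{t_0}$, which you obtain by applying Theorem~\ref{t4.2n} (existence implies properness) once at a fixed $t_0\in(0,R)$. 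This localizes the use of $R>0$ to one choice of $t_0$, avoids re-running the properness-to-existence machine at every $t$, and replaces the external coercivity statement for $J_\chi$ with the paper's own Theorem~\ref{t4.2n}. Two small clean-ups: the hypothesis $Aut_0(M,J)=0$ is not needed --- case (i) of Theorem~\ref{t4.2n} with $\beta=\frac{1-t_0}{t_0}\chi>0$ suffices --- and the passage from your $J$- and $K$-bounds to the entropy bound should be made explicit: renormalize $\varphi_t$ into $\mathcal{H}_0$ (all the functionals involved are translation-invariant), use Theorem~\ref{t2.3} to turn the Aubin $J$-bound into a $d_1$-bound, then Lemma~\ref{l2.5} to bound $|J_{-Ric}(\varphi_t)|$ and feed it into the decomposition (\ref{1.2nn}).
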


As noted in \cite{chen15}, it is interesting to understand geometrically for what K\"ahler classes this invariant is $1$ but do not admit constant scalar curvature metrics. More broadly,
it is interesting to estimate the upper and lower bound of this invariant.  It is not hard to see the relation between the invariant introduced in \cite{Sz11} and the invariant introduced above when restricted to the canonical K\"ahler class in Fano manifold, where we take $[\chi]$ to be the first Chern class in (\ref{1.1nn}) above. 
Hopefully, the method used there can be adapted to our setting to get estimate for this new invariant, in particular an upper bound. 

 T. Darvas and  Y. Rubinstein conjectured in \cite{DR}(Conjecture 2.9) that any minimizer of $K$-energy over the space $\mathcal{E}^1$ is actually a smooth K\"ahler potential. This 
is a bold and  imaginative  conjecture which might  be viewed as a natural generalization of an earlier conjecture by the first named author that any $C^{1,1}$ minimizer of $K$-energy is smooth (c.f. \cite{chen00}, Conjecture 3). Under an additional assumption that there exists a smooth cscK metric in the same K\"ahler class, 
Darvas-Rubinstein conjecture is verified in  \cite{Darvas1605}. 
In this paper, we establish this conjecture as an application of properness theorem. Note that Euler-Lagrange equation is not available apriori in our setting, so that the usual approach to the regularity problem in the calculus of variations does not immediately apply. Instead, we need to use the continuity path to overcome this difficulty. 

\begin{thm}\label{t1.3}(Theorem \ref{t4.1}) Let $\varphi_*\in\mathcal{E}^1$ be such that $K(\varphi_*)=\inf_{\varphi\in\mathcal{E}^1}K(\varphi)$.
Then $\varphi_*$ is smooth and $\omega_{\varphi_*}:=\omega_0+\sqrt{-1}\partial\bar{\partial}\varphi_*$ is a cscK metric.
\end{thm}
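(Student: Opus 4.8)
The plan is to use the twisted continuity path of \cite{chen15} to manufacture a \emph{smooth} cscK metric out of the bare existence of the minimizer $\varphi_*$, and then to invoke the theorem of Berman--Darvas--Lu \cite{Darvas1605} --- which confirms the Darvas--Rubinstein conjecture as soon as a smooth cscK metric is known to exist in the class --- to conclude that $\varphi_*$ itself must be that potential. The point is that $\varphi_*$ enters not through an Euler--Lagrange equation, which is unavailable in $\mathcal{E}^1$, but as a barrier forcing the continuity path to stay in a bounded region of $(\mathcal{E}^1,d_1)$.

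\emph{Setting up the path.} Since $\varphi_*$ minimizes $K$, the $K$-energy is bounded below on $\mathcal{H}$. I would fix the K\"ahler form $\chi=\omega_0$, so that $t=0$ in the twisted path $t(R_\varphi-\underline{R})=(1-t)(tr_\varphi\chi-\underline{\chi})$ is solved by $\varphi\equiv 0$. By the fact recalled above --- for $K$ bounded below the twisted path $(\ref{2.12})$ is solvable for every $t<1$ provided $t=0$ is solvable --- there is for each $t\in[0,1)$ a solution $\varphi_t$, which we normalize by $I(\varphi_t)=0$. Each $\varphi_t$ is smooth by the a priori estimates of \cite{cc1} in the twisted form developed in this paper, and $\varphi_t$ minimizes the twisted Mabuchi functional $\mathcal{F}_t:=tK+(1-t)G_\chi$ over $\mathcal{E}^1$, where $G_\chi$ is the $\chi$-twisted Aubin functional (normalized by $G_\chi(0)=0$, with $\frac{d}{ds}G_\chi(\varphi_s)=\int_M\dot\varphi_s(tr_{\varphi_s}\chi-\underline{\chi})\,\omega_{\varphi_s}^n$): indeed $\mathcal{F}_t$ is convex along $d_1$-geodesics (convexity of $K$ is Theorem~\ref{t2.4new}, and convexity of $G_\chi$ holds since $\chi>0$) and $\varphi_t$ is a smooth critical point.

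\emph{The comparison as $t\uparrow 1$.} From $\mathcal{F}_t(\varphi_t)\le\mathcal{F}_t(\varphi_*)$ and $K(\varphi_t)\ge K(\varphi_*)=\inf_{\mathcal{E}^1}K$ one gets first $G_\chi(\varphi_t)\le G_\chi(\varphi_*)<\infty$, and second $K(\varphi_*)\le K(\varphi_t)\le K(\varphi_*)+\frac{1-t}{t}\bigl(G_\chi(\varphi_*)-\inf_{\mathcal{E}^1}G_\chi\bigr)$. Since $\chi>0$, the functional $G_\chi$ is bounded below and proper with respect to $d_1$ (via $d_1\sim J$ and $G_\chi\ge c_\chi J-C$, cf. \cite{Darvas1403}); hence $d_1(0,\varphi_t)$ stays bounded and $K(\varphi_t)\to K(\varphi_*)$ as $t\uparrow 1$. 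A bound on $d_1(0,\varphi_t)$ together with the $K$-energy bound yields a bound on the entropy $\int_M\log\frac{\omega_{\varphi_t}^n}{\omega_0^n}\,\omega_{\varphi_t}^n$, and then the twisted a priori estimates of \cite{cc1} --- noting that the twisting term $\frac{1-t}{t}(tr_{\varphi_t}\chi-\underline{\chi})$ carries a small coefficient as $t\uparrow 1$ --- give uniform bounds $\|\varphi_t\|_{C^k}\le C_k$ for all $k$ and all $t$ near $1$. Passing to a subsequence $\varphi_{t_j}\to\varphi_\infty$ in $C^\infty$, the limit solves $R_{\varphi_\infty}=\underline{R}$, so $\omega_{\varphi_\infty}$ is a smooth cscK metric, and $K(\varphi_\infty)=\lim_j K(\varphi_{t_j})=K(\varphi_*)=\inf_{\mathcal{E}^1}K$.

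Thus $[\omega_0]$ carries a smooth cscK metric, so by Berman--Darvas--Lu \cite{Darvas1605} every minimizer of $K$ over $\mathcal{E}^1$ is a smooth cscK potential; applied to $\varphi_*$ this gives the theorem. (Alternatively one can finish directly: $K$ is convex along the finite-energy $d_1$-geodesic from $\varphi_*$ to $\varphi_\infty$, and since $Aut_0(M,J)=0$ it is strictly convex unless $\varphi_*=\varphi_\infty$; as $K$ equals $\inf_{\mathcal{E}^1}K$ at both endpoints it is constant on the geodesic, forcing $\varphi_*=\varphi_\infty$.) I expect the main obstacle to be the compactness step of the previous paragraph: ruling out that $\varphi_t$ escapes to infinity in $(\mathcal{E}^1,d_1)$ as $t\uparrow 1$ --- exactly where one uses that a genuine \emph{minimizer}, not merely a lower bound, exists --- and then upgrading $d_1$-boundedness plus the entropy bound to uniform $C^\infty$ bounds through the twisted generalization of the estimates of \cite{cc1}.
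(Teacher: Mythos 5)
Your proposal follows essentially the same route as the paper: you set $\chi=\omega_0$, run the continuity path $t(R_\varphi-\underline R)=(1-t)(tr_\varphi\omega_0-n)$ on $[0,1)$ using the lower bound on $K$ coming from the existence of $\varphi_*$, observe that each $\varphi_t$ is a global minimizer of $tK+(1-t)J_{\omega_0}$ over $\mathcal{E}^1$ (by convexity along geodesics), and compare against the competitor $\varphi_*$ to get $J_{\omega_0}(\varphi_t)\le J_{\omega_0}(\varphi_*)$. Properness of $J_{\omega_0}$ then bounds $d_1(0,\varphi_t)$, which combined with the decomposition of $K$ controls the entropy, so the twisted a priori estimates of Section~3 give uniform $C^\infty$ bounds and a smooth cscK limit $\varphi_\infty$. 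The final step --- invoking the weak--strong uniqueness of Berman--Darvas--Lu to conclude $\varphi_*$ itself is smooth --- is exactly what the paper does. This is a faithful reconstruction of the proof of Lemma~\ref{l4.1} and Theorem~\ref{t4.1}.

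One caution on your parenthetical ``alternative finish'': the assertion that $K$ is ``strictly convex unless $\varphi_*=\varphi_\infty$'' along the finite-energy geodesic is not an elementary consequence of $Aut_0(M,J)=0$. Affineness of $K$ along a finite-energy geodesic forcing the endpoints to agree modulo $Aut_0$ is precisely the hard content of \cite{Darvas1605} (their Theorem~1.4), so the alternative is not actually independent of the BDL input --- you would still be using the same theorem, just a slightly different formulation of it. The main route through weak--strong uniqueness is the cleaner and correct way to close the argument, and it is what the paper uses.
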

We actually establish a more general result which allows us to consider more general twisted $K$-energy and we can show the weak minimizers of twisted $K$-energy are smooth as long as the twisting form is smooth, closed and nonnegative.

   \begin{rem}W. He and Y. Zeng \cite{He-Zeng}  proved Chen's conjecture on the regularity of $C^{1,1}$ minimizers of $K$-energy. Their original proof contains an unnecessary assumption that the $(1,1)$ current defined by the minimizer has a strictly positive lower bound
   which can be removed by adopting a weak K\"ahler-Ricci flow method initiated in Section 7 of  Chen-Tian \cite{CT}.  This will be discussed in an unpublished note \cite{cc0}.
\end{rem}

 In view of Theorem 1.3, it is important to study, under what conditions, the $K$-energy functional is proper in a given K\"ahler class. In \cite{chen00},   the first named author proposed a decomposition formula for $K$-energy:
       \begin{equation}\label{1.2nn}
K(\varphi)=\int_M\log\bigg(\frac{\omega_{\varphi}^n}{\omega_0^n}\bigg)\frac{\omega_{\varphi}^n}{n!}+J_{-Ric}(\varphi).
\end{equation}
where the functional $J_{-Ric}$ is defined through its derivatives:
\begin{equation}\label{1.2new}
{{d\, J_{-Ric}}\over {d\,t}} = \displaystyle \int_M\; {{\partial \varphi}\over {\partial t}} (-Ric\wedge\frac{\omega_{\varphi}^{n-1}}{(n-1)!}+\underline{R}\frac{\omega_{\varphi}^n}{n!}).
\end{equation}
 One key observation in \cite{chen00} (based on this decomposition formula) is that $K$-energy has a lower bound if the corresponding $J_{-Ric}$ functional has a lower bound.  Note that when the first Chern class
is negative, one can choose a background metric such that $- Ric > 0.\;$ Then, $J_{-Ric}$ is  convex along $C^{1,1}$ geodesics in $\mathcal H$ and is bounded from below if it has a critical point. In \cite{SongBen040},  Song-Weinkove further pointed out that, $J_{-Ric}$ functional being bounded from below is sufficient to imply the properness of $K$-energy.
 The research in
this direction has been very active and intense (c.f. Chen\cite{chen00}, Fang-Lai-Song-Weinkove \cite{FLSW14}, Song-Weikove \cite{SongBen13},  Li-Shi-Yao \cite{LSY13},  R. Dervan \cite{Dervan142}, and references therein).
Combining these results with Theorem \ref{t1.2nn}, we have the following corollary.

\begin{cor}\label{c1.5n} There exists a cscK metric in $(M,[\omega])$ if any one of the following conditions holds:
\begin{enumerate}

\item There exists  a constant $\epsilon \geq 0$ such that $ \epsilon < {{n+1}\over n} \alpha_M([\omega]) $ and $ \pi C_1(M) < \epsilon [\omega]$
such that
\[
\left( - n {{C_1(M) \cdot [\omega]^{n-1}}\over {[\omega]^n} } +\epsilon\right) \cdot [\omega] + (n-1) C_1(M) > 0.
\]
Here $\alpha_M(\omega)$ denotes the $\alpha$-invariant of the K\"ahler class $(M,[\omega])$ (c.f. \cite{tian87}).
\item  If 
\[
\alpha_M([\omega]) >  {{C_1(M) \cdot [\omega]^{n-1}}\over {[\omega]^n} } \cdot {n\over {n+1}}
\]
and 
\[
C_1(M) \geq  {{C_1(M) \cdot [\omega]^{n-1}}\over {[\omega]^n} } \cdot {n\over {n+1}} \cdot [\omega].
\]
\end{enumerate}
\end{cor}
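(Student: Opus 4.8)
The plan is to reduce both cases to Theorem \ref{t1.2nn}: since $Aut_0(M,J)=0$, that theorem guarantees a cscK metric in $(M,[\omega])$ as soon as the $K$-energy is proper with respect to the $L^1$ geodesic distance $d_1$. First I would trade $d_1$-properness for the classical notion: by Theorem 5.5 of \cite{Darvas1403} the distance $d_1$ is equivalent to Aubin's functional $J$, so it suffices to prove that $K$ is proper in terms of $J$. Next I would invoke the decomposition formula \eqref{1.2nn}; the entropy term appearing there is manifestly nonnegative, so by the observation of \cite{chen00}, sharpened by Song--Weinkove \cite{SongBen040}, it is enough to show that the functional $J_{-Ric}$ of \eqref{1.2new} is bounded from below. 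Indeed, boundedness below of $J_{-Ric}$ already forces $K$ to be proper in terms of $J$, hence in terms of $d_1$.

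It then remains to check that each of the hypotheses (1) and (2) forces $\inf_{\mathcal H}J_{-Ric}>-\infty$, and this is exactly where the $\alpha$-invariant $\alpha_M([\omega])$ enters. Condition (2) is the clean $\alpha$-invariant criterion: writing $\lambda:=\tfrac{n}{n+1}\cdot\tfrac{C_1(M)\cdot[\omega]^{n-1}}{[\omega]^n}$, the inequality $C_1(M)\geq\lambda[\omega]$ allows one to represent $-Ric$ cohomologically as $-\lambda\,\omega$ plus a closed nonnegative $(1,1)$-form, while the strict bound $\alpha_M([\omega])>\lambda$ supplies, through a Tian-type Moser--Trudinger/H\"ormander estimate (c.f. \cite{tian87}, \cite{FLSW14}, \cite{SongBen13}, \cite{LSY13}, \cite{Dervan142}), the required uniform lower bound for $J_{-Ric}$. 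Condition (1) runs on the same mechanism with an auxiliary slack parameter $\epsilon\geq 0$: the cohomological inequality $\big(-n\,\tfrac{C_1(M)\cdot[\omega]^{n-1}}{[\omega]^n}+\epsilon\big)[\omega]+(n-1)C_1(M)>0$ encodes the positivity of the ``residual'' form, and the constraint $\epsilon<\tfrac{n+1}{n}\alpha_M([\omega])$ is precisely what the $\alpha$-invariant argument needs. In either case one concludes that $J_{-Ric}$ is bounded from below.

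Chaining the reductions then yields the corollary: (1) or (2) $\Rightarrow$ $J_{-Ric}$ bounded below $\Rightarrow$ $K$ proper in $J$ $\Rightarrow$ $K$ proper in $d_1$ $\Rightarrow$ a cscK metric exists. I do not expect a substantial obstacle here, since the analytic weight of the argument sits in Theorem \ref{t1.2nn} and in the already established properness criteria. The one point to handle with care is the bookkeeping in the middle reduction: one must make sure the three notions in play, namely ``$J_{-Ric}$ bounded below'', ``$K$ proper in terms of $J$'', and ``$K$ proper in terms of $d_1$'', are correctly matched along the way, which is guaranteed respectively by \cite{SongBen040} and by the equivalence $J\sim d_1$ of \cite{Darvas1403}.
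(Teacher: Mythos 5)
Your global strategy is the same one the paper uses: invoke Theorem \ref{t1.2nn} (properness implies existence under $Aut_0(M,J)=0$) and then appeal to properness criteria from the literature --- indeed the paper proves the corollary by citing \cite{LSY13} for part (1) and \cite{Dervan142} for part (2), exactly the references you mention. However, the intermediate reduction you wedge in between --- that hypotheses (1) and (2) force $J_{-Ric}$ to be bounded below, and then you conclude by the Chen/Song--Weinkove observation --- is not correct, and it misrepresents how the $\alpha$-invariant actually enters.

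The Song--Weinkove criterion ($J_{-Ric}$ bounded below $\Rightarrow$ $K$ proper) is a genuine reduction, but it applies to the regime where $C_1(M)$ can be made nonpositive; it is essentially what the corollary gives when $\epsilon = 0$. Once $\epsilon>0$ in part (1), and in all of part (2), the conditions allow $C_1(M)$ to be positive in some or all directions, and then $J_{-Ric}$ is generally \emph{not} bounded below. Concretely, in case (2) write $\lambda=\tfrac{n}{n+1}\,\tfrac{C_1\cdot[\omega]^{n-1}}{[\omega]^n}$: the hypothesis $C_1\geq\lambda[\omega]$ lets you write $-Ric$ cohomologously as $-\lambda\omega_0-\chi$ with $\chi\geq 0$, hence up to bounded terms $J_{-Ric}=-\lambda J_{\omega_0}-J_\chi$. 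Since $J_{\omega_0}$ is proper (it diverges to $+\infty$, cf.\ \cite{CoSz}), the term $-\lambda J_{\omega_0}$ alone tends to $-\infty$, so $J_{-Ric}$ cannot be bounded below. What the Tian-type $\alpha$-invariant estimate does is lower-bound the \emph{entropy} part of the decomposition \eqref{1.2nn} by roughly $\alpha'\cdot J_{\omega_0}(\varphi)-C$ for $\alpha'$ close to $\alpha_M$; the condition $\alpha_M>\lambda$ (respectively $\epsilon<\tfrac{n+1}{n}\alpha_M$) is exactly what lets that positive contribution overpower the $-\lambda J_{\omega_0}$ coming out of $J_{-Ric}$. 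So the correct chain is: (1) or (2) $\Rightarrow$ $K$ proper directly via the entropy estimate plus a careful accounting of $J_{-Ric}$ (this is the content of \cite{LSY13} and \cite{Dervan142}) $\Rightarrow$ cscK by Theorem \ref{t1.2nn}. Your route through ``$J_{-Ric}$ bounded below'' would skip precisely the place where the $\alpha$-invariant is needed, and a reader following it could not reconstruct the argument in the $\epsilon>0$ or Fano-like regime.
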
 

Here part (i) of Corollary \ref{c1.5n} follows Theorem \ref{t1.2nn} and  Li-Shi-Yao \cite{LSY13} (c.f. Fang-Lai-Song-Weinkove \cite{FLSW14} Song-Weinkove \cite{SongBen13}), part (ii) of Corollary \ref{c1.5n} follows Theorem \ref{t1.2nn}
and R. Dervan \cite{Dervan142}.  

Following Donaldson's observation in \cite{Dona99}, if a K\"ahler surface $M$ admits no curve of negative self intersections and has $C_1(M)<0$, then the condition 
\[
   {{2 [\omega]\cdot [-C_1(M)]}\over [\omega]^2} \cdot [\omega]  -[-C_1(M)]> 0
\]
is satisfied automatically for any K\"ahler class $[\omega]$ (c.f.  Song-Weinkove \cite{SongBen040}). Consequently, on 
any K\"ahler surface $M$ with $C_1(M) < 0$  with no curve of negative self-intersection,    the $K$-energy is proper for any K\"ahler class (c.f. Song-Weinkove \cite{SongBen13}). It follows that on these surfaces, every K\"ahler class admits a cscK metric.  Albeit restrictive, this is indeed very close to the original vision of E. Calabi that every
K\"ahler class should have one canonical representative.
E. Calabi's vision has inspired generations of K\"ahler geometers to
work on this exciting problem and without it, this very paper will never exist.  To celebrate his vision, we propose to call such a manifold 
a {\it Calabi dream manifold}.
\begin{defn}  A K\"ahler manifold is called {\bf Calabi dream manifold} if every K\"ahler class on it admits an extremal K\"ahler metric.
\end{defn}

Clearly, all compact Riemann surfaces, complex projective spaces $\mathbb{C}\mathbb{P}^n$ and all compact Calabi-Yau manifolds \cite{Yau78} are {\it Calabi dream manifolds}. Our discussion above asserts
\begin{cor} Any K\"ahler surface with $C_1 < 0$ and no curve of negative self-intersection is a Calabi dream surface. 
\end{cor}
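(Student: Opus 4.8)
The plan is to reduce the statement, via the previous corollary, to a purely cohomological positivity fact on K\"ahler surfaces that is due to Donaldson. Fix an arbitrary K\"ahler class $[\omega]$ on our surface $M$; since a cscK metric is in particular extremal, it suffices to produce a cscK metric in $[\omega]$. I would apply Corollary \ref{c1.5n}(i) with the choice $\epsilon=0$. Two of its hypotheses then hold automatically: $0<\tfrac{3}{2}\alpha_M([\omega])$ because the $\alpha$-invariant of a K\"ahler class is always positive, and $\pi C_1(M)<0=\epsilon[\omega]$ because $C_1(M)<0$. Hence everything comes down to the remaining inequality, which for a surface ($n=2$, $\epsilon=0$) reads
\[
\Big(-2\,\frac{C_1(M)\cdot[\omega]}{[\omega]^2}\Big)[\omega]+C_1(M)>0 .
\]
Writing $L:=-C_1(M)$, which is a K\"ahler class since $C_1(M)<0$, and $c:=\dfrac{L\cdot[\omega]}{[\omega]^2}>0$, this is precisely the assertion that the class $D:=2c[\omega]-L$ is a K\"ahler class on $M$, which is exactly the observation of Donaldson recalled just before the statement.

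To verify that $D$ is K\"ahler I would invoke the Nakai--Moishezon criterion for compact K\"ahler surfaces (the classical one in the projective case, and its transcendental version of Buchdahl and Lamari in general; see \cite{Dona99}, \cite{SongBen040}). First, a direct computation using $L\cdot[\omega]=c[\omega]^2$ gives $D^2=4c^2[\omega]^2-4c(L\cdot[\omega])+L^2=L^2>0$ and $D\cdot[\omega]=c[\omega]^2>0$, so $D$ lies in the connected component $\mathcal P^+$ of the positive cone of $H^{1,1}(M,\mathbb R)$ that contains $[\omega]$. Next, let $C\subset M$ be any irreducible curve. By hypothesis $C^2\geq 0$, while $C\cdot[\omega]>0$ because $[\omega]$ is K\"ahler, so $C\in\overline{\mathcal P^+}\setminus\{0\}$. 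Since the intersection form on a surface is Lorentzian, any nonzero element of $\overline{\mathcal P^+}$ pairs strictly positively with every element of $\mathcal P^+$ (the pairing is $\geq 0$ by a limiting argument from interior classes, and it cannot vanish, since $D^2>0$ makes $D^\perp$ negative definite, which would force $C^2<0$). Hence $D\cdot C>0$ for every curve $C$, and the Nakai--Moishezon criterion gives that $D$ is K\"ahler; if $M$ carries no curve at all the conclusion is immediate from $D^2>0$ and $D\cdot[\omega]>0$.

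With $D>0$ established, Corollary \ref{c1.5n}(i) applies and produces a cscK metric in $[\omega]$, hence an extremal metric in $[\omega]$. As $[\omega]$ was arbitrary, $M$ is a Calabi dream surface.

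The only non-formal ingredient is the positivity of $D$, i.e.\ Donaldson's observation, whose proof is the Nakai--Moishezon argument sketched above; this is where both hypotheses ``$C_1(M)<0$'' and ``no curve of negative self-intersection'' are genuinely used. Everything else is bookkeeping --- checking that $\epsilon=0$ is an admissible choice in Corollary \ref{c1.5n}(i), and using the trivial implication cscK $\Rightarrow$ extremal --- so I expect no serious obstacle beyond correctly handling the boundary case $C^2=0$ and the case of a surface without curves, both of which are absorbed into the positive-cone argument above.
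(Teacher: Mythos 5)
Your argument is correct and follows essentially the same route the paper sketches immediately before the corollary: reduce to the cohomological positivity of $D=\dfrac{2[\omega]\cdot[-C_1]}{[\omega]^2}[\omega]-[-C_1]$ (Donaldson's observation), which plugs into the properness-implies-cscK machinery (packaged in Corollary~\ref{c1.5n}(i) with the admissible choice $\epsilon=0$; the paper instead cites Song--Weinkove directly for properness and then invokes Theorem~\ref{t1.2nn}). The only material difference is that you unfold the Nakai--Moishezon argument that the paper leaves to the references \cite{Dona99} and \cite{SongBen040}; your Lorentzian-cone computation ($D^2=L^2>0$, $D\cdot[\omega]>0$, and $D\cdot C>0$ from $C^2\geq 0$ and $C\cdot[\omega]>0$) is accurate and handles the boundary and curve-free cases correctly.
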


 It is fascinating to understand how large this family of Calabi dream surfaces is.  We will delay more discussions on Calabi dream manifolds to the end of Section 2.\\

The key technical theorem we prove is the following compactness theorem in the space of K\"ahler potentials:
\begin{thm}(Corollary \ref{c1.1})\label{t1.6}  The set of K\"ahler potentials(suitably normalized up to a constant) with bounded scalar curvature and entropy (or geodesic distance) is bounded in $W^{4,p}$ for any  $p<\infty$, hence precompact in $C^{3,\alpha}$ for any $0<\alpha<1$.
\end{thm}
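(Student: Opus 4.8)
The plan is to prove the compactness statement by establishing, in order, a priori bounds on the K\"ahler potential in successively stronger norms, starting from the normalization and the two given constraints (bounded scalar curvature $R_\varphi$ and bounded entropy $\int_M \log(\omega_\varphi^n/\omega_0^n)\,\omega_\varphi^n$, or equivalently bounded $d_1$-distance, which by the equivalence of entropy and distance in the range of interest one reduces to the entropy case). The scalar curvature equation is the coupled system
\begin{equation*}
\begin{cases}
\det(g_{i\bar j}+\varphi_{i\bar j}) = e^{F}\det(g_{i\bar j}),\\
\Delta_\varphi F = -R_\varphi + \mathrm{tr}_\varphi \mathrm{Ric}(\omega_0),
\end{cases}
\end{equation*}
so that bounding $R_\varphi$ and the entropy $\int_M F\,\omega_\varphi^n$ feeds directly into the a priori estimates of \cite{cc1}. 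First I would invoke the zeroth-order (uniform $C^0$) estimate: combining the entropy bound with Tian's $\alpha$-invariant / Moser-Trudinger type inequalities one gets $\|\varphi\|_{C^0}\le C$, and then a lower bound on the Green's function / the uniform bound $\|F\|_{C^0}\le C$ follows from the machinery of \cite{cc1}. The main work is then the second-order estimate: one shows $n+\Delta\varphi$ is bounded above (equivalently the metrics $\omega_\varphi$ are uniformly equivalent to $\omega_0$), using the auxiliary function introduced in \cite{cc1} that couples $\log\mathrm{tr}_{\omega_0}\omega_\varphi$ with $F$ and a large multiple of $\varphi$, applying the maximum principle, and absorbing the scalar curvature term which is now controlled by hypothesis.

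Once the metrics are uniformly equivalent, the equation $\Delta_\varphi F = -R_\varphi + \mathrm{tr}_\varphi\mathrm{Ric}(\omega_0)$ has a uniformly elliptic (in non-divergence form with bounded, but not yet H\"older, coefficients) right-hand side bounded in $L^\infty$; by the Calder\'on--Zygmund / $W^{2,p}$ estimates applied to $F$, combined with the complex Monge--Amp\`ere equation $\det(g_{i\bar j}+\varphi_{i\bar j})=e^F\det(g_{i\bar j})$, I would next bootstrap: $F\in W^{2,p}$ for all $p$ gives $F\in C^{1,\alpha}$, hence the right-hand side of the Monge--Amp\`ere equation is $C^{1,\alpha}$, and the complex version of the Evans--Krylov / Caffarelli $C^{2,\alpha}$ estimate for Monge--Amp\`ere, followed by Schauder, yields $\varphi\in C^{3,\alpha}$ with a uniform bound, and in fact $\varphi\in W^{4,p}$ for all $p<\infty$ by differentiating the equation once more and applying $L^p$ elliptic theory. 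The passage from the a priori $W^{4,p}$ bound to precompactness in $C^{3,\alpha}$ is then the standard Rellich--Kondrachov / Arzel\`a--Ascoli compact embedding $W^{4,p}\hookrightarrow C^{3,\alpha}$ for $p$ large, applied modulo the fixed normalization constant.

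The main obstacle, and the place where one genuinely needs the hard analysis of \cite{cc1} rather than classical arguments, is the second-order estimate under only an \emph{integral} (entropy) bound rather than an $L^\infty$ bound on $F$: classical Yau-type estimates require $\sup F$ and $\sup|\nabla F|$, which are not available here, so one must run the subtle maximum-principle argument of \cite{cc1} where the dangerous gradient terms in $|\nabla F|$ are controlled by a careful choice of test function and an integration by parts that only uses $\int F\,\omega_\varphi^n$. A secondary difficulty is the gradient (first-order) estimate for $\varphi$, which in \cite{cc1} is obtained not by a direct interpolation but via an independent argument (a Bochner-type inequality for $|\nabla\varphi|^2_{\omega_\varphi}$ together with the $C^0$ bound), and this must be in place before the $C^2$ estimate can close. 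Granting those two estimates from \cite{cc1} (and their twisted analogues, which is exactly the generalization the paper sets up), the remaining bootstrap to $W^{4,p}$ and the compact embedding into $C^{3,\alpha}$ are routine. I would therefore organize the proof as: (1) reduce the distance constraint to the entropy constraint; (2) quote the $C^0$ estimates for $\varphi$ and $F$; (3) quote the $C^1$ and $C^2$ estimates from \cite{cc1}; (4) bootstrap via $W^{2,p}$ for $F$, complex Evans--Krylov and Schauder for $\varphi$, then differentiate once more for $W^{4,p}$; (5) conclude precompactness in $C^{3,\alpha}$ by Sobolev embedding.
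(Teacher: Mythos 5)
Your overall architecture matches the paper's: normalize $\sup_M\varphi=0$, reduce to an entropy bound, invoke the generalized a priori estimates of Section~3 (the content of Theorem~3.1, which treats $\Delta_\varphi F=-f+\mathrm{tr}_\varphi\eta$ with $f$ merely bounded and $\eta$ a fixed form, so that taking $f=R_\varphi$, $\eta=\mathrm{Ric}(\omega_0)$ covers the "bounded scalar curvature'' hypothesis) to obtain $\|\varphi\|_{C^0}$, $\|F\|_{C^0}$ and the uniform two-sided bound $C^{-1}\omega_0\le\omega_\varphi\le C\omega_0$, and then bootstrap elliptic regularity between the two coupled equations. You also correctly identify where the genuine difficulty lies, namely the passage from an entropy bound to uniform equivalence of the metrics via the estimates of \cite{cc1}.

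There is, however, a gap in the order of your bootstrap. You propose to apply Calder\'on--Zygmund $W^{2,p}$ theory to $\Delta_\varphi F=-R_\varphi+\mathrm{tr}_\varphi\mathrm{Ric}$ at the stage where the operator $\Delta_\varphi$ has coefficients $g_\varphi^{i\bar j}$ that are merely bounded measurable (you acknowledge they are "not yet H\"older''). Calder\'on--Zygmund $W^{2,p}$ estimates for nondivergence-form operators require at least continuity (in fact VMO) of the leading coefficients; with only $L^\infty$ coefficients the best available is the Krylov--Safonov (equivalently, via the divergence form, De~Giorgi--Nash--Moser) H\"older estimate, which gives $F\in C^{\alpha'}$, not $F\in W^{2,p}$. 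Consequently, trying to reach $F\in W^{2,p}$ before Evans--Krylov cannot work. The paper's bootstrap runs in the opposite order: first $F\in C^{\alpha'}$ from Krylov--Safonov; then Evans--Krylov applied to $\det(g_{i\bar j}+\varphi_{i\bar j})=e^F\det g$ (whose right side is now H\"older) gives $\varphi\in C^{2,\alpha''}$; then the coefficients of the divergence form of the $F$-equation are H\"older, so $F\in C^{1,\alpha''}$ by \cite{GT}, Theorem~8.32; Schauder on the differentiated Monge--Amp\`ere equation gives $\varphi\in C^{3,\alpha''}$; a second pass upgrades $F\in C^{1,\alpha}$ and $\varphi\in C^{3,\alpha}$ for all $\alpha<1$; only then does the $L^p$ theory (coefficients now $C^{1,\alpha}$) give $F\in W^{2,p}$, and a final differentiation gives $\varphi\in W^{4,p}$. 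In short, $F\in W^{2,p}$ is the \emph{last} step, not the first, and arriving there requires threading through Krylov--Safonov, Evans--Krylov and Schauder in that order. Once the bootstrap is run correctly the rest of your argument is fine, and the precompactness in $C^{3,\alpha}$ from the $W^{4,p}$ bound is, as you say, the standard compact embedding.
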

This is an improvement from earlier work of first named author, Theorem 1.4 \cite{chen05},  where he additionally assumed a bound on Ricci curvature. More recently,  Chen-Darvas-He \cite{chd} proved that the set of K\"ahler potentials with uniform Ricci upper bound and $L^1$ geodesic distance bound is precompact in $C^{1,\alpha} $ for any $0<\alpha<1$ (indeed, the K\"ahler form is bounded from above).   As a corollary of Theorem 1.6, we prove \begin{thm}\label{t1.7new} The Calabi flow can be extended as long as the scalar curvature is uniformly bounded.
\end{thm}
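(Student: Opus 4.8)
The plan is to combine the short-time existence theory for the Calabi flow with the compactness statement of Theorem \ref{t1.6} (Corollary \ref{c1.1}). Recall that the Calabi flow $\partial_t\varphi = R_\varphi - \underline{R}$ is a fourth-order parabolic equation in the K\"ahler potential; by the work of Chen-He, given smooth initial data it has a unique smooth solution on some maximal time interval $[0,T)$, and the standard continuation principle says that if $T<\infty$ then some geometric quantity must blow up. What we want to prove is precisely that the quantity that controls extension is the sup norm of the scalar curvature: so assume for contradiction that $T<\infty$ but $\sup_{[0,T)\times M}|R_{\varphi(t)}| =: \Lambda <\infty$.

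First I would record the a priori facts that are automatic along Calabi flow: the $K$-energy is non-increasing in $t$ (the Calabi flow is the gradient flow of $K$ with respect to the $L^2$ metric), and the flow moves with finite $d_1$-speed, hence $d_1(\varphi(0),\varphi(t))$ stays bounded on $[0,T)$ because $T<\infty$ and the $L^1$-speed $\int_M |R_\varphi - \underline R|\,\omega_\varphi^n$ is controlled by $\Lambda$. In particular the potentials $\varphi(t)$, suitably normalized so that $I(\varphi(t))=0$, stay in a $d_1$-bounded set. Together with the scalar curvature bound $|R_{\varphi(t)}|\le\Lambda$, this is exactly the hypothesis of Theorem \ref{t1.6}: the family $\{\varphi(t): t\in[0,T)\}$ is bounded in $W^{4,p}$ for every $p<\infty$, hence precompact in $C^{3,\alpha}$ for every $\alpha\in(0,1)$.

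Next, I would upgrade this uniform-in-time spatial regularity to genuine convergence of $\varphi(t)$ as $t\to T^-$ in $C^{3,\alpha'}$. The $C^{3,\alpha}$ (indeed $W^{4,p}$) bound makes the Calabi flow uniformly parabolic with coefficients bounded in the appropriate H\"older/Sobolev spaces on $[0,T)$, so parabolic Schauder (or $L^p$) estimates bootstrap to uniform bounds on all space-time derivatives on $[\tau,T)\times M$ for any $\tau>0$; this gives equicontinuity in $t$ and hence a limit $\varphi(T)\in C^\infty$ with $\omega_{\varphi(T)}>0$ (positivity is preserved because the metrics are uniformly equivalent, the volume form being pinned between two positive constants by the entropy/$C^{3,\alpha}$ bound). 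Finally, restarting the Calabi flow from the smooth Kähler potential $\varphi(T)$ gives a smooth solution on $[T,T+\delta)$ for some $\delta>0$, which glues with the original solution to contradict maximality of $T$.

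The main obstacle is the step from a uniform-in-$t$ spatial bound to time-regularity and an actual limit as $t\to T^-$: one must make sure the parabolic smoothing estimates are genuinely uniform up to time $T$, which requires that the lower-order geometric quantities entering the coefficients (the metric, its inverse, the curvature) are controlled uniformly on the closed-in-time interval, not just on compact subsets of $[0,T)$ — and here it is essential that Theorem \ref{t1.6} delivers a $W^{4,p}$ bound that is \emph{independent of $t$}, driven only by $\Lambda$ and the $d_1$-diameter. A secondary technical point is to verify that the normalization used to apply Theorem \ref{t1.6} (quotienting by additive constants, or fixing $I=0$) is compatible with the Calabi flow, which only changes $\varphi$ by a genuine evolution and not by a constant, so that reabsorbing the normalization constant is harmless; and that the entropy $\int_M \log(\omega_\varphi^n/\omega_0^n)\,\omega_\varphi^n$ stays bounded, which follows from the $K$-energy monotonicity together with the decomposition formula (\ref{1.2nn}) and the $d_1$-boundedness controlling $J_{-Ric}$.
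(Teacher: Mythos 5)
Your proof is correct and follows essentially the same strategy the paper uses: derive a uniform entropy bound from $K$-energy monotonicity together with the decomposition (\ref{1.2nn}), invoke Theorem \ref{t1.6}/Corollary \ref{c1.1} to obtain uniform $W^{4,p}$ (hence $C^{3,\alpha}$) control and a two-sided metric bound on $[0,T)$, and then extend by short-time existence. Two small points where you diverge from the paper. First, to bound $J_{-Ric}(\varphi(t))$ you pass through $d_1$-boundedness of the flow; the paper observes more directly that $|\partial_t\varphi|=|R_\varphi-\underline R|\leq\Lambda+|\underline R|$ keeps $\|\varphi(t)\|_{L^\infty}$ uniformly bounded on $[0,T]$ by integrating in time, which already controls $J_{-Ric}$ via the explicit formula — both are fine. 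Second, and more substantively, the obstacle you flagged yourself (upgrading uniform-in-$t$ spatial bounds to time-regularity and a limit $\varphi(T)$) is precisely what the paper sidesteps by citing the Chen--He short-time existence theorem (Theorem 3.2 of \cite{chenhe05}), which says the lifespan of the Calabi flow is bounded below in terms of the $C^{3,\alpha}$ norm of the initial potential and a positive lower bound for the initial metric; since Theorem \ref{t1.6} supplies both bounds uniformly on $[0,T)$, the lifespan starting from any $t_0<T$ is bounded below by a fixed $\delta>0$, and the flow extends past $T$ with no need to first construct a limit $\varphi(T)$. Your parabolic Schauder bootstrap route is also workable, but it is extra work that the cited lifespan estimate makes unnecessary.
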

\begin{rem} This is a surprising development.  With completely different motivations in geometry, the first named author has a similar conjecture on Ricci flow which states that the only obstruction to the long time existence of 
Ricci flow is the $L^\infty$ bound of scalar curvature.  There has been significant progress in this problem, first by a series of works of B. Wang (c.f. \cite{wang12}, \cite{cw})  and more recently 
by the interesting and important work of Balmer-Zhang \cite{BZ17} and M. Simons \cite{MS15} in dimension 4.
\end{rem}
Theorem \ref{t1.7new} is a direct consequence of Theorem \ref{t1.6} and Chen-He short time existence theorem (c.f. Theorem 3.2 in  \cite{chenhe05}), where the authors proved the life span of the short time  solution depends only on $C^{3,\alpha}$ norm of the initial K\"ahler potential and lower bound of the initial metric. By assumption, we know that $\partial_t\varphi$ remains uniformly bounded, hence $\varphi$ is bounded on every finite time interval.
On the other hand, since $K$-energy is decreasing along the flow, in particular $K$-energy is bounded from above along the flow.
Due to (\ref{1.2nn}) and that $\varphi$ is bounded, we see that the entropy is bounded as well.
Hence the flow remains in a precompact subset of $C^{3,\alpha}(M)$ on every finite time interval, hence can be extended.\\

In light of Theorem 1.7 and a compactness theorem  of  Chen-Darvas-He \cite{chd}, a natural question is if one can extend the Calabi flow assuming only an upper bound  on Ricci curvature. A more difficult question is whether one-sided bound of the scalar curvature is sufficient for the extension of Calabi flow. Ultimately,  the remaining fundamental question is 
\begin{conj} (Calabi, Chen) Initiating from any smooth K\"ahler potential, the Calabi flow always exists globally.
\end{conj}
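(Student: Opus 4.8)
The plan is to reduce the conjecture, via Theorem \ref{t1.7new}, to a single a priori estimate: that along the Calabi flow $\partial_t\varphi_t = R_{\varphi_t}-\underline R$ starting from a smooth K\"ahler potential, $\|R_{\varphi_t}\|_{L^\infty(M)}$ stays finite on every finite time interval. Suppose then $T<\infty$ were the maximal existence time; by Theorem \ref{t1.7new} necessarily $\limsup_{t\to T}\|R_{\varphi_t}\|_{L^\infty}=\infty$. The first step is a parabolic blow-up: choose $t_j\uparrow T$ and $x_j\in M$ with $\lambda_j^2:=\sup_{M\times[0,t_j]}|R|=|R_{\varphi_{t_j}}(x_j)|\to\infty$, and rescale $g_j(s):=\lambda_j^2\,\omega_{\varphi(t_j+\lambda_j^{-4}s)}$, time being dilated by $\lambda_j^4$ since the Calabi flow is fourth order. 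By construction $|R_{g_j}|\le 1$ on the (expanding) time interval, with $|R_{g_j}|=1$ at $(x_j,0)$.

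The second step extracts a limit. One first notes that along $[0,T)$ the $K$-energy stays bounded: it is nonincreasing, and its time derivative $-\int(R-\underline R)^2$ is bounded below by (minus) the Calabi energy $\mathrm{Ca}(\varphi)=\int_M R_\varphi^2\,\omega_\varphi^n$, which is itself nonincreasing along the flow. Given in addition a uniform Sobolev inequality --- available whenever the flow does not collapse, e.g.\ when the $K$-energy is bounded below so that the entropy in (\ref{1.2nn}) is controlled --- the bound $|R_{g_j}|\le 1$ should bootstrap, through interior higher-order estimates for the Calabi flow in the spirit of Chen--He \cite{chenhe05} and the estimates behind Theorem \ref{t1.6}, to uniform $C^\infty_{\mathrm{loc}}$ control of the $g_j$ on balls of fixed size. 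Passing to a pointed Cheeger--Gromov limit produces an eternal solution on a complete, in general noncompact, K\"ahler manifold $(M_\infty,g_\infty)$ with $|R_{g_\infty}|\le 1$ and $|R_{g_\infty}|=1$ somewhere; and since $\mathrm{Ca}$ converges as $t\to T$, the limiting Calabi energy is constant in $s$, so $g_\infty$ is a steady extremal K\"ahler metric on $M_\infty$.

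The third step is to contradict the existence of such a bubble. Under $g\mapsto\lambda^2 g$ the Calabi energy scales like $\lambda^{2n-4}$ in complex dimension $n$, so it is scale invariant exactly when $n=2$. In that case energy quantizes, only finitely many bubbles can form, and each is a nontrivial extremal ALE K\"ahler surface of finite energy; one then rules these out using geometric constraints on $M$ (absence of curves of negative self-intersection, or the stability input behind Theorems \ref{t1.1new} and \ref{t1.2nn}, which obstruct the resolution geometry the candidate bubbles carry). In complex dimension $\geq 3$, where the Calabi energy is supercritical, this must be replaced by an $\varepsilon$-regularity statement: small Calabi energy on a parabolic cylinder forces an interior curvature bound --- or, equivalently, one needs a genuinely scale-invariant monotone quantity along the Calabi flow.

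\textbf{The main obstacle} is exactly this last point. The Calabi flow is fourth order and has no maximum principle, so there is no direct pointwise control of $R$; the only robust monotone quantity, the $L^2$ Calabi energy, sits at the critical scaling in complex dimension $2$ and past it in higher dimension, and establishing the $\varepsilon$-regularity --- equivalently, ruling out curvature concentration without a parabolic comparison principle --- is precisely where the conjecture is open. A realistic intermediate target, consistent with the results of this paper, is global existence under the extra assumptions that the $K$-energy is proper (Theorem \ref{t1.2nn}) and the Ricci curvature is bounded above (cf.\ Chen--Darvas--He \cite{chd}), which together confine the flow and reduce matters to the bubbling analysis above on a negligible set of times.
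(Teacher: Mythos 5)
The statement you were asked to prove is a conjecture, attributed to Calabi and Chen, and the paper supplies no proof of it. The paper explicitly treats it as open: immediately after stating the conjecture, the authors observe that the \emph{weak} Calabi flow exists globally (Street, Berman--Darvas--Lu) and suggest that ``perhaps one can prove this conjecture via improving regularity of weak Calabi flow," and they single out Li--Wang--Zheng as a deep partial result. The paper's positive contribution in this direction is Theorem \ref{t1.7new}, the extension criterion under an $L^\infty$ bound on scalar curvature, which you correctly take as your starting point. So there is no paper proof to compare against; what you have written is a research sketch, and you are right to flag it as such.

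Given that, your assessment of where the difficulty lies is essentially correct: the flow is fourth order, so no maximum principle is available, and the only robustly monotone quantity (the Calabi energy $\int R^2\,\omega^n$) is scale-invariant exactly in complex dimension $2$ and unfavorable for $n\ge 3$, so the blow-up argument runs into exactly the $\varepsilon$-regularity wall you describe. Two smaller points in the sketch are not quite right, though. First, the inference that $K$-energy bounded $\Rightarrow$ entropy controlled via the decomposition $K=\int\log(\omega_\varphi^n/\omega_0^n)\omega_\varphi^n+J_{-Ric}$ requires a separate bound on $J_{-Ric}(\varphi_t)$; in the paper's proof of Theorem \ref{t1.7new} this is obtained \emph{because} $\partial_t\varphi=R-\underline R$ is assumed bounded, which keeps $\|\varphi_t\|_{L^\infty}$ under control on finite time intervals --- precisely the hypothesis you cannot use when trying to rule out scalar-curvature blow-up. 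Second, uniform noncollapsing and the passage to a smooth pointed Cheeger--Gromov limit require injectivity radius control and interior higher-order parabolic estimates for the rescaled flows; neither is established in the paper for unbounded scalar curvature, and Chen--He's short-time estimate (Theorem 3.2 of \cite{chenhe05}) is used only through a $C^{3,\alpha}$ initial bound, which is exactly what breaks down here. These are not defects of your write-up so much as an honest map of the open territory; just be careful not to present the $K$-energy--to--entropy step as available in the generality the conjecture demands.
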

Given the recent work by J. Street\cite{Street12}, Berman-Darvas-Lu\cite{Darvas1602}, the weak Calabi flow always exists globally. Perhaps one can prove this conjecture via improving regularity of weak Calabi flow. On the other hand, one may hope to prove this conjecture on K\"ahler classes which already admit constant scalar curvature K\"ahler metrics and prove the flow
will converges to such a metric as $t\rightarrow\infty$. An important and deep result in this direction is Li-Wang-Zheng's work \cite{LWZ15}.\\

Finally  we explain the organization of the paper:

In section 2, we recall the necessary preliminaries needed for our proof, including the continuity path we will use to solve the cscK equation and the theory of geodesic metric spaces established by Darvas and others.

In section 3, we generalize our previous estimates in \cite{cc1} on cscK equation to more general type of equations, so that we can apply these estimates to twisted cscK equation and Calabi flow.

In section 4, we prove the equivalence between the existence of cscK metric and properness of $K$-energy, namely Theorem \ref{t1.2nn}.

In section 5, we prove that a minimizer of $K$-energy over the space $\mathcal{E}^1$ is smooth. More general twisted $K$-energy is also considered and we show its minimizer is smooth as long as the twisting form is nonnegative, closed and smooth.

In section 6, we show that the existence of cscK metric is equivalent to geodesic stability, In particular, we verify the Donaldson's conjecture, Theorem \ref{t1.1new}.\\

\noindent{\bf Acknowledgement.}  Both authors are grateful to the help from the first named author's colleague Professor Jason Starr in the discussions about {\it Calabi dream manifolds}.

\section{preliminaries}
In this section, we will review some basic concepts in K\"ahler geometry as well as some fundamental results involving finite energy currents,
which will be needed for our proof of Theorem 1.1 and 1.3. In particular, it includes the characterization of the space $(\mathcal{E}^1,d_1)$, a compactness result on bounded subsets of $\mathcal{E}^1$ with finite entropy. We also include results on the convexity of $K$-energy along $C^{1,1}$ geodesics as well as its extension to the space $\mathcal{E}^1$. 
For more detailed account on these topics, we refer to a recent survey paper by Demailly \cite{demailly}.  At the end of this section, we will discuss about Calabi dream manifolds.

\subsection{$K$-energy and twisted $K$-energy}
Let $(M,\omega_0)$ be a fixed K\"ahler class on $M$.
Then we can define the space $\mathcal{H}$ of K\"ahler metrics cohomologous to $\omega_0$ as:
\begin{equation}
\mathcal{H}=\{\varphi\in C^2(M):\omega_{\varphi}:=\omega_0+\sqrt{-1}\partial\bar{\partial}\varphi>0\}.
\end{equation}
We can introduce the $K$-energy in terms of its derivative:
\begin{equation}
\frac{dK}{dt}(\varphi)= - \int_M {{\partial \varphi} \over {\partial t}} (R_{\varphi} - \underline{R})\frac{\omega_{\varphi}^n}{n!},\textrm{ $\varphi\in\mathcal{H}$.}
\end{equation}
Here $R_{\varphi}$ is the scalar curvature of $\omega_{\varphi}$, and 
$$\underline{R} = {{[C_1(M)] \cdot [\omega]^{[n-1]}} \over {[\omega]^{[n]}}} = {{\int_M R_\varphi \omega_\varphi^n}\over {\int_M \omega^n}}.\;$$ 
Following \cite{chen00}, we can write down an explicit formula for $K(\varphi)$:
\begin{equation}\label{K}
K(\varphi)=\int_M\log\bigg(\frac{\omega_{\varphi}^n}{\omega_0^n}\bigg)\frac{\omega_{\varphi^n}}{n!}+J_{-Ric}(\varphi),
\end{equation}
where for a $(1,1)$ form $\chi$, we define
\begin{equation}\label{J-chi}
\begin{split}
J_{\chi}&(\varphi)=\int_0^1\int_M\varphi\bigg(\chi\wedge\frac{\omega_{\lambda\varphi}^{n-1}}{(n-1)!}-\underline{\chi}\frac{\omega_{\lambda\varphi}^n}{n!}\bigg)d\lambda\\
&\qquad \qquad =\frac{1}{n!}\int_M\varphi\sum_{k=0}^{n-1}\chi\wedge\omega_0^k\wedge\omega_{\varphi}^{n-1-k}-\frac{1}{(n+1)!}\int_M\underline{\chi}\varphi\sum_{k=0}^n\omega_0^k\wedge\omega_{\varphi}^{n-k}.
\end{split}
\end{equation}
Here $$\underline{\chi}=\frac{\int_M\chi\wedge\frac{\omega_0^{n-1}}{(n-1)!}}{\int_M\frac{\omega_0^n}{n!}}.$$ Following formula (\ref{1.2new}), we have 
 $$\frac{dJ_{\chi}}{dt}=\int_M\partial_t\varphi(tr_{\varphi}\chi-\underline{\chi})\frac{\omega_{\varphi}^n}{n!}.$$
It is well-known that $K$-energy is convex along smooth geodesics in the space of K\"ahler potentials. \\

Let $\beta\geq0$ be a smooth closed $(1,1)$ form, we define a ``{\it twisted  $K$-energy with respect to $\beta$}" by
\begin{equation}\label{K-beta}
K_{\beta}(\varphi)=K(\varphi)+J_{\beta}(\varphi).
\end{equation}

The critical points of $K_{\beta}(\varphi)$ satisfy the following equations:

\begin{equation}\label{2.6n}
R_{\varphi}-\underline{R}=tr_{\varphi}\beta-\underline{\beta}, \;\;\;{\rm where}\;\; \underline{\beta}=\frac{\int_M\beta\wedge\frac{\omega_0^{n-1}}{(n-1)!}}{\int_M\frac{\omega_0^n}{n!}}.
\end{equation}
For later use, we also define the functionals $I(\varphi),  J(\varphi) $, given by
\begin{equation}\label{IJ}
I(\varphi)=\frac{1}{(n+1)!}\int_M\varphi\sum_{k=0}^n\omega_0^k\wedge\omega_{\varphi}^{n-k},  \qquad J(\varphi)=\int_M\varphi(\omega_0^n-\omega_{\varphi}^n).
\end{equation}

We also need to consider the more general twisted $K$-energy, 
which is defined to be
\begin{equation}\label{2.10}
K_{\chi,t}=t K+(1-t)J_{\chi}.
\end{equation}
Following \cite{chen00}, we can write down Euler-Lagrange equation for twisted $K$-energy:
\begin{equation}\label{2.12}
t(R_{\varphi}-\underline{R})=(1-t)(tr_{\varphi}\chi-\underline{\chi}),\,\,t\in[0,1].
\end{equation}
Following \cite{chen15}, for $t>0$, we can rewrite this into two coupled equations:
\begin{align}
\label{twisted-1}
&\det(g_{i\bar{j}}+\varphi_{i\bar{j}})=e^F\det g_{i\bar{j}},\\
\label{twisted-2}
&\Delta_{\varphi}F=-(\underline{R}-\frac{1-t}{t}\underline{\chi})+tr_{\varphi}(Ric-\frac{1-t}{t}\chi).
\end{align}
In the following, we will assume $\chi>0$, that is, $\chi$ is a K\"ahler form. 
The equation
(\ref{2.12}) with $t \in [0,1]$ is the continuity path proposed in \cite{chen15} to solve the cscK equation.
More generally, one can consider similar twisted paths in order to solve (\ref{2.6n}). Namely we consider
\begin{equation}\label{2.13nn}
t(R_{\varphi}-\underline{R})=t(tr_{\varphi}\beta-\underline{\beta})+(1-t)(tr_{\varphi}\chi-\underline{\chi}).
\end{equation}
The solution to (\ref{2.13nn}) is a critical point of $tK_{\beta}+(1-t)J_{\chi}$. We will see later that it is actually a minimizer. 
For $t>0$, this again can be equivalently put as
\begin{align}
\label{g-twisted1}
&\det(g_{i\bar{j}}+\varphi_{i\bar{j}})=e^F\det g_{i\bar{j}},\\
\label{g-twisted2}
&\Delta_{\varphi}F=-(\underline{R}-\underline{\beta}-\frac{1-t}{t}\underline{\chi})+tr_{\varphi}\big(Ric-\beta-\frac{1-t}{t}\chi\big).
\end{align}
An important question  is whether the set of $t$ for which (\ref{2.13nn}) can be solved is open.
The cited result is only for (\ref{2.12}), but the same argument would work for (\ref{2.13nn}).
\begin{lem}\label{l2.2}  (\cite{chen15}, \cite{zeng}, \cite{Hashi}):
Suppose for some $0\leq t_0<1$, (\ref{2.13nn}) 
has a solution $\varphi\in C^{4,\alpha}(M)$ with $t=t_0$, then for some $\delta>0$, (\ref{2.13nn}) has a solution in $C^{4,\alpha}$ for any $t\in (t_0-\delta,t_0+\delta) \bigcap [0, 1)$.
\end{lem}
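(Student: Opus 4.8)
### Proof proposal for Lemma~\ref{l2.2} (openness of the twisted path)

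The plan is to linearize the coupled system \eqref{g-twisted1}--\eqref{g-twisted2} at the given solution $\varphi_{t_0}$ and apply the implicit function theorem on suitable H\"older spaces, exactly as in the cited openness argument for \eqref{2.12}, noting that the extra twisting term $t(tr_\varphi\beta-\underline\beta)$ only modifies the zeroth-order data and not the structure of the linearization. Concretely, I would set up the nonlinear map $\Phi(\varphi,t)$ whose vanishing is equivalent to \eqref{2.13nn}: working with the pair $(\varphi,F)$, the first equation \eqref{g-twisted1} defines $F$ as a function of $\varphi$ (a Monge--Amp\`ere relation), and substituting into \eqref{g-twisted2} yields a single fourth-order fully nonlinear elliptic equation for $\varphi$, with $t$ entering analytically. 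The linearization of the fourth-order operator at $\varphi_{t_0}$ has the form $\mathbb{L}\psi = \Delta_\varphi^2\psi + (\text{lower order})$, the Lichnerowicz-type operator of the twisted problem: its leading part is $\mathcal{D}^*\mathcal{D}$, where $\mathcal{D}$ is the $\bar\partial$-operator on $(1,0)$-vector fields, plus the positive contributions coming from $\chi>0$ (and $\beta\geq 0$).

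The key analytic input is that $\mathbb{L}\colon C^{4,\alpha}(M)\to C^{0,\alpha}(M)$ is an isomorphism onto the appropriate codimension-one (or, after normalizing by the constant functions, a genuine isomorphism) subspace. Self-adjointness of $\mathbb{L}$ with respect to $\omega_\varphi^n$ reduces this to showing the kernel is trivial modulo constants. An element of the kernel gives a holomorphic vector field (from $\mathcal{D}\psi=0$) whose associated potential pairs to zero against the strictly positive terms from $\chi$; since $Aut_0(M,J)=0$ by the standing assumption of the paper, there are no nonzero holomorphic vector fields, so $\ker\mathbb{L}=\mathbb{R}$ (constants), and when $t_0>0$ the term $\tfrac{1-t_0}{t_0}\chi$ is present with positive sign, forcing even the constants' contribution to vanish only trivially --- at $t_0=0$ one instead uses that $\chi>0$ directly makes the relevant bilinear form positive definite modulo constants, so $\mathbb{L}$ is invertible there too. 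Once surjectivity and injectivity (on the normalized spaces, say $\{\psi: \int_M\psi\,\omega_\varphi^n=0\}$) are in hand, the implicit function theorem in Banach spaces produces a $C^{4,\alpha}$ family of solutions $\varphi_t$ for $t$ in a neighborhood of $t_0$; intersecting with $[0,1)$ gives the stated conclusion, with elliptic Schauder bootstrapping confirming $\varphi_t\in C^{4,\alpha}$.

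The main obstacle --- and the only place the argument is not completely formal --- is verifying the invertibility of $\mathbb{L}$ uniformly in the structure of the twisted operator, i.e. checking that the positivity coming from $\chi$ (and $\beta$) enters the Lichnerowicz operator with the correct sign and genuinely rules out kernel beyond constants, including the borderline behavior as $t_0\to 0$ where the coefficient $\tfrac{1-t}{t}$ blows up but is compensated because one rescales the equation (multiplying through by $t$) so that the relevant operator is $t\,\mathbb{L}_{\mathrm{cscK}} + (1-t)\mathbb{L}_\chi$ with $\mathbb{L}_\chi$ positive. I would handle this by writing the second variation of the twisted $K$-energy $tK_\beta + (1-t)J_\chi$ and observing it is nonnegative with kernel exactly the holomorphic-vector-field directions, which are absent; this is the standard reason the continuity path is open, and since the excerpt explicitly grants that ``the same argument would work for \eqref{2.13nn}'', the write-up can legitimately cite \cite{chen15}, \cite{zeng}, \cite{Hashi} for the details and confine itself to pointing out that the twisting term $t\beta$ is harmless because $\beta\geq 0$ is closed and smooth, hence contributes a nonnegative zeroth-order perturbation that does not affect ellipticity or the kernel computation.
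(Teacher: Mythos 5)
The paper does not prove Lemma~\ref{l2.2} at all---it is cited from \cite{chen15}, \cite{zeng}, \cite{Hashi}---so there is no internal proof to compare against. That said, your sketch of the openness argument is worth scrutinizing on its own terms.

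For $t_0>0$ your outline is essentially the right one: linearize the coupled system (equivalently the fourth-order scalar equation obtained by eliminating $F$), observe that the linearization is self-adjoint with respect to $\omega_{\varphi_{t_0}}^n$, compute that the associated quadratic form is $t_0\int|\mathcal{D}\psi|^2_\varphi\,\omega_\varphi^n$ plus a nonnegative contribution from $\beta\geq 0$ plus a \emph{strictly} positive-definite contribution from $(1-t_0)\chi$ with $\chi>0$, and conclude the kernel is just constants, so the implicit function theorem applies. However, two remarks. First, invoking $Aut_0(M,J)=0$ is both unnecessary and misleading here: the whole point of the twisted path in \cite{chen15} is that the $\chi>0$ term alone forces the kernel to be trivial modulo constants, \emph{independently} of the automorphism group. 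Leaning on $Aut_0(M,J)=0$ obscures why the continuity method is open for all $t\in(0,1)$ in general, which is precisely what makes the path usable for existence theory.

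Second, and more seriously, your treatment of the endpoint $t_0=0$ has a genuine gap. At $t_0=0$ the equation \eqref{2.13nn} degenerates to the second-order $J$-equation $tr_\varphi\chi=\underline{\chi}$, so the ``linearized operator $\mathbb{L}$'' is only second order; meanwhile for any $t>0$ the equation is fourth order. The naive implicit function theorem between fixed H\"older spaces $C^{4,\alpha}\to C^{0,\alpha}$ does not apply across this change of elliptic order: the inverse of $t\,\mathbb{L}_{\mathrm{cscK}}+(1-t)\mathbb{L}_{\chi}$, viewed as a map $C^{0,\alpha}\to C^{4,\alpha}$, does \emph{not} stay bounded as $t\to 0^+$ (you only get uniform control in lower norms). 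Saying ``$\chi>0$ makes the bilinear form positive definite, so $\mathbb{L}$ is invertible there too'' glosses over exactly this singular-perturbation issue. This borderline case is the main content of \cite{Hashi} (``large twist'') and a chunk of \cite{zeng}: one must quantify how the inverse of the linearization degenerates as $t\to 0$ and balance it against the size of the nonlinear remainder, or work in $t$-weighted function spaces. Your write-up should either restrict the openness claim to $t_0>0$ (which suffices for the continuity method once one knows $t=0$ is solvable and openness at $t=0$ separately) or explicitly flag that $t_0=0$ requires a different, singular-perturbation argument and is the nontrivial content of the cited references, rather than folding it into the same implicit-function-theorem sentence.
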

We observe that we can always make sure (\ref{2.12}) or (\ref{2.13nn}) can be solved for $t=0$ by choosing $\chi=\omega_0$ or any K\"ahler form in $[\omega_0].\;$
\begin{rem}\label{r2.3}
Clearly if $\chi$ is smooth, it is easy to see by bootstrap that a $C^{4,\alpha}$ solution to (\ref{2.12}) is actually smooth.
Hence Lemma \ref{l2.2} shows the set of $t$ for which (\ref{2.12}) has a smooth solution is relatively open in $[0,1)$.
\end{rem}

\subsection{The complete geodesic metric space $(\mathcal{E}^p,d_p)$}    
Following Mabuchi, T. Darvas  \cite{Darvas1402} introduced the notion of  $d_1$ on $\mathcal{H}$.
\begin{equation}
||\xi||_{\varphi}=\int_M|\xi|\frac{\omega_{\varphi}^n}{n!},\forall \;\textrm{  $\xi\in T_{\varphi}\mathcal{H}=C^{\infty}(M)$.}
\end{equation}
Using this, we can define the path-length distance $d_1$ on the space $\mathcal{H}$, i.e. $d_1(u_0,u_1)$ equals the infimum of length of all smooth curves in $\mathcal H$,
with $\alpha(0)=u_0$, $\alpha(1)=u_1$.  Following  Chen \cite{chen991}, T. Darvas proved (\cite{Darvas1402}, Theorem 2) that
$(\mathcal{H},d_1)$ is a metric space. \\

In section 3.3 of \cite{GZ07} introduced the following space for any $p\geq 1$:
\begin{equation}
\mathcal{E}^p=\{\varphi\in PSH(M,\omega_0):\int_M\omega_{\varphi}^n=\int_M\omega_0^n,\,\,\int_M|\varphi|^p\omega_{\varphi}^n<\infty\}.
\end{equation}
 A fundamental conjecture of  V. Guedj \cite{Guedj14} stated that the completion of the space 
${\mathcal H}$  of smooth potentials equipped with the $L^2$ metric is precisely the space $ {\mathcal E}^2 (M, \omega_0)$ of potentials of finite  energy.  
This has been shown by Darvas  \cite{Darvas1402}, \cite{Darvas1403}, in which he has shown similar characterization holds for general $L^p$ metric.   Note that the extension to the $L^1$ metric is essential and fundamental to our work.  We have the following characterization for $(\mathcal{E}^1,d_1)$:
\begin{thm}\label{t2.3}
(\cite{Darvas1402}, Theorem 5.5)Define
$$
I_1(u,v)=\int_M|u-v|\frac{\omega_{u}^n}{n!}+\int_M|u-v|\frac{\omega_{v}^n}{n!},\textrm{ $u,v\in\mathcal{H}$.}
$$
Then there exists a constant $C>0$ depending only on $n$, such that 
\begin{equation}
\frac{1}{C}I_1(u,v)\leq d_1(u,v)\leq C I_1(u,v),\textrm{ for any $u,v\in\mathcal{H}$.}
\end{equation}
\end{thm}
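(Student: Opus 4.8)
The plan is to prove the two inequalities of Theorem~\ref{t2.3} separately, in each case comparing $d_1$ with the Monge--Amp\`ere (Aubin) energy $I$ of (\ref{IJ}); recall that $I$ has first variation $\frac{d}{dt}I(\varphi_t)=\frac{1}{n!}\int_M\dot\varphi_t\,\omega_{\varphi_t}^{n}$. Two elementary facts will be used throughout. Evaluating this variation along the affine path $\varphi_s=(1-s)u+sv\in\mathcal H$ (admissible, since $\omega_{\varphi_s}=(1-s)\omega_u+s\omega_v>0$) and integrating the resulting Beta--function coefficients yields the cocycle identity
\[
I(v)-I(u)=\frac{1}{(n+1)!}\sum_{k=0}^{n}\int_M(v-u)\,\omega_u^{k}\wedge\omega_v^{n-k},\qquad u,v\in\mathcal H .
\]
And since $\bigl|\tfrac{d}{dt}I(\varphi_t)\bigr|\le\|\dot\varphi_t\|_{\varphi_t}$, minimizing length over smooth curves gives $|I(u)-I(v)|\le d_1(u,v)$ for all $u,v\in\mathcal H$.

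First I would treat \emph{ordered pairs}, $u\le v$. The $d_1$-length of the affine path above is precisely $\frac{1}{(n+1)!}\sum_k\int_M(v-u)\,\omega_u^{k}\wedge\omega_v^{n-k}$, which equals $I(v)-I(u)$ by the cocycle identity; combining with $d_1(u,v)\ge I(v)-I(u)$ gives the exact value $d_1(u,v)=I(v)-I(u)$. Every summand is nonnegative, so keeping the $k=0$ and $k=n$ terms yields $d_1(u,v)\ge\frac{1}{n+1}I_1(u,v)$. On the other hand the integration by parts
\[
\int_M(v-u)\,\omega_u^{k-1}\wedge\omega_v^{n-k+1}-\int_M(v-u)\,\omega_u^{k}\wedge\omega_v^{n-k}=-\int_M\sqrt{-1}\,\partial(v-u)\wedge\bar\partial(v-u)\wedge\omega_u^{k-1}\wedge\omega_v^{n-k}\le 0
\]
shows each summand is $\le\int_M(v-u)\,\omega_u^{n}$, whence $d_1(u,v)\le\frac{1}{n!}\int_M(v-u)\,\omega_u^{n}\le I_1(u,v)$. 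Thus the statement holds with $C=n+1$ for ordered pairs.

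For \emph{general pairs} I would reduce to the ordered case by means of the rooftop envelope $P:=P(u,v)=\bigl(\sup\{w\in PSH(M,\omega_0):w\le u,\ w\le v\}\bigr)^{*}$, which lies in $\mathcal E^1$, satisfies $P\le\min(u,v)$, and carries the orthogonality relation $\omega_P^{n}\le\mathbf 1_{\{P=u\}}\,\omega_u^{n}+\mathbf 1_{\{P=v\}}\,\omega_v^{n}$. For the upper bound, the triangle inequality and the ordered case give $d_1(u,v)\le d_1(u,P)+d_1(P,v)=(I(u)-I(P))+(I(v)-I(P))$; the same monotonicity-by-parts as above bounds $I(u)-I(P)$ and $I(v)-I(P)$ by $\frac1{n!}\int_M(u-P)\,\omega_P^{n}$ and $\frac1{n!}\int_M(v-P)\,\omega_P^{n}$ respectively, and then the orthogonality relation together with the pointwise identity $(u-P)+(v-P)=|u-v|$ on the contact set $\{P=u\}\cup\{P=v\}$ yields $d_1(u,v)\le I_1(u,v)$. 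For the lower bound, the pointwise inequality $|u-v|\le(u-P)+(v-P)$ reduces $I_1(u,v)$ to the masses $\int_M(u-P)\,\omega_u^{n}$, $\int_M(v-P)\,\omega_v^{n}$ and the ``cross'' masses $\int_M(u-P)\,\omega_v^{n}$, $\int_M(v-P)\,\omega_u^{n}$; the first two are controlled by $d_1(u,P)$, resp.\ $d_1(v,P)$, via the ordered case, while the cross masses are estimated by a descending induction in the number of $\omega_u$-factors, using integration by parts, the Cauchy--Schwarz inequality, and the orthogonality relation. Finally, since the envelope map $w\mapsto P(u,w)$ is $d_1$-nonexpansive, $d_1(u,P)=d_1(P(u,u),P(u,v))\le d_1(u,v)$ and hence $d_1(v,P)\le d_1(u,v)+d_1(u,P)\le 2d_1(u,v)$, so that $I_1(u,v)\le C_n\,d_1(u,v)$.

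The ordered-pair computation is essentially formal; the real work lies in the inputs invoked for general pairs, and this is where I expect the main difficulty. Namely: the fine pluripotential-theoretic properties of $P(u,v)$ (the orthogonality relation, and the fact that its regularity is in general only $C^{1,\bar 1}$, so that $P\notin\mathcal H$); the $d_1$-nonexpansiveness of the rooftop-envelope operation; and the passage of all the above from $\mathcal H$ to $\mathcal E^1$, which requires monotone decreasing approximations $u_j,v_j\downarrow u,v$ in $\mathcal H$ together with continuity of $I$, of $d_1$, and of mixed Monge--Amp\`ere masses along such sequences. One should also verify that the affine path genuinely realizes a $d_1$-length of the stated value and that enlarging the competitor curves from $\mathcal H$ to $\mathcal E^1$ does not decrease the distance. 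These are precisely the facts established, on the basis of \cite{chen991}, in \cite{Darvas1402}.
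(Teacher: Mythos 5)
The paper does not prove Theorem~\ref{t2.3}; it is a black-box citation of \cite{Darvas1402}, Theorem 5.5, so there is no internal proof to compare against. Your reconstruction is sound and follows the route of that reference: the ordered-pair case is worked out completely and correctly (the first-variation/cocycle identity for $I$, the bound $d_1\ge|I(u)-I(v)|$, the exact realization $d_1(u,v)=I(v)-I(u)$ along the affine segment for $u\le v$, and the monotonicity of the mixed masses by integration by parts), and for general pairs you correctly identify the non-trivial inputs — the rooftop envelope $P(u,v)$ and its orthogonality relation, the $d_1$-nonexpansiveness of $w\mapsto P(u,w)$, the BEGZ-type cross-mass estimates, and the extension of all of this from $\mathcal H$ to $\mathcal E^1$ by decreasing approximation — which are exactly the lemmas that Darvas establishes to make the sketch rigorous.
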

For later use, here we describe how to obtain ``finite energy geodesics" from the $C^{1,1}$ geodesics between smooth potentials.

\begin{thm}\label{t2.2new}
(\cite{Darvas1402}, Theorem 2)
The metric completion of $(\mathcal{H},d_1)$ equals $(\mathcal{E}^1,d_1)$ where 
$$
d_1(u_0,u_1)=:\lim_{k\rightarrow\infty}d_1(u_0^k,u_1^k),
$$
for any smooth decreasing sequence $\{u_i^k\}_{k\geq1}\subset\mathcal{H}$ converging pointwise to $u_i\in\mathcal{E}^1$.
Moreover, for each $t\in(0,1)$, define
$$
u_t:=\lim_{k\rightarrow\infty}u_t^k,\,\,t\in(0,1),
$$
\sloppy  where $u_t^k$ is the $C^{1,1}$ geodesic connecting $u_0^k$ and $u_1^k$ (c.f.  \cite{chen991}).
We have $u_t\in\mathcal{E}^1$, the curve $[0,1]\ni t\mapsto u_t$ is independent of the choice of approximating sequences and is a $d_1$-geodesic in the sense that for some $c>0$, $d_1(u_t,u_s)=c|t-s|$, for any $s,\,t\in[0,1]$.
\end{thm}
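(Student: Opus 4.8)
\medskip

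\noindent\textbf{Proof proposal.} The plan is to translate every statement about $d_1$ into one about the explicit functional $I_1$, using the two-sided comparison $\tfrac1C I_1\le d_1\le C I_1$ of Theorem \ref{t2.3}, and then to control $I_1$ along decreasing sequences by the finite-energy pluripotential theory (Bedford--Taylor continuity of the Monge--Amp\`ere operator along monotone sequences, Guedj--Zeriahi compactness of energy sublevel sets, the domination principle in $\mathcal{E}^1$). I use throughout that every $u\in\mathcal{E}^1$ is a decreasing pointwise limit of potentials $u^k\in\mathcal H$ (Demailly regularization), with $\int_M\omega_{u^k}^n=\int_M\omega_0^n$ fixed, so that no Monge--Amp\`ere mass is lost in the limit. \emph{Well-definedness of the extended $d_1$.} Given $u_0^k\downarrow u_0$ and $u_1^k\downarrow u_1$ in $\mathcal H$, the triangle inequality and Theorem \ref{t2.3} give $|d_1(u_0^k,u_1^k)-d_1(u_0^l,u_1^l)|\le C(I_1(u_0^k,u_0^l)+I_1(u_1^k,u_1^l))$; since each $u_i^k\downarrow u_i\in\mathcal{E}^1$, the standard monotone-convergence arguments for the finite-energy class give $I_1(u_i^k,u_i^l)\to0$ as $k,l\to\infty$, so $(d_1(u_0^k,u_1^k))_k$ is Cauchy in $\bR$ and one sets $d_1(u_0,u_1):=\lim_k d_1(u_0^k,u_1^k)$. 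If $\tilde u_i^k\downarrow u_i$ is another approximation, a regularized maximum $w_i^k\downarrow u_i$ in $\mathcal H$ dominates both $u_i^k$ and $\tilde u_i^k$, and $I_1(u_i^k,w_i^k)\to0$, $I_1(w_i^k,\tilde u_i^k)\to0$ for the same reason, so the limiting value is independent of the approximating sequence.

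\emph{The completion.} Symmetry and the triangle inequality pass to the limit, and $d_1\ge\tfrac1C I_1$ shows that $d_1(u,v)=0$ forces $u=v$ both $\omega_u^n$- and $\omega_v^n$-a.e., hence $u=v$ by the domination principle; so $(\mathcal{E}^1,d_1)$ is a metric space, containing $(\mathcal H,d_1)$ isometrically and with $\mathcal H$ dense (take $u^k\downarrow u$ and use $d_1(u^k,u)=\lim_j d_1(u^k,u^j)\to0$). For completeness, a $d_1$-Cauchy sequence may be taken in $\mathcal H$ by density; by $d_1\ge\tfrac1C I_1$ it has uniformly bounded Monge--Amp\`ere energy and is bounded in $L^1(\omega_0^n)$, so by Guedj--Zeriahi compactness and lower semicontinuity of the energy a subsequence converges in $L^1(\omega_0^n)$ and a.e. to some $v\in\mathcal{E}^1$, and the continuity of $I_1$ established above promotes this to $d_1$-convergence.

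\emph{The limiting geodesic.} Let $u_t^k$ be Chen's $C^{1,1}$ geodesic from $u_0^k$ to $u_1^k$ (\cite{chen991}). Being the upper envelope of subsolutions of the homogeneous complex Monge--Amp\`ere equation on the product of $M$ with an annulus, this geodesic depends monotonically on its endpoints, so for each fixed $t$ the sequence $(u_t^k)_k$ is decreasing. The Monge--Amp\`ere energy $I$ of (\ref{IJ}) is affine along a $C^{1,1}$ geodesic (differentiate twice and use $\ddot u=|\partial\dot u|_{\omega_u}^2$), hence $I(u_t^k)=(1-t)I(u_0^k)+tI(u_1^k)$ is bounded below uniformly in $k$, and continuity of $I$ along decreasing sequences of finite energy gives that $u_t:=\lim_k u_t^k$ is a well-defined element of $\mathcal{E}^1$. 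For $s,t\in[0,1]$ one has $d_1(u_s^k,u_t^k)=|s-t|\,d_1(u_0^k,u_1^k)$, because at the level of smooth potentials the $C^{1,1}$ geodesic is a constant-speed $d_1$-minimizing geodesic (the geodesic equation yields the conservation law $\tfrac{d}{dt}\int_M|\dot u_t^k|\,\omega_{u_t^k}^n=0$, and a length estimate rules out shorter competitors); letting $k\to\infty$ and applying the first step (with $u_s^k\downarrow u_s\in\mathcal{E}^1$) gives $d_1(u_s,u_t)=c|s-t|$ with $c=d_1(u_0,u_1)$. For independence of the approximating sequence, observe that for ordered potentials $u\le v$ one has $I(v)-I(u)\asymp I_1(u,v)\asymp d_1(u,v)$ (the mixed terms $\int_M(v-u)\,\omega_u^j\wedge\omega_v^{n-j}$ are monotone in $j$ by integration by parts); combined with the affineness of $I$ this yields $d_1(u_t,v_t)\le C(n)\big((1-t)d_1(u_0,v_0)+t\,d_1(u_1,v_1)\big)$ whenever $u_i\le v_i$, and since the geodesic from a regularized maximum of the two $k$-th terms dominates both $u_t^k$ and $\tilde u_t^k$ the triangle inequality reduces the general case to ordered endpoints; hence $\sup_t d_1(u_t^k,\tilde u_t^k)\to0$ as $k\to\infty$, and passing to the limit with the first step gives $d_1(u_t,\tilde u_t)=0$, i.e. the two limiting curves coincide.

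\emph{The main difficulty.} The crux, used repeatedly, is the limiting argument in the first step: that $I_1$ --- hence $d_1$ --- is continuous along decreasing $\mathcal{E}^1$-approximations, and that decreasing limits of $C^{1,1}$ geodesics stay in $\mathcal{E}^1$ rather than degenerating to $-\infty$. This is exactly where the finite-energy machinery is indispensable (no mass loss and uniform integrability for decreasing sequences of bounded energy, continuity of the Monge--Amp\`ere operator, compactness of energy sublevel sets, the domination principle), and the comparison $d_1\asymp I_1$ of Theorem \ref{t2.3} is what makes it applicable; the geometric inputs --- monotone dependence of $C^{1,1}$ geodesics on endpoints, affineness of $I$ along them, their constant $L^1$-speed, and $C^{1,1}$ regularity --- come from the maximum principle for the degenerate Monge--Amp\`ere equation defining geodesics together with Chen's existence and regularity theory.
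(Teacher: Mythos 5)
This theorem is not proved in the paper at all: it is stated as a cited result, attributed to Darvas \cite{Darvas1402}, Theorem 2, and the authors merely note afterward that the sequence $u_t^k$ is decreasing in $k$ by the comparison principle. So there is no ``paper's own proof'' to compare against; what you have written is an independent reconstruction of (a sketch of) Darvas's argument.

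As a sketch it is broadly sound and identifies the right ingredients: the comparison $d_1\asymp I_1$ from Theorem \ref{t2.3}, continuity of $I_1$ along decreasing sequences of finite energy, Guedj--Zeriahi compactness of energy sublevel sets, the domination principle to get $u=v$ from $d_1(u,v)=0$, monotone dependence of the $C^{1,1}$ geodesic on its endpoints (comparison principle for the Dirichlet problem on $M\times A$), and affineness of the Aubin--Mabuchi energy $I$ along geodesics to keep the decreasing limit $u_t=\lim_k u_t^k$ in $\mathcal{E}^1$. Two places are glossed over and deserve more than a sentence. First, in the completeness argument you extract a subsequence converging a.e.\ to some $v\in\mathcal{E}^1$ and then write that ``the continuity of $I_1$ established above promotes this to $d_1$-convergence''; but the earlier continuity is for \emph{monotone} approximations, whereas the a.e.\ limit of a Cauchy sequence is not monotone, so one needs the separate fact (proved in \cite{Darvas1402}) that $d_1$-bounded a.e.\ convergence with convergence of energies implies $d_1$-convergence. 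Second, the claim that $d_1(u_s^k,u_t^k)=|s-t|\,d_1(u_0^k,u_1^k)$ with the conservation law $\tfrac{d}{dt}\int_M|\dot u_t^k|\omega_{u_t^k}^n=0$ is stated as if it followed immediately from the geodesic equation $\ddot u=|\partial\dot u|^2_{\omega_u}$, but the absolute value in the $L^1$ speed makes this non-smooth and the constant-speed property requires the $\eps$-geodesic approximation argument from \cite{Darvas1402}. Neither is a fatal gap, but both are genuine steps in the cited proof, not formalities.
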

The above limit is pointwise decreasing limit.
Since the sequence $\{u_i^k\}_{k\geq1}$ is decreasing sequence for $i=0$, $1$, we know $\{u_t^k\}_{k\geq1}$ is also decreasing for $t\in(0,1)$, by comparison principle.

We say $u_t:[0,1]\ni t\rightarrow\mathcal{E}^1$ connecting $u_0$, $u_1$ is a finite energy geodesic if it is given by the procedure described in Theorem \ref{t2.2new}.
The following result shows the limit of finite energy geodesics is again a finite energy geodesic.
\begin{prop}\label{p2.4new}(\cite{Darvas1602} , Proposition 4.3)
Suppose $[0,1]\ni t\rightarrow u_t^i\in\mathcal{E}^1$
 is a sequence of finite energy geodesic segments 
such that $d_1(u_0^i,u_0),\,d_1(u_1^i,u_1)\rightarrow0$. Then $d_1(u_t^i,u_t)\rightarrow0$, for any $t\in[0,1]$, where $[0,1]\ni t\mapsto u_t\in\mathcal{E}^1$ is the finite energy geodesic connecting $u_0$, $u_1$.
\end{prop}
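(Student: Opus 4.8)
The plan is to deduce the statement from two structural properties of $(\mathcal E^1,d_1)$. The first is a \emph{comparison principle} for finite energy geodesics: if $v_t,w_t$ are finite energy geodesic segments with $v_0\le w_0$ and $v_1\le w_1$ pointwise on $M$, then $v_t\le w_t$ for every $t\in[0,1]$. This is inherited from the $C^{1,1}$ geodesics between smooth potentials, where it is the maximum principle for the homogeneous complex Monge--Amp\`ere equation governing geodesics, and it survives the pointwise decreasing limits that define finite energy geodesics in Theorem \ref{t2.2new}. The second is the \emph{continuity of $d_1$ under monotone limits}: if $\phi_j\searrow\phi$ or $\phi_j\nearrow\phi$ with $\phi_j,\phi\in\mathcal E^1$, then $d_1(\phi_j,\phi)\to 0$; this is part of the theory of $(\mathcal E^1,d_1)$ from \cite{Darvas1402} (morally Theorem \ref{t2.3} combined with monotone convergence of the measures $\omega_{\phi_j}^n$). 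I will also use that the finite energy geodesic between two points of $\mathcal E^1$ is the unique $d_1$-geodesic joining them, together with the monotonicity $\phi\le\psi\le\rho\Rightarrow d_1(\phi,\psi)\le d_1(\phi,\rho)$.

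First I would treat the \emph{monotone case}: suppose $u_0^i\searrow u_0$ and $u_1^i\searrow u_1$ (or both $\nearrow$). By the comparison principle the geodesics $u_t^i$ are monotone in $i$; let $\tilde u_t\in\mathcal E^1$ be their (regularized) limit. Since each $u_t^i$ is a constant speed $d_1$-geodesic of speed $d_1(u_0^i,u_1^i)$, the monotone continuity of $d_1$ gives $d_1(\tilde u_t,\tilde u_s)=\lim_i d_1(u_t^i,u_s^i)=|t-s|\,d_1(u_0,u_1)$, while $\tilde u_0=u_0$ and $\tilde u_1=u_1$; thus $t\mapsto\tilde u_t$ is a $d_1$-geodesic from $u_0$ to $u_1$, so $\tilde u_t=u_t$ by uniqueness, and then $d_1(u_t^i,u_t)\to 0$ again by the monotone continuity of $d_1$.

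For the general case I would sandwich. Given $u_0^i\to u_0$ and $u_1^i\to u_1$ in $d_1$, set $a_i^{(k)}:=P(u_k^i,u_k)$ (the largest $\omega_0$-psh function $\le\min(u_k^i,u_k)$) and $b_i^{(k)}:=\max(u_k^i,u_k)$ for $k=0,1$, so that $a_i^{(k)}\le u_k^i,u_k\le b_i^{(k)}$; Darvas's estimates for rooftop and maximum envelopes give $d_1(a_i^{(k)},u_k)\to 0$ and $d_1(b_i^{(k)},u_k)\to 0$. Let $\alpha_t^i$ and $\beta_t^i$ be the finite energy geodesics from $a_i^{(0)}$ to $a_i^{(1)}$ and from $b_i^{(0)}$ to $b_i^{(1)}$. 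By the comparison principle $\alpha_t^i\le u_t^i\le\beta_t^i$ and $\alpha_t^i\le u_t\le\beta_t^i$ pointwise, so the monotonicity of $d_1$ yields
\[
d_1(u_t^i,u_t)\ \le\ d_1(u_t^i,\alpha_t^i)+d_1(\alpha_t^i,u_t)\ \le\ 2\,d_1(\alpha_t^i,\beta_t^i).
\]
It then remains to prove $d_1(\alpha_t^i,\beta_t^i)\to 0$. For this I would argue along subsequences: using that $d_1$-convergence implies convergence in capacity, pass to a subsequence along which $a_i^{(k)}\to u_k$ and $b_i^{(k)}\to u_k$ in capacity; then $\big(\sup_{i\ge m}a_i^{(k)}\big)^{*}\searrow u_k$ and the $\omega_0$-psh envelopes of $\inf_{i\ge m}b_i^{(k)}$ increase to $u_k$, and by comparing the geodesics attached to these monotone families with $\alpha_t^i,\beta_t^i$ and invoking the monotone case one gets $d_1(\alpha_t^i,\beta_t^i)\to 0$ along the subsequence; since the subsequence was arbitrary, $d_1(\alpha_t^i,\beta_t^i)\to 0$, which finishes the proof.

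The hard part is precisely this last reduction: controlling finite energy geodesics when the endpoints converge only in the weak $d_1$ sense rather than monotonically. The comparison principle cleanly reduces matters to monotone endpoint families, but establishing that the rooftop and maximum envelopes one builds actually converge monotonically to the right limits (which is where convergence in capacity, and hence the pluripotential theory behind $\mathcal E^1$, enters) and that this monotone convergence is transported through the geodesic construction is the technical heart of the argument. As a shortcut, if one grants the convexity of $t\mapsto d_1(\gamma_t,\delta_t)$ along finite energy geodesics $\gamma,\delta$, then applying it to $\gamma=\alpha^i$ and $\delta=\beta^i$ gives $d_1(\alpha_t^i,\beta_t^i)\le(1-t)\,d_1(a_i^{(0)},b_i^{(0)})+t\,d_1(a_i^{(1)},b_i^{(1)})\to 0$ at once; the scheme above is the route that uses only the comparison principle and the monotone continuity of $d_1$.
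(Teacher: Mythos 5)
This proposition is cited by the paper from \cite{Darvas1602} (Proposition 4.3) and not reproved, so there is no in-paper proof to compare against; I am evaluating your blind attempt on its own and against what the literature actually does.

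Your structural ingredients are sound: the comparison principle for finite energy geodesics is correct and does survive the decreasing limits that define them; the monotone continuity of $d_1$ and the monotonicity $\phi\le\psi\le\rho\Rightarrow d_1(\phi,\psi)\le d_1(\phi,\rho)$ are standard facts in $(\mathcal E^1,d_1)$; the envelope estimates $d_1(P(u,v),u)\le d_1(u,v)$ and $d_1(\max(u,v),u)\le d_1(u,v)$ are also available, so $d_1(a_i^{(k)},u_k)\to0$ and $d_1(b_i^{(k)},u_k)\to0$ do hold; and the sandwich inequality $d_1(u_t^i,u_t)\le 2\,d_1(\alpha_t^i,\beta_t^i)$ is correct. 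The monotone case is also essentially right.

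The gap is in the last reduction, i.e.\ in the claim $d_1(\alpha_t^i,\beta_t^i)\to 0$, and it is genuine. After your sandwich you are left with two families of geodesics with merely \emph{one-sided} (not monotone) convergent endpoints: $a_i^{(k)}\le u_k$ with $d_1(a_i^{(k)},u_k)\to 0$, and symmetrically from above. Your subsequence-plus-capacity device produces, on the ``lower'' side, the decreasing envelopes $A_m^{(k)}:=\bigl(\sup_{i\ge m}a_i^{(k)}\bigr)^{*}\searrow u_k$ and hence, by the monotone case, geodesics $\tilde\alpha_t^m\searrow u_t$ with $\tilde\alpha_t^m\ge\alpha_t^i$ for $i\ge m$. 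This only bounds $\alpha_t^i$ from \emph{above}. To conclude $d_1(\alpha_t^i,u_t)\to0$ you would also need to squeeze $\alpha_t^i$ from \emph{below} by something converging to $u_t$, and your candidate $P\bigl(\inf_{i\ge m}a_i^{(k)}\bigr)$ has no reason to increase to $u_k$: a wandering family of small negative spikes in the $a_i^{(k)}$ can drive the psh envelope of the infimum far below $u_k$ while $d_1(a_i^{(k)},u_k)\to 0$. Nothing you have written controls that. The same issue appears mirror-imaged on the $\beta$ side. In short, the monotone case gives one-sided control and the sandwich needs two-sided control, and the argument as sketched never supplies the second side.

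The ``shortcut'' you mention at the end — convexity of $t\mapsto d_1(\gamma_t,\delta_t)$ for a pair of finite energy geodesics — is in fact the substantive content that closes this, and it is a theorem in Darvas's structure theory of $(\mathcal E^p,d_p)$; once one has it, not only is the last step finished, the whole envelope construction becomes unnecessary, since one may apply it directly to $\gamma=u^i_\cdot$ and $\delta=u_\cdot$ to get
\[
d_1(u_t^i,u_t)\ \le\ (1-t)\,d_1(u_0^i,u_0)+t\,d_1(u_1^i,u_1)\ \longrightarrow\ 0,
\]
which is the one-line proof you should give (with a precise citation for the convexity). As written, your proposal either defers the crux to an unproved convexity or hangs it on a capacity argument that does not close; so the proof is incomplete.
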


Finally we record the following compactness result which will be useful later. This result was first established in \cite{BBEGZ}.
The following version is taken from \cite{Darvas1602}, which is the form most convenient to us.
\begin{lem}\label{l2.6new}(\cite{BBEGZ}, Theorem 2.17, \cite{Darvas1602}, Corollary 4.8)
Let $\{u_i\}_i\subset\mathcal{E}^1$ be a sequence for which the following condition holds:
$$
\sup_id_1(0,u_i)<\infty,\,\,\sup_iK(u_i)<\infty.
$$
Then $\{u_i\}_i$ contains a $d_1$-convergent subsequence.
\end{lem}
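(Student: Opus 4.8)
\textbf{Proof proposal for Lemma \ref{l2.6new}.}

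The plan is to reduce the statement to the already-established compactness theorem of \cite{BBEGZ} (Theorem 2.17 there), translating between the two formulations. The two hypotheses $\sup_i d_1(0,u_i)<\infty$ and $\sup_i K(u_i)<\infty$ need to be unpacked. First I would use Theorem \ref{t2.3} together with the triangle inequality to convert the $d_1$-bound into a uniform bound on the Aubin-type energies: $\sup_i I_1(0,u_i)<\infty$, which in turn controls $\int_M |u_i|\,\omega_0^n$ and $\int_M|u_i|\,\omega_{u_i}^n$, hence gives uniform control of the $L^1$-norms and of the standard energy functional $E(u_i)$ (the primitive of the Monge--Amp\`ere operator). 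This is exactly the ``finite energy, uniformly bounded'' condition under which \cite{BBEGZ} operates. Second, I would recall that the $K$-energy on $\mathcal{E}^1$ is given by the decomposition (\ref{K}), $K(u)=\mathrm{Ent}(u)+J_{-Ric}(u)$ where $\mathrm{Ent}(u)=\int_M \log(\omega_u^n/\omega_0^n)\,\omega_u^n/n!$ is the entropy; since $J_{-Ric}$ is controlled (up to additive constants and a multiple of the energy) by the $d_1$-bound already in hand, the bound $\sup_i K(u_i)<\infty$ upgrades to a uniform entropy bound $\sup_i \mathrm{Ent}(u_i)<\infty$.

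With a uniform energy bound and a uniform entropy bound, the compactness theorem of \cite{BBEGZ} applies: it yields a subsequence, still denoted $u_i$, converging in the (weak, capacity, or $L^1(\omega_0^n)$) sense to some $u_\infty\in\mathcal{E}^1$, with the entropy lower semicontinuous along the convergence so that $u_\infty$ again has finite entropy and finite energy, i.e. $u_\infty\in\mathcal{E}^1$ genuinely. The remaining point is to promote this weak convergence to $d_1$-convergence. For this I would invoke the characterization of $d_1$-convergence on $\mathcal{E}^1$: a sequence in $\mathcal{E}^1$ with uniformly bounded energy converges in $d_1$ if and only if it converges weakly (in potential) and the energies $E(u_i)$ converge to $E(u_\infty)$ — equivalently, $d_1$-convergence is weak convergence plus convergence of the ``mass''/energy. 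The energy convergence is precisely the extra output one extracts from the entropy bound via the \cite{BBEGZ} machinery (the entropy bound prevents mass from escaping to a pluripolar set), so the subsequence is $d_1$-Cauchy, and since $(\mathcal{E}^1,d_1)$ is complete (Theorem \ref{t2.2new}) it converges in $d_1$ to a limit in $\mathcal{E}^1$, which by uniqueness of limits must be $u_\infty$.

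The main obstacle is the last step — passing from weak convergence to $d_1$-convergence — because $d_1$ is strictly stronger than weak convergence on $\mathcal{E}^1$ and the gap is exactly controlled by the no-mass-loss phenomenon. This is where the entropy hypothesis is essential and cannot be dropped: a uniform energy bound alone gives only weak subsequential limits. Fortunately this is precisely the content of \cite{Darvas1602}, Corollary 4.8, so in the write-up I would simply cite that corollary for the final implication rather than reproving the mass-convergence estimate; the role of the present lemma is to record the statement in the form convenient for later use, and the proof is genuinely a citation assembled from Theorem \ref{t2.3}, the decomposition (\ref{K}), the completeness in Theorem \ref{t2.2new}, and \cite{BBEGZ}/\cite{Darvas1602}.
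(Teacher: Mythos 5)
The paper does not prove this lemma at all — it simply records it as a citation from \cite{BBEGZ} (Theorem 2.17) and \cite{Darvas1602} (Corollary 4.8), which is exactly what you conclude at the end of your proposal. Your sketch of the underlying mechanism (converting the $d_1$-bound to an $I_1$/Aubin-energy bound via Theorem \ref{t2.3}, using the decomposition $K=\mathrm{Ent}+J_{-Ric}$ with Lemma \ref{l2.5}-type control on $J_{-Ric}$ to extract a uniform entropy bound, then invoking BBEGZ compactness and the Darvas characterization that $d_1$-convergence is $L^1$-convergence plus convergence of Monge--Amp\`ere energies) is a sound reconstruction of what lies behind the cited result, so the approach is consistent with the paper's.
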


\subsection{Convexity of $K$-energy}
In this subsection, we record some known results about the convexity of $K$-energy and $J_{\chi}$ functional along $C^{1,1}$ geodesics and also finite energy geodesics.
In \cite{chen00}, the first named author proved the following result about the convexity of the functional $J_{\chi}$.
\begin{thm}(\cite{chen00}, Proposition 2)
Let $\chi\geq0$ be a closed $(1,1)$ form. Let $u_0$, $u_1\in\mathcal{H}$. 
Let $\{u_t\}_{t\in[0,1]}$ be the $C^{1,1}$ geodesic connecting $u_0$, $u_1$. 
Then $[0,1]\ni t\mapsto J_{\chi}(u_t)$ is convex.
\end{thm}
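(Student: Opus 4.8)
The plan is to compute the second variation of $J_\chi$ along an arbitrary smooth path in $\mathcal H$, isolate a ``geodesic part'' that vanishes on geodesics together with a remainder that is manifestly nonnegative once $\chi\ge 0$, and then pass from smooth geodesics to the $C^{1,1}$ geodesic through the $\epsilon$-geodesic approximation of \cite{chen991}.

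For a smooth curve $t\mapsto u_t$ in $\mathcal H$, write $\omega=\omega_{u_t}$ and $v=\partial_t u_t$. Differentiating $\frac{d}{dt}J_\chi(u_t)=\int_M v\,(tr_\omega\chi-\underline\chi)\frac{\omega^n}{n!}$ once more in $t$, using $\partial_t\omega=\sqrt{-1}\partial\bar\partial v$ and integrating by parts (this is where $d\chi=0$ is used), one arrives at
\begin{equation}\label{p-secvar}
\frac{d^2}{dt^2}J_\chi(u_t)=\int_M\big(\ddot u_t-|\partial v|^2_\omega\big)\big(tr_\omega\chi-\underline\chi\big)\frac{\omega^n}{n!}+\int_M\langle\chi,\sqrt{-1}\partial v\wedge\bar\partial v\rangle_\omega\,\frac{\omega^n}{n!}.
\end{equation}
In coordinates diagonalizing $\omega$ the second integrand equals $\sum_k\chi_{k\bar k}|v_k|^2$, which is $\ge 0$ precisely because $\chi\ge 0$. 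Since a smooth geodesic satisfies $\ddot u_t-|\partial v|^2_\omega\equiv 0$, formula \eqref{p-secvar} already yields convexity of $t\mapsto J_\chi(u_t)$ for smooth geodesics.

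To treat the $C^{1,1}$ geodesic $\{u_t\}$ joining $u_0,u_1\in\mathcal H$, recall from \cite{chen991} that $u_t=\lim_{\epsilon\to 0}u^\epsilon_t$ uniformly and with uniform $C^{1,1}$ bounds, where the $u^\epsilon$ are smooth, $u^\epsilon_0=u_0$, $u^\epsilon_1=u_1$, and $\ddot u^\epsilon_t-|\partial\dot u^\epsilon_t|^2_{\omega_{u^\epsilon_t}}=\epsilon\,\omega_0^n/\omega^n_{u^\epsilon_t}=:\epsilon\,h^\epsilon_t\ge 0$ with $\int_M h^\epsilon_t\,\frac{\omega^n_{u^\epsilon_t}}{n!}=\int_M\frac{\omega_0^n}{n!}$ for every $t$. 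Substituting $u^\epsilon$ into \eqref{p-secvar}, the first term becomes
\[
\epsilon\int_M h^\epsilon_t\,\chi\wedge\frac{\omega^{n-1}_{u^\epsilon_t}}{(n-1)!}-\epsilon\,\underline\chi\int_M\frac{\omega_0^n}{n!},
\]
whose first piece is $\ge 0$ (since $h^\epsilon_t\ge 0$ and $\chi\wedge\omega^{n-1}_{u^\epsilon_t}\ge 0$) while the second piece is $O(\epsilon)$ uniformly in $t$; together with the nonnegative second term of \eqref{p-secvar} this gives $\frac{d^2}{dt^2}J_\chi(u^\epsilon_t)\ge -C\epsilon$ with $C=\underline\chi\int_M\omega_0^n/n!$. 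Hence $t\mapsto J_\chi(u^\epsilon_t)+\frac{C\epsilon}{2}t^2$ is convex on $(0,1)$, and on $[0,1]$ by continuity. Since $J_\chi$ is continuous along $u^\epsilon\to u$ (its terms are integrals of uniformly bounded potentials against mixed Monge--Amp\`ere measures, continuous by Bedford--Taylor), letting $\epsilon\to 0$ exhibits $t\mapsto J_\chi(u_t)$ as a pointwise limit of convex functions, hence convex.

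The routine parts are the integration by parts leading to \eqref{p-secvar} and the pointwise linear-algebra inequality; the real obstacle is the $C^{1,1}$ passage, where one must verify that the sole non--sign-definite contribution of the $\epsilon$-geodesic error is $O(\epsilon)$ \emph{uniformly in $t$}---which is exactly the content of the identity $\int_M h^\epsilon_t\,\omega^n_{u^\epsilon_t}=\int_M\omega_0^n$---so that the family of $C\epsilon$-almost-convex functions converges to a genuinely convex limit. (Alternatively, one can phrase the same argument on $M$ times an annulus: lifting the geodesic to $U$ with $\Omega:=p_M^\ast\omega_0+\sqrt{-1}\partial\bar\partial U\ge 0$, the right-hand side of \eqref{p-secvar} is the fibre integral of $\chi\wedge\Omega^{n-1}$ against a positive form, and $\chi\wedge\Omega^{n-1}\ge 0$ as a positive current when $\chi\ge0$; the $\epsilon$-regularization is still needed to make the wedge products meaningful.)
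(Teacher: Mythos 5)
Your proof is correct and takes essentially the same route the paper itself uses where this convexity is needed: the statement is only cited from \cite{chen00}, but the second-variation formula together with the $\epsilon$-geodesic passage to the $C^{1,1}$ geodesic is exactly what appears in the proof of Corollary~\ref{c2.6}. One small remark: your second-variation formula correctly keeps the factor $\big(tr_{\omega}\chi-\underline\chi\big)$ in the geodesic-deficit term, whereas the paper's display~\eqref{4.8n} writes $tr_{\varphi_s}\chi$ alone -- a harmless slip, since the omitted $-\underline\chi\int_M\big(\ddot\varphi_s-|\nabla_{\varphi_s}\dot\varphi_s|^2_{\varphi_s}\big)\frac{\omega_{\varphi_s}^n}{n!}$ contributes only the $\epsilon\,\underline\chi\int_M\omega_0^n/n!$ error that you explicitly identify and absorb.
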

The convexity of $K$-energy is more challenging and the first named author made the following conjecture:
\begin{conj}\label{conj2.1}(Chen)
Let $u_0$, $u_1\in\mathcal{H}$. Let $\{u_t\}_{t\in[0,1]}$ be the $C^{1,1}$ geodesic connecting $u_0$, $u_1$. Then $[0,1]\ni t\mapsto K(u_t)$ is convex.
\end{conj}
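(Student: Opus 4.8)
The plan is to establish the convexity first along \emph{smooth} geodesics, where it reduces to a Bochner-type identity, and then to descend to $C^{1,1}$ geodesics by approximation. For the descent, recall from \cite{chen991} that the $C^{1,1}$ geodesic $\{u_t\}_{t\in[0,1]}$ joining $u_0,u_1\in\mathcal{H}$ is the uniform limit, with uniform $C^{1,1}$ bounds, of the smooth $\epsilon$-geodesics $\{u_t^\epsilon\}$, i.e. of the paths solving $\big(\ddot u_t^\epsilon-|\partial\dot u_t^\epsilon|^2_{\omega_{u_t^\epsilon}}\big)\,\omega_{u_t^\epsilon}^n=\epsilon\,\omega_0^n$ (equivalently, of the potentials whose $S^1$-invariant lift to $M\times A$, with $A$ an annulus, solves the corresponding complex Monge--Amp\`ere equation with right-hand side $\epsilon$ times a fixed volume form). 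The entropy term in \eqref{K} is lower semicontinuous under weak convergence of measures, while $J_{-Ric}$ is continuous under $C^0$-convergence with uniform $C^{1,1}$ bounds by Bedford--Taylor theory; hence $K(u_t)\le\liminf_{\epsilon\to0}K(u_t^\epsilon)$. Applying the same approximation to each sub-segment of the geodesic (approximating its possibly non-smooth endpoints by smooth decreasing sequences as well), it therefore suffices to prove that $t\mapsto K(u_t^\epsilon)$ is convex up to an additive error tending to $0$ with $\epsilon$: this yields $K(u_s)\le\frac{b-s}{b-a}K(u_a)+\frac{s-a}{b-a}K(u_b)$ for all $0\le a<b\le 1$ and $s\in(a,b)$, which is precisely the asserted convexity.

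Along a smooth path one has the identity
\[
\frac{d^2}{dt^2}K(u_t)=\int_M\big(\ddot u_t-|\partial\dot u_t|^2_{\omega_{u_t}}\big)\big(\underline{R}-R_{u_t}\big)\frac{\omega_{u_t}^n}{n!}+\int_M\big|\mathcal{D}_{u_t}\dot u_t\big|^2\,\frac{\omega_{u_t}^n}{n!},
\]
where $\mathcal{D}_{u_t}f:=\bar\partial\big(\mathrm{grad}^{1,0}_{\omega_{u_t}}f\big)$ is the Lichnerowicz operator of $\omega_{u_t}$. On a genuine smooth geodesic the first integrand vanishes identically and convexity is immediate; along an $\epsilon$-geodesic the first term equals $\epsilon\int_M\big(\underline{R}-R_{u_t^\epsilon}\big)\frac{\omega_0^n}{n!}$, which is the point at which the difficulty reappears, since $\int_M R_{u_t^\epsilon}\,\omega_0^n$ is not controlled by the topological constant $\underline{R}\int_M\omega_0^n/n!$. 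One option is to bound $\int_M R^{-}_{u_t^\epsilon}\,\omega_0^n$ uniformly in $\epsilon$ and $t$, by a priori estimates in the spirit of this paper and \cite{cc1}; but I would rather avoid scalar-curvature estimates along $\epsilon$-geodesics, which motivates the following alternative.

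Split $K$ as in \eqref{K} into the entropy $H(u)=\int_M\log\!\big(\omega_u^n/\omega_0^n\big)\frac{\omega_u^n}{n!}$ and $J_{-Ric}(u)$. The form $-Ric$ is not sign-definite, so the convexity of $J_\chi$ for $\chi\ge0$ quoted above does not apply; along a smooth geodesic one has $\frac{d^2}{dt^2}J_{-Ric}(u_t)=-c\int_M\big\langle\sqrt{-1}\partial\dot u_t\wedge\bar\partial\dot u_t,\,Ric\big\rangle_{\omega_{u_t}}\frac{\omega_{u_t}^n}{n!}$, an indefinite quantity. The mechanism behind the theorem is that the second variation of $H$ carries a built-in positive term which exactly compensates this: along a geodesic one expects
\[
\frac{d^2}{dt^2}H(u_t)\ \ge\ \int_M\big|\mathcal{D}_{u_t}\dot u_t\big|^2\frac{\omega_{u_t}^n}{n!}+c\int_M\big\langle\sqrt{-1}\partial\dot u_t\wedge\bar\partial\dot u_t,\,Ric\big\rangle_{\omega_{u_t}}\frac{\omega_{u_t}^n}{n!},
\]
so that $\frac{d^2}{dt^2}K(u_t)\ge\int_M|\mathcal{D}_{u_t}\dot u_t|^2\,\omega_{u_t}^n/n!\ge0$. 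Establishing this entropy inequality is the crux, and the step I expect to be the main obstacle: it is not a formal manipulation but requires a genuine positivity input --- Berndtsson's positivity of direct image bundles (subharmonic variation of fibrewise $L^2$-norms, together with the associated Pr\'ekopa-type and minimum-principle results) applied to the total-space current $\pi^*\omega_0+\sqrt{-1}\partial\bar\partial\Phi$ on $M\times A$. Moreover it has to be pushed through for the merely $L^\infty$ (and possibly vanishing) densities $\omega_{u_t}^n/\omega_0^n$ arising along $C^{1,1}$ geodesics, where lower semicontinuity of the entropy yields the inequality only in the direction we need, so the order of limits against the $\epsilon$-approximation has to be arranged with care.
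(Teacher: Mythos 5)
The paper does not actually supply a proof of this statement: it records it as a conjecture of the first named author and then cites Berman--Berndtsson \cite{Ber14-01} (and Chen--Li--Paun \cite{clp}) for its resolution. So there is no ``paper's own proof'' to compare with; the relevant comparison is between your sketch and the Berman--Berndtsson argument it gestures toward.

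Your diagnosis of the difficulty is correct. The naive route --- differentiate $K$ twice along the smooth $\eps$-geodesics $u_t^\eps$ and pass to the limit --- fails exactly as you say: the second variation produces the term $\eps\int_M(\underline{R}-R_{u_t^\eps})\,\omega_0^n/n!$, and without a priori control of $R_{u_t^\eps}$ uniformly in $\eps$ this does not vanish as $\eps\to 0$. Splitting $K=H+J_{-Ric}$ and isolating the positivity as an inequality for the second variation of the entropy $H$ is also the right way to organize matters, and it is indeed the decomposition (\ref{K}) the paper uses elsewhere. But the step you then flag --- ``along a geodesic one expects $\tfrac{d^2}{dt^2}H(u_t)\ge\int_M|\mathcal{D}_{u_t}\dot u_t|^2\,\omega_{u_t}^n/n!+c\int_M\langle\sqrt{-1}\partial\dot u_t\wedge\bar\partial\dot u_t,Ric\rangle\,\omega_{u_t}^n/n!$'' --- is not a reformulation of the problem; it \emph{is} the problem. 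This inequality is precisely what Berndtsson's positivity-of-direct-images theorem (in the form used in \cite{Ber14-01}) delivers, together with a careful regularization argument to make sense of the second variation of the entropy when the density $\omega_{u_t}^n/\omega_0^n$ is merely $L^\infty$ and may vanish. Writing ``Establishing this entropy inequality is the crux, and ... it is not a formal manipulation'' is honest, but it means the proposal identifies the target without reaching it. As it stands the argument is a correct reduction plus a citation of the missing theorem, which is essentially what the paper does as well --- except that you should not present the Berndtsson input as something one ``expects'' and then ``pushes through'': it requires the full machinery of subharmonicity of fiberwise Bergman kernels on the total space $M\times A$, a weak-geodesic approximation argument that keeps track of both lower semicontinuity of the entropy and continuity of $J_{-Ric}$, and a separate treatment of the boundary/regularity issues near $t=0,1$. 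If you want the statement proved rather than reduced, that machinery has to actually be invoked and carried through, not merely named.

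One smaller point worth flagging in your reduction: you correctly note that convexity of $t\mapsto K(u_t)$ requires the three-point inequality on every sub-interval $[a,b]\subset[0,1]$, whose endpoints $u_a,u_b$ are only $C^{1,1}$. Approximating those endpoints by decreasing sequences of smooth potentials and passing to the limit requires, in addition to lower semicontinuity of $K$ at the interior point, an \emph{upper semicontinuity} (in fact continuity) statement at the endpoints $u_a,u_b$ along the approximating sequences; lower semicontinuity there alone gives an inequality in the wrong direction. This is handled in the literature (e.g.\ via the characterization of $K$-energy convergence along decreasing sequences, cf.\ \cite{Darvas1602}), but it is a genuine extra step, not a corollary of the statement you quote.
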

This conjecture was verified by the fundamental work of Berman and Berndtsson \cite{Ber14-01} (c.f.  Chen-Li-Paun \cite{clp} also).
\begin{thm}
Conjecture \ref{conj2.1} is true.
\end{thm}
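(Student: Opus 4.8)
The plan is to follow the complex‑analytic strategy of Berman--Berndtsson \cite{Ber14-01}; a more direct second–variation route is available via Chen--Li--Paun \cite{clp}. The first step is to reduce to the entropy term. Writing $K=\mathrm{Ent}+J_{-Ric}$ as in (\ref{K}), with $\mathrm{Ent}(\varphi):=\int_M\log(\omega_\varphi^n/\omega_0^n)\,\omega_\varphi^n/n!$, it suffices to prove that $t\mapsto\mathrm{Ent}(u_t)$ and $t\mapsto J_{-Ric}(u_t)$ are each convex along the $C^{1,1}$ geodesic. The functional $J_{-Ric}$ is of the form $J_\chi$ with $\chi=-Ric(\omega_0)$ a fixed \emph{closed} $(1,1)$-form which is not semipositive, so the convexity of $J_\chi$ recorded above does not apply to it directly; however $-Ric(\omega_0)$ is the curvature of the metric on $-K_M$ determined by $\omega_0^n$, and, as indicated below, it can be folded into the entropy contribution, so the essential point is the convexity of $\mathrm{Ent}$.

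Next I would pass to the complexified picture. A $C^{1,1}$ geodesic between $u_0,u_1\in\mathcal H$ is the uniform limit, as $\varepsilon\to0$, of smooth solutions $u^\varepsilon_t$ of the $\varepsilon$-geodesic equation. Writing $w=e^{-(t+\sqrt{-1}\theta)}$ on the annulus $A=\{e^{-1}<|w|<1\}$, these correspond to $S^1$-invariant solutions $\Phi_\varepsilon$ on $M\times A$ of the nondegenerate Monge--Amp\`ere equation $(\pi^*\omega_0+\sqrt{-1}\partial\bar\partial\Phi_\varepsilon)^{n+1}=\varepsilon\,\Omega$, for a fixed smooth volume form $\Omega$ on $M\times A$, with $\Phi_\varepsilon=u_0$ on $\{|w|=e^{-1}\}$, $\Phi_\varepsilon=u_1$ on $\{|w|=1\}$, and $u^\varepsilon_{-\log|w|}=\Phi_\varepsilon(\cdot,w)$. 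It is then enough to prove convexity of $t\mapsto K(u^\varepsilon_t)$ up to an error tending to $0$ with $\varepsilon$, and to let $\varepsilon\to0$ using the uniform $C^{1,1}$ estimates for geodesics (Chen \cite{chen991}).

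The core step is to show that, for fixed $\varepsilon$, the function $w\mapsto F_\varepsilon(w):=\mathrm{Ent}(\Phi_\varepsilon(\cdot,w))$ is subharmonic on $A$ up to a term vanishing with $\varepsilon$; since $F_\varepsilon$ depends only on $|w|$ by $S^1$-invariance, this is exactly convexity in $t=-\log|w|$. One obtains it from the thermodynamical (Gibbs/Legendre) description of relative entropy as a supremum of affine functionals of the Monge--Amp\`ere measure $\omega_{\Phi_w}^n$ against a free-energy normalization, combined with Berndtsson's theorems on the plurisubharmonic variation of fiberwise $L^2$/Bergman-type functionals and on the positivity of direct image bundles: because $(\pi^*\omega_0+\sqrt{-1}\partial\bar\partial\Phi_\varepsilon)^{n+1}$ is small, the fiberwise measures $\{\omega_{\Phi_\varepsilon(\cdot,w)}^n\}_w$ assemble into an almost positive current on $M\times A$, and feeding this into Berndtsson's positivity yields the subharmonicity of $F_\varepsilon$. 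Running the same argument with the reference measure attached to a (possibly singular) positively curved metric on $-K_M$ incorporates the non-semipositive twist $-Ric(\omega_0)$ of $J_{-Ric}$, since Berndtsson's positivity is available for line bundles over the total space, not only for trivial ones.

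The main obstacle is regularity and the passage to the limit. Berndtsson's positivity genuinely requires the weight $\Phi_\varepsilon$ to be plurisubharmonic with enough smoothness, whereas the limiting object solves a \emph{degenerate} Monge--Amp\`ere equation whose fiberwise measures may collapse on large sets; hence one must carry out the subharmonicity argument on the nondegenerate approximants and then show the inequality survives $\varepsilon\to0$, which forces one to control the entropy integrand $\log(\omega^n_{u^\varepsilon_t}/\omega_0^n)$ uniformly in $\varepsilon$ — precisely where the uniform $C^{1,1}$ bounds, together with lower bounds preventing the measures from vanishing too quickly, enter. Making rigorous the absorption of $-Ric(\omega_0)$ into a singular-reference entropy, i.e.\ checking that the associated direct image remains positively curved, is the other point requiring care.
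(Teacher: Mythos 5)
The paper does not prove this theorem --- it records the result by citing Berman--Berndtsson \cite{Ber14-01} and Chen--Li--Paun \cite{clp}, and no argument is supplied, so there is no in-paper proof to compare against. Your proposal is a high-level summary of the Berman--Berndtsson strategy, i.e.\ of the very result the paper is pointing to, so you are following the same route the paper cites.

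As a sketch of \cite{Ber14-01} it gets the scaffolding right (Chen--Tian decomposition, $S^1$-invariant complexification to $M\times A$, $\varepsilon$-geodesic regularization, absorption of $J_{-Ric}$ into a relative entropy against a reference density attached to a metric on $-K_M$, and an appeal to Berndtsson's positivity of direct images), but the step you present as the core --- ``one obtains it from the Gibbs/Legendre description of relative entropy \dots combined with Berndtsson's theorems'' --- is not a deduction as stated. In the Gibbs variational formula $\mathrm{Ent}(\nu\mid\mu_0)=\sup_u\big(\int u\,d\nu-\log\int e^u\,d\mu_0\big)$, fixing a test function $u$ and taking $\nu=\omega_{\Phi(\cdot,w)}^n$ does not produce a subharmonic function of $w$: along a geodesic, $w\mapsto\int_M u\,\omega_{\Phi(\cdot,w)}^n$ is in general neither sub- nor superharmonic, so one cannot simply take the supremum and invoke Berndtsson. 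What makes \cite{Ber14-01} work is that the comparison data are adapted to the geodesic itself and to the reference density on $-K_M$, via an approximation procedure carried out before Berndtsson's theorem is applied; this is the technical heart of that paper and your sketch elides it. This is not a gap relative to the present paper, which offers no proof at all, but your proposal should be read as a roadmap to \cite{Ber14-01}, not a self-contained argument.
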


It turns out that the $K$-energy and also the fuctional $J_{\chi}$ can be extended to the space $(\mathcal{E}^1,d_1)$ and is convex along finite energy geodesics.
More precisely,
\begin{thm}\label{t2.4new}(\cite{Darvas1602}, Theorem 4.7)
The $K$-energy defined in (\ref{K}) can be extended to a functional $K:\mathcal{E}^1\rightarrow\mathbb{R}\cup\{+\infty\}$.
Besides, the extended functional $K|_{\mathcal{E}^1}$ is the greatest $d_1$-lower semi-continuous extension of $K|_{\mathcal{H}}$.
Moreover, $K|_{\mathcal{E}^1}$ is convex along finite energy geodesics of $\mathcal{E}_1$.
\end{thm}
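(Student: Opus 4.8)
The strategy is to exploit the decomposition (\ref{K}), namely $K=H+J_{-Ric}$ on $\mathcal{H}$, where $H(\varphi)=\int_M\log\big(\omega_\varphi^n/\omega_0^n\big)\,\omega_\varphi^n/n!$ is the relative entropy of $\omega_\varphi^n$ with respect to $\omega_0^n$ (two measures of equal total mass when $\varphi\in\mathcal{E}^1$), and to extend the two pieces separately. I would first handle the energy term: by the explicit polynomial formula in (\ref{J-chi}), for any smooth closed real $(1,1)$-form $\chi$ the functional $J_\chi$ is a finite linear combination of the mixed-energy functionals $\varphi\mapsto\int_M\varphi\,\chi\wedge\omega_0^k\wedge\omega_\varphi^{n-1-k}$ and $\varphi\mapsto\int_M\varphi\,\omega_0^k\wedge\omega_\varphi^{n-k}$. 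By the pluripotential theory of finite-energy currents (\cite{GZ07}, \cite{BBEGZ}, \cite{Darvas1402}), each such functional is finite on $\mathcal{E}^1$, continuous along monotone sequences, and extends to a $d_1$-continuous functional on $(\mathcal{E}^1,d_1)$; in particular $J_{-Ric}$ extends to a $d_1$-continuous, real-valued functional on $\mathcal{E}^1$.

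Next I would extend the entropy term, setting $H(\varphi)=\int_M\log\frac{\omega_\varphi^n}{\omega_0^n}\,\frac{\omega_\varphi^n}{n!}$ when $\omega_\varphi^n$ is absolutely continuous with respect to $\omega_0^n$ and $H(\varphi)=+\infty$ otherwise, for $\varphi\in\mathcal{E}^1$. Two standard facts drive the argument: (a) relative entropy is convex and lower semicontinuous for the weak-$*$ topology on measures, being the supremum over $f\in C^0(M)$ of the weak-$*$ continuous affine functionals $\mu\mapsto\int_M f\,d\mu-\log\int_M e^f\,\frac{\omega_0^n}{n!}$; and (b) if $u_j\to u$ in $d_1$ with $u_j,u\in\mathcal{E}^1$ then $\omega_{u_j}^n\to\omega_u^n$ weakly-$*$ (a consequence of the Darvas/BBEGZ description of $d_1$-convergence, which forces convergence in capacity). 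Combining (a) and (b), $H$ is $d_1$-lower semicontinuous on $\mathcal{E}^1$, hence so is $K:=H+J_{-Ric}:\mathcal{E}^1\to\mathbb{R}\cup\{+\infty\}$, and by (\ref{K}) this $K$ coincides with the classical $K$-energy on $\mathcal{H}$. This gives the existence of the extension and its lower semicontinuity.

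For the ``greatest'' assertion, let $G$ be any $d_1$-lsc function on $\mathcal{E}^1$ with $G|_{\mathcal{H}}=K|_{\mathcal{H}}$ and fix $\varphi\in\mathcal{E}^1$; the inequality $G(\varphi)\le K(\varphi)$ is trivial when $K(\varphi)=+\infty$ (equivalently $H(\varphi)=+\infty$, as $J_{-Ric}$ is finite on $\mathcal{E}^1$), so assume $K(\varphi)<\infty$ and take the canonical decreasing approximation $\psi_j\searrow\varphi$ in $\mathcal{H}$, which converges to $\varphi$ in $d_1$. Then $G(\varphi)\le\liminf_jG(\psi_j)=\liminf_jK(\psi_j)$, so it suffices to prove $\liminf_jK(\psi_j)\le K(\varphi)$; since $J_{-Ric}(\psi_j)\to J_{-Ric}(\varphi)$, this reduces to $\liminf_jH(\psi_j)\le H(\varphi)$, i.e. to continuity of the relative entropy along a decreasing sequence whose limit has finite entropy (the known entropy-approximation result of \cite{BBEGZ}, and in the present $d_1$-formulation of \cite{Darvas1602}). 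Finally, for convexity along a finite energy geodesic $u_t$ joining $u_0,u_1\in\mathcal{E}^1$, fix $0\le s_0<t<s_1\le1$ with $t=\lambda s_0+(1-\lambda)s_1$; the target inequality $K(u_t)\le\lambda K(u_{s_0})+(1-\lambda)K(u_{s_1})$ is vacuous unless $K(u_{s_0}),K(u_{s_1})<\infty$, so assume this. Choosing decreasing approximations $u_0^k\searrow u_0$, $u_1^k\searrow u_1$ in $\mathcal{H}$, the $C^{1,1}$ geodesics $u_\tau^k$ decrease to $u_\tau$ for each $\tau$ and $d_1(u_\tau^k,u_\tau)\to0$ (Theorem \ref{t2.2new}, Proposition \ref{p2.4new}), and by the Berman--Berndtsson theorem \cite{Ber14-01} the function $\tau\mapsto K(u_\tau^k)$ is convex, so $K(u_t^k)\le\lambda K(u_{s_0}^k)+(1-\lambda)K(u_{s_1}^k)$. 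Letting $k\to\infty$, $d_1$-lower semicontinuity gives $K(u_t)\le\liminf_kK(u_t^k)$, while the entropy-approximation statement (applied to the decreasing sequences $u_{s_i}^k\searrow u_{s_i}$, whose limits have finite entropy) gives $K(u_{s_i}^k)\to K(u_{s_i})$; this yields the claimed inequality.

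The main obstacle is precisely the entropy-approximation statement invoked in the last two steps: \emph{upper} semicontinuity (hence, with the easy lower bound, continuity) of $\varphi\mapsto\int_M\log\frac{\omega_\varphi^n}{\omega_0^n}\,\frac{\omega_\varphi^n}{n!}$ along decreasing sequences $\varphi_j\searrow\varphi$ in $\mathcal{H}$ with finite-entropy limit. Unlike lower semicontinuity this is not formal: it needs genuine control of the densities $\omega_{\varphi_j}^n/\omega_0^n$ (for instance via a domination argument, or via continuity of the Monge--Amp\`ere operator in energy), and is the technical heart of the result. Everything else is soft --- the energy term is handled by the standard continuity of mixed energies on $\mathcal{E}^1$, and lower semicontinuity of $K$ follows from lower semicontinuity of relative entropy together with weak-$*$ convergence of the Monge--Amp\`ere measures under $d_1$-convergence.
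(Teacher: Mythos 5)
This theorem is stated in the paper as a black box cited from \cite{Darvas1602} (Theorem 4.7); the paper gives no proof of its own, so there is nothing internal to compare against. What you have written is a faithful reconstruction of the Berman--Darvas--Lu argument, and it is essentially correct in structure: the decomposition $K = H + J_{-Ric}$, the $d_1$-continuity of the mixed-energy term, the lower semicontinuity of the entropy via weak-$*$ lsc of relative entropy plus weak-$*$ convergence of Monge--Amp\`ere measures under $d_1$-convergence, the ``greatest lsc extension'' step via decreasing approximations, and the convexity via Berman--Berndtsson on $C^{1,1}$ geodesics followed by passage to the limit using Theorem \ref{t2.2new} and Proposition \ref{p2.4new}. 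You also correctly single out the genuinely hard ingredient: upper semicontinuity of the entropy along a decreasing approximating sequence with finite-entropy limit, which is precisely the content of Lemma 3.2 in \cite{Darvas1602} (building on \cite{BBEGZ}).

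One point deserves a small caveat. In the convexity step you invoke entropy-continuity ``applied to the decreasing sequences $u_{s_i}^k\searrow u_{s_i}$'' for interior parameters $s_i$, but the approximation lemma is most cleanly applied at the two endpoints $s_0=0$, $s_1=1$: one only needs $K(u_i^k)\to K(u_i)$ for $i=0,1$, and then lsc at the interior point $t$ closes the argument, so $K(u_t)\le\liminf_k K(u_t^k)\le\liminf_k\big[(1-t)K(u_0^k)+tK(u_1^k)\big]=(1-t)K(u_0)+tK(u_1)$. Applying it at interior points $s_0,s_1\in(0,1)$ as you do requires knowing that the geodesic slices $u_{s_i}^k$ are themselves entropy-approximating sequences for $u_{s_i}$, which is an extra claim you do not need. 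Related to this, it is safer to phrase the entropy-approximation step as: there exists a decreasing sequence in $\mathcal{H}$ for which both $d_1$-convergence and entropy-convergence hold, rather than asserting it for every decreasing sequence coming out of Blocki--Kolodziej; the cited lemma supplies the former, and the geodesic-stability Proposition \ref{p2.4new} lets you work with whatever approximating sequences you choose. With these adjustments your sketch matches the proof in the cited reference.
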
 
\begin{thm}
(\cite{Darvas1602}, Proposition 4.4 and 4.5)
The functional $J_{\chi}$ as defined by (\ref{J-chi}) can be extended to be a $d_1$-continuous functional on $\mathcal{E}^1$.
Besides, $J_{\chi}$ is convex along finite energy geodesics.
\end{thm}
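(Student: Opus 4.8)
The plan is to reduce $J_\chi$ to standard energy-type functionals, extend those to $\mathcal{E}^1$ by monotonicity, prove a quantitative $d_1$-estimate, and finally transfer convexity from $C^{1,1}$ geodesics to finite energy geodesics by approximation. First I would write, using the second line of (\ref{J-chi}), $J_{\chi}(\varphi)=E_{\chi}(\varphi)-\underline{\chi}\,I(\varphi)$, where $I$ is as in (\ref{IJ}) and $E_{\chi}(\varphi):=\frac1{n!}\int_M\varphi\sum_{k=0}^{n-1}\chi\wedge\omega_0^k\wedge\omega_{\varphi}^{n-1-k}$, with derivatives $\frac{dE_{\chi}}{dt}=\int_M\partial_t\varphi\,\chi\wedge\frac{\omega_{\varphi}^{n-1}}{(n-1)!}$ and $\frac{dI}{dt}=\frac1{n!}\int_M\partial_t\varphi\,\omega_{\varphi}^n$. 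It then suffices to treat $E_{\chi}$ and $I$ separately; if one wants $\chi$ Kähler one may replace it by $(\chi+C\omega_0)-C\omega_0$ with $C\gg1$, while for the convexity statement one keeps the running hypothesis $\chi\ge0$ so that Proposition~2 of \cite{chen00} applies verbatim.

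For the extension I would first check that for $\varphi\in\mathcal{E}^1$ every integral in (\ref{J-chi}) converges: interpreting the wedge products as non-pluripolar products and using $0\le\chi\le C\omega_0$ together with $\int_M|\varphi|\,\omega_0^j\wedge\omega_{\varphi}^{n-j}<\infty$ (which is built into $\mathcal{E}^1$ after the standard equivalences), one gets $J_{\chi}(\varphi)\in\mathbb{R}$. The derivative formulas show $E_{\chi}$ (for $\chi\ge0$) and $I$ are nondecreasing along pointwise-increasing paths in $\mathcal{H}$, hence are monotone functionals; so for smooth $\varphi_k\searrow\varphi\in\mathcal{E}^1$ the limits $E_{\chi}(\varphi):=\lim_kE_{\chi}(\varphi_k)$, $I(\varphi):=\lim_kI(\varphi_k)$ exist in $\mathbb{R}$, are independent of the chosen decreasing sequence (interlace two of them), and $E_{\chi},I$ become continuous along decreasing sequences via Bedford--Amp\`ere... that is, Bedford--Taylor convergence of the relevant mixed Monge--Amp\`ere measures.

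The heart of the matter is the $d_1$-estimate. For $u\ge v$ in $\mathcal{H}$ I would use the telescoping identities $I(u)-I(v)=\frac1{(n+1)!}\sum_{j=0}^n\int_M(u-v)\,\omega_u^j\wedge\omega_v^{n-j}$ and $E_{\chi}(u)-E_{\chi}(v)=\frac1{n!}\sum_{j=0}^{n-1}\int_M(u-v)\,\chi\wedge\omega_u^j\wedge\omega_v^{n-1-j}$, obtained from the derivative formulas along the (admissible) linear path $v+t(u-v)$. Each summand is nonincreasing in $j$ (integrate by parts, using $u-v\ge0$ and $\chi\ge0$), so these differences are controlled by a constant times $\int_M(u-v)(\omega_u^n+\omega_v^n)$ once the mixed masses are compared against $\omega_0$-masses by the standard estimates of Monge--Amp\`ere energy theory. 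Inserting $\max(u,v)$ for general $u,v$ then yields $|J_{\chi}(u)-J_{\chi}(v)|\le C\,I_1(u,v)$ on $d_1$-bounded sets, with $I_1$ as in Theorem~\ref{t2.3}; since $I_1$ is comparable to $d_1$ there, and $\mathcal{H}$ is $d_1$-dense in $\mathcal{E}^1$ with the estimate surviving the monotone limits, $J_{\chi}$ extends to a $d_1$-continuous functional on $\mathcal{E}^1$ coinciding with the decreasing-limit definition.

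For convexity along finite energy geodesics, let $[0,1]\ni t\mapsto u_t$ join $u_0,u_1\in\mathcal{E}^1$. By Theorem~\ref{t2.2new} there are smooth $u_0^k\searrow u_0$, $u_1^k\searrow u_1$ whose $C^{1,1}$ geodesics $u_t^k$ decrease to $u_t$ for each $t$, with $d_1(u_i^k,u_i)\to0$; Proposition~\ref{p2.4new} gives $d_1(u_t^k,u_t)\to0$ for all $t$. Each $t\mapsto J_{\chi}(u_t^k)$ is convex by Proposition~2 of \cite{chen00} (stated precisely for $C^{1,1}$ geodesics), and $J_{\chi}(u_t^k)\to J_{\chi}(u_t)$ by $d_1$-continuity, so $t\mapsto J_{\chi}(u_t)$ is convex as a pointwise limit of convex functions. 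The hard part will be the $d_1$-estimate together with controlling the mixed, non-pluripolar Monge--Amp\`ere integrals under the decreasing approximations — i.e. ensuring no mass escapes to the pluripolar locus — which is exactly the delicate core of Monge--Amp\`ere energy theory; the reduction, the monotone extension, and the final limiting argument for convexity are comparatively soft.
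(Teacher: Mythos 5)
The paper does not prove this theorem; it simply cites \cite{Darvas1602}, Propositions 4.4 and 4.5, so there is no proof of record to compare against. Your proposal, however, does closely follow the route taken in that reference and the surrounding Berman--Darvas--Lu / BBEGZ machinery: decompose $J_\chi$ into the energy functional $E_\chi$ and $\underline{\chi}\,I$, note both are monotone along pointwise-monotone paths when $\chi\geq 0$, extend by decreasing limits via Bedford--Taylor continuity, prove a quantitative estimate against $I_1\simeq d_1$, and transfer convexity from $C^{1,1}$ geodesics (Chen's Proposition~2 in \cite{chen00}, Theorem~\ref{t2.2new}, Proposition~\ref{p2.4new}) by $d_1$-continuity. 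The convexity-transfer step is correct and soft.

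One inaccuracy worth flagging: your claimed estimate $|J_\chi(u)-J_\chi(v)|\le C\,I_1(u,v)$ on $d_1$-bounded sets is not as immediate as your telescoping-plus-monotonicity-in-$j$ argument suggests. After telescoping, the dominant term for $E_\chi$ is $\int_M(u-v)\,\chi\wedge\omega_v^{n-1}$, which involves a mixed measure $\chi\wedge\omega_v^{n-1}\le C\,\omega_0\wedge\omega_v^{n-1}$ that does not compare pointwise to $\omega_u^n+\omega_v^n$. Converting it into something controlled by $I_1(u,v)$ requires either the $\sqrt{-1}\partial\bar\partial$ identity used in the paper's own Lemma~\ref{l2.5} (which they only carry out with $v=0$), or the BBEGZ-type interpolation inequalities of the form recorded in the paper's Lemma~\ref{l5.4n}; the latter naturally produce a H\"older- rather than Lipschitz-type modulus on $d_1$-balls. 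Either version is enough for uniform continuity on bounded sets and hence for the extension to $\mathcal{E}^1$, so the conclusion stands, but the Lipschitz claim as written overstates what the $j$-monotonicity alone gives you. You correctly identify this as the delicate core; I am just noting that the specific bound you wrote is likely not the one that actually comes out of the computation.
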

\subsection{Calabi dream Manifolds}

Every example of a {\it Calabi dream surface} $M$ that we discusse here  is constructed from the ``outside in".  We begin with an ambient manifold that satisfies a weaker hypothesis making it easier to construct.  Then we construct $M$ as an appropriate complete intersections of ample hypersurfaces inside the ambient manifold and we encourage interested readers to Demailly-Peternell-Schneider\cite{DPS06} for further readings on this topic.

For a smooth, projective surface $M$, the ``ample cone" equals the ``big cone" if and only if the self-intersection of every irreducible curve is nonnegative. In analytic terms, the ``ample cone" equals the ``big cone" if and only if every holomorphic line bundle admitting a singular Hermitian metric of positive curvature current admits a regular Hermitian metric of positive curvature.

\begin{enumerate}

\item For every smooth, projective variety $P$ of dimension n at least 3 such that the ample cone equals the big cone, for every (n-2)-tuple of divisors $D_1, ... , D_{n-2}. \;$ If the divisor classes of $D_i$ are each globally generated, and if the $D_i$ are ``general" in their linear equivalence classes, then the surface $M = D_1 \cap ... \cap D_{n-2} $ is smooth and connected by Bertini's theorems.  If also every $ D_i$ is ample, if $K_P + (D_1 + ... + D_{n-2})$ is globally generated, and if the divisors $D_i$ are ``very general" in their linear equivalence classes, then the surface M has ample cone equal to the big cone, cf. the Noether-Lefschetz article of Ravindra and Srinivas.   Finally, if also the divisor class $K_P + (D_1 + ... + D_{n-2}) $ is ample, then $K_M$ is ample.  In that case, the smooth, projective surface $M$ has $c_1(T M)$ negative, and the self-intersection of every irreducible curve is nonnegative, and thus are Calabi dream surfaes.

\item  If $P$ and $Q$ are projective manifolds whose ample cones equal the big cones, and if there is no nonconstant morphism from the (pointed) Albanese variety of $P$ to the (pointed) Albanese variety of $Q,$ then also the product $P \times Q$ is a projective manifold whose ample cone equals the big cone.  In particular, if $P$ and $Q$ are compact Riemann surfaces of (respective) genera at least 2, and if there is no nonconstant morphism from the Jacobian of $P$ to the Jacobian of $Q$, then the product $M = P \times Q$ is a Calabi dream manifold.

\item There are many examples of smooth, projective varieties $P$ as in item 1.
When the closure of the ample cone equals the semiample cone and is finitely generated, then such a variety is precisely a ``Mori dream space" that has only one Mori chamber, yet there are examples arising from Abelian varieties where the cone is not finitely generated.  For instance, all projective varieties of Picard rank 1 trivially satisfy this property.  The next simplest class consists of all varieties that are homogeneous under the action of a complex Lie group. This class includes all Abelian varieties.  It also includes the ``projective homogeneous varieties", e.g., projective spaces, quadratic hypersurfaces in projective space, Grassmannians, (classical) flag varieties,etc. This class is also stable for products and is Calabi dream manifolds.

\item  The next simplest class consists of every projective manifold $P$ of ``cohomogeneity one", i.e., those projective manifolds that admit a holomorphic action of a complex Lie group $G$ whose orbit space is a holomorphic map from $P$ to a compact Riemann surface. These are also Calabi dream surfaces.

\end{enumerate}
Here is an interesting question about Calabi dream manifolds: how ``far" is the class of Calabi dream surfaces from the class of all  smooth minimal surfaces of general type?
\section{more general cscK type equations}
First we would like to generalize our estimates on cscK to more general type of equations. More specifically, we consider the following coupled equations:
\begin{align}
\label{cscK-new1}
&\det(g_{i\bar{j}}+\varphi_{i\bar{j}})=e^F\det g_{i\bar{j}},\\
\label{cscK-new2}
&\Delta_{\varphi}F=-f+tr_{\varphi}\eta.
\end{align}
Here $f$ is a given function(not necessarily a constant) and $\eta$ is a smooth real valued closed $(1,1)$ form on $M$, written as $\eta=\sqrt{-1}\eta_{\alpha\bar{\beta}}dz_{\alpha}\wedge dz_{\bar{\beta}}$.
Observe that the equations (\ref{cscK-new1}), (\ref{cscK-new2}) combined gives
\begin{equation}
R_{\varphi}=f +tr_{\varphi}(Ric-\eta).
\end{equation}
Later on, we wish to apply our estimates to the equation (\ref{2.12}), with choice 
\begin{equation*}
f=\underline{R}-\frac{1-t}{t}\underline{\chi},\textrm{   }\eta=Ric-\frac{1-t}{t}\chi,
\end{equation*}
 and also (\ref{2.13nn}), with choice 
\begin{equation*}
f=\underline{R}-\underline{\beta}-\frac{1-t}{t}\underline{\chi},\textrm{   }\eta=Ric-\beta-\frac{1-t}{t}\chi.
\end{equation*}

The goal of this section is to prove the following apriori estimate:
\begin{thm}\label{t1.1}
Let $\varphi$ be a smooth solution to (\ref{cscK-new1}), (\ref{cscK-new2}) so that $\sup_M\varphi=0$, then there exists a 
constant $C_0>0$, depending only on the backgound metric $(M,g)$, $||f||_0$, $\max_M|\eta|_{\omega_0}$, and the upper bound of $\int_Me^FFdvol_g$ such that $||\varphi||_0\leq C_0$, and $\frac{1}{C_0}\omega_0\leq\omega_{\varphi}\leq C_0\omega_0$.
\end{thm}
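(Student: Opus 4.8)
The plan is to follow the strategy of \cite{cc1}, adapting each step to allow the right-hand side data $f$ (no longer constant) and the twisting form $\eta$ (no longer $Ric$). There are four quantities to control in sequence: (a) an integral ($L^p$ or entropy) bound on $F$; (b) a zeroth-order bound $\|\varphi\|_0 \le C_0$; (c) an upper bound $\omega_\varphi \le C_0 \omega_0$ for the metric, equivalently an upper bound on $\mathrm{tr}_{\omega_0}\omega_\varphi$ or on $e^F = \omega_\varphi^n/\omega_0^n$; and (d) a lower bound $\omega_\varphi \ge C_0^{-1}\omega_0$, which given (c) follows once $\sup_M F$ (equivalently $\sup_M \log(\omega_\varphi^n/\omega_0^n)$) is bounded, since $e^F \det g = \det(g+\partial\bar\partial\varphi)$ and the eigenvalues of $\omega_\varphi$ w.r.t.\ $\omega_0$ are pinched between their product $e^F$ and the trace. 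So the real content is (b) and (c).

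\emph{Step 1 (entropy/$L^p$ input).} The hypothesis already supplies an upper bound for $\int_M e^F F\, dvol_g$, i.e.\ a bound on the entropy $\int_M \log(\omega_\varphi^n/\omega_0^n)\,\omega_\varphi^n$. From this and $\sup_M\varphi=0$ one gets, by the standard Jensen/Moser-type argument (as in \cite{cc1}), an $L^p(\omega_0^n)$ bound on $e^F$ for every $p<\infty$, hence an $L^1$ bound on $|F|$, hence (via the Green's function for $\Delta_{\omega_0}$ applied to the PDE $\Delta_{\omega_0}\varphi = \mathrm{tr}_{\omega_0}\omega_\varphi - n \le$ something, or more directly via the complex Monge--Amp\`ere theory of Ko\l odziej/Guedj--Zeriahi) a bound $\|\varphi\|_0\le C_0$. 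This is where the $L^\infty$ bound on $f$ and $\max_M|\eta|_{\omega_0}$ first enter: they appear as the inhomogeneous terms controlling $\int_M e^F F$ in terms of the geometry, and in the estimate $\Delta_\varphi F = -f + \mathrm{tr}_\varphi \eta$ they are precisely the ingredients needed to run the iteration.

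\emph{Step 2 (the main obstacle: second-order/metric bound).} The heart of the matter is bounding $\mathrm{tr}_{\omega_0}\omega_\varphi$ (equivalently $\sup_M F$) from above. Following \cite{cc1}, one does \emph{not} have a direct Laplacian estimate of Yau--Aubin type because $F$ is not prescribed; instead $F$ satisfies its own second equation. The method is to differentiate the system: apply $\Delta_\varphi$ to a carefully chosen test function of the form $\log \mathrm{tr}_{\omega_0}\omega_\varphi - A\varphi + (\text{terms in } F)$, or rather to run the simultaneous estimate on $\mathrm{tr}_{\omega_0}\omega_\varphi$ and on $|\nabla F|^2_\varphi$ (or $\Delta_\varphi F$) together, absorbing the bad third-order terms against each other. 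The twisting form $\eta$ enters through curvature-type terms $\mathrm{tr}_\varphi\eta$ and its covariant derivatives, which are all controlled by $\max_M|\eta|_{\omega_0}$ and the fixed background geometry; the function $f$ enters through $\nabla f$, which is why we need $\|f\|_0$ (and implicitly, in \cite{cc1}, one also controls higher norms of $f$ — here the statement only asks for $\|f\|_0$, so the argument must be arranged so that only $f$, not $\nabla f$, appears at top order, presumably by integrating by parts or by using the structure of $\Delta_\varphi F$). After the differential inequality is set up, one feeds in the $L^p$ bound on $e^F$ from Step 1 and applies a Moser iteration / $\alpha$-invariant (or De Giorgi--Nash--Moser) argument on $(M,\omega_0)$ to convert the $L^p$ bound into the desired $L^\infty$ bound on $\mathrm{tr}_{\omega_0}\omega_\varphi$. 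I expect this step — choosing the right auxiliary function and showing the bad terms cancel so that only $\|f\|_0$ (not derivatives of $f$) is needed — to be the principal technical difficulty, and the place where the generalization beyond \cite{cc1} requires genuine care.

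\emph{Step 3 (lower bound and conclusion).} Once $\mathrm{tr}_{\omega_0}\omega_\varphi \le C$, the arithmetic–geometric mean inequality gives $e^F = \omega_\varphi^n/\omega_0^n \le C^n$, i.e.\ $\sup_M F \le C$. Combined with the $L^1$ (in fact $L^p$) bound on $F$ from Step 1, a further application of the maximum principle or Moser iteration to equation (\ref{cscK-new2}) — now with $\mathrm{tr}_\varphi\eta$ bounded since $\omega_\varphi \ge (C\cdot\text{stuff})$ is what we want, so one argues carefully in the right order, first getting a one-sided bound $\inf_M F \ge -C$ directly from $\Delta_\varphi F = -f + \mathrm{tr}_\varphi\eta \ge -\|f\|_0 - C\,\mathrm{tr}_\varphi\omega_0$ and the already-established upper bound on $\omega_\varphi$ — yields $|F| \le C$. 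Then $\det(g+\partial\bar\partial\varphi) = e^F\det g$ with $e^F$ pinched and $\mathrm{tr}_{\omega_0}\omega_\varphi$ bounded above forces each eigenvalue of $\omega_\varphi$ relative to $\omega_0$ to lie in $[C_0^{-1}, C_0]$, giving $C_0^{-1}\omega_0 \le \omega_\varphi \le C_0\omega_0$. All constants depend only on $(M,g)$, $\|f\|_0$, $\max_M|\eta|_{\omega_0}$, and the entropy bound, as claimed.
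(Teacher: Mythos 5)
Your proposal follows the same overall strategy as the paper, namely running the a priori estimate scheme of \cite{cc1} with $\underline{R}$ replaced by $f$ and $Ric$ replaced by $\eta$, and you correctly identify the one genuinely new difficulty: in the $C^{1,1}$ step the term $\nabla_\varphi F\cdot_\varphi\nabla_\varphi(\Delta_\varphi F)$ can no longer be estimated pointwise without forcing $\nabla f$ and $\nabla\eta$ into the constants. The paper handles this exactly as you anticipate, by integration by parts: inside Proposition \ref{p1.1} it pairs the differential inequality against $u^{2p}$, integrates over $M$, and moves the derivative off $\Delta_\varphi F$, after which the pointwise bound $|\Delta_\varphi F|\le \|f\|_0 + \max_M|\eta|_{\omega_0}\, tr_\varphi g$ suffices and no derivative of $f$ or $\eta$ ever appears.

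Two imprecisions are worth flagging, one cosmetic and one structural. The step entropy $\Rightarrow L^p(e^F)$ is not a ``standard Jensen/Moser'' estimate; it is the Alexandrov maximum principle argument with an auxiliary Monge--Amp\`ere potential $\psi$ solving $\omega_\psi^n = e^F\sqrt{F^2+1}\,\omega_0^n / \int_M e^F\sqrt{F^2+1}\,dvol_g$, applied to $F+\eps\psi-\lambda\varphi$ with $\lambda=2(1+\max_M|\eta|_{\omega_0})$, then combined with the $\alpha$-invariant (Theorem \ref{t1.2}, Corollary \ref{c1.2}); this is the core of \cite{cc1} and the place where the entropy hypothesis actually enters. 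Structurally, your Step 2 seeks the trace bound before $\|F\|_0$ is known while your Step 3 extracts $\sup F$ from the trace bound; in the paper's scheme this would be circular, since the partial $C^1$ estimate, the $W^{2,p}$ estimate, and the $C^{1,1}$ estimate (Propositions \ref{p1.4}, \ref{p1.5}, \ref{p1.1}) all take $\|F\|_0$ as an input. The two-sided bound on $F$ is therefore obtained \emph{before} any trace estimate --- the upper bound from ABP together with Kolodziej's $\|\varphi\|_0,\|\psi\|_0$ estimates, the lower bound from a direct maximum principle on $F+\lambda\varphi$ (Proposition \ref{p1.3}). Relatedly, the parenthetical ``equivalently $\sup_M F$'' in your Step 2 is not correct: a trace bound yields an upper bound on $F$ by the arithmetic--geometric mean inequality, but not conversely.
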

The proof of this theorem is very similar to the case of cscK, and we will be suitably brief and only highlight the main differences.
Before going into the proof of Theorem \ref{t1.1}, first we notice the following corollary:
\begin{cor}\label{c1.1}
Let $\varphi$ be a smooth solution to (\ref{cscK-new1}), (\ref{cscK-new2}) normalized to be $\sup_M\varphi=0$, then for any $p<\infty$, there exist a constant $C_{0.5}$,
depending only on the background metric $(M,g)$, 
$||f||_0$, 
$\max_M|\eta|_{\omega_0}$, $p$, and the upper bound of $\int_Me^FFdvol_g$ such that $||\varphi||_{W^{4,p}}\leq C_{0.5}$, $||F||_{W^{2,p}}\leq C_{0.5}$.
\end{cor}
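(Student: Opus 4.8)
The plan is to derive Corollary \ref{c1.1} from Theorem \ref{t1.1} by a bootstrap between the two equations (\ref{cscK-new1}), (\ref{cscK-new2}), so I assume throughout the conclusions of Theorem \ref{t1.1}: $\|\varphi\|_0\le C_0$ and $C_0^{-1}\omega_0\le\omega_\varphi\le C_0\omega_0$, where $C_0$ depends only on $(M,g)$, $\|f\|_0$, $\max_M|\eta|_{\omega_0}$ and the upper bound of $\int_M e^F F\,dvol_g$. Since the eigenvalues of $\omega_\varphi$ relative to $\omega_0$ then lie in $[C_0^{-1},C_0]$, equation (\ref{cscK-new1}) gives at once $\|F\|_0\le n\log C_0$, and $\Delta_{\omega_0}\varphi={\rm tr}_{\omega_0}\omega_\varphi-n$ is bounded in $L^\infty$. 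Applying the interior $L^p$ estimate for the fixed smooth-coefficient elliptic operator $\Delta_{\omega_0}$ on the compact manifold $M$ (absorbing the lower order term via $\|\varphi\|_0$) yields $\|\varphi\|_{W^{2,p}}\le C_p$ for every $p<\infty$, hence $\varphi\in C^{1,\beta}$ for every $\beta\in(0,1)$, with bounds depending only on $(M,g)$, the data of Theorem \ref{t1.1}, and $p$.

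The hard part — and, I expect, the only genuinely non-perturbative step — is to gain the first quantum of H\"older regularity for $F$ (equivalently for $\partial\bar\partial\varphi$). At this point the operator $\Delta_\varphi=g_\varphi^{i\bar j}\partial_i\partial_{\bar j}$, read as a real non-divergence-form elliptic operator, is only known to be uniformly elliptic with bounded measurable coefficients (ellipticity constants controlled by $C_0$), so neither Schauder nor Calder\'on--Zygmund estimates are available yet. However, by (\ref{cscK-new2}) one has $\Delta_\varphi F=-f+{\rm tr}_\varphi\eta$, whose right-hand side is bounded in $L^\infty$ by $\|f\|_0+C_0\max_M|\eta|_{\omega_0}$; the Krylov--Safonov Harnack inequality, applied on coordinate balls and patched over $M$, then gives $F\in C^{\alpha_0}$ with $\alpha_0=\alpha_0(n,C_0)>0$ and $\|F\|_{C^{\alpha_0}}\le C$. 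This is the step requiring the most care; everything afterwards is a routine elliptic loop.

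From here: with $F\in C^{\alpha_0}$, equation (\ref{cscK-new1}) reads $\log\det(g_{i\bar j}+\varphi_{i\bar j})=F+\log\det g_{i\bar j}\in C^{\alpha_0}$, a uniformly elliptic equation (uniform ellipticity from $C_0^{-1}\omega_0\le\omega_\varphi\le C_0\omega_0$) that is concave in $\partial\bar\partial\varphi$, so the complex Evans--Krylov interior estimate gives $\partial\bar\partial\varphi\in C^{\alpha_1}$ for some $\alpha_1=\alpha_1(n,C_0,\|F\|_{C^{\alpha_0}})>0$; in particular the coefficients $g_\varphi^{i\bar j}$ of $\Delta_\varphi$ are H\"older continuous. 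The Calder\'on--Zygmund $L^p$ estimate applied to $\Delta_\varphi F=-f+{\rm tr}_\varphi\eta$ (right-hand side in $L^\infty$, so only $\|f\|_0$ is used) then gives $\|F\|_{W^{2,p}}\le C_p$ for all $p<\infty$ — the asserted bound on $F$ — hence $F\in C^{1,\beta}$ for all $\beta<1$. Differentiating (\ref{cscK-new1}) once, each $\partial_\ell\varphi$ solves a second-order equation $g_\varphi^{i\bar j}(\partial_\ell\varphi)_{i\bar j}=(\text{terms in }L^p)$ with continuous coefficients, and since this operator has elliptic real symbol the $L^p$ estimate controls the full Hessian of $\partial_\ell\varphi$, giving $\varphi\in W^{3,p}$ for all $p$; differentiating (\ref{cscK-new1}) a second time, the only new term $(\partial_m g_\varphi^{i\bar j})(\partial_\ell\varphi)_{i\bar j}$ lies in $L^{p/2}$, so the same estimate at exponent $p/2$ gives $\varphi\in W^{4,q}$ for every $q<\infty$. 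As $p$ was arbitrary this is exactly the required $W^{4,p}$ bound, and tracking constants through the finitely many steps — each new constant depending on $C_0$, the previous one, $n$, $(M,g)$, $p$ — yields the stated dependence of $C_{0.5}$.
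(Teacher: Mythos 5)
Your proof is correct and follows essentially the same route as the paper: uniform ellipticity from Theorem~\ref{t1.1}, Krylov--Safonov for the first H\"older bound on $F$, Evans--Krylov for $C^{2,\alpha}$ of $\varphi$, and then an elliptic bootstrap between the two coupled equations. The only difference is cosmetic bookkeeping in the bootstrap: the paper rewrites (\ref{cscK-new2}) in divergence form and iterates Schauder/GT~8.32 to reach $\varphi\in C^{3,\alpha}$ before applying $L^p$ theory, whereas you invoke the non-divergence $W^{2,p}$ estimate (GT~9.11) directly after Evans--Krylov and absorb the resulting $L^{p/2}$ loss in the last step by the arbitrariness of $p$.
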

\begin{proof}
The proof of this corollary (assuming Theorem \ref{t1.1}) is essentially the combination of several classical elliptic estimates.
First we know from Theorem \ref{t1.1} that $\frac{1}{C_0}\omega_0\leq\omega_{\varphi}\leq C_0\omega_0$, where $C_0$ has the said dependence in this corollary.
But this means (\ref{cscK-new2}) is now uniformly elliptic with bounded right hand side.
From this we immediately know $||F||_{\alpha'}\leq C_{0.1}$, where $\alpha'$ and $C_{0.1}$ has the said dependence.
Then we go back to (\ref{cscK-new1}), we can then conclude from Evans-Krylov theorem that $||\varphi||_{2,\alpha''}\leq C_{0.2}$ for any $\alpha''<\alpha'$(see \cite{YW} for details on extension of Evans-Krylov to complex setting).
Again go back to (\ref{cscK-new2}) and notice that equation can be put in divergence form:
\begin{equation}\label{1.4n}
Re\big(\partial_i(\det(g_{\alpha\bar{\bar{\beta}}}+\varphi_{\alpha\bar{\beta}})g_{\varphi}^{i\bar{j}} F_{\bar{j}})\big)=(-f+tr_{\varphi}\eta)\det(g_{\alpha\bar{\beta}}+\varphi_{\alpha\bar{\beta}}).
\end{equation}

Here the coefficients on the left hand side is in $C^{\alpha''}$, while the right hand side is bounded.
Hence we may conclude $||F||_{1,\alpha''}\leq C_{0.3}$, from \cite{GT}, Theorem 8.32.
Then from (\ref{cscK-new1}), by differentiating both sides of the equation, we see that the first derivatives of $\varphi$ solves a linear elliptic equation with $C^{\alpha''}$ coefficient and right hand side, hence Schauder estimate applies and we conclude $\varphi\in C^{3,\alpha''}$(\cite{GT}, Theorem 6.2).
But then we go back to (\ref{1.4n}) one more time, the coefficients are in $C^{\alpha}$ for any $0<\alpha<1$ with bounded right hand side, hence we conclude $F\in C^{1,\alpha}$ for any $0<\alpha<1$.
Now the equation solved by the first derivatives of $\varphi$ will have coefficients and right hand side in $C^{\alpha}$ for any $0<\alpha<1$. Therefore $\varphi\in C^{3,\alpha}$ for any $0<\alpha<1$.

The second equation (\ref{cscK-new2}) now has $C^{1,\alpha}$ coefficient with bounded right hand side, then the classical $L^p$ estimate gives $F\in W^{2,p}$ for any finite $p$(\cite{GT}, Theorem 9.11).
Then differentiating the first equation (\ref{cscK-new1}) twice, we get a linear elliptic equation in terms of second derivatives of $\varphi$, which has $C^{\alpha}$ coefficients and $L^p$ right hand side(we already have $F\in W^{2,p}$), it follows that $\varphi\in W^{4,p}$.
\end{proof}
\begin{rem}\label{r3.2}
If we assume higher regularity of $f$ and $\eta$ on the right hand side of (\ref{cscK-new2}), it is easy to get regularity higher than $W^{4,p}$ by bootstraping.
\end{rem}
Now we can focus on proving Theorem \ref{t1.1}.

\subsection{Reduction of $C^{1,1}$ estimates to $W^{2,p}$ estimates}
This is the part where the main difference comes up with cscK case and we will highlight this difference. 
We will be brief at places where the proof works in the same way as cscK case.
 The exact result we will prove is the following:
\begin{prop}\label{p1.1}
Let $\varphi$ be a smooth solution to (\ref{cscK-new1}), (\ref{cscK-new2}), then there exists $p_n>0$, depending only on $n$, such that
\begin{equation}
\max_M|\nabla_{\varphi}F|_{\varphi}+\max_M(n+\Delta\varphi)\leq C_1.
\end{equation}
Here $C_1$ depends only on $(M,g)$, $||\varphi||_0$, $||F||_0$, $||n+\Delta\varphi||_{L^{p_n}(M)}$, $||f||_0$ and $\max_M|\eta|_{\omega_0}$.
\end{prop}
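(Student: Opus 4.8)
The plan is to follow the reduction of the $C^{1,1}$ estimate to a $W^{2,p}$ estimate from the cscK case in \cite{cc1}, carrying the two quantities $n+\Delta\varphi$ and $|\nabla_{\varphi}F|_{\varphi}^{2}$ along together, and tracking how the general data $(f,\eta)$ enter in place of the constant $\underline R$ and the form $\mathrm{Ric}(\omega_{0})$. The whole argument is a single differential inequality followed by an integral (Moser-type) iteration; the new point, compared with \cite{cc1}, is that $f$ is no longer constant.

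\noindent\emph{Step 1: the differential inequality.} Start from the sharp second order estimate of Yau/Aubin for the complex Monge--Amp\`ere equation (\ref{cscK-new1}),
\[
\Delta_{\varphi}\log(n+\Delta\varphi)\ \ge\ \frac{\Delta_{\omega_{0}}F}{n+\Delta\varphi}\ -\ C_{0}\,\mathrm{tr}_{\omega_{\varphi}}\omega_{0}\ +\ (\text{a nonnegative third order term}),
\]
with $C_{0}=C_{0}(M,g)$ from a lower bound of the bisectional curvature of $\omega_{0}$. The only uncontrolled term is $\Delta_{\omega_{0}}F$, since (\ref{cscK-new2}) gives only the ``wrong'' Laplacian $\Delta_{\omega_{\varphi}}F=-f+tr_{\varphi}\eta$. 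The bridge is the elementary pointwise bound (diagonalize $\omega_{\varphi}$ against $\omega_{0}$) $|\Delta_{\omega_{0}}F|\le \sqrt n\,(n+\Delta\varphi)\,|\sqrt{-1}\partial\bar\partial F|_{\omega_{\varphi}}$, hence $\frac{|\Delta_{\omega_{0}}F|}{n+\Delta\varphi}\le \frac12|\sqrt{-1}\partial\bar\partial F|_{\omega_{\varphi}}^{2}+\frac n2$. The Bochner--Weitzenb\"ock identity for $|\nabla_{\varphi}F|_{\varphi}^{2}$ on $(M,\omega_{\varphi})$ then supplies exactly this good term $|\sqrt{-1}\partial\bar\partial F|_{\omega_{\varphi}}^{2}$, together with a second nonnegative Hessian term, a Ricci term $\mathrm{Ric}(\omega_{\varphi})(\nabla F,\overline{\nabla F})$, and a cross term $2\,\mathrm{Re}\langle\nabla F,\nabla\Delta_{\varphi}F\rangle_{\varphi}$. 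Writing $\mathrm{Ric}(\omega_{\varphi})=\mathrm{Ric}(\omega_{0})-\sqrt{-1}\partial\bar\partial F$ and substituting $\Delta_{\varphi}F=-f+tr_{\varphi}\eta$, the resulting quartic gradient terms $|\nabla_{\varphi}F|_{\varphi}^{4}$ and the $\omega_{\varphi}$-contracted third derivatives of $\varphi$ (coming from $\nabla(tr_{\varphi}\eta)$ via the metric compatibility of $\nabla^{\omega_{\varphi}}$) are absorbed exactly as in \cite{cc1}, using the two Hessian terms of the Bochner identity and the retained third order term of Yau's inequality, at the cost of $\max_{M}|\eta|_{\omega_{0}}$. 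The $\nabla f$ part of the cross term is \emph{kept}. Finally, adding $-A\varphi$ to the test quantity and using $\Delta_{\varphi}\varphi=n-\mathrm{tr}_{\omega_{\varphi}}\omega_{0}$ gives, for $A$ large, a dominant good term $\tfrac A2\,\mathrm{tr}_{\omega_{\varphi}}\omega_{0}$ that swallows all remaining negative multiples of $\mathrm{tr}_{\omega_{\varphi}}\omega_{0}$. The outcome is, with $u$ a suitable combination of $\log(n+\Delta\varphi)$, $|\nabla_{\varphi}F|_{\varphi}^{2}$ and $-A\varphi$, a differential inequality of the schematic form $\Delta_{\varphi}u\ \ge\ -C-2\,\mathrm{Re}\langle\nabla F,\nabla f\rangle_{\varphi}$, with $C$ depending only on the data in the statement.

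\noindent\emph{Step 2: integral iteration.} Since $\omega_{\varphi}$ is not yet uniformly bounded, $\Delta_{\varphi}$ is not uniformly elliptic, so instead of a bare maximum principle I would multiply through by the Monge--Amp\`ere density and iterate for the cofactor operator $\det(g_{i\bar j}+\varphi_{i\bar j})\,g_{\varphi}^{i\bar j}\partial_{i}\partial_{\bar j}$. Its matrix has determinant bounded below by a positive constant depending only on $\|F\|_{0}$, and trace $\le C(n+\Delta\varphi)^{n-1}$, which lies in $L^{p_{n}/(n-1)}(\omega_{0}^{n})$ by hypothesis; this gives a Sobolev inequality with loss (H\"older against the $L^{p_{n}}$ bound of $n+\Delta\varphi$), so a De Giorgi--Moser iteration, together with a standard interpolation to lower the exponent, yields $\sup_{M}u\le C\bigl(1+\|u\|_{L^{1}(\omega_{0}^{n})}\bigr)$ once $p_{n}=p_{n}(n)$ is chosen large enough. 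Two points make this work: (i) in the Caccioppoli step, testing against powers of $u$ and integrating by parts transfers the derivative off $f$ in $\int\langle\nabla F,\nabla f\rangle_{\varphi}(\cdots)$, so only $\|f\|_{0}$ (never $\|f\|_{C^{1}}$) is needed; (ii) $\|u\|_{L^{1}(\omega_{0}^{n})}$ is bounded a priori, because $\log(n+\Delta\varphi)$ is bounded in $L^{1}$ (here $n+\Delta\varphi\ge c(\|F\|_{0})>0$ and $\int_{M}(n+\Delta\varphi)\omega_{0}^{n}=n\int_{M}\omega_{0}^{n}$), $\|\varphi\|_{0}$ is assumed bounded, and $\int_{M}|\nabla_{\varphi}F|_{\varphi}^{2}\omega_{0}^{n}\le e^{\|F\|_{0}}\int_{M}|\nabla_{\varphi}F|_{\varphi}^{2}\omega_{\varphi}^{n}$, which an integration by parts together with $\int_{M}\mathrm{tr}_{\omega_{\varphi}}\omega_{0}\,\omega_{\varphi}^{n}=n\int_{M}\omega_{0}^{n}$ bounds in terms of $\|F\|_{0}$, $\|f\|_{0}$, $\max_{M}|\eta|_{\omega_{0}}$. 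Hence $\sup_{M}(n+\Delta\varphi)<\infty$ and $\sup_{M}|\nabla_{\varphi}F|_{\varphi}^{2}<\infty$ with the asserted dependence, which is Proposition \ref{p1.1}.

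\noindent\emph{Main obstacle.} The genuine difficulty, shared with the cscK case of \cite{cc1}, is the mismatch between the Laplacian $\Delta_{\omega_{0}}F$ that appears in Yau's inequality and the Laplacian $\Delta_{\omega_{\varphi}}F$ that the second equation controls; its resolution---pairing Yau's estimate with the Bochner identity for $|\nabla_{\varphi}F|_{\varphi}^{2}$ and absorbing through the Hessian term---carries over essentially verbatim. The new work is only the bookkeeping with a general function $f$ and a general smooth closed form $\eta$ in place of $\underline R$ and $\mathrm{Ric}(\omega_{0})$, and the one point requiring real care is that $\|f\|_{0}$, not $\|f\|_{C^{1}}$, suffices; this is exactly what the integral form of the argument delivers, via the integration by parts in Step 2, and it is what makes the estimate usable along the Calabi flow, where no bound on $\nabla f$ is available. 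Secondarily, the non-uniform ellipticity of $\Delta_{\omega_{\varphi}}$ is what forces the $W^{2,p}$ hypothesis into the statement: the $L^{p_{n}}$ bound on $n+\Delta\varphi$ is precisely the input needed to control the cofactor coefficients and close the iteration.
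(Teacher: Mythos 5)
Your overall route---a Bochner-type computation for $e^{B(F)}|\nabla_{\varphi}F|_{\varphi}^{2}$ coupled with the second-order Monge--Amp\`ere estimate for $n+\Delta\varphi$, followed by a Moser iteration in which integration by parts rescues you from $\nabla f$---is indeed the paper's route, and you correctly identified the main obstacle (the mismatch between $\Delta_{\omega_{0}}F$ in the $C^{2}$ estimate and the controlled $\Delta_{\varphi}F$, plus the need to avoid $\|f\|_{C^{1}}$). But Step~1 as written has two genuine gaps, and the schematic differential inequality you end it with,
\[
\Delta_{\varphi}u\ \ge\ -C\ -\ 2\,\mathrm{Re}\langle\nabla F,\nabla f\rangle_{\varphi},\qquad C=C(\text{data}),
\]
is not attainable.

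First, the Ricci term $\mathrm{Ric}(\omega_{\varphi})(\nabla F,\overline{\nabla F})$ and the $\Delta_{\varphi}F\,|\nabla_{\varphi}F|_{\varphi}^{2}=(-f+\mathrm{tr}_{\varphi}\eta)|\nabla_{\varphi}F|_{\varphi}^{2}$ term (the latter coming from $B'\Delta_{\varphi}F\,|\nabla_{\varphi}F|_{\varphi}^{2}$ with $B=\lambda/2$) are only bounded below by quantities of the order $-C\,|\nabla_{\varphi}F|_{\varphi}^{2}\,\mathrm{tr}_{\varphi}g$. The prefactor $|\nabla_{\varphi}F|_{\varphi}^{2}$ is one of the unknowns you are trying to bound, so these terms cannot be swallowed by the $+A\,\mathrm{tr}_{\varphi}g$ produced by adding $-A\varphi$. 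They survive as a potential term of order $-C(n+\Delta\varphi)^{n-1}u$ (using $\mathrm{tr}_{\varphi}g\le e^{-F}(n+\Delta\varphi)^{n-1}$), and this potential is precisely what forces the $L^{p_{n}}$ hypothesis into the iteration---it is a source of loss separate from the non-uniform ellipticity of the cofactor operator that you do account for in Step~2.

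Second, when you substitute $\Delta_{\varphi}F=-f+\mathrm{tr}_{\varphi}\eta$ into the cross term $2\,\mathrm{Re}\langle\nabla F,\nabla(\Delta_{\varphi}F)\rangle_{\varphi}$ and try to absorb $\nabla(\mathrm{tr}_{\varphi}\eta)$ pointwise, you only treat the piece coming from $\partial(g_{\varphi}^{-1})$; the piece $g_{\varphi}^{i\bar{j}}\partial_{k}\eta_{i\bar{j}}$ is silently dropped, and absorbing it pointwise would import $\|\eta\|_{C^{1}}$, which the statement forbids. The fix---and this is exactly what the paper does---is to \emph{not} substitute and differentiate pointwise at all: keep $2e^{F/2}\,\nabla_{\varphi}F\cdot_{\varphi}\nabla_{\varphi}(\Delta_{\varphi}F)$ intact in the differential inequality, pair it against $u^{2p}\,dvol_{\varphi}$, and move the derivative off $\Delta_{\varphi}F$ entirely by a single integration by parts. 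After that step only $\Delta_{\varphi}F$ itself, never its gradient, appears, so only $\|f\|_{0}$ and $\max_{M}|\eta|_{\omega_{0}}$ enter. Thus the integration by parts is doing more for you than you claim: it removes $\nabla\eta$ as well as $\nabla f$, and it has to, because your pointwise absorption of $\nabla(\mathrm{tr}_{\varphi}\eta)$ does not actually close. The paper's actual inequality is
\[
\Delta_{\varphi}u\ \ge\ 2e^{F/2}\,\nabla_{\varphi}F\cdot_{\varphi}\nabla_{\varphi}(\Delta_{\varphi}F)\ -\ C\,(n+\Delta\varphi)^{n-1}u,
\qquad u=e^{F/2}|\nabla_{\varphi}F|_{\varphi}^{2}+(n+\Delta\varphi)+1,
\]
and the entire right-hand side is processed inside the $u^{2p}$--iteration; none of it is neutralized pointwise.
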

This corresponds to Theorem 4.1 in our first paper \cite{cc1}.
\begin{proof}
We can choose local coordinates so that at a point $p$ under consideration, we have
$$
g_{i\bar{j}}(p)=\delta_{ij},\,\nabla g_{i\bar{j}}(p)=0,\,\varphi_{i\bar{j}}(p)=\varphi_{i\bar{i}}(p)\delta_{ij}.
$$
Let $B:\mathbb{R}\rightarrow\mathbb{R}$ be a smooth function, we have(under above said coordinates at $p$):
\begin{equation}
\begin{split}
&e^{-B(F)}\Delta_{\varphi}(e^{B(F)}|\nabla_{\varphi}F|_{\varphi}^2)\geq2\nabla_{\varphi}F\cdot_{\varphi}\nabla_{\varphi}(\Delta_{\varphi}F)+\frac{Ric_{\varphi,i\bar{j}}F_{\bar{i}}F_j}{(1+\varphi_{i\bar{i}})(1+\varphi_{j\bar{j}})}\\
&+\frac{|F_{i\bar{j}}|^2}{(1+\varphi_{i\bar{i}})(1+\varphi_{j\bar{j}})}+B'\frac{F_iF_{j\bar{i}}F_{\bar{j}}+F_{\bar{i}}F_jF_{\bar{j}i}}{(1+\varphi_{i\bar{i}})(1+\varphi_{j\bar{j}})}+(B''|\nabla_{\varphi}F|_{\varphi}^2+B'\Delta_{\varphi}F)|\nabla_{\varphi}F|_{\varphi}^2.
\end{split}
\end{equation}
In the above, $\cdot_{\varphi}$ means the inner product is taken under the metric $\omega_{\varphi}$. 
This calculation corresponds to (4.3) in our first paper \cite{cc1} and it does not use the equation at all. Note that in the above
\begin{equation*}
Ric_{\varphi,i\bar{j}}=R_{i\bar{j}}-F_{i\bar{j}}.
\end{equation*}
As in cscK case, with the choice of $B(\lambda)=\frac{\lambda}{2}$, we obtain
\begin{equation}\label{3.7}
\begin{split}
e^{-\frac{1}{2}F}\Delta_{\varphi}(e^{\frac{1}{2}F}|\nabla_{\varphi}F|_{\varphi}^2)&\geq2\nabla_{\varphi}F\cdot_{\varphi}\nabla_{\varphi}(\Delta_{\varphi}F)+\frac{R_{i\bar{j}}F_{\bar{i}}F_j}{(1+\varphi_{i\bar{i}})(1+\varphi_{j\bar{j}})}\\
&+\frac{|F_{i\bar{j}}|^2}{(1+\varphi_{i\bar{i}})(1+\varphi_{j\bar{j}})}+\frac{1}{2}(-f+tr_{\varphi}\eta)|\nabla_{\varphi}F|_{\varphi}^2.
\end{split}
\end{equation}
In the above, we used the same crucial cancellation as in the cscK case. 
Next we can estimate
\begin{equation}
\frac{|R_{i\bar{j}}F_{\bar{i}}F_j|}{(1+\varphi_{i\bar{i}})(1+\varphi_{j\bar{j}})}\leq|Ric|_g\frac{|F_{\bar{i}}F_j|}{(1+\varphi_{i\bar{i}})(1+\varphi_{j\bar{j}})}\leq|Ric|_g|\nabla_{\varphi}F|^2_{\varphi}tr_{\varphi}g.
\end{equation}
Also
\begin{equation}
\frac{1}{2}(-f+tr_{\varphi}\eta)|\nabla_{\varphi}F|^2_{\varphi}\geq- \frac{1}{2}(||f||_0+\max_M||\eta||_gtr_{\varphi}g)|\nabla_{\varphi}F|^2_{\varphi}.
\end{equation}
Finally recall that
$$
tr_{\varphi}g\leq e^{-F}(n+\Delta\varphi)^{n-1}.
$$
Hence we obtain from (\ref{3.7}):
\begin{equation}
\begin{split}
\Delta_{\varphi}(e^{\frac{1}{2}F}|\nabla_{\varphi}F|_{\varphi}^2)&\geq 2 e^{\frac{1}{2}F}\nabla_{\varphi}F\cdot_{\varphi}\nabla_{\varphi}(\Delta_{\varphi}F)-C_{1.1}\big((n+\Delta\varphi)^{n-1}+1\big)|\nabla_{\varphi}F|_{\varphi}^2\\
&\quad\quad+\frac{1}{C_{1.1}}\frac{|F_{i\bar{\alpha}}|^2}{(1+\varphi_{i\bar{i}})(1+\varphi_{\alpha\bar{\alpha}})}.
\end{split}
\end{equation}
Here $C_{1.1}$ has the dependence stated in the proposition.
From (4.12) in our first paper, \cite{cc1}, we have
\begin{equation*}
\begin{split}
\Delta_{\varphi}(n+\Delta\varphi)&\geq-C_{1.11}(n+\Delta\varphi)^n+\Delta F-C_{1.11}\\
&\geq-C_{1.1}(n+\Delta\varphi)^n-\frac{1}{C_{1.1}}\frac{|F_{i\bar{i}}|^2}{(1+\varphi_{i\bar{i}})^2}-C_{1.1}(n+\Delta\varphi)^2-C_{1.11}\\
&\geq-C_{1.12}(n+\Delta\varphi)^n-\frac{1}{C_{1.1}}\frac{|F_{i\bar{i}}|^2}{(1+\varphi_{i\bar{i}})^2}.
\end{split}
\end{equation*}
In the last line above, we used the fact that $n+\Delta\varphi\geq ne^{\frac{F}{n}}$, which is bounded from below, and $n\geq2$.
By the same calculation as we did for cscK, if we denote $$u=e^{\frac{1}{2}F}|\nabla_{\varphi}F|_{\varphi}^2+(n+\Delta\varphi)+1,$$ we obtain
\begin{equation}
\Delta_{\varphi}u\geq 2e^{\frac{1}{2}F}\nabla_{\varphi}F\cdot_{\varphi}\nabla_{\varphi}(\Delta_{\varphi}F)-C_{1.2}(n+\Delta\varphi)^{n-1}u,
\end{equation}
Here $C_{1.2}$ has the said dependence as in proposition.
The main difference from the cscK case is that we cannot estimate the term $e^{\frac{1}{2}F}\nabla_{\varphi}F\cdot_{\varphi}\nabla_{\varphi}(\Delta_{\varphi}F)$ directly as we did for cscK, otherwise, $\nabla f$ and $\nabla \eta$ will enter into the estimates.\\

For any $p>0$, integrate the equality $$\Delta_{\varphi}(u^{2p+1})=(2p+1)2p|\nabla_{\varphi}u|_{\varphi}^2+(2p+1)u^{2p}\Delta_{\varphi}u$$ with respect to $dvol_{\varphi}$, we have
\begin{equation}\label{1.9}
\begin{split}
\int_M2&pu^{2p-1}|\nabla_{\varphi}u|_{\varphi}^2dvol_{\varphi}=\int_Mu^{2p}(-\Delta_{\varphi}u)dvol_{\varphi}\\
&\leq\int_M C_{1.2}(n+\Delta\varphi)^{n-1}u^{2p+1}dvol_{\varphi}-2\int_Me^{\frac{1}{2}F}\nabla_{\varphi}F\cdot_{\varphi}\nabla_{\varphi}(\Delta_{\varphi}F)u^{2p}dvol_{\varphi}.
\end{split}
\end{equation}
We need to integrate by parts in the last term above, then we have
\begin{equation}\label{1.10}
\begin{split}
-&\int_M2e^{\frac{1}{2}F}\nabla_{\varphi}F\cdot_{\varphi}\nabla_{\varphi}(\Delta_{\varphi}F)u^{2p}dvol_{\varphi}=\int_M4pu^{2p-1}e^{\frac{1}{2}F}\Delta_{\varphi}F\nabla_{\varphi}F\cdot_{\varphi}\nabla_{\varphi}udvol_{\varphi}\\
&+\int_M2u^{2p}e^{\frac{1}{2}F}(\Delta_{\varphi}F)^2dvol_{\varphi}+\int_Mu^{2p}e^{\frac{1}{2}F}|\nabla_{\varphi}F|_{\varphi}^2\Delta_{\varphi}Fdvol_{\varphi}.
\end{split}
\end{equation}
We wish to estimate the three terms on the right hand side of (\ref{1.10}) from above.
First,
\begin{equation}\label{1.11}
\begin{split}
\int_M4pu^{2p-1}&e^{\frac{1}{2}F}\Delta_{\varphi}F\nabla_{\varphi}F\cdot_{\varphi}\nabla_{\varphi}udvol_{\varphi}\leq \int_Mpu^{2p-1}|\nabla_{\varphi}u|_{\varphi}^2dvol_{\varphi}\\
& \qquad \qquad+ 4 \int_Mpu^{2p-1}e^F(\Delta_{\varphi}F)^2|\nabla_{\varphi}F|_{\varphi}^2dvol_{\varphi}\\
&\leq\int_Mpu^{2p-1}|\nabla_{\varphi}u|_{\varphi}^2dvol_{\varphi}+ 4 \int_Mpu^{2p}e^{\frac{1}{2}F}(\Delta_{\varphi}F)^2dvol_{\varphi}.
\end{split}
\end{equation}
Also it is clear that 
\begin{equation}\label{1.12}
\int_Mu^{2p}e^{\frac{1}{2}F}|\nabla_{\varphi}F|_{\varphi}^2\Delta_{\varphi}Fdvol_{\varphi}\leq\int_Mu^{2p+1}|\Delta_{\varphi}F|dvol_{\varphi}.
\end{equation}
Combining (\ref{1.10}), (\ref{1.11}) and (\ref{1.12}), we see
\begin{equation}
\begin{split}
-\int_Me^{\frac{1}{2}F}&\nabla_{\varphi}F\cdot_{\varphi}\nabla_{\varphi}(\Delta_{\varphi}F)u^{2p}dvol_{\varphi}\leq\int_Mpu^{2p-1}|\nabla_{\varphi}u|_{\varphi}^2dvol_{\varphi}\\
&+\int_M(4 p+2)u^{2p}e^{\frac{1}{2}F}(\Delta_{\varphi}F)^2dvol_{\varphi}+\int_Mu^{2p+1}|\Delta_{\varphi}F|dvol_{\varphi}.
\end{split}
\end{equation}
Combine with (\ref{1.9}), we obtain
\begin{equation}\label{1.14}
\begin{split}
\int_Mpu^{2p-1}|\nabla_{\varphi}u|_{\varphi}^2&dvol_{\varphi}\leq\int_MC_{1.21}(n+\Delta\varphi)^{n-1}u^{2p+1}dvol_{\varphi}\\
&+\int_Mu^{2p+1}|\Delta_{\varphi}F|dvol_{\varphi}+\int_M( 4 p+2)u^{2p}e^{\frac{1}{2}F}(\Delta_{\varphi}F)^2dvol_{\varphi}.
\end{split}
\end{equation}
In the above, we can estimate
\begin{equation}
\begin{split}
|\Delta_{\varphi}F|\leq|f|+|tr_{\varphi}\eta|\leq(||f||_0&+\max_M|\eta|_{\omega_0})(1+tr_{\varphi}g)\\
&\leq C_{1.3}(1+ne^{-F}(n+\Delta\varphi)^{n-1}).
\end{split}
\end{equation}
Recall that $n+\Delta\varphi$ is bounded from below in terms of $||F||_0$, we obtain from (\ref{1.14}) that
\begin{equation}
\int_Mpu^{2p-1}|\nabla_{\varphi}u|_{\varphi}^2dvol_g\leq \int_MC_{1.3}(p+1)(n+\Delta\varphi)^{2n-2}u^{2p+1}dvol_g.
\end{equation}
Here $C_{1.3}$ depends only on $||F||_0$, the background metric $(M,g)$, $||f||_0$, and $\max_M|\eta|_{\omega_0}$.
Above is equivalent to 
\begin{equation}\label{3.20n}
\int_M|\nabla_{\varphi}(u^{p+\frac{1}{2}})|_{\varphi}^2dvol_g\leq\frac{(p+\frac{1}{2})^2(p+1)}{p}\int_MC_{1.3}(n+\Delta\varphi)^{2n-2}u^{2p+1}dvol_g.
\end{equation}
For any $0<\eps<2$, apply H\"oler's inequality, we obtain for any $p\geq\frac{1}{2}$(by the same calculation  as in cscK case):
\begin{equation}\label{3.21n}
\bigg(\int_M|\nabla(u^{p+\frac{1}{2}})|^{2-\eps}dvol_g\bigg)^{\frac{2}{2-\eps}}\leq C_{1.4}p^2K_{\eps}\bigg(\int_Mu^{(p+\frac{1}{2})(2+\eps)}dvol_g\bigg)^{\frac{2}{2+\eps}}.
\end{equation}
Here 
$$
K_{\eps}= n^{\frac{\eps}{2-\eps}}  \bigg(\int_M(n+\Delta\varphi)^{\frac{2-\eps}{\eps}}dvol_g\bigg)^{\frac{\eps}{2-\eps}}\cdot\bigg(\int_M(n+\Delta\varphi)^{\frac{(2n-2)(2+\eps)}{\eps}}\bigg)^{\frac{\eps}{2+\eps}}.
$$
The key estimate (\ref{3.21n}) corresponds to (4.27) of our first paper, \cite{cc1}.
The passage from (\ref{3.20n}) to (\ref{3.21n}) follows the calculation from (4.22) to (4.26) of our first paper, \cite{cc1}, almost word-for-word.

After this, we choose $\eps$ sufficiently small so that
$$
\theta:=\frac{2n(2-\eps)}{2n-2+\eps}>2+\eps.
$$
Then we can apply Sobolev inequality to $u^{p+\frac{1}{2}}$ with exponent $2-\eps$ and obtain
\begin{equation}
\bigg(\int_Mu^{(p+\frac{1}{2})\theta}dvol_g\bigg)^{\frac{2}{\theta}}\leq C_{1.5}p^2\bigg(\int_Mu^{(p+\frac{1}{2})(2+\eps)}dvol_g\bigg)^{\frac{2}{2+\eps}}.
\end{equation}
This implies that for $p\geq\frac{1}{2}$, one has
\begin{equation}
||u||_{L^{(p+\frac{1}{2})\theta}}\leq(C_{1.6}p^2)^{\frac{1}{p+\frac{1}{2}}}||u||_{L^{(p+\frac{1}{2})(2+\eps)}}.
\end{equation}
Denote $\chi=\frac{\theta}{2+\eps}>1$, and choose $p+\frac{1}{2}=\chi^i$ for $i\geq0$, then from above we can conclude
$$
||u||_{L^{(2+\eps)\chi^{i+1}}}\leq \big(C_{1.6}\chi^{2i}\big)^{\chi^{-i}}||u||_{L^{(2+\eps)\chi^i}}.
$$
Iterate above estimate, and using the inequality $||u||_{L^{2+\eps}}\leq||u||_{L^1}^{\frac{1+\eps}{2+\eps}}||u||_{L^{\infty}}^{\frac{1}{1+\eps}}$ gives the desired result.
\end{proof}

\subsection{$W^{2,p}$ estimates in terms of entropy bound of $\frac{\omega_{\varphi}^n}{\omega_0^n}$}
In this subsection, we will state the estimates which ultimately shows that one can estimate the $W^{2,p}$ norm of the solution $\varphi$ in terms of the entropy bound $\int_Me^F Fdvol_g$. The proof in the cscK case carries over almost word for word.

As in the cscK case, let $\psi$ solves the following problem:
\begin{align}
\label{aux-1}
&\det(g_{i\bar{j}}+\psi_{i\bar{j}})=\frac{e^F\sqrt{F^2+1}}{\int_Me^F\sqrt{F^2+1}dvol_g} \det g,\\
\label{aux-2}
&\sup_M\psi=0.
\end{align}

\begin{thm}\label{t1.2}
Normalize $\varphi$ so that $\sup_M\varphi=0$. 
Given any $0<\eps<1$, there exists a constant $C_3$, depending only on $\eps$, the background metric $(M,g)$, the upper bound for $\int_Me^FFdvol_g$, $||f||_0$, and $\max_M|\eta|_{\omega_0}$, such that
\begin{equation}
F+\eps\psi-2(1+\max_M|\eta|_{\omega_0})\varphi\leq C_3.
\end{equation}
\end{thm}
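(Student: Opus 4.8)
The plan is to run a maximum principle / auxiliary function argument, exactly parallel to the corresponding estimate in \cite{cc1} (the cscK case), keeping careful track of where the extra terms $f$ and $\eta$ enter and verifying that they only contribute controlled quantities. Set
\[
G = F + \eps\psi - 2(1+\max_M|\eta|_{\omega_0})\varphi,
\]
and let $G$ attain its maximum at a point $p_0\in M$. At $p_0$ we have $\Delta_\varphi G \le 0$. The strategy is to compute $\Delta_\varphi G$ using the three equations available: $\Delta_\varphi F = -f + tr_\varphi\eta$ from \eqref{cscK-new2}; the standard inequality $\Delta_\varphi\psi \ge tr_\varphi g - \frac{e^{F}\sqrt{F^2+1}}{\int_M e^F\sqrt{F^2+1}\,dvol_g}\cdot\big(\text{stuff}\big)$ coming from \eqref{aux-1} (more precisely $\Delta_\varphi\psi = n - tr_\varphi g_\psi \ge -C + $ a positive multiple of $tr_\varphi g$ once one uses the arithmetic–geometric mean inequality and the normalization of \eqref{aux-1}); and $\Delta_\varphi\varphi = n - tr_\varphi g$. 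Combining these,
\[
0 \ge \Delta_\varphi G \ge -f + tr_\varphi\eta + \eps(\text{positive}\cdot tr_\varphi g - C) - 2(1+\max_M|\eta|_{\omega_0})(n - tr_\varphi g).
\]
Since $tr_\varphi\eta \ge -\max_M|\eta|_{\omega_0}\,tr_\varphi g$, the coefficient of $tr_\varphi g$ on the right is $\ge \eps\cdot(\text{positive}) - \max_M|\eta|_{\omega_0} + 2(1+\max_M|\eta|_{\omega_0}) \ge \eps\cdot(\text{positive}) + 2 > 0$; that is, the choice of the constant $2(1+\max_M|\eta|_{\omega_0})$ is precisely engineered so that the $\eta$ contribution is absorbed and the $tr_\varphi g$ term has a definite positive sign. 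Hence at $p_0$, $tr_\varphi g$ is bounded above by a constant of the stated dependence.

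Once $tr_\varphi g(p_0)$ is bounded, one recovers a bound at $p_0$ on $n+\Delta\varphi$ via the inequality $n + \Delta\varphi \le e^F (tr_\varphi g)^{n-1}$ combined with a bound on $e^F$; but $e^F = \omega_\varphi^n/\omega_0^n$ is controlled at $p_0$ by the entropy argument — this is exactly the step where the integral bound $\int_M e^F F\,dvol_g$ is converted to a pointwise bound at the maximum of $G$ via the auxiliary potential $\psi$ (this is the role of \eqref{aux-1}: $\psi$ was chosen so that $e^{F}\sqrt{F^2+1}$, not just $e^F$, is the prescribed volume form, which lets one bound $F(p_0)$ from above in terms of $\psi(p_0)$ and the entropy). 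Plugging $F(p_0) \le C$ and $\varphi \le 0$ everywhere gives $G(p_0) \le C + \eps\psi(p_0) \le C$, since $\psi \le 0$; and therefore $G \le G(p_0) \le C_3$ on all of $M$.

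The main obstacle I expect is the bookkeeping in the $\Delta_\varphi\psi$ term: one must extract from \eqref{aux-1} a clean lower bound of the shape $\Delta_\varphi\psi \ge c_0\, tr_\varphi g - C$ with $c_0 > 0$, which requires the Monge–Ampère equation for $\psi$ together with the arithmetic–geometric mean inequality and an upper bound on the prescribed volume form $\frac{e^F\sqrt{F^2+1}}{\int_M e^F\sqrt{F^2+1}\,dvol_g}$ — and the latter needs a lower bound on $\int_M e^F\sqrt{F^2+1}\,dvol_g$, which is immediate ($\ge \int_M e^F\,dvol_g = \int_M \omega_\varphi^n/\omega_0^n \cdot \omega_0^n = $ fixed), and control of $e^F\sqrt{F^2+1}$ at the relevant point, again handled by the entropy-to-pointwise conversion. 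All of this is routine given the cscK case; the one genuinely new point, as emphasized in the text, is that $f$ and $\eta$ enter only through $\|f\|_0$ and $\max_M|\eta|_{\omega_0}$ and never through their derivatives, because $G$ is built from $F,\psi,\varphi$ directly and the equation \eqref{cscK-new2} is used only in its undifferentiated form $\Delta_\varphi F = -f + tr_\varphi\eta$. I would therefore present the argument briefly, citing \cite{cc1} for the parts that transcribe verbatim and spelling out only the sign computation for the coefficient of $tr_\varphi g$ above.
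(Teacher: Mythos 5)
Your overall scaffolding — test function built from $F,\psi,\varphi$, absorb $f$ and $\eta$ through $\lVert f\rVert_0$ and $\max_M|\eta|_{\omega_0}$ by taking $\lambda=2(1+\max_M|\eta|_{\omega_0})$, and exploit the auxiliary Monge--Amp\`ere equation \eqref{aux-1} via AM--GM — is the right one and matches the paper. But two of the specific steps you describe are wrong, and the second one is fatal to the plan as stated.

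First, a sign/formula error that you flag as the "main obstacle": you write $\Delta_\varphi\psi = n - tr_\varphi g_\psi$ and then ask for a bound of the form $\Delta_\varphi\psi\ge c_0\,tr_\varphi g - C$ with $c_0>0$. Neither is correct. The identity is $\Delta_\varphi\psi = tr_\varphi g_\psi - tr_\varphi g$, and AM--GM on $tr_\varphi g_\psi$ with the prescribed ratio $\det g_\psi/\det g_\varphi = \sqrt{F^2+1}/A$ gives
\[
\Delta_\varphi\psi \;\ge\; nA^{-1/n}(F^2+1)^{1/(2n)} \;-\; tr_\varphi g,
\]
so the coefficient of $tr_\varphi g$ coming from $\psi$ is $-1$, not $+c_0$. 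The good sign of the $tr_\varphi g$ coefficient in the combined estimate comes entirely from the $-\lambda\varphi$ term via $\Delta_\varphi\varphi = n - tr_\varphi g$; your later net accounting happens to give a positive total ($\lambda-\eps-\max_M|\eta|_{\omega_0}>\lambda/2$), but the intermediate claim about $\psi$ is false. More importantly, the real payoff of $\psi$ is not a positive $tr_\varphi g$ contribution but the term $nA^{-1/n}(F^2+1)^{1/(2n)}$, which grows with $|F|$ and is what caps $F$ at the relevant point.

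Second, and this is the genuine gap: your final step "$G(p_0)\le F(p_0)+\eps\psi(p_0)\le C$ since $\psi\le 0$" is backwards. With $\varphi\le 0$ and $\lambda>0$ one has $-\lambda\varphi(p_0)\ge 0$, so $G(p_0)=F(p_0)+\eps\psi(p_0)-\lambda\varphi(p_0)\ge F(p_0)+\eps\psi(p_0)$; a bound on $F(p_0)$ tells you nothing about $G(p_0)$, because $-\lambda\varphi(p_0)$ could be huge — and indeed a bound on $\lVert\varphi\rVert_0$ is precisely what is being chased downstream, so it cannot be assumed. This is why a pointwise maximum principle at $p_0$ does not close the argument. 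The paper (exactly as in \cite{cc1}) instead applies the Alexandrov--Bakelman--Pucci estimate to $e^{\delta(F+\eps\psi-\lambda\varphi)}\eta_{p_0}$ on a small ball, with a cutoff $\eta_{p_0}$ and a carefully tuned exponent $\delta=\alpha/(2n\lambda)$ (where $2\alpha$ is the $\alpha$-invariant). The $tr_\varphi g$ term is dropped thanks to its good sign, and the resulting ABP integral is controlled because the factor $e^{-2n\delta\lambda\varphi}=e^{-\alpha\varphi}$ is integrable by the $\alpha$-invariant, the factor $e^{2n\delta\eps\psi}\le 1$, and the factor involving $(F^2+1)^{1/(2n)}$ makes the integrand's negative part bounded. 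None of the ABP machinery, the cutoff, the exponent $\delta$, or the $\alpha$-invariant appears in your proposal, and without it the step from a bound on $F$ at a point to a bound on $G$ everywhere does not go through.
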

This corresponds to Theorem 5.1 in the first paper.
\begin{proof}
Let $\eta_p:M\rightarrow\mathbb{R}_+$ be the cut-off function such that $\eta_p(p)=1$, $\eta_p\equiv1-\theta$ outside the ball $B_{\frac{d_0}{2}}(p)$, with $|\nabla\eta_p| \leq \frac{2\theta}{d_0}$ and $|\nabla^2\eta_p|\leq\frac{4\theta}{d_0^2}$.
Here $d_0$ is sufficiently small depending only on $(M,g)$ and $\theta$ is a sufficiently small constant to be chosen later.
Let $\lambda=2(1+\max_M|\eta|_{\omega_0})$, and $\delta=\frac{\alpha}{2n\lambda}$. 
Here $2\alpha$ is the $\alpha$-invariant of the K\"ahler class $(M,[\omega_0])$.
Suppose $e^{\delta(F+\eps\psi-\lambda\varphi)}$ has maximum at $p_0.\;$ Let us first calculate $\Delta_{\varphi}(e^{\delta(F+\eps\psi-\lambda\varphi)}\eta_{p_0})$.
Recall formula  (5.18) from our first paper \cite{cc1}, we have
\begin{equation}\label{3.25nn}
\begin{split}
&\Delta_{\varphi}\big(e^{\delta(F+\eps\psi-\lambda\varphi)}\eta_{p_0}\big)e^{-\delta(F+\eps\psi-\lambda\varphi)}\\
&=\eta_{p_0}\big(\delta^2|\nabla_{\varphi}(F+\eps\psi-\lambda\varphi)|_{\varphi}^2+\delta\Delta_{\varphi}(F+\eps\psi-\lambda\varphi)\big)+\Delta_{\varphi}\eta_{p_0}  \\
& \quad\quad+2\delta\nabla_{\varphi}(F+\eps\psi-\lambda\varphi)\cdot_{\varphi}\nabla_{\varphi}\eta_{p_0}.
\end{split}
\end{equation}
This does not use the equation at all.
Now we compute(similar to (5.21) in  \cite{cc1}):
\begin{equation}
\begin{split}
\Delta_{\varphi}(F+\eps\psi-\lambda\varphi)
&=-(f+\lambda n)+tr_{\varphi}(\eta+\lambda g)+\eps\Delta_{\varphi}\psi\\
&\geq(-f-\lambda n+\eps nA^{-\frac{1}{n}}(F^2+1)^{\frac{1}{2n}})+(\lambda-\eps-\max_M|\eta|_{\omega_0})tr_{\varphi}g.
\end{split}
\end{equation}
In the above $A=\int_Me^F\sqrt{F^2+1}dvol_g$, which can be bounded in terms of upper bound of $\int_Me^FFdvol_g$.
In the second line above, we used that 
$$\Delta_{\varphi}\psi\geq n(\sqrt{F^2+1}A^{-1})^{\frac{1}{n}}-tr_{\varphi}g.$$

Then we can estimate the terms in (\ref{3.25nn}) involving $\nabla_{\varphi}\eta_{p_0}$, $\Delta_{\varphi}\eta_{p_0}$  in terms of $tr_{\varphi}g$(see (5.19), (5.20) in \cite{cc1} for more details.)
Hence we conclude the following estimate(similar to (5.22) in \cite{cc1}):
\begin{equation}\label{3.27nn}
\begin{split}
e^{-\delta(F+\eps\psi-\lambda\varphi)} \Delta_{\varphi}&\big(e^{\delta(F+\eps\psi-\lambda\varphi)}\eta_{p_0}\big)\geq\delta\eta_{p_0}(-||f||_0-\lambda n+\eps nA^{-\frac{1}{n}}(F^2+1)^{\frac{1}{2n}})\\
&+\big(\delta\eta_{p_0}(\lambda-\eps-\max_M|\eta|_{\omega_0})-\frac{4\theta}{d_0^2(1-\theta)}-\frac{4\theta}{d_0^2(1-\theta)^2}\big)tr_{\varphi}g.
\end{split}
\end{equation}
Since $\eps<1$, and because of our choice of $\delta$ and $\lambda$, we know $\lambda-\eps-\max_M|\eta|_{\omega_0}>\frac{\lambda}{2}$,  and $\delta \lambda=\frac{\alpha}{2n}.$ Therefore we can choose $\theta$ small enough to make the coefficients of $tr_{\varphi}g$ in (\ref{3.27nn}) positive.
Now we drop the term involving $tr_{\varphi}g$ in (\ref{3.27nn}), and apply the Alexandrov maximum principle in $B_{d_0}(p_0)$, we obtain
\begin{equation}
\begin{split}
&\sup_{B_{d_0}(p_0)}u\eta_{p_0}\leq\sup_{\partial B_{d_0}(p_0)}u\eta_{p_0}\\
&+C_nd_0\bigg(\int_{B_{d_0}(p_0)}u^{2n}\frac{\big((-||f||_0-\lambda n+\eps nA^{-\frac{1}{n}}(F^2+1)^{\frac{1}{2n}})^-\big)^{2n}}{e^{-2F}}dvol_g\bigg).
\end{split}
\end{equation}
In the above, $u=e^{\delta(F+\eps\psi-\lambda\varphi)}$.
Then the argument proceeds the same way as in the first paper \cite{cc1}.
\end{proof}
Making use of the $\alpha$-invariant, namely the fact that $\int_Me^{-\alpha\psi}dvol_g\leq C_{3.1}$ for some $\alpha>0$ and $C_{3.1}$ depending only on background K\"ahler metric $(M,g)$, we can deduce:
\begin{cor}\label{c1.2}
Normalize $\varphi$ so that $\sup_M\varphi=0$. 
For any $1<q<\infty$, there exists a constant $C_{3.2}$, depending only on the background metric $(M,g)$, the upper bound for $\int_Me^FFdvol_g$, $||f||_0$, $\max_M|\eta|_{\omega_0}$, and $q$, such that 
\begin{equation}\label{3.25n}
\int_Me^{qF}dvol_g\leq C_{3.2}.
\end{equation}
In particular, there exists a constant $C_{3.3}$, with the same dependence as $C_{3.2}$ but not on $q$, such that
\begin{equation}
||\varphi||_0\leq C_{3.3},\,\,\,||\psi||_0\leq C_{3.3}.
\end{equation}
\end{cor}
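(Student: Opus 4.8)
The plan is to combine the pointwise bound of Theorem \ref{t1.2} with the $\alpha$-invariant (Skoda-type) estimate $\int_M e^{-\alpha\psi}\,dvol_g\le C_{3.1}$, and then feed the resulting $L^q$ control of $e^F$ into the $L^\infty$ a priori estimate for complex Monge--Amp\`ere equations. \emph{Step 1: the integral bound (\ref{3.25n}).} Since $\sup_M\varphi=0$ we have $\lambda\varphi\le 0$, so Theorem \ref{t1.2} gives the pointwise inequality $F\le C_3-\eps\psi$. For a prescribed $1<q<\infty$, apply Theorem \ref{t1.2} with $\eps=\eps(q)$ small enough that $q\eps\le\alpha$; then $e^{qF}\le e^{qC_3}e^{-q\eps\psi}$, and since $\psi\in PSH(M,\omega_0)$ with $\sup_M\psi=0$, the $\alpha$-invariant estimate (together with Jensen's inequality to pass from exponent $\alpha$ down to $q\eps$) yields $\int_M e^{qF}\,dvol_g\le e^{qC_3}\,C_{3.1}' =: C_{3.2}$. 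Here $C_3=C_3(\eps(q))$ is finite for each fixed $q$, which is exactly why $C_{3.2}$ is permitted to depend on $q$.

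\emph{Step 2: $\|\varphi\|_0\le C_{3.3}$.} Fix, say, $q=2$. By (\ref{cscK-new1}) the measure $\omega_\varphi^n=e^F\omega_0^n$ has density $e^F$ bounded in $L^2(dvol_g)$ by Step 1, with a constant no longer depending on $q$. The standard $L^\infty$ estimate for the complex Monge--Amp\`ere equation with $L^q$ ($q>1$) density (Ko\l{}odziej; the argument of \cite{cc1} in the cscK case carries over verbatim), applied with the normalization $\sup_M\varphi=0$, gives $\mathrm{osc}_M\varphi\le C$, hence $\|\varphi\|_0\le C_{3.3}$. \emph{Step 3: $\|\psi\|_0\le C_{3.3}$.} The auxiliary potential solves (\ref{aux-1}), so it suffices to bound the density $e^F\sqrt{F^2+1}/A$ in $L^q(dvol_g)$, where $A=\int_M e^F\sqrt{F^2+1}\,dvol_g$. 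Using $\sqrt{F^2+1}\le 1+|F|$ and the elementary inequality $|F|\le\delta^{-1}(e^{\delta F}+e^{-\delta F})$, we get $e^F\sqrt{F^2+1}\le C_\delta(e^{(1+\delta)F}+e^{(1-\delta)F}+e^F)$, whose $L^q$ norm is controlled by Step 1 applied to the exponents $q(1+\delta)$, $q(1-\delta)$, $q$ (choosing $\delta<1-q^{-1}$). The same inequality shows $A\le\mathrm{Vol}+\int_M e^F|F|\,dvol_g\le C$ (the entropy bound on $\int_M e^F F\,dvol_g$ handles $\{F\ge 0\}$ and $e^F|F|\le e^{-1}$ handles $\{F<0\}$), while $A\ge\int_M e^F\,dvol_g=\mathrm{Vol}>0$. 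Thus the density of (\ref{aux-1}) lies in $L^q(dvol_g)$ with a controlled norm, and Ko\l{}odziej's estimate once more gives $\|\psi\|_0\le C_{3.3}$.

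\emph{Main obstacle.} Everything here is bookkeeping except for one genuinely non-elementary input: the $L^\infty$ bound for the Monge--Amp\`ere equation whose right-hand side is merely in $L^q$ for some $q>1$. This is where pluripotential theory is used essentially and a maximum-principle argument will not suffice; the remaining subtleties — the $q$-dependent choice $\eps=\eps(q)$ in Step 1 and the two-sided control of the normalizing constant $A$ in Step 3 — are routine.
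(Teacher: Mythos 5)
Your proof is correct and follows essentially the same approach the paper intends: apply Theorem \ref{t1.2} with $\varphi\le 0$ to get $F\le C_3(\eps)-\eps\psi$, choose $\eps=\eps(q)$ small and invoke the $\alpha$-invariant bound $\int_M e^{-\alpha\psi}\,dvol_g\le C_{3.1}$ to obtain (\ref{3.25n}), and then deduce the $L^\infty$ bounds on $\varphi$ and $\psi$ from Ko\l{}odziej's estimate (the paper explicitly says the ``in particular'' part follows from (\ref{3.25n}) and Ko\l{}odziej's result, deferring details to Corollary 5.3 of \cite{cc1}). The only cosmetic remarks: in Step~1 no Jensen is needed once $q\eps\le\alpha$, since $\psi\le 0$ gives $e^{-q\eps\psi}\le e^{-\alpha\psi}$ pointwise; and in Step~3 the constraint $\delta<1-q^{-1}$ is unnecessary because $\int_M e^{q'F}\,dvol_g$ is trivially controlled for $0<q'\le 1$ as well (interpolate with $\int_M e^F\,dvol_g=\mathrm{Vol}(M)$). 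The two-sided control of $A$ is the one place where a small argument is genuinely needed, and you handle it correctly by splitting on $\{F\ge0\}$ and $\{F<0\}$ and using $xe^x\ge -e^{-1}$.
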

This corresponds to Corollary 5.3 in the first paper \cite{cc1}. 
As in cscK case, the ``in particular" part follows from (\ref{3.25n}) and Kolodziej's result.

After this, one can estimate $||F||_0$.
\begin{prop}\label{p1.3}
There exist constants $C_{3.4}>0$, $C_{3.5}>0$ such that 
\begin{equation}
F\geq-C_{3.4},\,e^F\leq C_{3.5}.
\end{equation}
Here $C_{3.4}$ and $C_{3.5}$ depend on $||\varphi||_0$, $||f||_0$ and $\max_M|\eta|_{\omega_0}$
\end{prop}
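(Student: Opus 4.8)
The goal is a lower bound $F\geq -C_{3.4}$ (equivalently an upper bound $e^{-F}\leq C_{3.5}'$; note the stated $e^F\leq C_{3.5}$ should actually be read as the companion estimate since an \emph{upper} bound on $F$ already follows from Corollary \ref{c1.2}). The plan is to run a maximum principle argument on the quantity $-F+\lambda\varphi$ (or a closely related combination), exactly as in the cscK case (this corresponds to the step in \cite{cc1} producing the lower bound on $F$). First I would recall that from equation (\ref{cscK-new2}),
\[
\Delta_\varphi F = -f + tr_\varphi\eta,
\]
so that $\Delta_\varphi(-F) = f - tr_\varphi\eta \leq \|f\|_0 + \max_M|\eta|_{\omega_0}\, tr_\varphi g$. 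The term $tr_\varphi g$ is the enemy: it is not a priori bounded. To absorb it, I would add a multiple of $\varphi$: since $\Delta_\varphi\varphi = n - tr_\varphi g$, we get for $\lambda = 2(1+\max_M|\eta|_{\omega_0})$,
\[
\Delta_\varphi(-F+\lambda\varphi) \leq \|f\|_0 + \lambda n + (\max_M|\eta|_{\omega_0}-\lambda)\, tr_\varphi g \leq \|f\|_0 + \lambda n - tr_\varphi g.
\]

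Next, at a point $p_0$ where $-F+\lambda\varphi$ attains its maximum we have $\Delta_\varphi(-F+\lambda\varphi)\leq 0$, hence $tr_\varphi g \leq \|f\|_0 + \lambda n$ at $p_0$. By the arithmetic–geometric mean inequality applied to the eigenvalues of $\omega_\varphi$ relative to $\omega_0$, a bound on $tr_\varphi g$ together with equation (\ref{cscK-new1}) (which says $\prod_i (1+\varphi_{i\bar i}) = e^F$) gives a \emph{lower} bound on $e^F$ at $p_0$: indeed $e^{-F} = \prod_i (1+\varphi_{i\bar i})^{-1} \leq \big(\tfrac1n tr_\varphi g\big)^n \leq \big(\tfrac1n(\|f\|_0+\lambda n)\big)^n$, so $F(p_0) \geq -n\log\!\big(\tfrac1n(\|f\|_0+\lambda n)\big)$. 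Therefore $(-F+\lambda\varphi)(p_0) \leq n\log\!\big(\tfrac1n(\|f\|_0+\lambda n)\big) + \lambda\sup_M\varphi = n\log\!\big(\tfrac1n(\|f\|_0+\lambda n)\big)$ since $\sup_M\varphi = 0$. Combining with the bound $\|\varphi\|_0\leq C_{3.3}$ from Corollary \ref{c1.2}, we conclude $-F \leq -F+\lambda\varphi + \lambda|\varphi| \leq n\log\!\big(\tfrac1n(\|f\|_0+\lambda n)\big) + \lambda C_{3.3} =: C_{3.4}$ everywhere, which is the desired lower bound on $F$; the companion bound on $e^{-F}$ (or the stated form) then follows by exponentiating, and is as in Corollary 5.3 / Proposition of \cite{cc1}.

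The main obstacle, such as it is, is purely bookkeeping: one must make sure the coefficient of $tr_\varphi g$ is strictly negative (this dictates the choice $\lambda = 2(1+\max_M|\eta|_{\omega_0})$, though any $\lambda > \max_M|\eta|_{\omega_0}$ suffices), and one must use that $\|\varphi\|_0$ is already controlled — which is exactly what Corollary \ref{c1.2} provides via Kołodziej's estimate. There is no new analytic difficulty beyond the cscK case treated in \cite{cc1}; the point $\eta\neq 0$ only enters through the constant $\lambda$ and the explicit value of $C_{3.4}$. I would present this briefly, noting it follows \cite{cc1} essentially verbatim.
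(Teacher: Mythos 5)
Your argument for the lower bound on $F$ has a sign error that breaks the maximum-principle step. You consider the \emph{maximum} of $-F+\lambda\varphi$ with $\lambda=2(1+\max_M|\eta|_{\omega_0})>0$, and you correctly derive
\[
\Delta_\varphi(-F+\lambda\varphi)\le \|f\|_0+\lambda n+(\max_M|\eta|_{\omega_0}-\lambda)\,tr_\varphi g \le \|f\|_0+\lambda n - tr_\varphi g.
\]
But at a maximum of $-F+\lambda\varphi$ you have $\Delta_\varphi(-F+\lambda\varphi)\le 0$. Both inequalities bound the same quantity from above, so they cannot be chained to yield $tr_\varphi g\le \|f\|_0+\lambda n$; that step is a non sequitur. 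If you instead try to extract a lower bound, $\Delta_\varphi(-F+\lambda\varphi)\ge -\|f\|_0+\lambda n-(\max_M|\eta|_{\omega_0}+\lambda)\,tr_\varphi g$, you only get a \emph{lower} bound on $tr_\varphi g$ at the maximum, which is useless. The sign of the $\varphi$-term is forced: to bound $-F$ from above you must work with $-F-\lambda\varphi$ at its maximum (equivalently $F+\lambda\varphi$ at its minimum), since then $\Delta_\varphi(-F-\lambda\varphi)\ge -\|f\|_0-\lambda n+(\lambda-\max_M|\eta|_{\omega_0})\,tr_\varphi g$ has a \emph{positive} coefficient in front of $tr_\varphi g$, and the maximum principle gives $tr_\varphi g(p_0)\le \|f\|_0+\lambda n$. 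This is exactly what the paper does (``we compute $\Delta_\varphi(F+\lambda\varphi)$ to estimate the lower bound of $F$''). The remainder of your argument — AM--GM giving $e^{-F}\le(tr_\varphi g/n)^n$ at $p_0$, then propagating via the extremum and the bound $\|\varphi\|_0\le C_{3.3}$ — is fine once the sign of $\lambda$ is corrected.

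A secondary inaccuracy: you assert that the upper bound $e^F\le C_{3.5}$ ``already follows from Corollary~\ref{c1.2}.'' Corollary~\ref{c1.2} only gives $\int_M e^{qF}\,dvol_g\le C_{3.2}$ for each finite $q$ and the $L^\infty$ bounds on $\varphi,\psi$; it does not by itself give a pointwise bound on $F$. As the paper notes, the pointwise upper bound requires Theorem~\ref{t1.2} (which gives $F+\eps\psi-2(1+\max_M|\eta|_{\omega_0})\varphi\le C_3$) \emph{together with} the $L^\infty$ bounds on $\psi$ and $\varphi$ from Corollary~\ref{c1.2}.
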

This corresponds to Proposition 2.1 and Corollary 5.4 in the first paper.
Same as in cscK case, we compute $\Delta_{\varphi}(F+\lambda\varphi)$ to estimate the lower bound of $F$, and the upper bound follows from Theorem \ref{t1.2} and Corollary \ref{c1.2}.

Combining Corollary \ref{c1.2} and Proposition \ref{p1.3}, we obtain an estimate for $||\varphi||_0$ and $||F||_0$ in terms of the entropy bound $\int_Me^FFdvol_g$, $||f||_0$, $\max_M|\eta|_{\omega_0}$ and the background metric $g$ only.
As before, we have the following ``partial $C^1$ estimate".
\begin{prop}\label{p1.4}
There exists a constant $C_{3.6}$, depending only on $||\varphi||_0$, $||f||_0$, $\max_M|\eta|_{\omega_0}$, and the background metric $g$, such that
\begin{equation}
\frac{|\nabla\varphi|^2}{e^F}\leq C_{3.6}.
\end{equation}
\end{prop}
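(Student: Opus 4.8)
The plan is to estimate $w:=e^{-F}|\nabla\varphi|^2$ by a maximum principle argument for the operator $\Delta_\varphi$, which after the reductions of the previous subsections may be used freely. Concretely, I would apply the maximum principle to the auxiliary function
\[
\beta:=e^{-F-\lambda\varphi}\big(|\nabla\varphi|^2+1\big),
\]
where $\lambda>0$ is a large constant to be fixed in terms of $\max_M|\eta|_{\omega_0}$ and the curvature of the background metric $g$; let $p_0\in M$ be a point where $\beta$ attains its maximum. The role of the weight $e^{-\lambda\varphi}$ is, via $\Delta_\varphi\varphi=n-tr_{\varphi}g$, to manufacture a large positive multiple of $tr_{\varphi}g$, while the role of the factor $e^{-F}$ is explained below.

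At $p_0$ the computation of $\Delta_\varphi\beta$ rests on three ingredients. First, the Bochner--Weitzenb\"ock identity for $|\nabla\varphi|^2$, combined with the Monge--Amp\`ere equation (\ref{cscK-new1}) in its differentiated form $g_\varphi^{k\bar l}\varphi_{pk\bar l}=F_p$: this yields
\[
\Delta_\varphi|\nabla\varphi|^2=Q+2\,\mathrm{Re}\,\langle\partial F,\partial\varphi\rangle+O\big((tr_{\varphi}g)\,|\nabla\varphi|^2\big),
\]
where $Q\ge0$ is the full Hessian term and the implied constant depends only on $g$; the key feature is that only \emph{first} derivatives of $F$ occur. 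Second, the equation (\ref{cscK-new2}), giving $\Delta_\varphi F=-f+tr_{\varphi}\eta$ and hence $|\Delta_\varphi F|\le\|f\|_0+\max_M|\eta|_{\omega_0}\,tr_{\varphi}g$. Third, $\Delta_\varphi\varphi=n-tr_{\varphi}g$. Assembling these, and using $\nabla\beta(p_0)=0$ to substitute $\nabla|\nabla\varphi|^2=(|\nabla\varphi|^2+1)\nabla(F+\lambda\varphi)$ into all first--order terms, the two contributions involving $|\nabla_\varphi F|^2_\varphi$ --- one coming from $\Delta_\varphi(e^{-F})$, one from the gradient cross term --- combine so that $|\nabla_\varphi F|^2_\varphi$ enters with a nonpositive coefficient; this is precisely the same cancellation that made the critical term in (\ref{3.7}) work in the proof of Proposition \ref{p1.1}, and it is exactly why the weight $e^{-F}$ is inserted. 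Choosing $\lambda$ large enough that the coefficient of $tr_{\varphi}g$ becomes a positive multiple of $(|\nabla\varphi|^2+1)$, one is left at $p_0$ with an inequality of the shape
\[
0\ge Q+2\,\mathrm{Re}\,\langle\partial F,\partial\varphi\rangle+c_0\,(|\nabla\varphi|^2+1)\,tr_{\varphi}g-C\,(|\nabla\varphi|^2+1),
\]
with $c_0>0$ and $C$ depending only on $\|f\|_0$, $\max_M|\eta|_{\omega_0}$ and $g$.

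The hard part will be to dispose of the surviving first-order-in-$F$ term $2\,\mathrm{Re}\,\langle\partial F,\partial\varphi\rangle$, since at this stage we have no control whatsoever on $\nabla F$. I would handle it by Cauchy--Schwarz, trading the two metrics through $|\partial F|^2_{\omega_0}\le(n+\Delta\varphi)\,|\nabla_\varphi F|^2_\varphi$ and absorbing the result into the nonnegative Hessian term $Q$ and into the good term $c_0(|\nabla\varphi|^2+1)tr_{\varphi}g$, using once more the maximum-point identity; this is the step which in the cscK case of \cite{cc1} is the technical core. What then survives is a bound $|\nabla\varphi|^2(p_0)\,tr_{\varphi}g(p_0)\le C$. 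Combining it with the inequality $tr_{\varphi}g\ge n\,e^{-F/n}$ (AM--GM applied to (\ref{cscK-new1})) and the lower bound $F\ge-C_{3.4}$ of Proposition \ref{p1.3}, one obtains $w(p_0)=e^{-F}|\nabla\varphi|^2(p_0)\le C_{3.6}$, hence $e^{-F}|\nabla\varphi|^2\le C_{3.6}$ on all of $M$ with $C_{3.6}$ of the asserted dependence. Since the only new features relative to the cscK case are the harmless extra terms $\|f\|_0$ and $\max_M|\eta|_{\omega_0}$, I expect the argument of \cite{cc1} to transfer essentially verbatim.
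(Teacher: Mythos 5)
Your overall setup (maximum principle for $e^{-F-\lambda\varphi}(|\nabla\varphi|^2+\mathrm{const})$, $\lambda$ large, Bochner plus the two equations, and the endgame via $tr_\varphi g\geq n e^{-F/n}$ and Proposition \ref{p1.3}) is of the same general type as the paper's, which uses the weight $e^{A}$ with $A=-(F+\lambda\varphi)+\tfrac12\varphi^2$ and the additive constant $K=10$. The endgame you describe is sound. However, your description of the \emph{mechanism} in the middle is wrong in two related ways, and the plan you give for the hard step would not close.

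First, the ``crucial cancellation'' here is not a cancellation of $|\nabla_\varphi F|^2_\varphi$ terms, and it is not the same cancellation as in the proof of Proposition \ref{p1.1}. In Proposition \ref{p1.1} the cancellation is between the $F_{i\bar j}F_iF_{\bar j}$ pieces of $Ric_\varphi$ and the $B'$-term (a second-derivative-of-$F$ cancellation). Here, when you expand $\Delta_\varphi\big(e^{A}(|\nabla\varphi|^2+K)\big)e^{-A}$ directly (without substituting $\nabla\beta=0$), the only $|\nabla_\varphi F|^2_\varphi$ that appears is the manifestly nonnegative complete square $|\nabla_\varphi(F+\lambda\varphi)-\varphi\nabla_\varphi\varphi|^2_\varphi(|\nabla\varphi|^2+K)$; there is nothing of that form that needs to be cancelled. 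The genuinely dangerous terms are linear in $\nabla F$: the Bochner/differentiated Monge--Amp\`ere term produces $2\Re(F_\alpha\varphi_{\bar\alpha})$, and the cross term $2\Re\,\nabla_\varphi(e^A)\cdot_\varphi\nabla_\varphi|\nabla\varphi|^2$ produces $-2\Re\big(\tfrac{F_i\varphi_{\bar i}\varphi_{i\bar i}}{1+\varphi_{i\bar i}}\big)$. These two combine via $1-\tfrac{\varphi_{i\bar i}}{1+\varphi_{i\bar i}}=\tfrac{1}{1+\varphi_{i\bar i}}$ into the single residual $2\Re\big(\tfrac{F_i\varphi_{\bar i}}{1+\varphi_{i\bar i}}\big)$, now with an extra factor $(1+\varphi_{i\bar i})^{-1}$. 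This residual is then absorbed by Cauchy--Schwarz into the \emph{retained} positive piece $K\,|\nabla_\varphi(F+\lambda\varphi)-\varphi\nabla_\varphi\varphi|^2_\varphi=K\sum_i\tfrac{|F_i+\lambda\varphi_i-\varphi\varphi_i|^2}{1+\varphi_{i\bar i}}$ (only the $|\nabla\varphi|^2$-multiple of the complete square is dropped), which is why $K$ is chosen large. In contrast, your proposal substitutes $\nabla|\nabla\varphi|^2=(|\nabla\varphi|^2+1)\nabla(F+\lambda\varphi)$ at the maximum; if you actually carry that out, $\Delta_\varphi\big(e^{-F-\lambda\varphi}(|\nabla\varphi|^2+1)\big)e^{F+\lambda\varphi}$ becomes $(\Delta_\varphi(F+\lambda\varphi)-|\nabla_\varphi(F+\lambda\varphi)|^2_\varphi)(|\nabla\varphi|^2+1)+\Delta_\varphi|\nabla\varphi|^2$, so $|\nabla_\varphi F|^2_\varphi$ enters the \emph{lower bound} with a strictly \emph{negative} coefficient. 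That is not ``good news'' --- it destroys the estimate, since you then have an uncontrolled negative term.

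Second, your plan for the surviving $2\Re\langle\partial F,\partial\varphi\rangle$ does not work. Cauchy--Schwarz as you propose yields $\eps(n+\Delta\varphi)|\nabla_\varphi F|^2_\varphi+\tfrac{1}{\eps}|\nabla\varphi|^2$, and the first piece cannot be absorbed into either the pointwise Hessian term $Q$ (which involves only derivatives of $\varphi$) or $c_0(|\nabla\varphi|^2+1)tr_\varphi g$ (which involves no $\nabla F$ at all). At this point of the proof one has no bound on $\nabla F$, and the only positive source of $|\nabla_\varphi F|^2_\varphi$ is the complete square --- precisely the term your substitution inverted in sign. So the absorption you invoke is exactly what fails. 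The correct route is the algebraic first-order cancellation described above, after which the residual $\tfrac{F_i\varphi_{\bar i}}{1+\varphi_{i\bar i}}$ is small enough to be absorbed by $K$ times the complete square. Finally, the paper also keeps the extra $+\tfrac12\varphi^2$ in the exponent; this is not decorative --- it is what produces the additional positive term $|\nabla_\varphi\varphi|^2_\varphi(|\nabla\varphi|^2+K)$ that feeds the endgame.
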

This corresponds to Theorem 2.2 in the first paper.
\begin{proof}
Let $\lambda,\,K>0$ be constants to be determined, denote $A(F,\varphi)=-(F+\lambda\varphi)+\frac{1}{2}\varphi^2$.
Then we have
\begin{equation}
\begin{split}
&\Delta_{\varphi}\big(e^{A(F,\varphi)}(|\nabla\varphi|^2+K)\big)e^{-A}\\
&=\bigg(\frac{|-F_i-\lambda\varphi_i+\varphi\varphi_i|^2}{1+\varphi_{i\bar{i}}}-\Delta_{\varphi}(F+\lambda\varphi-\frac{1}{2}\varphi^2)+\frac{|\varphi_i|^2}{1+\varphi_{i\bar{i}}}\bigg)
\\
&\times(|\nabla\varphi|^2+K)
+\Delta_{\varphi}(|\nabla\varphi|^2)+\frac{2}{1+\varphi_{i\bar{i}}}Re\big((-F_i-\lambda\varphi_i+\varphi\varphi_i)(|\nabla\varphi|^2)_{\bar{i}}\big).
\end{split}
\end{equation}
Exactly the same calculation leading to (2.21) in the first paper \cite{cc1} now gives:
\begin{equation}\label{3.34}
\begin{split}
&\Delta_{\varphi}\big(e^A(|\nabla\varphi|^2+K)\big)e^{-A}\geq|\nabla_{\varphi}(F+\lambda\varphi)-\varphi\nabla_{\varphi}\varphi|_{\varphi}^2(|\nabla\varphi|^2+K)\\
&+|\nabla_{\varphi}\varphi|_{\varphi}^2(|\nabla\varphi|^2+K)+(f-\lambda n+n\varphi+\sum_i\frac{\lambda-\eta_{i\bar{i}}-\varphi}{1+\varphi_{i\bar{i}}})(|\nabla\varphi|^2+K)\\
&+\frac{-C_{3.7}|\nabla\varphi|^2+|\varphi_{i\alpha}|^2+\varphi_{i\bar{i}}^2}{1+\varphi_{i\bar{i}}}+(-2\lambda+2\varphi)|\nabla\varphi|^2+2Re\big((F_{\alpha}+\lambda\varphi_{\alpha}-\varphi\varphi_{\alpha})\varphi_{\bar{\alpha}}\big)\\
&+\frac{2Re\big((-F_i-\lambda\varphi_i+\varphi\varphi_i)(\varphi_{\alpha}\varphi_{\bar{\alpha}\bar{i}}+\varphi_{\bar{i}}\varphi_{i\bar{i}})\big)}{1+\varphi_{i\bar{i}}}.
\end{split}
\end{equation}
In the above, $C_{3.7}$ is a constant depending only on the background metric $g$.
Following (2.22) in the first paper \cite{cc1}, we drop the complete square in (\ref{3.34}), and observe the crucial cancellation in the last two terms:
$$
(F_{\alpha}+\lambda\varphi_{\alpha}-\varphi\varphi_{\alpha})\varphi_{\bar{\alpha}}+\frac{(-F_i-\lambda\varphi_i+\varphi\varphi_i)\varphi_{\bar{i}}\varphi_{i\bar{i}}}{1+\varphi_{i\bar{i}}}=\frac{(F_i+\lambda\varphi_i-\varphi\varphi_i)\varphi_{\bar{i}}}{1+\varphi_{i\bar{i}}}.
$$
Therefore, we get following estimate similar to (2.24) in the first paper:
\begin{equation}
\begin{split}
&\Delta_{\varphi}\big(e^A(|\nabla\varphi|^2+K)\big)e^{-A}\geq K\frac{|-F_i-\lambda\varphi_i+\varphi\varphi_i|^2}{1+\varphi_{i\bar{i}}}+\frac{|\varphi_i|^2(|\nabla\varphi|^2+K)}{1+\varphi_{i\bar{i}}}\\
&+\sum_i\frac{\lambda-\eta_{i\bar{i}}-\varphi}{1+\varphi_{i\bar{i}}}(|\nabla\varphi|^2+K)+\big(-||f||_0-\lambda n+\varphi)(|\nabla\varphi|^2+K)\\
&-C_{3.7}|\nabla\varphi|^2\sum_i\frac{1}{1+\varphi_{i\bar{i}}}+\frac{\varphi_{i\bar{i}}^2}{1+\varphi_{i\bar{i}}}+(-2\lambda+2\varphi)|\nabla\varphi|^2\\
&+2Re\bigg(\frac{(F_i+\lambda\varphi_i-\varphi\varphi_i)\varphi_{\bar{i}}}{1+\varphi_{i\bar{i}}}\bigg).
\end{split}
\end{equation}
Now we choose $K=10$ and $\lambda=10(\max_M|\eta|_{\omega_0}+||\varphi||_0+C_{3.7}+1)$, then we can estimate:
\begin{equation}
\sum_i\frac{\lambda-\eta_{i\bar{i}}-\varphi}{1+\varphi_{i\bar{i}}}(|\nabla\varphi|^2+K)-C_{3.7}|\nabla\varphi|^2\sum_i\frac{1}{1+\varphi_{i\bar{i}}}\geq10|\nabla\varphi|^2\sum_i\frac{1}{1+\varphi_{i\bar{i}}}.
\end{equation}
We estimate the terms $(-||f||_0-\lambda n+ n \varphi)(|\nabla\varphi|^2+K)$, $(-2\lambda+2\varphi)|\nabla\varphi|^2$, $\frac{(F_i+\lambda\varphi_i-\varphi\varphi_i)\varphi_{\bar{i}}}{1+\varphi_{i\bar{i}}}$ and $\frac{\varphi_{i\bar{i}}^2}{1+\varphi_{i\bar{i}}}$ in the same way as we did for cscK(see (2.25), (2.26), (2.27) and (2.29) in the first paper \cite{cc1} for details).
In the end, we obtain the following estimate:
\begin{equation}
\Delta_{\varphi}\big(e^A(|\nabla\varphi|^2+K)\big)e^{-A}\geq \frac{|\varphi_i|^2|\nabla\varphi|^2}{1+\varphi_{i\bar{i}}}+9|\nabla\varphi|^2\sum_i\frac{1}{1+\varphi_{i\bar{i}}}+e^{\frac{F}{n}}-C_{3.8}(|\nabla\varphi|^2+1).
\end{equation}
Here $C_{3.8}$ is a positive constant which has the dependence described in this proposition.
This estimate corresponds to (2.30) in our first paper, \cite{cc1}.

From here on, the argument is completely the same as in cscK case.
\end{proof}
As a result of this, we deduce the following $W^{2,p}$ estimates, which were what we needed for Proposition \ref{p1.1}.
\begin{prop}\label{p1.5}
For any $p>0$, there exists a constant $\alpha(p)>0$, depending only on $p$, and another constant $C_{3.8}$, depending on $||\varphi||_0$, $||f||_0$, $\max_M|\eta|_{\omega_0}$, the background metric $(M,g)$ and $p$, such that
\begin{equation}
\int_Me^{-\alpha(p)F}(n+\Delta\varphi)^pdvol_g\leq C_{3.8}.
\end{equation}
In particular,
\begin{equation}
||n+\Delta\varphi||_{L^p(dvol_g)}\leq C_{3.9}.
\end{equation}
Here $C_{3.9}$ has the same dependence as $C_{3.8}$ but additionally on $||F||_0$.
\end{prop}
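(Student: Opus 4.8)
The plan is to derive the weighted $L^p$ bound from the Aubin--Yau second-order estimate for $n+\Delta\varphi$ together with a Moser-type iteration, following the cscK case in \cite{cc1} almost verbatim. Write $u = n+\Delta\varphi$. The Aubin--Yau inequality applied to the Monge--Amp\`ere equation (\ref{cscK-new1}), combined with the already established bounds $||\varphi||_0,\,||F||_0\le C$ (Corollary \ref{c1.2}, Proposition \ref{p1.3}) which let one replace $tr_\varphi g \le C e^{-F}u^{n-1}$ by $C u^{n-1}$, yields a pointwise inequality of the schematic form
\[
\Delta_\varphi u \;\ge\; \Delta F \;-\; C\,u^{n},
\]
where $\Delta$ is the background Laplacian (more precisely one keeps $\Delta F$, or splits it as $\Delta F \ge -\tfrac{1}{C}\,\frac{|F_{i\bar i}|^2}{(1+\varphi_{i\bar i})^2} - C u^2$ as in the proof of Proposition \ref{p1.1}). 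The only term not yet under control is $\Delta F$: at this stage $F$ is merely bounded and no $C^{1,\alpha}$ or $W^{2,p}$ control on $F$ is available, so it cannot be estimated pointwise — this is the crux of the matter.

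To handle it I would, following the proof of Proposition \ref{p1.4}, work with the weighted quantity $v = e^{A(F,\varphi)} u$, $A(F,\varphi) = -(F+\lambda\varphi) + \tfrac12\varphi^2$, for $\lambda$ large depending on $\max_M|\eta|_{\omega_0}$, $||\varphi||_0$ and the background metric; the term $\lambda\, tr_\varphi g$ produced by $\Delta_\varphi(-\lambda\varphi)$ dominates the lower-order errors, and — via the coupled equation (\ref{cscK-new2}), which gives $\Delta_\varphi(-F) = f - tr_\varphi\eta$ — the $F$-derivatives entering through $A$ stay controlled. One then multiplies the resulting differential inequality for $v$ (or for $\log v$, as convenient) by suitable powers $v^{2p-1}$ and integrates over $M$ against $dvol_\varphi = e^F\, dvol_g$. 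The only surviving dangerous contribution, coming from $\Delta F$, takes the form $\int_M \phi\, e^F\, \Delta F\, dvol_g$ with $\phi \ge 0$; integration by parts with respect to the background metric turns it into the favorably signed $-\int_M \phi\, e^F |\nabla F|^2\, dvol_g$ plus a cross term $-\int_M e^F\, \nabla F \cdot \nabla\phi\, dvol_g$. Since $\nabla\phi$ only involves $\nabla F$, $\nabla\varphi$ and $\nabla u$, the $\nabla F$ part of the cross term is absorbed into the good term by Cauchy--Schwarz, the $\nabla\varphi$ part is controlled by the partial $C^1$ estimate $|\nabla\varphi|^2 \le C e^F$ (Proposition \ref{p1.4}), and the $\nabla u$ part is absorbed into the $\int v^{2p-1}|\nabla_\varphi v|_\varphi^2$ term arising from $\Delta_\varphi(v^{2p})$, exactly as in the Moser iteration of Proposition \ref{p1.1}. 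Every integral carrying extra negative powers of $e^F$ — these come from $tr_\varphi g \le C e^{-F} u^{n-1}$ and from passing between $dvol_\varphi$ and $dvol_g$ — is estimated using $\int_M e^{qF}\, dvol_g \le C_{3.2}$ of Corollary \ref{c1.2}, valid for every finite $q$; this is precisely why the final estimate must carry the weight $e^{-\alpha(p)F}$ with $\alpha(p)$ growing as $p \to \infty$.

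The outcome of this integration is a Sobolev-type inequality of the form $\big(\int_M v^{(p+\frac12)\chi}\big)^{1/\chi} \le C(p) \int_M v^{p+\frac12}(\text{weight})$ with a fixed $\chi > 1$; iterating it as in Proposition \ref{p1.1}, starting from the fact that $\int_M(n+\Delta\varphi)\,\omega_0^n$ is a fixed (background-dependent) constant (which, together with the bounds on $e^{\pm F}$ and $\varphi$, controls $v$ in $L^1$), gives $\int_M e^{-\alpha(p)F}(n+\Delta\varphi)^p\, dvol_g \le C_{3.8}$ for every finite $p$, with $\alpha(p)$ and $C_{3.8}$ of the stated form. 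The ``in particular'' assertion is then immediate, since $\int_M (n+\Delta\varphi)^p\, dvol_g \le e^{\alpha(p)||F||_0} \int_M e^{-\alpha(p)F}(n+\Delta\varphi)^p\, dvol_g \le C_{3.9}$, which explains the additional dependence of $C_{3.9}$ on $||F||_0$. I expect the main obstacle to be exactly the treatment of the $\Delta F$ term: the exponential weight $A(F,\varphi)$ must be chosen so that simultaneously the $tr_\varphi g$ terms have the correct sign, the cross terms in the integration by parts are absorbable using only the partial $C^1$ estimate, and no derivative of $f$ or $\eta$ is ever produced — this last point being the one genuine difference from the pure complex Monge--Amp\`ere setting and the reason the coupling with (\ref{cscK-new2}) cannot be dispensed with.
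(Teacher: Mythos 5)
Your overall strategy is the one the paper follows: compute $\Delta_{\varphi}$ of an exponentially weighted version of $n+\Delta\varphi$, where the weight involves $-F$ and $-\lambda\varphi$ so that the $\lambda\,tr_\varphi g$ term produced by $\Delta_\varphi(-\lambda\varphi)$ dominates the $tr_\varphi\eta$ and low-order errors; integrate $\Delta_\varphi(u^{2p+1})$ against $dvol_\varphi = e^F\,dvol_g$; move the surviving $\Delta F$ term by a background integration by parts; absorb the $\nabla\varphi\cdot\nabla F$ cross term using the partial $C^1$ estimate of Proposition \ref{p1.4} and the $\nabla u\cdot\nabla F$ cross term into the gradient term produced by $\Delta_\varphi(u^{2p+1})$; and finally bootstrap up in $p$. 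The paper's weight is $e^{-\kappa(F+\lambda\varphi)}$ with a free parameter $\kappa$, whereas you reuse the weight $A=-(F+\lambda\varphi)+\tfrac12\varphi^2$ from Proposition \ref{p1.4}; since $\varphi$ is bounded by Corollary \ref{c1.2} this is harmless, and the hidden $|\nabla F|^2$ with coefficient $\sim 2p\kappa$ coming out of $\nabla u\cdot\nabla F$ still appears with the correct sign even when $\kappa=1$. So up to the last step you are reproducing the paper's argument.

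The last step is where there is a genuine problem. You claim the integration yields ``a Sobolev-type inequality $\bigl(\int_M v^{(p+\frac12)\chi}\bigr)^{1/\chi}\le C(p)\int_M v^{p+\frac12}(\text{weight})$ with a \emph{fixed} $\chi>1$; iterating it as in Proposition \ref{p1.1}, starting from $\int_M(n+\Delta\varphi)\,\omega_0^n=\text{const}$.'' But the Moser-type Sobolev iteration in Proposition \ref{p1.1} requires converting $\int u^{2p-1}|\nabla_\varphi u|_\varphi^2\,e^F\,dvol_g$ into an $L^{2-\eps}$ bound on $\nabla u$ with respect to the \emph{background} metric; that conversion via H\"older produces the factor $K_\eps$ containing $\int_M(n+\Delta\varphi)^{\,q(\eps)}\,dvol_g$, which is exactly the hypothesis $\|n+\Delta\varphi\|_{L^{p_n}}<\infty$ assumed in Proposition \ref{p1.1} --- i.e.\ the $W^{2,p}$ bound you are trying to prove. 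Using Sobolev here is circular. (One can also check directly that the multiplicative gain from Sobolev, $(2p-1)\tfrac{n}{n-1}$ versus $2p$, only exceeds $1$ once $p>n-\tfrac12$, so the iteration cannot even get started from $L^1$.) What the paper actually does --- implicitly, via ``the rest of the calculation is exactly the same as the cscK case'' --- is drop the gradient term in (\ref{3.44nn}), which has the good sign, and exploit the power gap between the good term $\int u^{2p}(n+\Delta\varphi)^{\frac{n}{n-1}}e^{\dots}$ and the bad term $\int u^{2p}(n+\Delta\varphi)e^{\dots}$: after Young's inequality this gives, for every $p$, a bound $\int(n+\Delta\varphi)^{2p+\frac{n}{n-1}}e^{\dots}\le C\int(n+\Delta\varphi)^{2p}e^{\dots}$, an \emph{additive} gain of $\tfrac{n}{n-1}$ in the exponent. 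Iterating this finitely many times from the background-dependent $L^1$ bound reaches any finite $p$, with the $F$-weights accumulating to give $\alpha(p)$, and no Sobolev inequality is needed at this stage. If you replace your Sobolev iteration by this step-up, the rest of your argument goes through. One last small point: you invoke Corollary \ref{c1.2} (the estimate $\int e^{qF}\le C_{3.2}$, whose constant depends on the entropy bound) to control negative powers of $e^F$, but since Proposition \ref{p1.3} already bounds $\|F\|_0$ in terms of $\|\varphi\|_0$, $\|f\|_0$ and $\max_M|\eta|_{\omega_0}$ alone, using that instead keeps the constant $C_{3.8}$ with exactly the dependence claimed in the statement.
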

This corresponds to Theorem 3.1 in the first paper.
\begin{proof}
We start by calculating:
\begin{equation}
\begin{split}
\Delta_{\varphi}\big(&e^{-\kappa(F+\lambda\varphi)}(n+\Delta\varphi)\big)e^{\kappa(F+\lambda\varphi)}\\
&=\big(-\kappa\Delta_{\varphi}(F+\lambda\varphi)+\kappa^2|\nabla_{\varphi}(F+\lambda\varphi)|_{\varphi}^2\big)(n+\Delta\varphi)\\
&\quad\quad+\Delta_{\varphi}(n+\Delta\varphi)-2\kappa Re\bigg(\frac{(F_i+\lambda\varphi_i)(\Delta\varphi)_{\bar{i}}}{1+\varphi_{i\bar{i}}}\bigg).
\end{split}
\end{equation}
In the above, if we choose $\lambda$ so that $\lambda>2\max_M|\eta|_{\omega_0}$, then
\begin{equation}
-\kappa\Delta_{\varphi}(F+\lambda\varphi)=\kappa(R-\lambda n-tr_{\varphi}\eta+\lambda tr_{\varphi}g)\geq\kappa(-||f||_0-\lambda n)+\frac{\lambda\kappa}{2}tr_{\varphi}g.
\end{equation}
We calculate $\Delta_{\varphi}(n+\Delta\varphi)$ in exactly the same way as cscK case(see (3.4), (3.5), (3.7) in the first paper for details.)
Therefore, 
\begin{equation}
\begin{split}
\Delta_{\varphi}&\big(e^{-\kappa(F+\lambda\varphi)}(n+\Delta\varphi)\big)\geq e^{-\kappa(F+\lambda\varphi)}(\frac{\lambda\kappa}{2}-C_{3.91})(n+\Delta\varphi)\sum_i\frac{1}{1+\varphi_{i\bar{i}}}\\
&+\kappa e^{-\kappa(F+\lambda\varphi)}(-||f||_0-\lambda n)(n+\Delta\varphi)+e^{-\kappa(F+\lambda\varphi)}(\Delta F-R_g).
\end{split}
\end{equation}
In the above, $R_g$ is the scalar curvature of the background metric $\omega_0$, and $C_{3.91}$ depends only on the curvature bound of $\omega_0$.
This estimate is the analogue of (3.7) of our first paper, \cite{cc1}.
Next we use the estimate 
$$
(n+\Delta\varphi)\sum_i\frac{1}{1+\varphi_{i\bar{i}}}\geq e^{-\frac{F}{n-1}}(n+\Delta\varphi)^{1+\frac{1}{n-1}}.
$$
Denote $u=e^{-\kappa(F+\lambda\varphi)}(n+\Delta\varphi)$, as long as $\frac{\lambda\kappa}{2}-C_{3.91}>0$, we have
\begin{equation}
\begin{split}
\Delta_{\varphi}u&\geq e^{-(\kappa+\frac{1}{n-1})F-\kappa \lambda\varphi}(\frac{\lambda\kappa}{2}-C_{3.91})(n+\Delta\varphi)^{\frac{n}{n-1}}\\
&-\kappa e^{-\kappa (F+\lambda\varphi)}(\lambda n+||f||_0)(n+
\Delta\varphi)+e^{-\kappa(F+\lambda\varphi)}(\Delta F-R_g).
\end{split}
\end{equation}
For any $p\geq0$, we integrate $\Delta_{\varphi}(u^{2p+1})$ with respect to $dvol_{\varphi}=e^Fdvol_g$, we obtain
\begin{equation}\label{3.44nn}
\begin{split}
&\int_Me^{-(\kappa-\frac{n-2}{n-1})F-\kappa \lambda\varphi}\big(\frac{\lambda\kappa}{2}-C_{3.91}\big)(n+\Delta\varphi)^{\frac{n}{n-1}}u^{2p}dvol_g\\
&+\int_M2pu^{2p-1}|\nabla_{\varphi}u|_{\varphi}^2e^Fdvol_g+\int_Me^{(1-\kappa)F-\kappa \lambda\varphi}u^{2p}\Delta Fdvol_g\\
&\leq\int_M\kappa e^{(1-\kappa)F-\kappa \lambda\varphi}(\lambda n+||f||_0)(n+\Delta\varphi)u^{2p}dvol_g\\
&+\int_Me^{(1-\kappa)F-\lambda\kappa\varphi}|R_g|u^{2p}dvol_g.
\end{split}
\end{equation}
Above estimate is the analogue of (3.9) of our first paper, \cite{cc1}.
 In (\ref{3.44nn}), we need to handle the term involving $\Delta F$ via integration by parts, namely,
\begin{equation}
\begin{split}
\int_M&e^{(1-\kappa)F-\kappa\lambda\varphi}u^{2p}\Delta Fdvol_g=\int_M(\kappa-1)e^{(1-\kappa)F-\kappa\lambda\varphi}u^{2p}|\nabla F|^2dvol_g\\
&+\int_M\kappa\lambda u^{2p}\nabla\varphi\cdot\nabla Fdvol_g-\int_M2pe^{(1-\kappa)F-\kappa\lambda\varphi}u^{2p-1}\nabla u\cdot\nabla Fdvol_g.
\end{split}
\end{equation}

In order to estimate the term involving $\nabla\varphi$, we use Proposition \ref{p1.4}.
The rest of the calculation is exactly the same as cscK case.
\end{proof}

If we combine the results in Proposition \ref{p1.1}, Corollary \ref{c1.2}, Proposition \ref{p1.3}, Proposition \ref{p1.4} and Proposition \ref{p1.5}, we obtain a proof for Theorem \ref{t1.1}.

\section{$K$-energy proper implies existence of cscK}

Let the functional $I$ be as given by (\ref{IJ}), we define
$$
\mathcal{H}_0=\{\varphi\in\mathcal{H}:I(\varphi)=0\}.
$$
Following \cite{Tian97}  \cite{DR}, we introduce the following notion of properness:
\begin{defn}\label{d4.1}
We say the $K$-energy is proper with respect to $L^1$ geodesic distance
 if for any sequence $\{\varphi_i\}_{i\geq1}\subset\mathcal{H}_0$, $\lim_{i\rightarrow\infty}d_1(0,\varphi_i)=\infty$ implies $\lim_{i\rightarrow\infty}K(\varphi_i)=\infty$.
\end{defn}
The goal of this section is to prove the following existence result of cscK metrics.
\begin{thm}\label{t2.2}
Let $\beta\geq0$ be a smooth closed $(1,1)$ form. Let $K_{\beta}$ be defined as in (\ref{K-beta}). Suppose $K_{\beta}$ is proper with respect to geodesic distance $d_1$, then there exists a twisted cscK metric with respect to $\beta$(i.e, solves (\ref{2.6n})).
\end{thm}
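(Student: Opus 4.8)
\noindent\emph{Proof plan.} The natural approach is the continuity method along the twisted path (\ref{2.13nn}) with the choice $\chi=\omega_0$. Since at $t=0$ that equation reduces to $tr_\varphi\omega_0=\underline{\omega_0}=n$, which is solved by $\varphi\equiv0$, the set
\[
S:=\{t\in[0,1]:\ (\ref{2.13nn})\text{ with }\chi=\omega_0\text{ admits a smooth solution}\}
\]
contains $0$, and by Lemma \ref{l2.2} together with Remark \ref{r2.3} it is relatively open in $[0,1)$. So it would suffice to prove $S$ is closed in $(0,1]$: then $S\cap[0,1)$ is open, closed and nonempty in the connected interval $[0,1)$, hence equals $[0,1)$, and one more use of closedness gives $1\in S$, at which point (\ref{2.13nn}) is exactly the twisted cscK equation (\ref{2.6n}). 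The whole problem thus reduces to an a priori estimate, uniform on $[\delta,1]$ for each fixed $\delta>0$, for smooth solutions of (\ref{2.13nn}); for this I would invoke Theorem \ref{t1.1} and Corollary \ref{c1.1}. Once $t\geq\delta$, the data $f=\underline R-\underline\beta-\tfrac{1-t}{t}\underline{\omega_0}$ and $\eta=Ric-\beta-\tfrac{1-t}{t}\omega_0$ in the coupled form (\ref{g-twisted1})--(\ref{g-twisted2}) are smooth with $\|f\|_0$, $\max_M|\eta|_{\omega_0}$ controlled, so the only remaining hypothesis of Theorem \ref{t1.1} is an upper bound on the entropy $\int_M\log(\omega_\varphi^n/\omega_0^n)\,\omega_\varphi^n/n!$ ($=\int_M e^{F}F\,dvol_g$ in the notation of (\ref{g-twisted1})) of the solution.

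The heart of the argument, and the step I expect to be the main obstacle, is extracting that entropy bound from the properness hypothesis. Fix $t\in(0,1]$ and a smooth solution $\varphi_t$. First I would upgrade ``critical point'' to ``minimizer'': $\varphi_t$ is a smooth critical point of $E_t:=tK_\beta+(1-t)J_{\omega_0}=t(K+J_\beta)+(1-t)J_{\omega_0}$, each summand is convex along finite-energy geodesics (the convexity of $K$ and of $J_\chi$ recalled in Section 2, using $\beta\geq0$), and the first variation of $E_t$ at $\varphi_t$ vanishes by (\ref{2.13nn}); a standard convexity argument would then give $E_t(\varphi_t)\leq E_t(\psi)$ for all $\psi$, in particular $E_t(\varphi_t)\leq E_t(0)=0$. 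Since $\varphi\equiv0$ is a critical point, hence by convexity a minimizer, of $J_{\omega_0}$ with $J_{\omega_0}(0)=0$, one has $J_{\omega_0}\geq0$, so
\[
K_\beta(\varphi_t)\ \leq\ -\tfrac{1-t}{t}\,J_{\omega_0}(\varphi_t)\ \leq\ 0\qquad\text{for all }t\in(0,1].
\]
Normalizing $\varphi_t$ by $I(\varphi_t)=0$ (which changes neither the equation nor $K_\beta$, $J_{\omega_0}$), the assumed $d_1$-properness of $K_\beta$ (Definition \ref{d4.1}) would then force $d_1(0,\varphi_t)\leq C_1$ uniformly in $t$ (otherwise $K_\beta(\varphi_t)$ would be unbounded above along a subsequence). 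As $J_\beta$ and $J_{-Ric}$ extend to $d_1$-continuous functionals on $\mathcal E^1$ (reducing $J_{-Ric}$ to nonnegative forms by linearity of $J_\chi$ in $\chi$), they are bounded on the $d_1$-ball of radius $C_1$; feeding this together with $K_\beta(\varphi_t)\leq0$ into the decomposition $K_\beta=K+J_\beta$, $K=(\text{entropy})+J_{-Ric}$ of (\ref{K}), (\ref{K-beta}) would bound the entropy from above, uniformly in $t\in(0,1]$. (It is bounded below by $0$ by Jensen, and is unaffected by the additive normalization of $\varphi_t$.)

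With that in hand the rest should be routine. For $t\in[\delta,1]$, Theorem \ref{t1.1} gives $\tfrac1{C_0}\omega_0\leq\omega_{\varphi_t}\leq C_0\omega_0$, Corollary \ref{c1.1} gives $\|\varphi_t\|_{W^{4,p}}+\|F_t\|_{W^{2,p}}\leq C$ for every finite $p$, and bootstrapping (Remark \ref{r3.2}, since $f,\eta$ are smooth) gives uniform $C^k$ bounds for all $k$, all constants uniform on $[\delta,1]$ after re-normalizing $\sup_M\varphi_t=0$. Hence if $t_i\to t_*\in(0,1]$ with $t_i\in S$, then eventually $t_i\geq t_*/2$, a subsequence of $\{\varphi_{t_i}\}$ converges in $C^\infty$ to a smooth $\varphi_*$ with $\omega_{\varphi_*}\geq\tfrac1{C_0}\omega_0>0$ solving (\ref{2.13nn}) at $t_*$, so $t_*\in S$. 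This closes the continuity argument and produces a smooth solution of (\ref{2.6n}), i.e.\ a twisted cscK metric with respect to $\beta$. Besides the entropy bound above, the only delicate points are the rigorous justification that the smooth critical point is a genuine minimizer (handled by the geodesic convexity recalled in Section 2) and the care needed because the constants in Theorem \ref{t1.1} degenerate as $t\to0^+$, which is harmless since closedness is only needed on $(0,1]$.
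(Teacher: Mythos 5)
Your proposal is correct and follows essentially the same route as the paper: continuity method along (\ref{2.13nn}) with $\chi=\omega_0$, openness from Lemma~\ref{l2.2}, the key observation that solutions along the path minimize $tK_\beta+(1-t)J_{\omega_0}$ (the paper's Corollary~\ref{c2.6}), properness forcing a uniform $d_1$-bound, then Lemma~\ref{l2.5}-type control of $J_{-Ric}$, $J_\beta$, $J_{\omega_0}$ to extract the entropy bound required by Theorem~\ref{t1.1} and Corollary~\ref{c1.1} for compactness/closedness. The only cosmetic difference is that the paper packages this chain through Lemmas~\ref{l2.4}, \ref{l2.5}, \ref{l2.7}, and bounds $J_{-Ric}$ directly without your linearity reduction, but the substance and order of the argument are identical.
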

For the converse direction, we have
\begin{thm}(main theorem of \cite{Darvas1605} and Theorem 4.13 of \cite{Darvas1602})\label{t4.2n}
Let $\beta$ be as in the previous theorem. Suppose that either
\begin{enumerate}
\item $\beta>0$;
\\
or
\item $\beta=0$ and $Aut_0(M,J)=0$.\\
Suppose there exists a twisted cscK metric with respect to $\beta$(i.e solves (\ref{2.6n})), then the functional $K_{\beta}$ is proper with respect to geodesic distance $d_1$.
\end{enumerate}
\end{thm}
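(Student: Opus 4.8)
Since the twisted cscK equation (\ref{2.6n}) is exactly the Euler--Lagrange equation of $K_\beta=K+J_\beta$, and $K_\beta$ is convex along finite energy geodesics (Theorem \ref{t2.4new} together with the convexity of $J_\beta$), any solution $\varphi_0$ of (\ref{2.6n}) is a global minimizer of $K_\beta$ on $\mathcal{E}^1$: for $\psi\in\mathcal{H}$ the $C^{1,1}$ geodesic $u_t$ from $\varphi_0$ to $\psi$ has $K_\beta(u_t)$ convex with vanishing right derivative at $t=0$ (the first variation being the left side of (\ref{2.6n}), which vanishes at $\varphi_0$), hence $K_\beta(\psi)\ge K_\beta(\varphi_0)$; the case $\psi\in\mathcal{E}^1$ follows by approximation and $d_1$-lower semicontinuity. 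In particular $\inf_{\mathcal{E}^1}K_\beta=K_\beta(\varphi_0)>-\infty$. By Theorem \ref{t2.3} and the triangle inequality (noting $d_1(0,\varphi_0)<\infty$) it suffices to establish the quantitative coercivity: there are $\epsilon,C>0$ with $K_\beta(\varphi)\ge \epsilon\,d_1(\varphi_0,\varphi)-C$ for all $\varphi\in\mathcal{E}^1_0$. Suppose this fails, so there are $\varphi_j\in\mathcal{E}^1_0$ with $\ell_j:=d_1(\varphi_0,\varphi_j)\to\infty$ and $K_\beta(\varphi_j)\le C_0$ for a fixed $C_0\ge K_\beta(\varphi_0)$.

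Join $\varphi_0$ to $\varphi_j$ by the finite energy geodesic of Theorem \ref{t2.2new}, reparametrized to unit speed as $\gamma_j:[0,\ell_j]\to\mathcal{E}^1$; since $I$ is affine along finite energy geodesics and vanishes at both endpoints, $\gamma_j(s)\in\mathcal{E}^1_0$. By convexity of $K_\beta$ along $\gamma_j$ and minimality of $\varphi_0=\gamma_j(0)$, the map $s\mapsto K_\beta(\gamma_j(s))$ is convex and nondecreasing, so the chord inequality gives, for each fixed $T>0$,
\[
K_\beta(\varphi_0)\le K_\beta(\gamma_j(T))\le \frac{\ell_j-T}{\ell_j}\,K_\beta(\varphi_0)+\frac{T}{\ell_j}\,K_\beta(\varphi_j)\ \longrightarrow\ K_\beta(\varphi_0)\quad(j\to\infty),
\]
together with the uniform bound $K_\beta(\gamma_j(T))\le C_0$. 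Since $\gamma_j(T)$ lies in the $d_1$-ball of radius $T+d_1(0,\varphi_0)$ about $0$, and $K(\gamma_j(T))=K_\beta(\gamma_j(T))-J_\beta(\gamma_j(T))$ is uniformly bounded for fixed $T$ (the $d_1$-continuous functional $J_\beta$ being bounded on $d_1$-bounded sets by \cite{Darvas1602}), Lemma \ref{l2.6new} and a diagonal extraction produce a subsequence along which $\gamma_j\to\gamma_\infty$ in $d_1$, locally uniformly on $[0,\infty)$, where $\gamma_\infty:[0,\infty)\to\mathcal{E}^1_0$ is a unit-speed locally finite energy geodesic ray with $\gamma_\infty(0)=\varphi_0$ (the restriction to each $[0,T]$ being a finite energy geodesic by Proposition \ref{p2.4new}). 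By $d_1$-lower semicontinuity of $K$ (Theorem \ref{t2.4new}) and $d_1$-continuity of $J_\beta$ we get $K_\beta(\gamma_\infty(s))\le\liminf_j K_\beta(\gamma_j(s))=K_\beta(\varphi_0)$, which with minimality forces $K_\beta\equiv K_\beta(\varphi_0)$ along $\gamma_\infty$. Thus $K_\beta$ is constant along a nontrivial finite energy geodesic ray.

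It remains to rule this out, which is the crux. Constancy of $K_\beta=K+J_\beta$ with $K$ and $J_\beta$ both convex along $\gamma_\infty$ forces each of $s\mapsto K(\gamma_\infty(s))$ and $s\mapsto J_\beta(\gamma_\infty(s))$ to be affine. In case (1), $\beta>0$: the Hessian of $J_\beta$ along a geodesic $u_s$ is the nonnegative quadratic form $\dot u_s\mapsto\int_M\beta\wedge\sqrt{-1}\,\partial\dot u_s\wedge\bar\partial\dot u_s\wedge\frac{\omega_{u_s}^{n-2}}{(n-2)!}$, which for $\beta>0$ vanishes only when $\partial\dot u_s\equiv0$, i.e. $\dot u_s$ is a function of $s$ alone; the normalization $I(\gamma_\infty(s))\equiv0$ then forces $\dot u_s\equiv0$, contradicting unit speed. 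In case (2), $\beta=0$ and $Aut_0(M,J)=0$: $K$ is affine along the nontrivial finite energy geodesic ray $\gamma_\infty$, and by the rigidity counterpart of the Berman--Berndtsson convexity theorem (\cite{Ber14-01}, \cite{Darvas1605}) this produces a nonzero holomorphic vector field on $M$ whose flow is a nontrivial one-parameter subgroup of $Aut_0(M,J)$ --- contradicting discreteness. In either case the failure of coercivity is impossible, so $K_\beta$ is proper with respect to $d_1$.

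The main obstacle is the rigidity invoked in the last paragraph, above all in case (2): upgrading ``affineness of the $K$-energy along a weak geodesic ray'' to an actual holomorphic vector field requires delicate regularity of the geodesic and analysis of the equality case in Berndtsson's positivity/subharmonicity argument --- this is the genuinely hard analytic input (the content of \cite{Darvas1605}, built on \cite{Ber14-01}). A secondary technical point is the limit-ray construction: one must verify that a $d_1$-limit of unit-speed finite energy geodesic segments of uniformly bounded energy is again a unit-speed locally finite energy geodesic ray, and that the normalization $I\equiv0$ is preserved under every limiting operation, so that the ``trivial'' direction $\dot u\equiv\mathrm{const}$ is genuinely excluded in deriving the contradiction.
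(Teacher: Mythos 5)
Your proof is correct in spirit but more elaborate than the paper's, and in the final rigidity step you reach for a slightly stronger statement than you actually need. The paper never constructs a full geodesic ray: starting from a sequence $\psi_i\in\mathcal{H}_0$ with $d_1(\varphi_0,\psi_i)\ge 1$ and $\tfrac{K_\beta(\psi_i)-K_\beta(\varphi_0)}{d_1(\psi_i,\varphi_0)}\to 0$, it just takes the point $\phi_i = c^i(1)$ at distance $1$ on the geodesic from $\varphi_0$ to $\psi_i$, uses convexity of $K_\beta$ to see $K_\beta(\phi_i)\le K_\beta(\varphi_0)+o(1)$, extracts a $d_1$-limit $\phi_\infty$ via Lemma \ref{l2.6new}, and concludes by lower semicontinuity that $\phi_\infty$ is a second minimizer at $d_1$-distance exactly $1$ from $\varphi_0$. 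The contradiction is then immediate from the uniqueness theorems for minimizers: Theorem 1.4 of \cite{Darvas1605} for $\beta=0$ (minimizers are smooth cscK and unique modulo $Aut_0$, hence unique here since $Aut_0 = 0$ and $I\equiv 0$ fixes the constant), and Theorem 4.13 of \cite{Darvas1602} for $\beta > 0$.

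Your ray construction works — it is in fact the technique the paper uses to prove the Donaldson-conjecture theorem (Theorem \ref{t3.1}), not the properness theorem — but you do not need the ray here, and more importantly you do not need the rigidity statement you invoke at the end. Having established that $K_\beta$ is constant along $\gamma_\infty$, the point $\gamma_\infty(1)$ is already a minimizer at distance $1$ from $\varphi_0$, and the very same uniqueness theorems close the argument. Instead, in case (2) you cite a ``rigidity counterpart of Berman--Berndtsson'' that upgrades affineness of $K$-energy along a \emph{weak infinite geodesic ray} to a nontrivial holomorphic vector field; that statement is genuinely harder than (and not literally contained in) the regularity/uniqueness results of \cite{Ber14-01} and \cite{Darvas1605} for minimizers, and the paper is careful to avoid it. Similarly in case (1) your Hessian computation for $J_\beta$ is only valid along smooth geodesics, not along finite energy geodesics in $\mathcal{E}^1$; what is actually available is the uniqueness of minimizers of $K_\beta$ for $\beta > 0$ from \cite{Darvas1602}, Theorem 4.13, which is what the paper uses. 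In short: replace the rigidity-of-rays argument in your last paragraph by the observation that $\gamma_\infty(1)$ is a minimizer and apply the weak-strong uniqueness theorems directly, and your proof becomes both complete and essentially equivalent to (if longer than) the paper's.
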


In this theorem,  the case $\beta=0$ and  $Aut_0(M,J)=0$ is the main result of \cite{Darvas1605}, and the case with $\beta>0$ follows from the uniqueness of minimizers of twisted $K$-energy when the twisting form is K\"ahler (c.f. \cite{Darvas1602}, Theorem 4.13).
For completeness, we will reproduce the proof in this paper.\\

First we prove Theorem \ref{t2.2}. For this we will use the continuous path (\ref{2.13nn}) to solve (\ref{2.6n}).
Put $\chi=\omega_0$ in (\ref{2.13nn}), define
\begin{equation}\label{2.18}
S=\{t_0\in[0,1]: (\ref{2.13nn}) \textrm{ has a smooth solution for any $t\in [0,t_0]$.}\}.
\end{equation}
\begin{rem} One may also consider the set $S'$, consisting of $t_0\in[0,1]$ for which (\ref{2.13nn}) has a solution with $t=t_0$. In general, $t_0\in S'$ does not imply $[0,t_0]\subset S'$.
For instance, in \cite{chen-Zeng14}, it is shown that if a cscK metric exists (i.e,
 (\ref{2.13nn})  can be solved at $t=1.$), then we can solve this equation for all $t$ sufficiently close to $1$, for any $\beta>0$.
 However, we can always
find a $\chi >0$ such that  (\ref{2.13nn})  has no solution with $t=0.\;$.
\end{rem}
By Lemma \ref{l2.2}, we know the set $S$ is relatively open in $[0,1]$.
Also when $t=0$, (\ref{2.13nn}) has a trivial solution, namely $\varphi=0$. In particular $S\neq\emptyset$. 
The only remaining issue for the continuity method is the closedness of $S$.
Due to Theorem \ref{t1.1}, we can conclude the following criterion for closedness:
\begin{lem}\label{l2.4}
Suppose $t_i\in S$, $t_i\nearrow t_*>0$, and let $\varphi_i$ be a solution to  (\ref{2.13nn})  with $t =t_i$.
Denote $F_i=\log\frac{\omega_{\varphi_i}^n}{\omega_0^n}$.
Suppose that $\sup_i\int_Me^{F_i}F_idvol_g<\infty$, then $t_*\in S$.
\end{lem}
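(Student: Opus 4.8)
The plan is to use the a priori estimates of Theorem \ref{t1.1} (and its upgrade Corollary \ref{c1.1}) together with the openness of $S$ from Lemma \ref{l2.2} to show that the solutions $\varphi_i$ stay in a compact family, so their limit solves (\ref{2.13nn}) at $t=t_*$; then invoking openness again upgrades $t_*\in S$ to $[0,t_*]\subset S$. First I would fix the normalization $\sup_M\varphi_i=0$, which we are free to do since (\ref{2.13nn}) only involves $\omega_{\varphi_i}$, not $\varphi_i$ itself. The data of the equation (\ref{2.13nn}) at $t=t_i$, written in the coupled form (\ref{g-twisted1})--(\ref{g-twisted2}), is exactly of the type (\ref{cscK-new1})--(\ref{cscK-new2}) with
\[
f=\underline{R}-\underline{\beta}-\frac{1-t_i}{t_i}\underline{\chi},\qquad \eta=Ric-\beta-\frac{1-t_i}{t_i}\chi .
\]
Here $\chi=\omega_0$ is fixed. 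Since $t_i\nearrow t_*>0$, we have $t_i\geq t_*/2>0$ for $i$ large, so $\frac{1-t_i}{t_i}$ is uniformly bounded; hence $\|f\|_0$ and $\max_M|\eta|_{\omega_0}$ are bounded uniformly in $i$ by constants depending only on the fixed background data $(M,\omega_0)$, $\beta$, $\chi$ and the lower bound $t_*/2$ for $t_i$. Combined with the standing hypothesis $\sup_i\int_M e^{F_i}F_i\,dvol_g<\infty$, all the hypotheses of Theorem \ref{t1.1} hold with uniform constants, so there is $C_0$, independent of $i$, with $\|\varphi_i\|_0\leq C_0$ and $\frac{1}{C_0}\omega_0\leq\omega_{\varphi_i}\leq C_0\omega_0$.

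Next I would feed this back into Corollary \ref{c1.1}: its hypotheses are precisely the ones just verified, so for every $p<\infty$ we get $\|\varphi_i\|_{W^{4,p}}\leq C_{0.5}$ and $\|F_i\|_{W^{2,p}}\leq C_{0.5}$ with $C_{0.5}$ independent of $i$ (the dependence on $t_i$ enters only through $\|f\|_0,\max_M|\eta|_{\omega_0}$, already controlled). By Sobolev embedding the $\varphi_i$ are bounded in $C^{3,\alpha}$ for any $\alpha\in(0,1)$, hence precompact in $C^{3,\alpha'}$ for $\alpha'<\alpha$; passing to a subsequence, $\varphi_i\to\varphi_*$ in $C^{3,\alpha'}$ and $\omega_{\varphi_*}$ satisfies $\frac{1}{C_0}\omega_0\leq\omega_{\varphi_*}\leq C_0\omega_0$, so $\varphi_*\in\mathcal H$. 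Similarly $F_i\to F_*$ weakly in $W^{2,p}$ and (after a further subsequence) in $C^{1,\alpha'}$, with $F_*=\log\frac{\omega_{\varphi_*}^n}{\omega_0^n}$. Since $t_i\to t_*$, the coefficients and right-hand side of (\ref{g-twisted2}) at $t=t_i$ converge to those at $t=t_*$, so passing to the limit in (\ref{g-twisted1})--(\ref{g-twisted2}) shows $(\varphi_*,F_*)$ is a $C^{3,\alpha'}$ weak solution of (\ref{2.13nn}) at $t=t_*$. A standard bootstrap using the smoothness of $Ric$, $\beta$, $\chi$ (as in Remark \ref{r2.3} and Remark \ref{r3.2}) then promotes $\varphi_*$ to a smooth solution.

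Finally, a smooth solution at $t=t_*$ is in particular a $C^{4,\alpha}$ solution, so Lemma \ref{l2.2} applies with $t_0=t_*$: there is $\delta>0$ so that (\ref{2.13nn}) is solvable in $C^{4,\alpha}$ for $t\in(t_*-\delta,t_*]$, and these solutions are smooth. Since each $t_i\in S$, the interval $[0,t_i]\subset S$; letting $i\to\infty$ gives $[0,t_*)\subset S$, and combining with solvability at $t=t_*$ itself yields $[0,t_*]\subset S$, i.e. $t_*\in S$. \qed

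The only genuine input here is Theorem \ref{t1.1}; everything else is standard elliptic theory and the already-cited openness lemma. Accordingly, I expect the main (indeed the sole) obstacle to be the uniform control of the equation's data along the path — specifically checking that $\frac{1-t_i}{t_i}$ stays bounded, which is exactly where the hypothesis $t_*>0$ is used, and confirming that the entropy bound $\sup_i\int_M e^{F_i}F_i\,dvol_g<\infty$ (assumed, not proved, in this lemma) is precisely the remaining quantity needed to close the continuity argument.
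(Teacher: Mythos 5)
Your argument is correct and follows essentially the same route as the paper: normalize $\sup_M\varphi_i=0$, recast (\ref{2.13nn}) at $t=t_i$ as (\ref{cscK-new1})--(\ref{cscK-new2}) with $f_i=\underline{R}-\underline\beta-\frac{1-t_i}{t_i}\underline\chi$ and $\eta_i=Ric-\beta-\frac{1-t_i}{t_i}\omega_0$, use $t_i\geq\delta>0$ to get uniform bounds on $\|f_i\|_0$ and $\max_M|\eta_i|_{\omega_0}$, apply Corollary \ref{c1.1} with the assumed entropy bound, bootstrap via Remark \ref{r3.2}, and extract a smooth convergent subsequence solving the equation at $t=t_*$. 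The only cosmetic difference is your closing invocation of Lemma \ref{l2.2}, which is unnecessary (and fails to apply if $t_*=1$): solvability on $[0,t_*)$ already follows from $t_i\in S$, and together with the constructed solution at $t_*$ this gives $t_*\in S$ directly.
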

\begin{proof}
We just need to show (\ref{2.13nn}), or equivalently the coupled equations  (\ref{g-twisted1}), (\ref{g-twisted2}) has a smooth solution with $t=t_*$.
Indeed, the solvability of (\ref{2.13nn}) for $t<t_*$ follows from $t_i\in S$, where $t_i$ is chosen so that $t_i>t$.
Since $t_*>0$, there is no loss of generality to assume $t_i\geq\delta$ for some $\delta>0$. In light of equation (\ref{g-twisted2}), we
denote 
\[ f_i=\underline{R}-\underline{\beta}-\frac{1-t_i}{t_i}\underline{\chi},\qquad{\rm and}\qquad  \chi_i=Ric-\beta-\frac{1-t_i}{t_i}\omega_0.\]

Then we see that $(\varphi_i,F_i)$ solves (\ref{cscK-new1}), (\ref{cscK-new2}) with $f=f_i$, $\eta=\chi_i:$
\[
\Delta_{\varphi_i}F_i=-f_i+tr_{\varphi_i}\eta_i,\qquad F_i = \log {\omega_{\varphi_i}^n \over \omega_0^n}.
\]

It is clear that $\sup_i|f_i|<\infty$, $\sup_i\max_M|\chi_i|_{\omega_0}<\infty$
since $t_i\geq\delta$.
Set \[\tilde{\varphi}_i=\varphi_i-\sup_M\varphi_i,\]
then we are in a position to apply Corollary \ref{c1.1} to conclude $||\tilde{\varphi}_i||_{3,\alpha}\leq C$ for some $C>0$.
But since $f_i$ is constant, and all the higher derivatives of $\chi_i$ are also uniformly bounded independent of $i$, we see that the higher derivatives of $\tilde{\varphi}_i$ are also uniformly bounded in view of Remark \ref{r3.2}.

Hence we can take a subsequence of $\tilde{\varphi}_i$ and a smooth function $\varphi_*\in C^{\infty}(M)$ such that all derivatives of $\tilde{\varphi}_i$ converges to the corresponding derivatives for $\varphi_*$ uniformly.
Clearly $\varphi_*$ is a solution for  (\ref{2.13nn})  with $t=t_*$.
\end{proof}
To connect this criterion with properness, we need some estimates connecting the $L^1$ geodesic distance $d_1$ and the $I$ , $J_{\chi}$ functional defined in (\ref{IJ}), (\ref{J-chi}).
\begin{lem}\label{l2.5}
There exists a constant $C>0$, depending only on $n$ and the background metric $\omega_0$, such that for any $\varphi\in\mathcal{H}_0$,  we have
\begin{equation}
\begin{split}
|\sup_M\varphi|\leq C(d_1(0,\varphi)+1),\,\,|J_{\chi}(\varphi)|\leq C\max_M|\chi|_{\omega_0}d_1(0,\varphi).
\end{split}
\end{equation}
\end{lem}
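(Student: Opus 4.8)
The plan is to reduce $d_1(0,\varphi)$ to the symmetric expression $I_1(0,\varphi)=\int_M|\varphi|\,\omega_0^n/n!+\int_M|\varphi|\,\omega_\varphi^n/n!$ via Theorem \ref{t2.3}, and then to squeeze everything out of the normalization $I(\varphi)=0$. Writing $a_m:=\int_M\varphi\,\omega_0^m\wedge\omega_\varphi^{n-m}$, one has $\sum_{m=0}^n a_m=(n+1)!\,I(\varphi)=0$, and integration by parts gives
\[
a_m-a_{m+1}=\int_M\varphi\,\sqrt{-1}\partial\bar\partial\varphi\wedge\omega_0^m\wedge\omega_\varphi^{n-m-1}=-\int_M\sqrt{-1}\partial\varphi\wedge\bar\partial\varphi\wedge\omega_0^m\wedge\omega_\varphi^{n-m-1}\le0,
\]
since $\sqrt{-1}\partial\varphi\wedge\bar\partial\varphi$ is a positive $(1,1)$-form. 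Hence $a_0\le a_1\le\dots\le a_n$, so $a_0\le0\le a_n$, and therefore $|a_m|\le\max(-a_0,a_n)\le\int_M|\varphi|\,\omega_\varphi^n+\int_M|\varphi|\,\omega_0^n\le C\,d_1(0,\varphi)$ for every $m$, using Theorem \ref{t2.3}. As a byproduct, $\mu:=\sum_{m=0}^n\omega_0^m\wedge\omega_\varphi^{n-m}$ is a positive $(n,n)$-form with $\int_M\varphi\,\mu=(n+1)!\,I(\varphi)=0$, so $\varphi$ cannot be everywhere negative, i.e. $\sup_M\varphi\ge0$.

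For the first inequality I would combine $\sup_M\varphi\ge0$ with the classical sub--mean value bound for $\omega_0$-plurisubharmonic functions: there is $C_0=C_0(M,\omega_0)$ with $\sup_M u\le(\int_M\omega_0^n)^{-1}\int_M u\,\omega_0^n+C_0$ for all $u\in\mathrm{PSH}(M,\omega_0)$ (the usual consequence of $L^1$-compactness of normalized $\omega_0$-psh potentials). Then by Theorem \ref{t2.3}
\[
0\le\sup_M\varphi\le\frac{1}{\int_M\omega_0^n}\int_M\varphi\,\omega_0^n+C_0\le\frac{1}{\int_M\omega_0^n}\int_M|\varphi|\,\omega_0^n+C_0\le C\,d_1(0,\varphi)+C_0,
\]
which gives $|\sup_M\varphi|\le C(d_1(0,\varphi)+1)$ after renaming constants. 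The additive constant is genuinely needed here — a thin, tall bump shows that $\sup_M\varphi$ is not controlled by the integral quantity $d_1$ — which is why the $J_\chi$ bound, which must be clean, needs a different route.

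For $J_\chi$ the key is that the $\underline\chi$-term in (\ref{J-chi}) equals $\underline\chi\,I(\varphi)=0$, so on $\mathcal{H}_0$
\[
J_\chi(\varphi)=\frac1{n!}\sum_{k=0}^{n-1}a^\chi_k,\qquad a^\chi_k:=\int_M\varphi\,\chi\wedge\omega_0^k\wedge\omega_\varphi^{n-1-k}.
\]
The "purely $\omega_0$" term satisfies $|a^\chi_{n-1}|=|\int_M\varphi\,\chi\wedge\omega_0^{n-1}|\le\max_M|\chi|_{\omega_0}\int_M|\varphi|\,\omega_0^n\le C\max_M|\chi|_{\omega_0}\,d_1(0,\varphi)$. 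For the remaining ones I would telescope down from $a^\chi_{n-1}$: integration by parts gives $a^\chi_k-a^\chi_{k+1}=-\int_M\sqrt{-1}\partial\varphi\wedge\bar\partial\varphi\wedge\chi\wedge\omega_0^k\wedge\omega_\varphi^{n-2-k}$, and since $-\max_M|\chi|_{\omega_0}\,\omega_0\le\chi\le\max_M|\chi|_{\omega_0}\,\omega_0$ while $\sqrt{-1}\partial\varphi\wedge\bar\partial\varphi\wedge\omega_0^k\wedge\omega_\varphi^{n-2-k}$ is a positive $(n-1,n-1)$-form,
\[
|a^\chi_k-a^\chi_{k+1}|\le\max_M|\chi|_{\omega_0}\int_M\sqrt{-1}\partial\varphi\wedge\bar\partial\varphi\wedge\omega_0^{k+1}\wedge\omega_\varphi^{n-2-k}=\max_M|\chi|_{\omega_0}\,(a_{k+2}-a_{k+1})\le C\max_M|\chi|_{\omega_0}\,d_1(0,\varphi),
\]
the middle equality being the same integration by parts and the last step the bound on the $|a_m|$. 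Summing over $k$ yields $|a^\chi_k|\le C\max_M|\chi|_{\omega_0}\,d_1(0,\varphi)$ for all $k$, hence $|J_\chi(\varphi)|\le C\max_M|\chi|_{\omega_0}\,d_1(0,\varphi)$; for $n=1$ there is nothing to telescope and $J_\chi(\varphi)=a^\chi_0$ is estimated directly.

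The integration-by-parts identities and the use of Theorem \ref{t2.3} are routine; the one step that really needs the hypothesis $\varphi\in\mathcal{H}_0$ is the control of the mixed Monge--Amp\`ere integrals $\int_M|\varphi|\,\omega_0^m\wedge\omega_\varphi^{n-m}$ by $d_1(0,\varphi)$. For an unnormalized potential this is false (these integrals can dwarf $\int_M|\varphi|(\omega_0^n+\omega_\varphi^n)$ in directions where $\omega_\varphi$ degenerates), and it is precisely $\sum_m a_m=0$ together with the monotonicity $a_0\le\dots\le a_n$ that rescues the clean bound. In the case actually used below, $\chi=\omega_0$, the argument collapses to the identity $J_{\omega_0}(\varphi)=\frac1{n!}\sum_{m=1}^n a_m=-\frac1{n!}a_0=-\frac1{n!}\int_M\varphi\,\omega_\varphi^n$ on $\mathcal{H}_0$, so $|J_{\omega_0}(\varphi)|\le\frac1{n!}\int_M|\varphi|\,\omega_\varphi^n\le C\,d_1(0,\varphi)$ at once.
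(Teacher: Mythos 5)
Your proof is correct, and the computation you carry out is at bottom the same one the paper does (integrate by parts, invoke Theorem~\ref{t2.3}, exploit $I(\varphi)=0$), but you package it more economically in two respects. First, for the $J_\chi$ bound you observe that the second term in~(\ref{J-chi}) is exactly $\underline\chi\,I(\varphi)=0$ on $\mathcal H_0$, so you only need to estimate the ``$\chi$-part''; the paper instead bounds that second term by $C\max_M|\chi|_{\omega_0}d_1(0,\varphi)$ via a separate telescoping computation, which your observation renders unnecessary. Second, by introducing $a_m=\int_M\varphi\,\omega_0^m\wedge\omega_\varphi^{n-m}$ and noting $a_0\le a_1\le\cdots\le a_n$ with $\sum a_m=0$, you get a uniform bound $|a_m|\le\max(-a_0,a_n)\le C\,d_1(0,\varphi)$ on all mixed integrals at once, and then control the telescoped differences $a_k^\chi-a_{k+1}^\chi$ by the differences $a_{k+2}-a_{k+1}$; the paper instead computes the full error $\sum_k a_k^\chi-n\,a_{n-1}^\chi$ in one stroke as $-\int\sqrt{-1}\partial\varphi\wedge\bar\partial\varphi\wedge\sum_l(n-1-l)\chi\wedge\omega_0^{n-2-l}\wedge\omega_\varphi^l$ and bounds it by $(n-1)\max|\chi|_{\omega_0}\cdot\big(a_n-a_0\big)$. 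For the $\sup$ estimate the two arguments coincide: the paper writes out the Green's function representation and bounds the Green's term explicitly, while you cite the resulting sub-mean-value inequality as a black box; both then combine with $\sup_M\varphi\ge0$, which forces the additive constant and gives $|\sup_M\varphi|\le C(d_1+1)$. Your framework (the monotone $a_m$'s summing to zero) is the cleaner way to see why all the mixed Monge--Amp\`ere integrals are simultaneously controlled by $d_1$, and I would say it is worth keeping.

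One small caution on your heuristic aside about the $+1$ being ``genuinely needed'' because of a thin tall \emph{positive} bump: $\omega_0$-psh functions cannot have thin positive spikes (the sub-mean-value property forbids it), so the failure of a clean linear bound near $d_1=0$ is not for the reason you state. The $+1$ is needed simply because the Green's-function constant $C_0$ is nonzero, and there is no reason for $\sup_M\varphi\to0$ \emph{linearly} in $d_1(0,\varphi)\to0$. This is only commentary and does not affect the proof.
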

\begin{proof} This is well known in the literature and we give a proof for completeness here.
We now prove the first estimate.
Let $G(x,y)$ be the Green's function defined by the metric $\omega_0$, then we can write:
\begin{equation}\label{2.21n}
\varphi(x)={1\over vol(M,\omega_0)} \int_M\varphi(y)\frac{\omega_0^n}{n!}(y)+ {1\over vol(M,\omega_0)} \int_MG(x,y)\Delta_{\omega_0}\varphi(y)\frac{\omega_0^n}{n!}(y).
\end{equation}
We know that $\sup_{M\times M}G(x,y)\leq C_{15}$, hence
\begin{equation}
\begin{split}
&\int_MG(x,y)\Delta_{\omega_0}\varphi(y)\frac{\omega_0^n}{n!}(y)=\int_M(G(x,y)-C_{15})(\Delta_{\omega_0}\varphi(y)+n)\frac{\omega_0^n}{n!}\\
&-\int_MnG(x,y)\frac{\omega_0^n}{n!}+C_{15}n\leq-n\inf_{x\in M}\int_MG(x,y)\frac{\omega_0^n}{n!}+C_{15}n:=C_{16}vol(M,\omega_0).
\end{split}
\end{equation}
Take sup in (\ref{2.21n}),
\begin{equation}
\sup_M\varphi\leq {1\over vol(M,\omega_0)} \int_M\varphi\frac{\omega_0^n}{n!}+C_{16}\leq Cd_1(0,\varphi)+C_{16}.
\end{equation}
On the other hand, since $I(\varphi)=0$, it follows from (\ref{IJ}) that $\sup_M\varphi\geq0$, so the first estimate follows.
For the second estimate, first we can calculate
\begin{equation}\label{4.6new}
\begin{split}
&\int_M\varphi\sum_{k=0}^{n-1}\chi\wedge\omega_0^k\wedge\omega_{\varphi}^{n-1-k}-n\int_M\varphi\chi\wedge\omega_0^{n-1}\\
&=\int_M\varphi\sum_{k=0}^{n-2}\chi\wedge\omega_0^k\wedge(\omega_{\varphi}^{n-1-k}-\omega_0^{n-1-
k})\\
&=\int_M-\sqrt{-1}\partial\varphi\wedge\bar{\partial}\varphi\wedge\sum_{l=0}^{n-2}(n-1-l)\chi\wedge\omega_0^{n-2-l}\wedge\omega_{\varphi}^l
\end{split}
\end{equation}
Thus,
\[
\begin{split}
& |\int_M\varphi\sum_{k=0}^{n-1}\chi\wedge\omega_0^k\wedge\omega_{\varphi}^{n-1-k}-\int_Mn\varphi\chi\wedge\omega_0^{n-1}|\\
& \leq (n-1)\max_M|\chi|_{\omega_0}\int_M-\sqrt{-1}\partial\varphi\wedge\bar{\partial}\varphi\wedge\sum_{l=0}^{n-1}\omega_0^{n-1-l}\wedge\omega_{\varphi}^l\\
&=(n-1)\max_M|\chi|_{\omega_0}\int_M\varphi(\omega_{\varphi}^n-\omega_0^n).
\end{split}
\]
Using  Theorem \ref{t2.3},  we conclude
$$
|\int_M\varphi\sum_{k=0}^{n-1}\chi\wedge\omega_0^k\wedge\omega_{\varphi}^{n-1-k}-\int_Mn\varphi\chi\wedge\omega_0^{n-1}|\leq C_n\max_M|\chi|_{\omega_0}d_1(0,\varphi).
$$
Similar calculation shows 
$$
|\int_M\underline{\chi}\varphi\sum_{k=0}^n\omega_0^k\wedge\omega_{\varphi}^{n-k}-(n+1)\int_M\underline{\chi}\varphi\omega_0^n|\leq C_n\max_M|\chi|_{\omega_0}d_1(0,\varphi).
$$
On the other hand, the quantities $\int_Mn\varphi\chi\wedge\omega_0^{n-1}$ and $\int_M\underline{\chi}\varphi\omega_0^n$ can be bounded in terms of $\max_M|\chi|_{\omega_0}d_1(0,\varphi)$, again due to Theorem \ref{t2.3}.
Now the claimed estimate follows from (\ref{J-chi}).
\end{proof}

From Theorem \ref{t2.2new}, any two elements in $\mathcal{E}^1$ can be connected by a ``locally finite energy geodesic" segment.
On the other hand, from Theorem 4.7 in  \cite{Darvas1602}, we know $K_{\beta}$ is convex along locally finite energy geodesic segment. 
This implies $tK_{\beta}+(1-t)J_{\omega_0}$ is convex along locally finite energy geodesics.
In view of this, we can observe:
\begin{cor}\label{c2.6}
Let $\varphi$ be a smooth solution to (\ref{g-twisted1}), (\ref{g-twisted2}) for some $t\in[0,1]$, then $\varphi$ minimizes the functional $tK_{\beta}+(1-t)J_{\omega_0}$ over $\mathcal{E}^1$.
\end{cor}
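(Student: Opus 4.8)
The plan is to combine three facts: (i) $E:=tK_{\beta}+(1-t)J_{\omega_0}$ is convex along finite energy geodesics (noted just above the statement), (ii) a smooth solution of (\ref{g-twisted1})--(\ref{g-twisted2}) is precisely a critical point of $E$ (this is how (\ref{2.13nn}) with $\chi=\omega_0$ was obtained), and (iii) the functional $E$ on $\mathcal{E}^1$ is the greatest $d_1$-lower semicontinuous extension of $E|_{\mathcal H}$. Fact (iii) follows by applying Theorem \ref{t2.4new} to $tK_{\beta}$ and combining with the $d_1$-continuity of $J_{\omega_0}$, since adding a $d_1$-continuous functional commutes with passing to the greatest lsc extension (and $K_{\beta}|_{\mathcal E^1}=K|_{\mathcal E^1}+J_\beta|_{\mathcal E^1}$ is itself the greatest lsc extension of $K_{\beta}|_{\mathcal H}$). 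Granting (iii), it is enough to show $\varphi$ minimizes $E$ over $\mathcal H$: given $\psi\in\mathcal{E}^1$, one may pick $\psi_j\in\mathcal H$ with $d_1(\psi_j,\psi)\to 0$ and $E(\psi_j)\to E(\psi)$, and then $E(\varphi)\le E(\psi_j)$ for all $j$ forces $E(\varphi)\le E(\psi)$. (The functional $E$ is unchanged by adding a constant to $\varphi$, so minimizing over $\mathcal H$, $\mathcal H_0$, or their closures in $\mathcal E^1$ is the same thing.)

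To show $\varphi$ minimizes $E$ over $\mathcal H$, fix $\psi\in\mathcal H$, let $\{u_s\}_{s\in[0,1]}$ be the $C^{1,1}$ geodesic of \cite{chen991} from $\varphi$ to $\psi$ (a finite energy geodesic in the sense of Theorem \ref{t2.2new}), and set $g(s):=E(u_s)$. Then $g$ is a finite convex function on $[0,1]$, so $g(1)-g(0)\ge g'(0^+)$, and it suffices to prove $g'(0^+)=0$. Since the endpoint $u_0=\varphi$ is smooth, the first variation of $K_{\beta}=K+J_\beta$ and of $J_{\omega_0}$ at $\varphi$ along the geodesic are given by the formulas recorded in Section~2, and combining them gives
\begin{equation*}
g'(0^+)=-\int_M \dot{u}_0^{+}\,\Big[\,t\bigl((R_{\varphi}-\underline{R})-(tr_{\varphi}\beta-\underline{\beta})\bigr)-(1-t)\bigl(tr_{\varphi}\omega_0-\underline{\omega_0}\bigr)\,\Big]\,\frac{\omega_{\varphi}^n}{n!},
\end{equation*}
where $\dot{u}_0^{+}$ is the initial ($s\to 0^+$) velocity of the geodesic. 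The bracketed factor vanishes identically because $\varphi$ solves (\ref{2.13nn}) with $\chi=\omega_0$, equivalently (\ref{g-twisted1})--(\ref{g-twisted2}). Hence $g'(0^+)=0$ and $E(\psi)=g(1)\ge g(0)=E(\varphi)$, as desired.

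The main obstacle is legitimizing the displayed first-variation identity: the geodesic $u_s$ is only $C^{1,1}$ in spacetime, so $K_{\beta}(u_s)$ cannot be differentiated naively. I would handle this exactly as in Chen--Tian \cite{CT}: approximate $u_s$ by $\varepsilon$-geodesics $u_s^{\varepsilon}$, which are smooth up to $s=0,1$, carry the fixed boundary value $u_0^{\varepsilon}=\varphi$, enjoy uniform $C^{1,1}$ bounds, and along which $s\mapsto K_{\beta}(u_s^{\varepsilon})$ and $s\mapsto J_{\omega_0}(u_s^{\varepsilon})$ are genuinely differentiable with the expected integral formulas; letting $\varepsilon\to 0$, so that $\dot{u}_0^{\varepsilon}\to\dot{u}_0^{+}$ while $u_0^{\varepsilon}=\varphi$ stays fixed, recovers the formula for $g'(0^+)$ (one may alternatively invoke the known differentiability of the $K$-energy along $C^{1,1}$ geodesics at a smooth endpoint). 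Finally, the case $t=0$ is degenerate but trivial: there (\ref{2.13nn}) forces $\omega_{\varphi}=\omega_0$, i.e.\ $\varphi\equiv 0$, and $E=J_{\omega_0}$ is convex along finite energy geodesics with critical point $0$, hence nonnegative, so $0$ is the minimizer. The remaining points---fact (iii) and the constant-shift invariance---are routine given Section~2.
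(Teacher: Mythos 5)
Your proposal is correct and follows essentially the same route as the paper: reduce to $\mathcal H$ via density with convergent energy values, connect $\varphi$ to $\psi$ by the $C^{1,1}$ geodesic, and combine convexity of $tK_\beta+(1-t)J_{\omega_0}$ along geodesics with a first-variation identity at the smooth endpoint $\varphi$, justified through the $\varepsilon$-geodesic approximation. The only cosmetic difference is that the paper handles $K$ (via Berman--Berndtsson's Lemma 3.5, which yields a one-sided derivative bound) and $J_\chi$ (via the explicit second-variation formula along $\varepsilon$-geodesics) separately before summing, while you package the two into a single convex function $g$ and assert $g'(0^+)=0$; for the conclusion only $g'(0^+)\ge 0$ is needed, which is exactly what the one-sided bounds deliver.
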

\begin{proof}
Observe that it is sufficient to show that $\varphi$ minimizes $tK_{\beta}+(1-t)J_{\omega_0}$ over $\mathcal{H}$, in view of the fact that an element in $\mathcal{E}^1$ can be approximated(under distance $d_1$) using smooth potentials with convergent entropy, as proved in Theorem 3.2, \cite{Darvas1602}, while the $J_{\chi}$ functional is continuous under $d_1$, as shown by Proposition 4.1 and Proposition 4.4 in \cite{Darvas1602}.

Next we can write $tK_{\beta}+(1-t)J_{\omega_0}=tK+J_{t\beta+(1-t)\omega_0}$. Take $\psi\in\mathcal{H}$. Let $\{u_s\}_{s\in[0,1]}$ be the $C^{1,1}$ geodesic connection $\varphi$ and $\psi$, with $u_0=\varphi$, $u_1=\psi$.
From Lemma 3.5 of \cite{Ber14-01} and the convexity of $K$-energy along $C^{1,1}$ geodesics, we conclude:
\begin{equation}\label{4.7n}
K(\psi)-K(\varphi)\geq\lim_{s\rightarrow0^+}\frac{K(u_s)-K(u_0)}{s}\geq\int_M(\underline{R}-R_{\varphi})\frac{du_s}{ds}|_{s=0}\frac{\omega_{\varphi}^n}{n!}.
\end{equation}
The first inequality used the convexity of $K$-energy along $C^{1,1}$ geodesics, proved by Berman-Berndtsson, \cite{Ber14-01}, and the second inequality is Lemma 3.5 of \cite{Ber14-01}.

On the other hand, let $\{\varphi_s\}_{s\in[0,1]}$ be any smooth curve in $\mathcal{H}$ with $\varphi_0=\varphi$, $\varphi_1=\psi$, and let $\chi\geq0$, we know from the calculation in \cite{chen00}, Proposition 2 that
\begin{equation}\label{4.8n}
\begin{split}
&J_{\chi}(\psi)-J_{\chi}(\varphi)=\int_M(tr_{\varphi}\chi-\underline{\chi})\frac{d\varphi_s}{ds}|_{s=0}\frac{\omega_{\varphi}^n}{n!}+\int_0^1(1-s)\frac{d^2}{ds^2}J_{\chi}(\varphi_s)ds\\
&=\int_M(tr_{\varphi}\chi-\underline{\chi})\frac{d\varphi_s}{ds}|_{s=0}\frac{\omega_{\varphi}^n}{n!}+\int_0^1(1-s)ds\int_M\bigg(\frac{\partial^2\varphi}{\partial s^2}-|\nabla_{\varphi_s}\frac{\partial\varphi_s}{\partial s}|^2_{\varphi_s}\bigg)tr_{\varphi_s}\chi\frac{\omega_{\varphi_s}^n}{n!}\\
&+\int_0^1(1-s)ds\int_Mg_{\varphi_s}^{i\bar{j}}g_{\varphi_s}^{k\bar{l}}\chi_{i\bar{l}}\big(\frac{\partial\varphi}{\partial s}\big)_{,k}\big(\frac{\partial\varphi}{\partial s}\big)_{,\bar{j}}\frac{\omega_{\varphi_s}^n}{n!}.
\end{split}
\end{equation}
Now we choose $\varphi_s=u_s^{\eps}$, namely the $\eps$-geodesic(which is smooth by \cite{chen991}), which means 
$$
\bigg(\frac{\partial ^2\varphi_s}{\partial s^2}-|\nabla_{\varphi_s}\frac{\partial\varphi_s}{\partial s}|^2_{\varphi_s}\bigg)\det g_{\varphi_s}=\eps\det g_0\geq0.
$$
Hence we obtain from (\ref{4.8n}) that
\begin{equation}
J_{\chi}(\psi)-J_{\chi}(\varphi)\geq\int_M(tr_{\varphi}\chi-\underline{\chi})\frac{du_s^{\eps}}{ds}|_{s=0}\frac{\omega_{\varphi}^n}{n!}.
\end{equation}
Also we know that $u_s^{\eps}\rightarrow u_s$ weakly in $W^{2,p}$ for any $p<\infty$ as $\eps\rightarrow0$.
This implies $\frac{du_s^{\eps}}{ds}|_{s=0}$, as a function on $M$, is uniformly bounded with its first derivatives.
Hence we may conclude $\frac{du_s^{\eps}}{ds}|_{s=0}\rightarrow \frac{du_s}{ds}|_{s=0}$ uniformly.
This convergence is sufficient to imply
$$
\int_M(tr_{\varphi}\chi-\underline{\chi})\frac{du^{\eps}_s}{ds}|_{s=0}\frac{\omega_{\varphi}^n}{n!}\rightarrow\int_M(tr_{\varphi}\chi-\underline{\chi})\frac{du_s}{ds}|_{s=0}\frac{\omega_{\varphi}^n}{n!},\textrm{ as $\eps\rightarrow0$.}
$$
Therefore,
\begin{equation}\label{4.10n}
J_{\chi}(\psi)-J_{\chi}(\varphi)\geq\int_M(tr_{\varphi}\chi-\underline{\chi})\frac{du_s}{ds}|_{s=0}\frac{\omega_{\varphi}^n}{n!}.
\end{equation}
Take $\chi=t\beta+(1-t)\omega_0$ in (\ref{4.10n}). Then multiply (\ref{4.7n}) by $t$, add to (\ref{4.10n}), we conclude 
\begin{equation}
K_{\beta}(\psi)-K_{\beta}(\varphi)\geq\int_M\bigg(t(\underline{R}-R_{\varphi})+(tr_{\varphi}\chi-\underline{\chi})\bigg)\frac{du_s}{ds}|_{s=0}\frac{\omega_{\varphi}^n}{n!}=0.
\end{equation}
The last equality used that $\varphi$ solves (\ref{g-twisted1}), (\ref{g-twisted2}).
\end{proof}
Using this fact, we can obtain the following improvement of Lemma \ref{l2.4}, which asserts that having control over the geodesic distance $d_1$ along the path of continuity ensures we can pass to limit.
\begin{lem}\label{l2.7}
Suppose $t_i\in S$, $t_i\nearrow t_*>0$, and let $\varphi_i$ be the solution to (\ref{2.13nn}) with $t=t_i$, normalized so that $I(\varphi_i)=0$. Suppose $\sup_id_1(0,\varphi_i)<\infty$, then $t_*\in S$.
\end{lem}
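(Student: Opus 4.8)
The plan is to reduce the closedness to Lemma~\ref{l2.4} by extracting a uniform entropy bound $\sup_i\int_Me^{F_i}F_i\,dvol_g<\infty$ from the hypothesis $\sup_id_1(0,\varphi_i)<\infty$. The starting point is the decomposition (\ref{K}) of the $K$-energy: with $F_i=\log\frac{\omega_{\varphi_i}^n}{\omega_0^n}$ one has
\begin{equation*}
\int_Me^{F_i}F_i\,dvol_g=K(\varphi_i)-J_{-Ric}(\varphi_i).
\end{equation*}
Since $J_{-Ric}=J_{\chi}$ with $\chi=-Ric$, Lemma~\ref{l2.5} applied to each $\varphi_i\in\mathcal{H}_0$ gives $|J_{-Ric}(\varphi_i)|\leq C\max_M|Ric|_{\omega_0}\,d_1(0,\varphi_i)$, which is uniformly bounded by hypothesis. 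Hence it suffices to bound $K(\varphi_i)$ from above, uniformly in $i$.

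To do this I would exploit that $\varphi_i$ is a minimizer. Discarding finitely many indices we may assume $t_i\geq\delta>0$, so that $\varphi_i$ solves (\ref{g-twisted1}), (\ref{g-twisted2}) with $\chi=\omega_0$ and, by Corollary~\ref{c2.6}, minimizes $t_iK_{\beta}+(1-t_i)J_{\omega_0}$ over $\mathcal{E}^1$. Testing against the competitor $0\in\mathcal{H}$, for which $K_{\beta}(0)=J_{\omega_0}(0)=0$, gives
\begin{equation*}
t_iK_{\beta}(\varphi_i)+(1-t_i)J_{\omega_0}(\varphi_i)\leq 0.
\end{equation*}
Writing $K_{\beta}=K+J_{\beta}$ and rearranging, this becomes
\begin{equation*}
t_iK(\varphi_i)\leq t_i|J_{\beta}(\varphi_i)|+(1-t_i)|J_{\omega_0}(\varphi_i)|,
\end{equation*}
and applying Lemma~\ref{l2.5} to both $J_{\beta}$ (with $\chi=\beta$) and $J_{\omega_0}$ (with $\chi=\omega_0$) bounds the right-hand side by $C\,d_1(0,\varphi_i)$, hence by a constant independent of $i$. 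Dividing by $t_i\geq\delta$ yields $\sup_iK(\varphi_i)<\infty$.

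Combining the two steps, $\sup_i\int_Me^{F_i}F_i\,dvol_g\leq\sup_iK(\varphi_i)+\sup_i|J_{-Ric}(\varphi_i)|<\infty$, and Lemma~\ref{l2.4} then gives $t_*\in S$. I do not anticipate a real obstacle: all the ingredients — the $K$-energy decomposition, the distance estimates of Lemma~\ref{l2.5}, and the minimizing property of Corollary~\ref{c2.6} — are already available, and the only care needed is sign bookkeeping and the harmless reduction to $t_i$ bounded away from $0$. The one conceptual point worth stressing is that the minimality of $\varphi_i$ over all of $\mathcal{E}^1$ is used only against the single competitor $\varphi\equiv0$, and that the entropy demanded by Lemma~\ref{l2.4} is precisely the first term of (\ref{K}), so no further manipulation of functionals is required.
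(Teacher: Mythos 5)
Your argument is correct and is essentially the paper's proof with a minor rearrangement: the paper bounds $t_iK_{\beta}(\varphi_i)+(1-t_i)J_{\omega_0}(\varphi_i)$ against the competitor $0$ via Corollary~\ref{c2.6}, then isolates the entropy from that expression using Lemma~\ref{l2.5}, whereas you first isolate the entropy from $K(\varphi_i)$ via the decomposition~(\ref{K}) and then bound $K(\varphi_i)$ by the same minimizing property and Lemma~\ref{l2.5}. Same ingredients (minimality against $\varphi\equiv 0$, the $d_1$ control on $J_{\chi}$, reduction to Lemma~\ref{l2.4}), just in a different order.
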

\begin{proof}
As before, we assume $t_i\geq\delta>0$.
First observe that $\sup_i(t_iK_{\beta}+(1-t_i)J_{\omega_0})(\varphi_i)<\infty$.
Indeed, we know from Corollary \ref{c2.6} that $\varphi_i$ are minimizers of $t_iK_{\beta}+(1-t_i)J_{\omega_0}$, hence
\begin{equation}\label{2.27}
\begin{split}
t_iK_{\beta}&(\varphi_i)+(1-t_i)J_{\omega_0}(\varphi_i)\leq K_{\chi,t_i}(0)=t_iK_{\beta}(0)+(1-t_i)J_{\omega_0}(0)\\
&\leq\max(K_{\beta}(0),J_{\omega_0}(0)).
\end{split}
\end{equation}

On the other hand, we know
\begin{equation}
t_iK_{\beta}(\varphi_i)+(1-t_i)J_{\omega_0}(\varphi_i)=t_i\int_Me^{F_i}F_idvol_g+t_iJ_{-Ric+\beta}(\varphi_i)+(1-t_i)J_{\omega_0}(\varphi_i).
\end{equation}
Since we assumed $\sup_id_1(0,\varphi_i)<\infty$,  Lemma \ref{l2.5} then implies that $\sup_i|J_{-Ric+\beta}(\varphi_i)|+|J_{\omega_0}(\varphi_i)|<\infty$.
 Consequently, $\sup_i\int_Me^{F_i}F_idvol_g<\infty$ since $t_i\geq\delta > 0.\;$
The result then follows from Lemma \ref{l2.4}.
\end{proof}
Now we are ready to prove Theorem \ref{t2.2}.
\begin{proof}
(of  Theorem \ref{t2.2})
Let $S$ be defined as in (\ref{2.18}), we just need to prove $S=[0,1]$.
First we know from Lemma \ref{l2.2} that $t_*>0$.
We want to show that $t_*=1$ and $1\in S$.
Indeed, if $t_*<1$, then we can take a sequence $t_i\in S$, such that $t_i\nearrow t_*$.
Let $\varphi_i$ be the solution to (\ref{2.12}) so that $I(\varphi_i)=0$.

As observed in (\ref{2.27}) above, $\sup_i\big(t_iK_{\beta}+(1-t_i)J_{\omega_0}\big)(\varphi_i)<\infty$.
On the other hand, since $0\in\mathcal{H}$ is a critical point of $J_{\omega_0}$, we know from Corollary \ref{c2.6} that $J_{\omega_0}(\varphi_i)\geq J_{\omega_0}(0)$.
Therefore we know $\sup_iK_{\beta}(\varphi_i) < \infty.\;$
By properness, we can then conclude $\sup_id_1(0,\varphi_i)<\infty$.
From Lemma \ref{l2.7} we see $t_*\in S$.
But then from Lemma \ref{l2.2} and Remark \ref{r2.3} we know $t_*+\delta'\in S$ for some $\delta'>0$ small.
This contradicts $t_*=\sup S$.
Hence we must have $t_*=1$.
Repeat the argument in this paragraph, we can finally conclude $1\in S$.
\end{proof}
For completeness, we also include here the proof of Theorem \ref{t4.2n}.
\begin{proof}
(of Theorem \ref{t4.2n})
First we assume that $\beta=0$ and $Aut_0(M,J)=0$.
Let $\varphi_0\in\mathcal{H}_0$ be such that $\omega_{\varphi_0}:=\omega_0+\sqrt{-1}\partial\bar{\partial}\varphi_0$ is cscK.
We will show that for some $\eps>0$, and for any $\psi\in\mathcal{H}_0$, $d_1(\varphi_0,\psi)\geq1$, we have $K(\psi)\geq \eps d_1(\psi,\varphi_0)+K(\varphi_0)$.

Indeed, if this were false, we will have a sequence of $\psi_i\in\mathcal{H}_0$, such that $d_1(\varphi_0,\psi_i)\geq1$, but $\eps_i:=\frac{K(\psi_i)-K(\varphi_0)}{d_1(\psi_i,\varphi_0)}\rightarrow0$. 
Let $c^i:t\in[0,d_1(\varphi_0,\psi_i)]\rightarrow \mathcal{E}^1$ be the unit speed  $C^{1,1}$ geodesic segment connecting $\varphi_0$ and $\psi_i$ \cite{chen991}.
Let $\phi_i=c^i(1)$, then $d_1(\phi_i,\varphi_0)=1$.
On the other hand, from the convexity of $K$-energy, we have
\begin{equation}
K(\phi_i)\leq \big(1-\frac{1}{d_1(\psi_i,\varphi_0)}\big)K(\varphi_0)+\frac{1}{d_1(\psi_i,\varphi_0)}K(\psi_i)=K(\varphi_0)+\eps_i.
\end{equation}
By the compactness result Lemma \ref{l2.6new}, there exists a subsequence of $\{\phi_i\}_{i\geq1}\subset\mathcal{E}^1$, denoted by $\phi_{i_j}$, such that $\phi_{i_j}\stackrel{\textrm{$d_1$}}\rightarrow\phi_{\infty}$.  
Hence $d_1(\varphi_0,\phi_{\infty})=1$.
From the lower semi-continuity of $K$-energy(Theorem 4.7 of \cite{Darvas1602}), we obtain:
\begin{equation}
K(\phi_{\infty})\leq\lim_{j\rightarrow\infty}\inf K(\phi_{i_j})\leq K(\varphi_0).
\end{equation}
But since $\varphi_0$ is a minimizer of $K$-energy over $\mathcal{E}^1$, it follows that $\phi_{\infty}$ is also a minimizer.
From Theorem 1.4 of \cite{Darvas1605}, we know $\phi_{\infty}$ is also a smooth solution to cscK equation, and there exists $g\in Aut_0(M,J)$, such that $g^*\omega_{\phi_{\infty}}=\omega_{\varphi_0}$.
But we assumed $Aut_0(M,J)=0$, hence $\omega_{\phi_{\infty}}=\omega_{\varphi_0}$.
Therefore $\phi_{\infty}-\varphi_0$ is constant.
But from the normalization $I(\phi_{\infty})=I(\varphi_0)=0$, we know $\varphi_0-\phi_{\infty}=0$, this contradicts $d_1(\varphi_0,\phi_{\infty})=1$.

Next we assume $\beta>0$. Let $\varphi^{\beta}$ solves (\ref{2.13nn}), normalized so that $I(\varphi^{\beta})=0$.
We show that for some $\eps>0$, one has $K_{\beta}(\psi)\geq\eps d_1(\varphi^{\beta},\psi)+K_{\beta}(\varphi^{\beta})$ for any $\psi\in\mathcal{H}_0$ with $d_1(\varphi^{\beta},\psi)\geq1$.

Indeed, if this were false, then there exists a sequence of $\psi_i\in\mathcal{H}_0$, such that $d_1(\varphi^{\beta},\psi_i)\geq 1$, but
$\eps_i':=\frac{K_{\beta}(\psi_i)-K_{\beta}(\varphi^{\beta})}{d_1(\psi_i,\varphi^{\beta})}\rightarrow0$. Note that $K$-energy is lower semi-continuous with respect to $d_1$ convergence and 
$J_{\beta}$ is continuous(\cite{Darvas1602}, Proposition 4.4). Hence $K_{\beta}$ is lower semicontinuous as well.
So the same argument as last paragraph applies and we get a minimizer of $K_{\beta}$, denoted as $\psi_{\infty}\in\mathcal{H}_0$, such that $d_1(\psi_{\infty},\varphi^{\beta})=1$.
But by \cite{Darvas1602}, Theorem 4.13, we know $\psi_{\infty}$ and $\varphi^{\beta}$ should differ by a constant.
Because of the normalization $I(\psi_{\infty})=I(\varphi^{\beta})=0$, we know that actually $\psi_{\infty}=\varphi^{\beta}$.
This contradicts $d_1(\psi_{\infty},\varphi^{\beta})=1$.
\end{proof}

As a corollary to this theorem, we show that the supremem of $t$ for which (\ref{2.12}) 
can be solved depends only on cohomology class of $\chi$.
More precisely,
\begin{cor}
Let $\chi_1$, $\chi_2$ be two K\"ahler forms in the same cohomology class. We define
$$
S_i=\{t_0\in[0,1]:\textrm{(\ref{2.12}) with $\chi=\chi_i$ has a smooth solution for any $t\in[0,t_0]$.}\}
$$
Then $S_1=S_2$. In particular, if we define $R([\omega_0],\chi_i)=\sup S_i$, then $R([\omega_0],\chi_1)=R([\omega_0],\chi_2)$.
\end{cor}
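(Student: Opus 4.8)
The plan is to show that the set $T_i$ of those $t\in[0,1]$ for which equation (\ref{2.12}) with $\chi=\chi_i$ admits a smooth solution is the same for $i=1$ and $i=2$; since $S_i$ consists of exactly those $t_0$ with $[0,t_0]\subseteq T_i$, this gives $S_1=S_2$ at once, and hence $R([\omega_0],\chi_1)=\sup S_1=\sup S_2=R([\omega_0],\chi_2)$. I would split the range of $t$ into $(0,1)$, the point $t=1$, and the point $t=0$. For $t\in(0,1)$, dividing (\ref{2.12}) by $t$ shows that it coincides with (\ref{2.6n}) for the K\"ahler twisting form $\beta=\frac{1-t}{t}\chi$; by Theorem \ref{t2.2} together with Theorem \ref{t4.2n} (the case $\beta>0$), this equation has a smooth solution if and only if $K_{\beta}=K+\frac{1-t}{t}J_{\chi}$ is proper with respect to $d_1$. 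Hence for $t\in(0,1)$ it suffices to prove that, when $[\chi_1]=[\chi_2]$, the functional $K+\frac{1-t}{t}J_{\chi_1}$ is proper exactly when $K+\frac{1-t}{t}J_{\chi_2}$ is.

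The key observation is that, within a fixed cohomology class, $J_{\chi}$ changes only by a uniformly bounded quantity. Writing $\chi_2=\chi_1+\sqrt{-1}\partial\bar\partial f$ with $f\in C^{\infty}(M)$, note first that $\underline{\chi_2}=\underline{\chi_1}$, since $\int_M\sqrt{-1}\partial\bar\partial f\wedge\omega_0^{n-1}=0$. Then, using the explicit formula (\ref{J-chi}), integrating by parts to move $\sqrt{-1}\partial\bar\partial f$ onto $\varphi$ via $\sqrt{-1}\partial\bar\partial\varphi=\omega_\varphi-\omega_0$, and telescoping the resulting sum, one obtains
\begin{equation*}
J_{\chi_2}(\varphi)-J_{\chi_1}(\varphi)=\frac{1}{n!}\int_M f\,(\omega_\varphi^n-\omega_0^n),\qquad\varphi\in\mathcal{H}.
\end{equation*}
Since $\int_M\omega_\varphi^n=\int_M\omega_0^n$, the right-hand side is bounded in absolute value by $\frac{2}{n!}\|f\|_{L^\infty(M)}\int_M\omega_0^n$, uniformly in $\varphi\in\mathcal{H}$, in particular on $\mathcal{H}_0$. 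As the $K$-energy term is unchanged, the two functionals $K+\frac{1-t}{t}J_{\chi_1}$ and $K+\frac{1-t}{t}J_{\chi_2}$ differ by a function uniformly bounded on $\mathcal{H}_0$, so one of them is proper with respect to $d_1$ (Definition \ref{d4.1}) precisely when the other is. This disposes of $t\in(0,1)$.

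For the endpoints: at $t=1$, equation (\ref{2.12}) reads $R_\varphi=\underline{R}$, which does not involve $\chi$ at all, so $1\in T_1$ if and only if $1\in T_2$. At $t=0$, equation (\ref{2.12}) degenerates to the $J$-equation $tr_{\varphi}\chi_i=\underline{\chi_i}$, whose solvability over $\mathcal{H}$ is known to depend only on the cohomology class $[\chi_i]$ (equivalently, on a numerical condition comparing $[\chi_i]$ and $[\omega_0]$), so $0\in T_1$ if and only if $0\in T_2$. Combining the three cases gives $T_1=T_2$, whence $S_1=S_2$ and the corollary.

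I expect the genuinely separate ingredient to be the $t=0$ endpoint: there the $K$-energy term disappears and (\ref{2.12}) becomes a complex Hessian equation of a different nature, so its cohomological invariance must be imported as a known fact about the $J$-equation rather than deduced from the properness results of Sections 3 and 4. By contrast, for $t\in(0,1)$ the argument is robust, since properness is insensitive to the bounded perturbation of $J_{\chi}$ identified above, and $t=1$ is immediate.
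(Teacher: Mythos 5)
Your proposal is correct and follows essentially the same route as the paper. Both arguments rest on the same three pillars: (a) the explicit computation that $J_{\chi_1}-J_{\chi_2}$ equals $\pm\frac{1}{n!}\int_M \nu(\omega_\varphi^n-\omega_0^n)$, hence is uniformly bounded on $\mathcal{H}$, so $K_{\chi_1,t}$ and $K_{\chi_2,t}$ differ by a bounded perturbation and are proper simultaneously; (b) the equivalence ``solvable at parameter $t\in(0,1)$ $\Leftrightarrow$ $K_{\chi,t}$ proper'' furnished by Theorems \ref{t2.2} and \ref{t4.2n}; and (c) the external fact (from \cite{CoSz}) that the $t=0$ endpoint, the $J$-equation $tr_\varphi\chi_i=\underline{\chi}_i$, has cohomology-invariant solvability. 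Your reorganization around the auxiliary set $T_i$ (pointwise solvability) with $S_i=\{t_0:[0,t_0]\subseteq T_i\}$ is slightly cleaner logically than the paper's direct argument that $t_0\in S_1\Rightarrow t_0\in S_2$, but it does not introduce a new idea, and your diagnosis that the $t=0$ case is the one genuinely external ingredient matches what the paper does (it invokes \cite{CoSz}, Propositions 21--22, for exactly that point).
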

\begin{proof}
First we know from \cite{CoSz}, Proposition 21 and Proposition 22 that existence of smooth solutions to $tr_{\varphi}\chi_i=\underline{\chi}_i$, $i=1,\,2$ are equivalent. So we may assume both equations are solvable. Then it follows from Lemma \ref{l2.2}
that $R([\omega_0],\chi_i)>0$.
In virtue of Theorem \ref{t2.2} and Theorem \ref{t4.2n}, we just need to show for any $0<t_0\leq 1$:
\begin{equation}\label{2.37}
\textrm{$K_{\chi_1,t_0}$ is proper $\Leftrightarrow
K_{\chi_2,t_0}$ is proper.}
\end{equation}
Here $K_{\chi_i,t_0}$ is defined as in (\ref{2.10}).

Indeed, suppose $t_0\in S_1$ and $t_0<1$, then for any $0< t\leq t_0$, (\ref{2.12}) with $\chi=\chi_1$ has a solution.
From Theorem \ref{t4.2n} applied to $\beta=\frac{1-t}{t}\chi_1$, we know this implies $K_{\chi_1,t}$ is proper, for any $0<t\leq t_0$.
If (\ref{2.37}) were true, then $K_{\chi_2,t}$ is proper for any $0<t\leq t_0$.
Use Theorem \ref{t2.2} again, we know (\ref{2.12}) with $\chi=\chi_2$ is solvable for any $t\in[0,t_0]$.
This means $t_0\in S_2$.

If $t_0\in S_1$ and $t_0=1$, then it means $K$-energy is bounded from below, hence $K_{\chi_2,t}$ will be proper for $0\leq t<1$(\cite{CoSz}, Proposition 21). Then Theorem \ref{t2.2} implies (\ref{2.12}) will be solvable for $\chi=\chi_2$ and any $0\leq t<1$.
While for $t=1$, the solvability follows from the assumption that $t_0=1$, since the equation (\ref{2.12}) for $t=1$ does not involve $\chi_1$ or $\chi_2$.
Therefore $1\in S_2$.

Now we turn to the proof of (\ref{2.37}), which is an elementary calculation (c.f. \cite{Sz11}). Since $\chi_1$ and $\chi_2$ are in the same K\"ahler class, we can write
$$
\chi_1-\chi_2=\sqrt{-1}\partial\bar{\partial} \nu,\textrm{ for some smooth function $\nu$.}
$$
From (\ref{J-chi}), we can compute for $\varphi\in\mathcal{H}_0$:
\begin{equation}\label{4.17n}
\begin{split}
J_{\chi_1}(\varphi)-J_{\chi_2}(\varphi)&=\frac{1}{n!}\sum_{p=0}^{n-1}\int_M(-\varphi)\sqrt{-1}\partial\bar{\partial} \nu \wedge\omega_0^{n-p-1}\wedge\omega_{\varphi}^p\\
&=\frac{1}{n!}\sum_{p=0}^{n-1}\int_M-\nu\sqrt{-1}\partial\bar{\partial}\varphi\wedge\omega_0^{n-p-1}\wedge\omega_{\varphi}^p\\
&=\frac{-1}{n!}\int_M \nu \omega_{\varphi}^n+\int_M\frac{1}{n!} \nu \omega_0^n.
\end{split}
\end{equation}
From this it is clear that 
\begin{equation}
|J_{\chi_1}(\varphi)-J_{\chi_2}(\varphi)|\leq c_n\sup_M|\nu|.
\end{equation}
On the other hand,
\begin{equation}
|K_{\chi_1,t_0}(\varphi)-K_{\chi_2,t_0}(\varphi)|\leq (1-t_0)|J_{\chi_1}(\varphi)-J_{\chi_2}(\varphi)|\leq c_n\sup_M|\nu|.
\end{equation}
From this (\ref{2.37}) immediately follows. 
\end{proof}

\section{regularity of weak minimizers of $K$-energy}
Our main goal in this section is to show the minimizers of $K$-energy over $\mathcal{E}^1$ are always smooth. 
The main ingredients are the continuity path as well as apriori estimates obtained in section 3. The strategy of the proof is somewhat different from the usual variational problem.
Indeed, the usual strategy for variational problem will be first to take some smooth variation of the minimizer, and derive an Euler-Lagrange equation for the minimizer(in weak form).
Then one works with the Euler-Lagrange equation to obtain regularity(or partial regularity).

However, the same strategy runs into difficulty  here. Indeed, an Euler-Lagrange equation for minimizer is not apriori available, since an arbitrary smooth variation of $\varphi_*$ does not necessarily preserve the condition that $\omega_{\varphi}\geq 0$.

To get around this difficulty, we will still use the continuity path and our argument is partly inspired from \cite{Darvas1605}. The difference here is that the properness theorem (Theorem \ref{t2.2}) plays a central role. 
Here we sketch the argument. 
Take $\varphi_j$ to be smooth approximations of $\varphi_*$ (in the space $\mathcal{E}^1$), and we solve continuity path from $\varphi_j$.
That $K$-energy is bounded from below ensures the continuity path is solvable for $t<1$.
We will show the existence of a minimizer ensures that for each fixed $j$, $L^1$ geodesic distance remains bounded as $t\rightarrow1$.
Hence we can take limit as $t\rightarrow1$ and obtain a cscK potential $u_j$. Besides, such a sequence of $u_j$ will also be uniformly bounded under $L^1$ geodesic distance, which follows from the uniform boundedness of $\varphi_j$ under $L^1$ geodesic distance.
Our apriori estimates allow us to take smooth limit of $u_j$ and conclude that $u_j\rightarrow\psi$ smoothly and $\psi$ is a smooth cscK potential.
The proof is then finished once we can show $\psi$ and $\varphi_*$ only differ by an additive constant.\\
  
First we show that the existence of minimizers implies existence of smooth cscK metric.
\begin{lem}\label{l4.1}
Suppose that for some $\varphi_*\in\mathcal{E}^1$, we have $K(\varphi_*)=\inf_{\varphi\in\mathcal{E}^1}K(\varphi)$, then there exists a smooth cscK in the class $[\omega_0]$.
\end{lem}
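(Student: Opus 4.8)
The plan is to run the continuity path (\ref{2.12}) with $\chi=\omega_0$ from $t=0$ all the way to $t=1$ and extract a smooth constant scalar curvature metric in the limit; the hypothesis that $\varphi_*$ minimizes $K$ will be used only to produce the uniform $L^1$ geodesic distance bound needed to pass to the limit. First, since $K(\varphi_*)=\inf_{\mathcal{E}^1}K>-\infty$, the $K$-energy is bounded below on $\mathcal{H}$, so by \cite{CoSz}, Proposition 21 the functional $K_{\chi,t}=tK+(1-t)J_{\omega_0}$ is proper with respect to $d_1$ for every $t\in(0,1)$. Applying Theorem \ref{t2.2} with $\beta=\frac{1-t}{t}\omega_0$ (note $K_{\chi,t}=tK_{\beta}$, so $K_{\chi,t}$ proper $\Leftrightarrow K_\beta$ proper) we obtain, for each $t\in(0,1)$, a smooth solution $\varphi_t$ of (\ref{2.12}), while $\varphi\equiv0$ solves it at $t=0$. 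Normalize so that $I(\varphi_t)=0$.

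By Corollary \ref{c2.6} with $\beta=0$, each $\varphi_t$ minimizes $tK+(1-t)J_{\omega_0}$ over $\mathcal{E}^1$; comparing its value at $\varphi_t$ with its value at $\varphi_*$ yields
\[
t\big(K(\varphi_t)-K(\varphi_*)\big)\le(1-t)\big(J_{\omega_0}(\varphi_*)-J_{\omega_0}(\varphi_t)\big).
\]
Since $\varphi_*$ is a global minimizer of $K$ the left-hand side is nonnegative, so $J_{\omega_0}(\varphi_t)\le J_{\omega_0}(\varphi_*)$ for all $t\in(0,1)$. On the other hand $0$ is a critical point, hence (by convexity of $J_{\omega_0}$ along finite energy geodesics) a minimizer, of $J_{\omega_0}$, so $J_{\omega_0}(\varphi_t)\ge J_{\omega_0}(0)=0$. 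Thus $\{J_{\omega_0}(\varphi_t)\}_{t\in(0,1)}$ is bounded; combining the standard comparison of $J_{\omega_0}$ with the Aubin functional $J$ of (\ref{IJ}), the equivalence of $J$ and $d_1$ (\cite{Darvas1403}, Theorem 5.5), and the normalization $I(\varphi_t)=0$, this forces $\sup_{t\in[0,1)}d_1(0,\varphi_t)<\infty$. Feeding $J_{\omega_0}(\varphi_t)\ge0$ back into the displayed inequality gives $K(\varphi_t)\le K(\varphi_*)+\frac{1-t}{t}J_{\omega_0}(\varphi_*)$, so $K(\varphi_t)$ is bounded for $t\in[\tfrac12,1)$; together with the $d_1$-bound and the $d_1$-continuity of $J_{-Ric}$ (Lemma \ref{l2.5}), the decomposition (\ref{1.2nn}) then shows the entropy $\int_M e^{F_t}F_t\,dvol_g$, with $F_t=\log(\omega_{\varphi_t}^n/\omega_0^n)$, is bounded uniformly for $t\in[\tfrac12,1)$.

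Now $(\varphi_t,F_t)$ solves (\ref{cscK-new1}), (\ref{cscK-new2}) with $f=\underline R-\frac{1-t}{t}\underline{\omega_0}$ and $\eta=Ric-\frac{1-t}{t}\omega_0$, and for $t\in[\tfrac12,1)$ the quantities $||f||_0$ and $\max_M|\eta|_{\omega_0}$ are bounded independently of $t$. Hence Corollary \ref{c1.1} together with Remark \ref{r3.2} gives $||\varphi_t-\sup_M\varphi_t||_{C^k}\le C_k$ uniformly for $t\in[\tfrac12,1)$ and every $k$, while $|\sup_M\varphi_t|$ is bounded by Lemma \ref{l2.5}; thus $\{\varphi_t\}_{t\in[\tfrac12,1)}$ is precompact in $C^\infty(M)$. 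Choosing $t_\ell\nearrow1$ and passing to a subsequence, $\varphi_{t_\ell}\to u$ in $C^\infty(M)$; letting $t\to1$ in (\ref{2.12}) and using that $tr_{\varphi_{t_\ell}}\omega_0$ stays bounded by the two-sided estimate $\tfrac{1}{C_0}\omega_0\le\omega_{\varphi_t}\le C_0\omega_0$ of Theorem \ref{t1.1} (which passes to the limit), we conclude $R_u=\underline R$, that is, $\omega_u:=\omega_0+\sqrt{-1}\partial\bar{\partial}u>0$ is a smooth cscK metric in $[\omega_0]$.

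The main obstacle is the second paragraph: obtaining a bound on $d_1(0,\varphi_t)$, equivalently on the entropy of $\varphi_t$, that is uniform as $t\nearrow1$. This is exactly where the existence of the minimizer is used, through the interplay between the minimizing property of $\varphi_t$ along the path (Corollary \ref{c2.6}) and the fact that $\varphi_*$ globally minimizes $K$; once this bound is in hand, the a priori estimates of Section 3 make the passage to the limit routine. (To also identify the resulting cscK potential with $\varphi_*$ up to an additive constant, as needed for Theorem \ref{t4.1}, one repeats this construction along the twisted paths based at a sequence of smooth $\varphi_j\to\varphi_*$ in $\mathcal{E}^1$ with convergent entropy, obtains smooth cscK potentials $u_j$ with $\sup_j d_1(0,u_j)<\infty$, extracts a smooth limit $u_j\to\psi$, and argues that $\psi$ and $\varphi_*$ differ by a constant; this refinement is not needed for the present lemma.)
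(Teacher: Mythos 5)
Your proof is correct and follows essentially the same route as the paper: run the continuity path (\ref{2.12}) with $\chi=\omega_0$, use the lower bound on $K$ plus Theorem \ref{t2.2} to solve up to every $t<1$, exploit the minimizing property from Corollary \ref{c2.6} together with $K(\varphi_*)=\inf K$ to obtain $J_{\omega_0}(\varphi_t)\le J_{\omega_0}(\varphi_*)$, and then use the coercivity of $J_{\omega_0}$ to bound $d_1(0,\varphi_t)$ uniformly. The only cosmetic difference is that the paper at this point simply invokes Lemma \ref{l2.7} to pass to the limit, whereas you inline that lemma's content (entropy bound via the decomposition (\ref{1.2nn}) and Lemma \ref{l2.5}, then Corollary \ref{c1.1} and Remark \ref{r3.2} for the smooth compactness).
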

\begin{proof}
We consider the continuity path (\ref{2.12}) with $\chi=\omega_0$. 
By assumption, $K$-energy over $\mathcal{E}^1$ is bounded from below.
Therefore the twisted $K$-energy $K_{\omega_0,t}$, defined by (\ref{2.10}) is proper for any $0\leq t<1$.
Hence we may invoke Theorem \ref{t2.2} with $\beta=\frac{1-t}{t}\omega_0$ to conclude that there exists a solution to (\ref{2.12}) for any $0<t<1$.
The only remaining issue is to see what happens in (\ref{2.12}) as $t\rightarrow1$.

Choose $t_i<1$ and $t_i\rightarrow1$, and let $\tilde{\varphi}_i$ be solutions to (\ref{2.12}) with $t=t_i$, normalized up to an additive constant so that $I(\tilde{\varphi}_i)=0$. Corollary \ref{c2.6} implies that $\tilde{\varphi}_i$ is the minimizer to $K_{\omega_0,t_i}$. Therefore we have
\begin{equation}\label{4.1}
t_iK(\varphi_*)+(1-t_i)J_{\omega_0}(\tilde{\varphi}_i)\leq t_iK(\tilde{\varphi}_i)+(1-t_i)J_{\omega_0}(\tilde{\varphi}_i)\leq t_iK(\varphi_*)+(1-t_i)J_{\omega_0}(\varphi_*).
\end{equation}
Hence (\ref{4.1}) implies that
$$
J_{\omega_0}(\tilde{\varphi}_i)\leq J_{\omega_0}(\varphi_*).
$$
On the other hand, we know $J_{\omega_0}$ is proper, in the sense that $J_{\omega_0}(\varphi)\geq \delta d_1(0,\varphi)-C$, for $\varphi\in\mathcal{H}_0$ (c.f. \cite{CoSz}, Proposition 22).
This implies that
$$
\sup_id_1(0,\tilde{\varphi}_i)\leq\frac{1}{\delta}\big(C+J_{\omega_0}(\varphi_*)\big)<\infty.
$$
Now from Lemma 4.6 we conclude that (\ref{2.12}) can be solved up to $t=1$, and we obtain the existence of a cscK potential.
\end{proof}
The main result of \cite{Darvas1605} showed the following weak-strong uniqueness property: as long as a smooth cscK exists in the K\"ahler class $[\omega_0]$, then all the minimizers of $K$-energy over $\mathcal{E}^1$ are smooth cscK. 
Therefore, we can already conclude the following result:
\begin{thm}\label{t4.1}
Let $\varphi_*\in\mathcal{E}^1$ be such that $K(\varphi_*)=\inf_{\mathcal{E}^1}K(\varphi)$. Then $\varphi_*$ is smooth, and $\omega_{\varphi_*}$ is a cscK metric.
\end{thm}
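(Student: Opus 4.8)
The plan is to deduce this from Lemma \ref{l4.1} together with the weak-strong uniqueness theorem of \cite{Darvas1605}. Since $\varphi_*$ minimizes $K$ over $\mathcal{E}^1$, the $K$-energy is in particular bounded from below on $\mathcal{E}^1$, so Lemma \ref{l4.1} supplies a smooth cscK metric in $[\omega_0]$, i.e.\ a smooth $\varphi_{\mathrm{cscK}}\in\mathcal{H}_0$ solving (\ref{2.6n}) with $\beta=0$. I would then invoke Theorem 1.4 of \cite{Darvas1605}: once a smooth cscK metric exists in $[\omega_0]$, every minimizer of $K$ over $\mathcal{E}^1$ agrees, modulo an additive constant, with $g^*\varphi_{\mathrm{cscK}}$ for some $g\in Aut_0(M,J)$, hence is itself a smooth K\"ahler potential whose associated metric is cscK. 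Applying this to $\varphi_*$ finishes the proof (and since $Aut_0(M,J)=0$, in fact $\omega_{\varphi_*}=\omega_{\varphi_{\mathrm{cscK}}}$).

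The only substantive ingredient is Lemma \ref{l4.1}, and that is where the real work sits: from boundedness below of $K$ one gets properness of the twisted energy $K_{\omega_0,t}$ for $t<1$, hence solvability of the continuity path (\ref{2.12}) on $[0,1)$ by Theorem \ref{t2.2}; the minimality of $\varphi_*$ and Corollary \ref{c2.6} force $J_{\omega_0}(\tilde\varphi_t)\le J_{\omega_0}(\varphi_*)$ along the path; properness of $J_{\omega_0}$ on $\mathcal{H}_0$ then bounds $d_1(0,\tilde\varphi_t)$ uniformly as $t\to1$; and Lemma \ref{l2.7}, resting on the a priori estimate Theorem \ref{t1.1}, allows passage to the limit $t=1$. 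Given these, Theorem \ref{t4.1} itself has essentially no remaining obstacle.

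If one instead wanted a proof independent of \cite{Darvas1605} --- which will be needed for the twisted generalization announced after Theorem \ref{t1.3}, since \cite{Darvas1605} only handles $\beta=0$ --- I would follow the sketch given just before Lemma \ref{l4.1}: approximate $\varphi_*$ in $(\mathcal{E}^1,d_1)$ by smooth $\varphi_j$ with $K(\varphi_j)\to K(\varphi_*)$ and $\sup_j d_1(0,\varphi_j)<\infty$ (possible by Theorem \ref{t2.2new} and the lower semicontinuity in Theorem \ref{t2.4new}), solve the continuity path from each $\varphi_j$ to obtain smooth cscK potentials $u_j$, show $\sup_j d_1(0,u_j)<\infty$ by the same $J_{\omega_0}$-monotonicity argument, extract a smooth limit $u_j\to\psi$ via the compactness in Corollary \ref{c1.1}, and identify $\psi$ with $\varphi_*$ up to an additive constant by comparing $K$-values along the finite energy geodesic joining them, using convexity of $K$ (Theorem \ref{t2.4new}) together with $Aut_0(M,J)=0$. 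In that route the delicate point is the uniform $d_1$-bound on $u_j$ as $t\to1$, which is precisely where the existence of the weak minimizer $\varphi_*$ enters.
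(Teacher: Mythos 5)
Your proposal is correct and follows exactly the paper's route: deduce the existence of a smooth cscK metric from the weak minimizer via Lemma~\ref{l4.1}, then invoke the weak--strong uniqueness of \cite{Darvas1605} to conclude that $\varphi_*$ itself is a smooth cscK potential. Your anticipation that the twisted generalization requires a separate argument bypassing \cite{Darvas1605}, and your sketch of how to run it through the continuity path, also matches what the paper does immediately afterward.
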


Next we will prove a more general version of Theorem \ref{t4.1}.
More precisely, we will prove:
\begin{thm}
Let $\chi\geq0$ be a closed smooth $(1,1)$ form.
Define $K_{\chi}(\varphi)=K(\varphi)+J_{\chi}(\varphi)$, where $J_{\chi}(\varphi)$ is defined by (\ref{J-chi}).
Let $\varphi_*\in\mathcal{E}^1$ be such that $K_{\chi}(\varphi_*)=\inf_{\mathcal{E}^1}K_{\chi}(\varphi)$. Then $\varphi_*$ is smooth and solves the equation $R_{\varphi}-\underline{R}=tr_{\varphi}\chi-\underline{\chi}$.
\end{thm}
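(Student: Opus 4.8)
The plan is to solve the Euler--Lagrange equation $R_\varphi-\underline R=tr_\varphi\chi-\underline\chi$ (that is, (\ref{2.6n}) with $\beta=\chi$) by the continuity method used for Theorem \ref{t2.2}, with the properness hypothesis there replaced by the mere existence of the minimizer $\varphi_*$, and then to identify the smooth solution produced with $\varphi_*$ up to a constant. Concretely, run the path (\ref{2.13nn}) with the fixed form taken to be $\chi$ and the auxiliary form taken to be $\omega_0$,
$$
t(R_\varphi-\underline R)=t(tr_\varphi\chi-\underline\chi)+(1-t)(tr_\varphi\omega_0-\underline{\omega_0}),
$$
and set $S=\{t_0\in[0,1]:\text{this is solvable for all }t\in[0,t_0]\}$. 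Then $0\in S$ (solution $\varphi\equiv0$), and $S$ is relatively open by Lemma \ref{l2.2} and Remark \ref{r2.3}. Observe that $K_\chi(\varphi_*)=\inf_{\mathcal{E}^1}K_\chi<\infty$ (compare with $\varphi=0$), and that Corollary \ref{c2.6}, Lemma \ref{l2.5}, Lemma \ref{l2.4} and Lemma \ref{l2.7} used only convexity of $K$ and of $J_\psi$ for $\psi\geq0$, hence apply verbatim with $\beta=\chi$.

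For closedness, suppose $t_i\nearrow t_*>0$ with $t_i\in S$, and let $\varphi_i$ solve the path at $t=t_i$ with $I(\varphi_i)=0$. By (the $\chi$-analogue of) Corollary \ref{c2.6}, $\varphi_i$ minimizes $t_iK_\chi+(1-t_i)J_{\omega_0}$ over $\mathcal{E}^1$, so
$$
t_iK_\chi(\varphi_i)+(1-t_i)J_{\omega_0}(\varphi_i)\leq t_iK_\chi(\varphi_*)+(1-t_i)J_{\omega_0}(\varphi_*);
$$
since $K_\chi(\varphi_i)\geq K_\chi(\varphi_*)$ this forces $J_{\omega_0}(\varphi_i)\leq J_{\omega_0}(\varphi_*)$ for $t_i<1$, and because $J_{\omega_0}(\varphi)\geq\delta d_1(0,\varphi)-C$ on $\mathcal{H}_0$ (\cite{CoSz}, Proposition 22) we get $\sup_i d_1(0,\varphi_i)<\infty$. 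Then (the $\chi$-analogue of) Lemma \ref{l2.7} gives $t_*\in S$. Applying this first up to $t_*=\sup S$ and then, by the same argument, at $t_*=1$, we obtain a smooth $\psi$ solving the path at $t=1$, i.e. $R_\psi-\underline R=tr_\psi\chi-\underline\chi$; and by Corollary \ref{c2.6} at $t=1$, $\psi$ minimizes $K_\chi$ over $\mathcal{E}^1$, so $K_\chi(\psi)=\inf_{\mathcal{E}^1}K_\chi=K_\chi(\varphi_*)$. (Alternatively, as in the sketch above one first replaces $\varphi_*$ by smooth approximants $\varphi_j\to\varphi_*$ with convergent entropy and uniformly bounded $d_1(0,\varphi_j)$, runs the path from each $\varphi_j$, and passes to a smooth limit $u_j\to\psi$ via Corollary \ref{c1.1}, the $d_1$-bound along each path coming from $\sup_j d_1(0,\varphi_j)<\infty$.)

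It remains to show $\varphi_*=\psi$ up to an additive constant; then $\varphi_*$ is smooth and solves the equation. Both are minimizers of $K_\chi$ over $\mathcal{E}^1$; joining them by the finite energy geodesic $u_t$ with $u_0=\psi$, $u_1=\varphi_*$, convexity of $K$ (Theorem \ref{t2.4new}) and of $J_\chi$ forces $K_\chi(u_t)\equiv\inf K_\chi$, and since $K$ and $J_\chi$ are each convex along $u_t$ their sum being affine makes each of them affine along $u_t$. Rigidity for an affine branch of the twisted $K$-energy --- Berman--Berndtsson \cite{Ber14-01} in the untwisted case, \cite{Darvas1602}, Theorem 4.13 when $\chi>0$, or the second variation computation at the smooth point $\psi$ combined with $Aut_0(M,J)=0$ (unique continuation for the Lichnerowicz-type operator, exploiting a point of strict positivity of $\chi$ when $\chi\not\equiv0$) --- then forces $u_t$ to be trivial, so $\omega_{\varphi_*}=\omega_\psi$ and, after normalizing $I=0$, $\varphi_*=\psi$.

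The main obstacle is exactly this last step: extracting genuine rigidity from ``$K_\chi$ affine along a merely finite energy geodesic joining a smooth solution to an a priori non-smooth minimizer'', where the Euler--Lagrange equation and the second variation are not literally available at $\varphi_*$. Everything preceding it is a routine transcription, with $\beta$ replaced by $\chi$, of the continuity-method arguments of Sections 3 and 4; the genuinely new ingredient is the weak--strong uniqueness of minimizers of the twisted $K$-energy, for which one relies on the Berman--Berndtsson / Darvas--Rubinstein / Berman--Darvas--Lu circle of ideas already used in Theorem \ref{t4.2n}.
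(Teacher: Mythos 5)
Your continuity-method construction of a smooth solution $\psi$ to $R_\psi-\underline R=tr_\psi\chi-\underline\chi$ is sound (it is essentially the paper's Lemma \ref{l4.1} with $\beta=\chi$), and you correctly observe that $\psi$ then minimizes $K_\chi$ via (the $\chi$-analogue of) Corollary \ref{c2.6}. The gap is exactly where you flag it: identifying $\psi$ with $\varphi_*$. Your proposed rigidity --- $K_\chi$ affine along the finite energy geodesic from $\psi$ to $\varphi_*$, then second variation / unique continuation at the smooth endpoint --- is precisely the weak--strong uniqueness statement for partially degenerate $\chi\geq0$ that the paper explicitly says it does \emph{not} know how to prove (``it is not clear to us whether the argument in \cite{Darvas1605} can be adapted to this case''). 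When $\chi>0$ one has \cite{Darvas1602}, Theorem 4.13; when $\chi\equiv 0$ one has \cite{Darvas1605} (under $Aut_0(M,J)=0$, which this theorem does not assume); but for $\chi$ with nontrivial kernel the affineness of $J_\chi$ gives nothing where $\chi$ vanishes, and Berman--Berndtsson rigidity for affine $K$-energy requires smoothness of both endpoints, which $\varphi_*$ does not have a priori. The unique continuation for the fourth-order twisted Lichnerowicz operator you allude to is not in the literature and would itself be a nontrivial result.

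The paper circumvents this by a quantitative argument whose crucial ingredient you miss: the auxiliary form in the continuity path is taken to be $\omega_{\varphi_j}$, where $\varphi_j\in\mathcal H$ are smooth approximants of $\varphi_*$ (rather than $\omega_0$). Because $\varphi_j$ is the unique minimizer of $J_{\omega_{\varphi_j}}$, and the solution $\varphi_j^t$ at time $t$ minimizes $tK_\chi+(1-t)J_{\omega_{\varphi_j}}$, one gets the pinching
\[
J_{\omega_{\varphi_j}}(\varphi_j)\;\leq\; J_{\omega_{\varphi_j}}(\varphi_j^t)\;\leq\; J_{\omega_{\varphi_j}}(\varphi_*),
\]
and $d_1(\varphi_j,\varphi_*)\to 0$ forces $J_{\omega_{\varphi_j}}(\varphi_*)-J_{\omega_{\varphi_j}}(\varphi_j)\to 0$, hence (via the comparability of $I$ and the $J$-increment) $\sup_{t}I(\varphi_j,\varphi_j^t)\to 0$ and then $\sup_t I(\varphi_*,\varphi_j^t)\to 0$ by the quasi-triangle inequality of Lemma \ref{l5.4n}. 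Passing to the limits $t\to1$ and $j\to\infty$ gives $I(\varphi_*,\psi)=0$, and Lemma \ref{l5.8nnn} (the gradient lower bound $I(\varphi,\psi)\geq\int_M|\nabla_\psi(\varphi-\psi)|^2_\psi\omega_\psi^n$ for $\varphi\in\mathcal E^1$, $\psi\in\mathcal H$) then forces $\varphi_*=\psi+\text{const}$. With $\omega_0$ as the auxiliary form, none of this control on $I(\varphi_j^t,\varphi_*)$ is available, and you are left needing exactly the uniqueness theorem the paper deliberately avoids.
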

Note that one can run the same argument as in 
Lemma \ref{l4.1} to show once there exists a minimizer to $K_{\chi}$, then there exists a smooth solution to 
\begin{equation}\label{4.2new}
R_{\varphi}-\underline{R}=tr_{\varphi}\chi-\underline{\chi}.
\end{equation}
However, it is not clear to us whether the argument in \cite{Darvas1605} can be adapted to this case to show a weak-strong uniqueness result. Namely if there exists a smooth solution to $R_{\varphi}-\underline{R}=tr_{\varphi}\chi-\underline{\chi}$, can one conclude all minimizers of $K_{\chi}$ are smooth?
Therefore, in the following, we will use a direct argument.
This argument is motivated from \cite{Darvas1605}, but now is more straightforward because of the use of properness theorem.

Let $\varphi_*$ be a minimizer of $K_{\chi}$.
 Then by \cite{Darvas1602}, Lemma 1.3, we may take a sequence of $\varphi_j\in\mathcal{H}$, such that $d_1(\varphi_j,\varphi_*)\rightarrow0$, and $K_{\chi}(\varphi_j)\rightarrow K_{\chi}(\varphi_*)$.
Indeed, that lemma asserts the convergence of the entropy part, but the $J_{-Ric}$ and $J_{\chi}$ are continuous under $d_1$ convergence, by \cite{Darvas1602}, Proposition 4.4.

Since there exists a minimizer to $K_{\chi}$, the functional $K_{\chi}$ is bounded from below.
On the other hand, for each fixed $j$, by \cite{CoSz}, Proposition 22, we know that $J_{\omega_{\varphi_j}}$ is proper.
Therefore, for $0\leq t<1$, the twisted $K_{\chi}$-energy $K_{\chi,\omega_{\varphi_j},t}:=tK_{\chi}+(1-t)J_{\omega_{\varphi_j}}$ is proper. 
Hence we may invoke Theorem \ref{t2.2} to conclude there exists a smooth solution to the equation
\begin{equation}\label{4.1n}
t(R_{\varphi}-\underline{R})=(1-t)(tr_{\varphi}\omega_{\varphi_j}-n)+t(tr_{\varphi}\chi-\underline{\chi}),\textrm{ for any $0\leq t<1$.}
\end{equation}
Denote the solution to be $\varphi_j^t$, normalized up to an additive constant so that $\varphi_j^t\in\mathcal{H}_0$, namely $I(\varphi_j^t)=0$.

Since $\chi\geq0$ and closed, we know that $J_{\chi}$ is convex along $C^{1,1}$ geodesic(though not necessarily strictly convex).
Hence the functional $K_{\chi}$ is convex along $C^{1,1}$ geodesic.
This again implies the convexity of $tK_{\chi}+(1-t)J_{\omega_{\varphi_j}}$ along $C^{1,1}$ geodesic.
In particular, $\varphi_j^t$ is a global minimizer of $tK_{\chi}+(1-t)J_{\omega_{\varphi_j}}$ by Corollary 4.5.

Hence we know that
\begin{equation}
tK_{\chi}(\varphi_j^t)+(1-t)J_{\omega_{\varphi_j}}(\varphi_j)\leq tK_{\chi}(\varphi_j^t)+(1-t)J_{\omega_{\varphi_j}}(\varphi_j^t)\leq tK_{\chi}(\varphi_j)+(1-t)J_{\omega_{\varphi_j}}(\varphi_j).
\end{equation}
The first inequality above uses that $\varphi_j$ minimizes $J_{\omega_{\varphi_j}}$. Hence
\begin{equation}\label{4.3n}
\sup_{0<t<1,\,j}K_{\chi}(\varphi_j^t)\leq \sup_jK_{\chi}(\varphi_j).
\end{equation}
Next we will show that the family of solution $\varphi_j^t$ are uniformly bounded in $d_1$.
First we have
\begin{equation}\label{4.5}
tK_{\chi}(\varphi_j^t)+(1-t)J_{\omega_{\varphi_j}}(\varphi_j^t)\leq tK_{\chi}(\varphi_*)+(1-t)J_{\omega_{\varphi_j}}(\varphi_*)\leq tK_{\chi}(\varphi_j^t)+(1-t)J_{\omega_{\varphi_j}}(\varphi_*).
\end{equation}
The first inequality follows from that $\varphi_j^t$ minimizes $tK_{\chi}+(1-t)J_{\omega_{\varphi_j}}$ and the second inequality follows since $\varphi_*$ minimizes $K_{\chi}$.
Therefore,
\begin{equation}\label{1.3}
J_{\omega_{\varphi_j}}(\varphi_j)\leq J_{\omega_{\varphi_j}}(\varphi_j^t)\leq J_{\omega_{\varphi_j}}(\varphi_*).
\end{equation}
The first inequality follows from that $\varphi_j$ is a minimizer of $J_{\omega_{\varphi_j}}$. 
The second inequality follows from (\ref{4.5}).
As a first observation, we have
\begin{lem}
As $j\rightarrow\infty$,
$$J_{\omega_{\varphi_j}}(\varphi_*)-J_{\omega_{\varphi_j}}(\varphi_j)\rightarrow0.$$
\end{lem}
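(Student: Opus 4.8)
The plan is to compare the functional $J_{\omega_{\varphi_j}}$ with $J_{\omega_0}$ and then exploit that $\varphi_j\to\varphi_*$ in $d_1$. Since $\omega_{\varphi_j}-\omega_0=\sqrt{-1}\partial\bar\partial\varphi_j$, the computation carried out in (\ref{4.17n}) — which only uses integration by parts together with the telescoping identity $\sum_{p=0}^{n-1}(\omega_\varphi-\omega_0)\wedge\omega_0^{n-1-p}\wedge\omega_\varphi^{p}=\omega_\varphi^n-\omega_0^n$, and is therefore valid for every $\varphi\in\mathcal{E}^1$, not only for $\varphi\in\mathcal{H}_0$ — gives
$$
J_{\omega_{\varphi_j}}(\varphi)-J_{\omega_0}(\varphi)=-\frac{1}{n!}\int_M\varphi_j\big(\omega_\varphi^n-\omega_0^n\big),\qquad \varphi\in\mathcal{E}^1 .
$$
(The right-hand side is unchanged if a constant is added to $\varphi_j$, consistently with the fact that $\omega_{\varphi_j}$ determines $\varphi_j$ only up to a constant.) Taking the difference of this identity at $\varphi=\varphi_*$ and at $\varphi=\varphi_j$ yields
$$
J_{\omega_{\varphi_j}}(\varphi_*)-J_{\omega_{\varphi_j}}(\varphi_j)=\big(J_{\omega_0}(\varphi_*)-J_{\omega_0}(\varphi_j)\big)+\frac{1}{n!}\int_M\varphi_j\big(\omega_{\varphi_j}^n-\omega_{\varphi_*}^n\big),
$$
so it suffices to show that each of the two terms on the right tends to $0$ as $j\to\infty$.

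For the first term, since $\omega_0\ge0$ the functional $J_{\omega_0}$ extends to a $d_1$-continuous functional on $\mathcal{E}^1$ (Proposition 4.4 of \cite{Darvas1602}, recalled in Section~2), so $J_{\omega_0}(\varphi_j)\to J_{\omega_0}(\varphi_*)$ because $d_1(\varphi_j,\varphi_*)\to0$ by construction. For the second term I would first split off the part that is immediately controlled by the $d_1$-distance,
$$
\int_M\varphi_j\big(\omega_{\varphi_j}^n-\omega_{\varphi_*}^n\big)=\int_M(\varphi_j-\varphi_*)\big(\omega_{\varphi_j}^n-\omega_{\varphi_*}^n\big)+\int_M\varphi_*\big(\omega_{\varphi_j}^n-\omega_{\varphi_*}^n\big),
$$
where $\big|\int_M(\varphi_j-\varphi_*)(\omega_{\varphi_j}^n-\omega_{\varphi_*}^n)\big|\le\int_M|\varphi_j-\varphi_*|(\omega_{\varphi_j}^n+\omega_{\varphi_*}^n)=n!\,I_1(\varphi_j,\varphi_*)\le C\,d_1(\varphi_j,\varphi_*)\to0$ by Theorem~\ref{t2.3} (applied with $\varphi_j\in\mathcal{H}$, $\varphi_*\in\mathcal{E}^1$, using the density of $\mathcal{H}$ in $\mathcal{E}^1$ and the $d_1$-continuity of $I_1$).

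It remains to treat $\int_M\varphi_*(\omega_{\varphi_j}^n-\omega_{\varphi_*}^n)$, and this is where the real work lies, the difficulty being that $\varphi_*$ is only a finite-energy potential and may be unbounded below. I would expand $\omega_{\varphi_j}^n-\omega_{\varphi_*}^n=\sqrt{-1}\partial\bar\partial(\varphi_j-\varphi_*)\wedge\sum_{l=0}^{n-1}\omega_{\varphi_j}^l\wedge\omega_{\varphi_*}^{n-1-l}$ and integrate by parts — the integration-by-parts formula for mixed Monge–Amp\`ere currents of $\mathcal{E}^1$ potentials is licit here since all the relevant energies are finite — to move $\sqrt{-1}\partial\bar\partial$ onto $\varphi_*$, obtaining
$$
\int_M\varphi_*\big(\omega_{\varphi_j}^n-\omega_{\varphi_*}^n\big)=\sum_{l=0}^{n-1}\int_M(\varphi_j-\varphi_*)\,(\omega_{\varphi_*}-\omega_0)\wedge\omega_{\varphi_j}^l\wedge\omega_{\varphi_*}^{n-1-l}.
$$
Every term on the right has the form $\int_M|\varphi_j-\varphi_*|\,\omega_{\phi_1}\wedge\cdots\wedge\omega_{\phi_n}$ with each $\phi_i\in\{0,\varphi_j,\varphi_*\}$, and such mixed integrals are bounded by $C\big(n,\,d_1(0,\varphi_j),\,d_1(0,\varphi_*)\big)\,I_1(\varphi_j,\varphi_*)$ by the comparison estimates for finite-energy potentials (c.f. \cite{Darvas1402}, \cite{BBEGZ}); since $d_1(0,\varphi_j)$ stays bounded and $I_1(\varphi_j,\varphi_*)\le C\,d_1(\varphi_j,\varphi_*)\to0$, these terms vanish in the limit. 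Alternatively, one can write $\int_M\varphi_*(\omega_{\varphi_j}^n-\omega_{\varphi_*}^n)$ as a limit over bounded approximants $\varphi_*^k\downarrow\varphi_*$ and combine the weak convergence $\omega_{\varphi_j}^n\rightharpoonup\omega_{\varphi_*}^n$ (which follows from $d_1(\varphi_j,\varphi_*)\to0$) with the uniform integrability of the $\{\varphi_j\}$ against the measures $\omega_{\varphi_j}^n$ coming from the uniform energy bound. Collecting the three pieces gives $J_{\omega_{\varphi_j}}(\varphi_*)-J_{\omega_{\varphi_j}}(\varphi_j)\to0$, which is moreover consistent with the one-sided bound $J_{\omega_{\varphi_j}}(\varphi_*)\ge J_{\omega_{\varphi_j}}(\varphi_j)$ already recorded in (\ref{1.3}).
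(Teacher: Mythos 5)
Your route is genuinely different from the one in the paper, and the comparison is instructive. The paper never leaves the functional $J_{\omega_{\varphi_j}}$: it writes the difference as $\int_0^1\frac{d}{d\lambda}J_{\omega_{\varphi_j}}(\lambda\varphi_*+(1-\lambda)\varphi_j)\,d\lambda$, uses $\omega_{\varphi_j}-\omega_{\varphi_\lambda}=\lambda\sqrt{-1}\partial\bar\partial(\varphi_j-\varphi_*)$ and one integration by parts to arrive at the single \emph{nonnegative} quantity
$\int_0^1\lambda\,d\lambda\int_M\sqrt{-1}\partial(\varphi_*-\varphi_j)\wedge\bar\partial(\varphi_*-\varphi_j)\wedge\frac{(\lambda\omega_{\varphi_*}+(1-\lambda)\omega_{\varphi_j})^{n-1}}{(n-1)!}$,
and then dominates the mixed form termwise by $\sum_k\omega_{\varphi_j}^k\wedge\omega_{\varphi_*}^{n-1-k}$, so the whole difference is directly sandwiched between $0$ and $C\,I(\varphi_j,\varphi_*)\leq C'\,d_1(\varphi_j,\varphi_*)$. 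Your argument instead detours through $J_{\omega_0}$ via the cohomology-invariance identity (\ref{4.17n}), which is fine and gives a slick reduction, but it reintroduces the background form $\omega_0$ and thereby trades one clean inequality for three pieces, two of which are indeed immediate ($d_1$-continuity of $J_{\omega_0}$, and the $I_1$ control of $\int(\varphi_j-\varphi_*)(\omega_{\varphi_j}^n-\omega_{\varphi_*}^n)$).

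The soft spot is the third piece. After your second integration by parts you need to bound terms of the form $\int_M(\varphi_j-\varphi_*)(\omega_{\varphi_*}-\omega_0)\wedge\omega_{\varphi_j}^l\wedge\omega_{\varphi_*}^{n-1-l}$, and you assert that mixed integrals $\int_M|\varphi_j-\varphi_*|\,\omega_{\phi_1}\wedge\cdots\wedge\omega_{\phi_n}$ are bounded by $C(n,d_1(0,\varphi_j),d_1(0,\varphi_*))\,I_1(\varphi_j,\varphi_*)$, citing ``comparison estimates for finite-energy potentials''. That linear bound is not one of the quoted results, and I do not think it holds as stated: the mixed Monge--Amp\`ere measure $\omega_{\phi_1}\wedge\cdots\wedge\omega_{\phi_n}$ is not dominated pointwise by $\omega_{\varphi_j}^n+\omega_{\varphi_*}^n$, which is all $I_1(\varphi_j,\varphi_*)$ controls. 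What the paper's Lemma~\ref{l5.4n} (from \cite{BBEGZ}) actually provides is a quasi-triangle inequality together with a bound on Dirichlet-type integrals $\int\sqrt{-1}\partial(u-w)\wedge\bar\partial(u-w)\wedge\omega_v^{n-1}$ with a fractional exponent $I(u,w)^{1/2^{n-1}}$, not an $L^1$ bound with a linear rate. A correct execution of your plan would integrate by parts once more, apply Cauchy--Schwarz to reduce to such Dirichlet integrals, and then use that power estimate (which still tends to $0$, so the lemma would still follow). That is doable, but it is noticeably more machinery than the paper's one-line inequality, which never needs mixed $L^1$ estimates at all. In short: correct strategy, but the key quantitative step is asserted rather than proved, and the claimed linear dependence on $I_1$ should be replaced by the fractional-power bound actually available.
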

\begin{proof}
We can compute
\begin{equation}\label{1.4}
\begin{split}
J_{\omega_{\varphi_j}}&(\varphi_*)-J_{\omega_{\varphi_j}}(\varphi_j)=\int_0^1\frac{d}{d\lambda}\big(J_{\omega_{\varphi_j}}(\lambda\varphi_*+(1-\lambda)\varphi_j)\big)d\lambda\\
&=\int_0^1d\lambda\int_M(\varphi_*-\varphi_j)\frac{\omega_{\lambda\varphi_*+(1-\lambda)\varphi_j}^{n-1}\wedge \omega_{\varphi_j}-\omega_{\lambda\varphi_*+(1-\lambda)\varphi_j}^n}{(n-1)!}\\
&=\int_0^1d\lambda\int_M\lambda(\varphi_*-\varphi_j)\wedge\sqrt{-1}\partial\bar{\partial}(\varphi_j-\varphi_*)\wedge\frac{\omega_{\lambda\varphi_*+(1-\lambda)\varphi_j}^{n-1}}{(n-1)!}\\
&=\int_0^1d\lambda\int_M\lambda\sqrt{-1}\partial(\varphi_*-\varphi_j)\wedge\bar{\partial}(\varphi_*-\varphi_j)\wedge\frac{(\lambda \omega_{\varphi_*}+(1-\lambda)\omega_{\varphi_j})^{n-1}}{(n-1)!}.
\end{split}
\end{equation}
Define
 \begin{equation}\label{1.5}
\begin{split}
I(\varphi_j,&\varphi_*)=\int_M\sqrt{-1}\partial(\varphi_j-\varphi_*)\wedge\bar{\partial}(\varphi_j-\varphi_*)\wedge\sum_{k=0}^{n-1}\omega_{\varphi_j}^k\wedge\omega_{\varphi_*}^{n-1-k}\\
&=\int_M(\varphi_j-\varphi_*)(\omega_{\varphi_*}^n-\omega_{\varphi_j}^n).
\end{split}
\end{equation}
Since we know $d_1(\varphi_j,\varphi_*)\geq\frac{1}{C} \int_M|\varphi_j-\varphi_*|(\omega_{\varphi_j}^n+\omega_{\varphi_*}^n)$ for some dimensional constant $C$, by \cite{Darvas1402}, Theorem 5.5, we have $I(\varphi_j,\varphi_*)\leq Cd_1(\varphi_j,\varphi_*)\rightarrow0$.
On the other hand, we have
$J_{\omega_{\varphi_j}}(\varphi_*)-J_{\omega_{\varphi_j}}(\varphi_j)\leq C'I(\varphi_j,\varphi_*)$ from (\ref{1.4}) and (\ref{1.5}).
Hence $J_{\omega_{\varphi_j}}(\varphi_*)-J_{\omega_{\varphi_j}}(\varphi_j)\leq C'C d_1(\varphi_j,\varphi_*)\rightarrow0$.
\end{proof}
\begin{cor}\label{c1.3}
Let $I(\varphi_j,\varphi_j^t)$ be defined similar to (\ref{1.5}), then we have $\sup_{0<t<1}I(\varphi_j,\varphi_j^t)\rightarrow 0$ as $j\rightarrow\infty$.
\end{cor}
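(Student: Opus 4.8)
The plan is to re-run the computation that produced the preceding lemma, but with $\varphi_*$ replaced by the continuity-path solution $\varphi_j^t$, and then to squeeze $I(\varphi_j,\varphi_j^t)$ between $0$ and a quantity already known to vanish in the limit. First I would extract from the sandwich inequality (\ref{1.3}) that, for every $t\in(0,1)$,
$$
0\ \le\ J_{\omega_{\varphi_j}}(\varphi_j^t)-J_{\omega_{\varphi_j}}(\varphi_j)\ \le\ J_{\omega_{\varphi_j}}(\varphi_*)-J_{\omega_{\varphi_j}}(\varphi_j),
$$
where the right-hand side is \emph{independent of $t$} and tends to $0$ as $j\to\infty$ by the previous lemma. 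So it suffices to bound $I(\varphi_j,\varphi_j^t)$ from above by a fixed multiple of $J_{\omega_{\varphi_j}}(\varphi_j^t)-J_{\omega_{\varphi_j}}(\varphi_j)$, uniformly in $t$ and $j$.

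For that, integrate $\tfrac{d}{d\lambda}J_{\omega_{\varphi_j}}\big(\lambda\varphi_j^t+(1-\lambda)\varphi_j\big)$ over $\lambda\in[0,1]$ exactly as in (\ref{1.4}); writing $u=\varphi_j^t-\varphi_j$ this gives
$$
J_{\omega_{\varphi_j}}(\varphi_j^t)-J_{\omega_{\varphi_j}}(\varphi_j)=\int_0^1\lambda\,d\lambda\int_M\sqrt{-1}\partial u\wedge\bar{\partial}u\wedge\frac{\big(\lambda\omega_{\varphi_j^t}+(1-\lambda)\omega_{\varphi_j}\big)^{n-1}}{(n-1)!}.
$$
Since $\varphi_j\in\mathcal H$ and $\varphi_j^t\in\mathcal H_0$ are genuine smooth K\"ahler potentials, expanding the $(n-1)$-st power by the binomial theorem displays the integrand as a nonnegative combination of the measures $\sqrt{-1}\partial u\wedge\bar{\partial}u\wedge\omega_{\varphi_j^t}^{k}\wedge\omega_{\varphi_j}^{n-1-k}$, $0\le k\le n-1$; performing the $\lambda$-integration multiplies each of these by a fixed positive constant (a value of the Beta function times a binomial coefficient), so
$$
J_{\omega_{\varphi_j}}(\varphi_j^t)-J_{\omega_{\varphi_j}}(\varphi_j)\ \ge\ c_n\int_M\sqrt{-1}\partial u\wedge\bar{\partial}u\wedge\sum_{k=0}^{n-1}\omega_{\varphi_j}^{k}\wedge\omega_{\varphi_j^t}^{n-1-k}\ =\ c_n\,I(\varphi_j,\varphi_j^t)
$$
for a dimensional constant $c_n>0$, where $I(\varphi_j,\varphi_j^t)$ is the quantity defined in analogy with (\ref{1.5}).

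Combining the two displays yields $c_n\,I(\varphi_j,\varphi_j^t)\le J_{\omega_{\varphi_j}}(\varphi_*)-J_{\omega_{\varphi_j}}(\varphi_j)$ with a right-hand side that does not depend on $t$; taking the supremum over $t\in(0,1)$ and then letting $j\to\infty$ gives $\sup_{0<t<1}I(\varphi_j,\varphi_j^t)\to0$. There is no genuine obstacle here; the only points requiring care are that every mixed wedge product appearing after the binomial expansion is a \emph{nonnegative} measure — which is precisely why one needs $\varphi_j^t$ itself (not merely a weak limit) to be an honest smooth K\"ahler potential — and that the constant $c_n$ coming out of the $\lambda$-integration is uniform in $j$ and $t$, which is clear since it depends only on $n$.
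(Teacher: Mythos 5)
Your argument is correct and is essentially the paper's own proof: the paper likewise combines the sandwich inequality (1.3) with the observation that the identity (1.4)–(1.5), rerun with $\varphi_*$ replaced by $\varphi_j^t$, yields a two-sided comparison between $I(\varphi_j,\varphi_j^t)$ and $J_{\omega_{\varphi_j}}(\varphi_j^t)-J_{\omega_{\varphi_j}}(\varphi_j)$ with dimensional constants. You have simply spelled out the one direction of that comparison that is actually needed (the binomial expansion producing nonnegative measures and a uniform dimensional constant), which is a correct and adequate unpacking of the step the paper leaves implicit.
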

\begin{proof}
From previous lemma and (\ref{1.3}), we know that as $j\rightarrow\infty$,
$$\sup_{0<t<1}J_{\omega_{\varphi_j}}(\varphi_j^t)-J_{\omega_{\varphi_j}}(\varphi_j)\leq J_{\omega_{\varphi_j}}(\varphi_*)-J_{\omega_{\varphi_j}}(\varphi_j)\rightarrow0.
$$
On the other hand, we know from (\ref{1.4}), (\ref{1.5}) with $\varphi_*$ replaced by $\varphi_j^t$, th following estimate holds:
$$
\frac{1}{C_n}(J_{\omega_{\varphi_j}}(\varphi_j^t)-J_{\omega_{\varphi_j}}(\varphi_j))\leq I(\varphi_j^t,\varphi_j)\leq C_n(J_{\omega_{\varphi_j}}(\varphi_j^t)-J_{\omega_{\varphi_j}}(\varphi_j)).$$ 
\end{proof}
Next we would like to show the $d_1$ distance of $\varphi_j^t$ remains uniformly bounded.
For this we will need the following key lemma:
\begin{lem}(\cite{BBEGZ}, Theorem 1.8 and Lemma 1.9)\label{l5.4n}
There exists a dimensional constant $C_n$, such that for any $u,\,v,\,w\in\mathcal{E}^1$, we have
$$
I(u,w)\leq C_n(I(u,v)+I(v,w)).
$$
Besides, we have
$$
\int_M\sqrt{-1}\partial(u-w)\wedge\bar{\partial}(u-w)\wedge\omega_v^{n-1}\leq C_nI(u,w)^{\frac{1}{2^{n-1}}}\big(I(u,v)^{1-\frac{1}{2^{n-1}}}+I(w,v)^{1-\frac{1}{2^{n-1}}}\big).
$$
\end{lem}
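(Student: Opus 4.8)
The statement is quoted verbatim from \cite{BBEGZ}, so the plan is to reproduce their argument. The first reduction is to pass from $u,v,w\in\mathcal{E}^1$ to \emph{bounded} $\omega_0$-plurisubharmonic potentials. Every element of $\mathcal{E}^1$ is a decreasing limit of bounded (indeed smooth) $\omega_0$-psh functions — e.g.\ the canonical truncations $\max(u,-j)$ — and both sides of each of the two inequalities are continuous along such decreasing sequences. This uses the weak continuity of the mixed Monge--Amp\`ere operators $\omega_u^k\wedge\omega_v^{n-1-k}$ along monotone sequences in $\mathcal{E}^1$, together with the fact that, after the usual telescoping, $I(u,v)=\int_M(u-v)(\omega_v^n-\omega_u^n)$ is a sum of nonnegative mixed energies $\int_M\sqrt{-1}\partial(u-v)\wedge\bar{\partial}(u-v)\wedge\omega_u^k\wedge\omega_v^{n-1-k}$. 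Hence it is enough to prove both inequalities when $u,v,w$ are bounded, where all the integrations by parts appearing below are legitimate by Bedford--Taylor theory.

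For bounded potentials I would argue by induction on $n$. When $n=1$ the quantity $I(u,w)=\int_M\sqrt{-1}\partial(u-w)\wedge\bar{\partial}(u-w)$ is the square of a Dirichlet seminorm, so the triangle inequality for that seminorm applied to $(u-w)=(u-v)+(v-w)$ gives $I(u,w)\le 2I(u,v)+2I(v,w)$, while the second inequality is vacuous since $2^{n-1}=1$ and $\omega_v^0=1$. For the inductive step one expands
\[
I(u,w)=\sum_{k=0}^{n-1}\int_M\sqrt{-1}\partial(u-w)\wedge\bar{\partial}(u-w)\wedge\omega_u^k\wedge\omega_w^{n-1-k},
\]
uses the Hermitian-form inequality $\sqrt{-1}\partial(a+b)\wedge\bar{\partial}(a+b)\le 2\sqrt{-1}\partial a\wedge\bar{\partial}a+2\sqrt{-1}\partial b\wedge\bar{\partial}b$ (valid once wedged against a positive $(n-1,n-1)$-current) with $a=u-v$, $b=v-w$, and then integrates by parts to trade a factor of $\omega_w$ (resp.\ $\omega_u$) in the background for $\omega_v$, at the cost of a Cauchy--Schwarz step of the schematic form
\[
\int_M\sqrt{-1}\partial f\wedge\bar{\partial}f\wedge\alpha\wedge S\ \le\ \Big(\int_M\sqrt{-1}\partial f\wedge\bar{\partial}f\wedge\beta\wedge S\Big)^{1/2}\Big(\int_M\sqrt{-1}\partial f\wedge\bar{\partial}f\wedge\gamma\wedge S\Big)^{1/2}+(\dots)
\]
for positive closed currents $\alpha,\beta,\gamma$ and a positive closed background $S$ of complementary bidegree. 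Iterating this $n-1$ times removes all the mismatched $\omega$-factors; each iteration halves the exponent carried by the "$I(u,w)$-type" factor, which is exactly what produces the exponent $\frac{1}{2^{n-1}}$, and its complement $1-\frac{1}{2^{n-1}}$, in the second inequality. The first inequality then follows by inserting the second inequality (applied with the background $\omega_u^k\wedge\omega_w^{n-1-k}$, and with $\omega_u$ or $\omega_w$ playing the role of the "middle" form) back into the expansion above, together with the elementary $|a+b|^2\le 2|a|^2+2|b|^2$; alternatively one runs both inequalities simultaneously in a single induction on $n$.

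The main obstacle will not be the formal manipulation but its justification at low regularity: the integrations by parts and the Cauchy--Schwarz steps have to be carried out for merely bounded $\omega_0$-psh functions using the Bedford--Taylor calculus — closedness of mixed currents, legitimacy of $\int_M\sqrt{-1}\partial f\wedge\bar{\partial}f\wedge T$ and of its integration by parts for bounded psh $f$ — and one must verify that the monotone approximation in the first step is compatible with \emph{all} the mixed Monge--Amp\`ere integrals that occur along the way, which is precisely where one invokes the convergence theorems for mixed Monge--Amp\`ere measures in $\mathcal{E}^1$ from \cite{GZ07} and \cite{BBEGZ}. A secondary but genuine difficulty is the bookkeeping in the Cauchy--Schwarz iteration: the $n-1$ steps must be organized so that every surviving factor is of type $I(u,v)$, $I(v,w)$ or $I(u,w)$, with the total exponent on the $I(u,w)$-factors small enough to be absorbed into the left-hand side when deducing the first inequality.
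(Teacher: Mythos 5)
The paper offers no proof of this lemma: it is stated as a direct citation of Theorem~1.8 and Lemma~1.9 of \cite{BBEGZ}, and no argument is reproduced in the text. Your proposal correctly recognizes this and attempts to reconstruct the \cite{BBEGZ} proof, and your reconstruction captures the right mechanism: the telescoping identity expressing $I(u,v)$ as a sum of nonnegative mixed gradient energies, the reduction to bounded $\omega_0$-psh potentials via monotone approximation combined with continuity of mixed Monge--Amp\`ere operators, and the iterated ``trade an $\omega_u$ or $\omega_w$ in the background for an $\omega_v$ at the cost of one Cauchy--Schwarz step'' argument, whose $n-1$ iterations are exactly what produce the exponent $1/2^{n-1}$. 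The $n=1$ base case observation is also correct.

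Two small cautions, neither fatal. First, the canonical truncations $\max(u,-j)$ are bounded $\omega_0$-psh but not smooth, so the parenthetical ``(indeed smooth)'' should be dropped or replaced by an appeal to Demailly/Blocki--Kolodziej regularization, which is what the paper itself uses elsewhere (\cite{BK}); in any case bounded is all that is needed to invoke Bedford--Taylor. Second, in deducing the quasi-triangle inequality from the second estimate you will need not just the inequality with background $\omega_v^{n-1}$ as stated but a family of intermediate estimates with mixed backgrounds $\omega_u^k\wedge\omega_w^{n-1-k}$, each obtained by the same iterated Cauchy--Schwarz mechanism; as you flag, this is precisely where the bookkeeping lives. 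These are gaps of execution rather than of strategy: the plan you lay out is the argument of \cite{BBEGZ}, and if carried out carefully it does close.
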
 
As an immediate consequence of this lemma and Corollary \ref{c1.3}, we see that:
\begin{cor}\label{c1.5}
$\sup_{0<t<1}I(\varphi_j^t,\varphi_*)\rightarrow0$ as $j\rightarrow\infty$.
\end{cor}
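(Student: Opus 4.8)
The plan is to read this off the quasi-triangle inequality for the functional $I$ contained in Lemma \ref{l5.4n}, applied to the three potentials $u=\varphi_j^t$, $v=\varphi_j$, $w=\varphi_*$, all of which lie in $\mathcal{E}^1$ (the $\varphi_j^t$ are smooth, hence in $\mathcal{H}\subset\mathcal{E}^1$). This gives, uniformly in $t\in(0,1)$,
$$
I(\varphi_j^t,\varphi_*)\leq C_n\big(I(\varphi_j^t,\varphi_j)+I(\varphi_j,\varphi_*)\big).
$$
Taking the supremum over $t\in(0,1)$ reduces the claim to showing that each of the two terms on the right tends to $0$ as $j\to\infty$.

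The first term is handled directly by Corollary \ref{c1.3}: $\sup_{0<t<1}I(\varphi_j^t,\varphi_j)\to 0$ as $j\to\infty$. For the second term, I would use the second expression for $I$ in (\ref{1.5}): $I(\varphi_j,\varphi_*)=\int_M(\varphi_j-\varphi_*)(\omega_{\varphi_*}^n-\omega_{\varphi_j}^n)$, which is nonnegative and bounded above by $\int_M|\varphi_j-\varphi_*|(\omega_{\varphi_j}^n+\omega_{\varphi_*}^n)=n!\,I_1(\varphi_j,\varphi_*)$. By Theorem \ref{t2.3}, $I_1(\varphi_j,\varphi_*)\leq C\,d_1(\varphi_j,\varphi_*)$, and by the choice of the approximating sequence $d_1(\varphi_j,\varphi_*)\to 0$; hence $I(\varphi_j,\varphi_*)\to 0$. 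Combining the two estimates yields $\sup_{0<t<1}I(\varphi_j^t,\varphi_*)\to 0$.

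There is essentially no obstacle here beyond bookkeeping: one only needs to note that Lemma \ref{l5.4n} is stated for triples in $\mathcal{E}^1$ (so the possibly non-smooth limit $\varphi_*$ is allowed) and that the equivalence of $I_1$ and $d_1$ in Theorem \ref{t2.3}, although written for $u,v\in\mathcal{H}$, extends to $\mathcal{E}^1$ by the metric completion description in Theorem \ref{t2.2new} — exactly as already used in the preceding lemma. The only mildly quantitative point is the elementary comparison $I(\varphi_j,\varphi_*)\leq n!\,I_1(\varphi_j,\varphi_*)$ above, which requires no further input.
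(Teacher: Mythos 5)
Your proof is correct and follows essentially the same route as the paper: the quasi-triangle inequality from Lemma \ref{l5.4n} bounds $I(\varphi_j^t,\varphi_*)$ by $C_n\big(I(\varphi_j^t,\varphi_j)+I(\varphi_j,\varphi_*)\big)$, the first term is killed by Corollary \ref{c1.3}, and the second by the comparison $I(\varphi_j,\varphi_*)\leq Cd_1(\varphi_j,\varphi_*)\to 0$ (which the paper has already used in the preceding lemma). Your extra bookkeeping about $I\leq n!\,I_1$ and the extension of Theorem \ref{t2.3} to $\mathcal{E}^1$ is accurate but does not change the argument.
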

\begin{proof}
Indeed,
$$
I(\varphi_j^t,\varphi_*)\leq C_n(I(\varphi_j^t,\varphi_j)+I(\varphi_j,\varphi_*))\leq C_n\big(I(\varphi_j^t,\varphi_j)+Cd_1(\varphi_j,\varphi_*)\big).
$$
\end{proof}

Using Lemma \ref{l5.4n}, we can show the following:
\begin{lem}
There exists a constant $C$, depending only on $\sup_jd_1(0,\varphi_j)$, $n$, such that
$$
\sup_{j,0<t<1}d_1(0,\varphi_j^t)\leq C.
$$
\end{lem}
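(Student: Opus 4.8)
The plan is to upgrade the conclusions of Corollaries \ref{c1.3} and \ref{c1.5} from mere convergence as $j\to\infty$ to \emph{uniform} bounds on the energy pairing $I(\cdot,\cdot)$, with constants depending only on $M_0:=\sup_j d_1(0,\varphi_j)$ and $n$, and then to convert such a bound into a $d_1$-bound using that each $\varphi_j^t$ lies in $\mathcal H_0$. To begin, since $d_1(\varphi_j,\varphi_*)\to0$, the triangle inequality gives $d_1(0,\varphi_*)\le\liminf_j d_1(0,\varphi_j)\le M_0$, hence $d_1(\varphi_j,\varphi_*)\le d_1(\varphi_j,0)+d_1(0,\varphi_*)\le2M_0$ for every $j$; and for any $u,v\in\mathcal E^1$ one has $0\le I(u,v)=\int_M(u-v)(\omega_v^n-\omega_u^n)\le\int_M|u-v|(\omega_u^n+\omega_v^n)=n!\,I_1(u,v)$, so Theorem \ref{t2.3} (in its $\mathcal E^1$ form, as used already above) yields $I(u,v)\le C_n d_1(u,v)$; in particular $I(0,\varphi_*)\le C_nM_0$ and $I(\varphi_j,\varphi_*)\le2C_nM_0$.

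Next I would re-run the proofs of Corollaries \ref{c1.3} and \ref{c1.5} keeping track of constants. From \eqref{1.3}, the two-sided comparison of $I(\varphi_j^t,\varphi_j)$ with $J_{\omega_{\varphi_j}}(\varphi_j^t)-J_{\omega_{\varphi_j}}(\varphi_j)$, and the estimate $J_{\omega_{\varphi_j}}(\varphi_*)-J_{\omega_{\varphi_j}}(\varphi_j)\le C_n d_1(\varphi_j,\varphi_*)$ of the Lemma preceding Corollary \ref{c1.3}, one obtains $\sup_{0<t<1} I(\varphi_j^t,\varphi_j)\le C_n d_1(\varphi_j,\varphi_*)\le 2C_nM_0$ for every $j$. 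Applying the quasi-triangle inequality of Lemma \ref{l5.4n} twice then gives $I(\varphi_j^t,\varphi_*)\le C_n\bigl(I(\varphi_j^t,\varphi_j)+I(\varphi_j,\varphi_*)\bigr)\le C_nM_0$ and $I(0,\varphi_j^t)\le C_n\bigl(I(0,\varphi_*)+I(\varphi_*,\varphi_j^t)\bigr)\le C_nM_0$, all constants depending only on $n$. Thus $A:=\sup_{j,\,0<t<1}I(0,\varphi_j^t)\le C(M_0,n)$.

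It remains to pass from $I(0,\varphi_j^t)\le A$ to a bound on $d_1(0,\varphi_j^t)$. Writing $I(0,\varphi)=\int_M\sqrt{-1}\,\partial\varphi\wedge\bar\partial\varphi\wedge\sum_{k=0}^{n-1}\omega_0^k\wedge\omega_\varphi^{n-1-k}$ and retaining only the $k=n-1$ term bounds the Dirichlet energy of $\varphi_j^t$ with respect to the \emph{fixed} metric $\omega_0$; together with the normalization $I(\varphi_j^t)=0$ and the monotonicity $\int_M\varphi_j^t\,\omega_{\varphi_j^t}^n\le\int_M\varphi_j^t\,\omega_0^k\wedge\omega_{\varphi_j^t}^{n-k}\le\int_M\varphi_j^t\,\omega_0^n$ for $0\le k\le n$ (which bounds $\int_M\varphi_j^t\,\omega_0^n$ and $\int_M\varphi_j^t\,\omega_{\varphi_j^t}^n$ in absolute value by $A$), a Poincaré inequality on $(M,\omega_0)$ bounds $\int_M|\varphi_j^t|\,\omega_0^n$, and an integration by parts showing $\int_M(\varphi_j^t)^+(\omega_{\varphi_j^t}^n-\omega_0^n)\le0$ bounds $\int_M|\varphi_j^t|\,\omega_{\varphi_j^t}^n$; hence $I_1(0,\varphi_j^t)\le C(M_0,n)$ and Theorem \ref{t2.3} gives $d_1(0,\varphi_j^t)\le C(M_0,n)$. (Alternatively one may invoke directly the known comparison of $d_1$ with the Aubin functional on $\mathcal H_0$ from \cite{Darvas1402}, \cite{BBEGZ}.) The main obstacle is exactly this conversion step: the pairing $I(0,\cdot)$ is invariant under adding a constant whereas $d_1$ is not, so the precise normalization $I(\varphi_j^t)=0$ must be exploited, and one must take care that the Poincaré constant used is the fixed one of $(M,\omega_0)$ rather than one deteriorating with the possibly degenerating metrics $\omega_{\varphi_j}$; everything upstream is a bookkeeping refinement of Corollaries \ref{c1.3} and \ref{c1.5} arranged so that the constants depend only on $M_0$ and $n$.
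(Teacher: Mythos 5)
Your proof is correct, and it takes a genuinely different route from the paper's. The paper bounds $J_{\omega_0}(\varphi_j^t)$ by first writing $J_{\omega_0}(\varphi_j^t)-J_{\omega_{\varphi_j}}(\varphi_j^t)$ as an integral of $d^c\varphi_j^t\wedge d\varphi_j$ against mixed Monge--Amp\`ere measures, estimating it via Young's inequality together with the H\"older-type interpolation in Lemma~\ref{l5.4n}, and then invoking the coercivity $J_{\omega_0}(\cdot)\ge\delta\,d_1(0,\cdot)-C$ on $\mathcal{H}_0$ from \cite{CoSz} and absorbing a small multiple of $d_1(0,\varphi_j^t)$; it then separately bounds $J_{\omega_{\varphi_j}}(\varphi_*)$ from above, again using the interpolation inequality. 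You instead quantify Corollaries~\ref{c1.3}--\ref{c1.5}: from (\ref{1.3}) and the two-sided comparison $I(\varphi_j^t,\varphi_j)\asymp_n J_{\omega_{\varphi_j}}(\varphi_j^t)-J_{\omega_{\varphi_j}}(\varphi_j)$ you get $I(\varphi_j^t,\varphi_j)\le C_n d_1(\varphi_j,\varphi_*)\le C(n)M_0$, then chain via only the quasi-triangle inequality of Lemma~\ref{l5.4n} (never the interpolation) to obtain $I(0,\varphi_j^t)\le C(M_0,n)$. The conversion $I(0,\cdot)\le A\Rightarrow d_1(0,\cdot)\le C(A)$ on $\mathcal{H}_0$ is, as you note, just the coercivity of the Aubin $J$-functional, which one may cite directly (\cite{Darvas1403}, \cite{BBEGZ}); your Poincar\'e argument proves it from scratch: the $k=n-1$ term in $I(0,\varphi)$ controls the $\omega_0$-Dirichlet energy, the normalization $I(\varphi)=0$ together with the monotonicity of $k\mapsto\int_M\varphi\,\omega_0^k\wedge\omega_\varphi^{n-k}$ pins $\int_M\varphi\,\omega_0^n$ and $\int_M\varphi\,\omega_\varphi^n$ in $[-A,A]$, Poincar\'e on the \emph{fixed} $(M,\omega_0)$ bounds $\int_M|\varphi|\omega_0^n$, and the convexity/regularization argument behind $\int_M\varphi^+(\omega_\varphi^n-\omega_0^n)\le0$ (integrate by parts after replacing $s^+$ by a smooth convex increasing approximation) bounds $\int_M|\varphi|\omega_\varphi^n$; Theorem~\ref{t2.3} then finishes. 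Net effect: both proofs feed on the same inputs---(\ref{1.3}), the bound $\sup_j d_1(0,\varphi_j)\le M_0$, Lemma~\ref{l5.4n}, and some coercivity of an energy functional against $d_1$---but yours is shorter, avoids the Young-inequality and absorption steps, and uses only the quasi-triangle half of Lemma~\ref{l5.4n}. One small remark: your chaining $0\to\varphi_*\to\varphi_j\to\varphi_j^t$ can be shortened to $0\to\varphi_j\to\varphi_j^t$ (one application of the quasi-triangle inequality instead of two), bypassing the need to quantify Corollary~\ref{c1.5} at all.
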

\begin{proof}
Denote $d^c=\frac{\sqrt{-1}}{2}(\partial-\bar{\partial})$, and let $\eps>0$, we may calculate
\begin{equation}
\begin{split}
&J_{\omega_0}(\varphi_j^t)-J_{\omega_{\varphi_j}}(\varphi_j^t)\\
&=\int_0^1\int_M\frac{d}{d\lambda}(J_{\omega_0}(\lambda\varphi_j^t)-J_{\omega_{\varphi_j}}(\lambda\varphi_j^t)\big)d\lambda\\
&=\int_0^1\int_M\varphi_j^t\bigg(\frac{\omega_0\wedge\omega_{\lambda\varphi_j^t}^{n-1}}{(n-1)!}-\frac{\omega_{\varphi_j}\wedge\omega_{\lambda\varphi_j^t}^{n-1}}{(n-1)!}\bigg)d\lambda=\int_0^1\int_Md^c\varphi_j^t\wedge d\varphi_j\wedge\frac{\omega_{\lambda\varphi_j^t}^{n-1}}{(n-1)!}d\lambda\\
&\leq \eps\int_0^1\int_Md^c\varphi_j^t\wedge d\varphi_j^t\wedge\frac{\omega_{\lambda\varphi_j^t}^{n-1}}{(n-1)!}d\lambda+\frac{1}{\eps}\int_0^1\int_Md^c\varphi_j\wedge d \varphi_j\wedge\frac{\omega_{\lambda\varphi_j^t}^{n-1}}{(n-1)!}d\lambda\\
&\leq\eps C_n\int_Md^c\varphi_j^t\wedge d\varphi_j^t\wedge\sum_{k=0}^{n-1}\omega_0^k\wedge\omega_{\varphi_j^t}^{n-1-k}+\frac{C_n}{\eps}\int_Md^c\varphi_j\wedge d\varphi_j\wedge\frac{\omega_{\frac{1}{2}\varphi_j^t}^{n-1}}{(n-1)!}\\
&\leq \eps\tilde{C}_nd_1(0,\varphi_j^t)+\frac{\tilde{C}_n}{\eps}I(\varphi_j,0)^{\frac{1}{2^{n-1}}}\bigg(I(0,\frac{1}{2}\varphi_j^t)^{1-\frac{1}{2^{n-1}}}+I(\varphi_j,\frac{1}{2}\varphi_j^t)^{1-\frac{1}{2^{n-1}}}\bigg)\\
&\leq \eps\tilde{C}_nd_1(0,\varphi_j^t)+\frac{\tilde{C}_n}{\eps}I(0,\varphi_j)^{\frac{1}{2^{n-1}}}\bigg(I(0,\frac{1}{2}\varphi_j^t)^{1-\frac{1}{2^{n-1}}}\\
&\quad\quad+D_nI(0,\varphi_j)^{1-\frac{1}{2^{n-1}}}+D_nI(0,\frac{1}{2}\varphi_j^t)^{1-\frac{1}{2^{n-1}}}\bigg)\\
&\leq \eps\tilde{C}_nd_1(0,\varphi_j^t)+\eps I(0,\frac{1}{2}\varphi_j^t)+\eps^{-2^n+1}\big(\tilde{C}_n(1+D_n)\big)^{2^{n-1}}I(0,\varphi_j).
\end{split}
\end{equation}
In the first line above, we used that $J_{\omega_0}(0)=J_{\omega_{\varphi_j}}(0)=0$, which follows from (\ref{J-chi}).
We used the second inequality of Lemma \ref{l5.4n} in the passage from the 5th line to 6th line, and the first inequality in the passage from 6th line to 7th line.
In the passage from 7th line to the last line, we used Young's inequality.
Next observe that 
\begin{equation}\label{5.11n}
\begin{split}
I(0,\frac{1}{2}&\varphi_j^t)=\int_M\sqrt{-1}\partial\big(\frac{1}{2}\varphi_j^t\big)\wedge\bar{\partial}\big(\frac{1}{2}\varphi_j^t\big)\wedge\sum_{k=0}^{n-1}\omega_{\frac{1}{2}\varphi_j^t}^k\wedge \omega_0^{n-1-k}\\
&=\int_M\sqrt{-1}\partial\big(\frac{1}{2}\varphi_j^t\big)\wedge\bar{\partial}\big(\frac{1}{2}\varphi_j^t\big)\wedge\sum_{k=0}^{n-1}\frac{1}{2^k}(\omega_0+\omega_{\varphi_j^t})^k\wedge\omega_0^{n-1-k}\\
&\leq C_n\int_M\sqrt{-1}\partial\varphi_j^t\wedge\bar{\partial}\varphi_j^t\wedge\sum_{k=0}^{n-1}\omega_0^k\wedge\omega_{\varphi_j^t}^{n-1-k}=C_n\int_M\varphi_j^t(\omega_0^n-\omega_{\varphi_j^t}^n)\\
&\leq \tilde{C}_nd_1(0,\varphi_j^t).
\end{split}
\end{equation}
Hence we obtain
\begin{equation}\label{1.8}
\begin{split}
J_{\omega_0}(\varphi_j^t)&\leq J_{\omega_{\varphi_j}}(\varphi_j^t)+\eps\tilde{C}_nd_1(0,\varphi_j^t)+\eps^{-2^n+1}\big(\tilde{C}_n(1+D_n)\big)^{2^{n-1}}I(0,\varphi_j).
\end{split}
\end{equation}
On the other hand, since we know $J_{\omega_0}$ is proper in the following sense:
$$
J_{\omega_0}(\varphi)\geq\delta d_1(0,\varphi)-C,\qquad \varphi\in\mathcal{H}_0.
$$
Choose $\eps$ small enough so that 
$$
\eps\tilde{C}_n\leq\frac{\delta}{2}.
$$
Hence we obtain from (\ref{1.8}) that 
\begin{equation}
d_1(0,\varphi_j^t)\leq \frac{2}{\delta}\big(J_{\omega_{\varphi_j}}(\varphi_j^t)+\eps^{-2^n+1}\big(\tilde{C}_n(1+D_n)\big)^{2^{n-1}}I(0,\varphi_j)+C\big).
\end{equation}
Since we know that $I(0,\varphi_j)\leq Cd_1(0,\varphi_j)$, and $d_1(0,\varphi_j)$ is uniformly bounded, it only remains to find an upper bound for $J_{\omega_{\varphi_j}}(\varphi_j^t)$.
In order to bound $J_{\omega_{\varphi_j}}(\varphi_j^t)$ from above, we just need to find an upper bound for $J_{\omega_{\varphi_j}}(\varphi_*)$ thanks to (\ref{1.3}).
For this we can write:
\begin{equation}\label{5.14n}
\begin{split}
&J_{\omega_{\varphi_j}}(\varphi_*)=\int_0^1d\lambda\int_M\varphi_*\bigg(\frac{\omega_{\lambda\varphi_*}^{n-1}\wedge\omega_{\varphi_j}}{(n-1)!}-\frac{\omega_{\lambda\varphi_*}^n}{(n-1)!}\bigg)\\
&\leq \int_0^1d\lambda\int_M\varphi_*\sqrt{-1}\partial\bar{\partial}(\varphi_j-\lambda\varphi_*)\wedge\frac{\omega_{\lambda\varphi_*}^{n-1}}{(n-1)!}\\
&=\int_0^1d\lambda\int_M\lambda d^c\varphi_*\wedge d\varphi_*\wedge\frac{\omega_{\lambda\varphi_*}^{n-1}}{(n-1)!}-\int_0^1d\lambda\int_Md^c\varphi_*\wedge d\varphi_j\wedge\frac{\omega_{\lambda\varphi_*}^{n-1}}{(n-1)!}.
\end{split}
\end{equation}
In the above, $d^c=\frac{\sqrt{-1}}{2}(\partial-\bar{\partial})$, hence $d^cd=\sqrt{-1}\partial\bar{\partial}$. 
For the first term above, it can be bounded in the following way:
\begin{equation}\label{1.11}
\int_0^1d\lambda\int_M\lambda d^c\varphi_*\wedge d\varphi_*\wedge\frac{\omega_{\lambda\varphi_*}^{n-1}}{(n-1)!}\leq\int_Md^c\varphi_*\wedge d\varphi_*\wedge\sum_{k=0}^{n-1}\omega_0^k\wedge\omega_{\varphi_*}^{n-1-k}\leq Cd_1(0,\varphi_*).
\end{equation}
For the second term on the right hand side of (\ref{5.14n}),
\begin{equation}
\begin{split}
-&\int_0^1d\lambda\int_Md^c\varphi_*\wedge d\varphi_j\wedge\frac{\omega_{\lambda\varphi_*}^{n-1}}{(n-1)!}\leq\frac{1}{2}\int_0^1d\lambda\int_Md^c\varphi_*\wedge d\varphi_*\wedge\frac{\omega_{\lambda\varphi_*}^{n-1}}{(n-1)!}\\
&+\frac{1}{2}\int_0^1d\lambda\int_Md^c\varphi_j\wedge d\varphi_j\wedge\frac{\omega_{\lambda\varphi_*}^{n-1}}{(n-1)!}.
\end{split}
\end{equation}
The first term above can be estimated in the same way as in (\ref{1.11}).
For the second term above, we have
\begin{equation}
\begin{split}
&\int_0^1d\lambda\int_M\sqrt{-1}\partial\varphi_j\wedge\bar{\partial}\varphi_j\wedge\frac{\omega_{\lambda\varphi_*}^{n-1}}{(n-1)!}\\
&\leq C_n\int_M\sqrt{-1}\partial\varphi_j\wedge\bar{\partial}\varphi_j\wedge\frac{\omega_{\frac{1}{2}\varphi_*}^{n-1}}{(n-1)!}\\
&\leq C_nI(0,\varphi_j)^{\frac{1}{2^{n-1}}}\bigg(I(0,\frac{1}{2}\varphi_*)^{1-\frac{1}{2^{n-1}}}+I(\varphi_j,\frac{1}{2}\varphi_*)^{1-\frac{1}{2^{n-1}}}\bigg)\\
&\leq C_nI(0,\varphi_j)^{\frac{1}{2^{n-1}}}\bigg(I(0,\frac{1}{2}\varphi_*)^{1-\frac{1}{2^{n-1}}}+D_nI(0,\varphi_j)^{1-\frac{1}{2^{n-1}}}\\
&\quad\quad\quad\quad+D_nI(0,\frac{1}{2}\varphi_*)^{1-\frac{1}{2^{n-1}}}\bigg).
\end{split}
\end{equation}
By \cite{Darvas1402}, Theorem 5.5, $I(0,\varphi_j)$ is controlled by $d_1(0,\varphi_j)$ and the calculation in (\ref{5.11n}) shows that that $I(0,\frac{1}{2}\varphi_*)$ can be controlled in terms of $d_1(0,\varphi_*)$ respectively.
\end{proof}

Next we are ready to pass to limit.
From $\sup_{0<t<1}d_1(0,\varphi_j^t)<\infty$, we may conclude that $\sup_{j,\,0<t<1}|J_{-Ric}(\varphi_j^t)|<\infty$
 and $\sup_{j,0<t<1}|J_{\chi}(\varphi_j^t)|<\infty$ by Lemma 4.4.
 By (\ref{4.3n}) and our definition of $K_{\chi}$,  we know that $\sup_{j,t}\int_M\log\big(\frac{\omega_{\varphi_j^t}^n}{\omega_0^n}\big)\omega_{\varphi_j^t}^n<\infty$. 
Hence we may use Lemma \ref{l2.4} (the same argument works for $K_{\chi}$) to conclude that up to a subsequence of $t$, $\varphi_j^t\rightarrow u_j$ as $t\rightarrow 1$ and $u_j$ solves (\ref{4.2new}) for each $j$ with $I(u_j)=0$.
This convergence is smooth convergence due to our previous estimates.
Again due to to the last lemma, we have $\sup_jd_1(0,u_j)\leq \sup_{j,t}d_1(0,\varphi_j^t)\leq C$ for some fixed constant $C$ depending only on $n$ and $\sup_jd_1(0,\varphi_j)$.
Hence we may again assume that up to a subsequence of $j$, $u_j\rightarrow \psi$ smoothly as $j\rightarrow\infty$ and $\psi$ is a smooth solution to (\ref{4.1n}).
To finish the proof that $\varphi_*$ is smooth, we just need the following lemma:
\begin{lem}
 $\varphi_*$ and $\psi$ differ by an additive constant.
\end{lem}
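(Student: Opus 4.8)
The plan is to establish that $I(\psi,\varphi_*)=0$, where $I$ is the energy-type quantity introduced in (\ref{1.5}), and then to use the positivity of $\omega_\psi$ to conclude that $\psi-\varphi_*$ is constant. The starting point is Corollary \ref{c1.5}, which gives $\sup_{0<t<1}I(\varphi_j^t,\varphi_*)\to 0$ as $j\to\infty$, and the idea is to propagate this estimate through the two limits used to produce $\psi$: first $t\to 1$ with $j$ fixed, and then $j\to\infty$.

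For the first limit, fix $j$ and work along the subsequence $t\to 1$ for which $\varphi_j^t\to u_j$ in $C^\infty$. Then $\varphi_j^t\to u_j$ uniformly and $\omega_{\varphi_j^t}^n\to\omega_{u_j}^n$ smoothly, and since $\varphi_*\in\mathcal{E}^1\subset L^1(M,\omega_0^n)$ with $\omega_{\varphi_*}^n$ a finite measure, each of the four integrals in $I(\varphi_j^t,\varphi_*)=\int_M(\varphi_j^t-\varphi_*)(\omega_{\varphi_*}^n-\omega_{\varphi_j^t}^n)$ converges to its counterpart with $\varphi_j^t$ replaced by $u_j$; for the term $\int_M\varphi_*\,\omega_{\varphi_j^t}^n$ one uses that the densities $\omega_{\varphi_j^t}^n/\omega_0^n$ are uniformly bounded and converge uniformly, so dominated convergence applies. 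Hence $I(u_j,\varphi_*)=\lim_{t\to 1}I(\varphi_j^t,\varphi_*)\le\sup_{0<t<1}I(\varphi_j^t,\varphi_*)$, which tends to $0$ by Corollary \ref{c1.5}. Running the identical argument along the subsequence $j\to\infty$ with $u_j\to\psi$ in $C^\infty$ then gives $I(\psi,\varphi_*)=\lim_{j\to\infty}I(u_j,\varphi_*)=0$.

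It remains to extract from $I(\psi,\varphi_*)=0$ that $\psi-\varphi_*$ is constant. Since $\psi$ solves (\ref{4.2new}) smoothly, $\omega_\psi$ is a genuine K\"ahler form, and $w:=\varphi_*-\psi$ is $\omega_\psi$-plurisubharmonic and of finite energy. Writing $I(\psi,\varphi_*)=\sum_{k=0}^{n-1}\int_M\sqrt{-1}\partial w\wedge\bar\partial w\wedge\omega_\psi^k\wedge\omega_{\varphi_*}^{n-1-k}$ as a sum of nonnegative terms, every term vanishes; in particular $\int_M\sqrt{-1}\partial w\wedge\bar\partial w\wedge\omega_\psi^{n-1}=0$, which, $\omega_\psi$ being a smooth positive form, forces $\partial w=0$ and hence $w$ constant by a standard pluripotential approximation argument (c.f. \cite{BBEGZ}). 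Combined with the normalizations $I(\psi)=I(\varphi_*)=0$ this yields $\psi=\varphi_*$, completing the proof that $\varphi_*$ is smooth.

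I expect the only delicate point to be the two passages to the limit in $I(\cdot,\varphi_*)$: because $\varphi_*$ is merely of finite energy one cannot invoke weak continuity directly. This is circumvented exactly as indicated above, using that the approximating sequences $\varphi_j^t$ (for fixed $j$) and $u_j$ converge smoothly, so their Monge--Amp\`ere densities relative to $\omega_0^n$ are uniformly bounded and converge uniformly, while $\varphi_*\in L^1(M,\omega_0^n)$; if one prefers, it suffices to use lower semicontinuity of $u\mapsto I(u,\varphi_*)$, which together with $I(u_j,\varphi_*)\to 0$ still gives $I(\psi,\varphi_*)=0$.
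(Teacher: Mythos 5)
Your proposal is essentially correct and follows the same skeleton as the paper's argument: show $I(\psi,\varphi_*)=0$, then use positivity of $\omega_\psi$ to conclude. There are, however, two small differences worth noting. For the second limit ($j\to\infty$) you pass to the limit directly in $I(u_j,\varphi_*)$ via a dominated-convergence argument, whereas the paper instead invokes the quasi-triangle inequality from Lemma~\ref{l5.4n} to write $I(\varphi_*,\psi)\leq C_n\big(I(u_j,\varphi_*)+I(u_j,\psi)\big)$ and sends $j\to\infty$; both work, but the paper's route avoids a second DCT argument entirely. Your reasoning for the first limit ($t\to1$ with $j$ fixed) is the same in spirit as what the paper implicitly does when it asserts $I(u_j,\varphi_*)\to 0$.

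The one place where your proof is genuinely incomplete is the final step. You want to decompose $I(\psi,\varphi_*)$ as $\sum_{k}\int_M\sqrt{-1}\partial w\wedge\bar\partial w\wedge\omega_\psi^k\wedge\omega_{\varphi_*}^{n-1-k}$ with $w=\varphi_*-\psi$ and extract that $\int_M\sqrt{-1}\partial w\wedge\bar\partial w\wedge\omega_\psi^{n-1}=0$ forces $w$ to be constant. This requires knowing that $\nabla w$ is a genuine $L^2(\omega_\psi^n)$ object, which is not automatic for a finite-energy potential. You relegate this to "a standard pluripotential approximation argument (c.f.~\cite{BBEGZ})", but the paper proves it carefully as a separate lemma (Lemma~\ref{l5.8nnn}): one approximates $\varphi_*$ by a decreasing sequence $\phi_j\in\mathcal H$ with $d_1(\phi_j,\varphi_*)\to 0$, uses $I(\phi_j,\psi)\geq\int_M|\nabla_\psi(\phi_j-\psi)|^2_\psi\,\omega_\psi^n$ in the smooth case, and then passes to the weak $H^1$ limit using $I(\phi_j,\psi)\to I(\varphi_*,\psi)$ (which itself needs Lemma~\ref{l5.4n}). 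If you spell that out, or cite the precise statement, your proof is complete; as written, that step is the one real gap.
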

\begin{proof}
By taking limit as $t\rightarrow1$, we can conclude from Corollary \ref{c1.5} that $I(u_j,\varphi_*)\rightarrow0$ as $j\rightarrow\infty$.
On the other hand, since $u_j\rightarrow\psi$ smoothly, we have $I(u_j,\psi)\rightarrow0$ as $j\rightarrow\infty$. Hence 
$$
I(\varphi_*,\psi)\leq C_n(I(u_j,\varphi_*)+I(u_j,\psi))\rightarrow0,\textrm{ as $j\rightarrow\infty$.}
$$
That is, $I(\varphi_*,\psi)=0$. 
On the other hand, from Lemma \ref{l5.8nnn}, we know $\varphi_*\in H^1(M)$ and
$$
I(\varphi_*,\psi)\geq\int_M|\nabla_{\psi}(\varphi_*-\psi)|_{\psi}^2\omega_{\psi}^n.
$$
Therefore $\psi$ and $\varphi_*$ differ only up to a constant.
\end{proof}
In the above lemma, we used the following fact.
\begin{lem}\label{l5.8nnn}
Let $\varphi\in\mathcal{E}^1$, then $\varphi\in H^1(M,\omega_0^n)$. Moreover, for any $\psi\in\mathcal{H}$, we have
\begin{equation}\label{5.18n}
I(\varphi,\psi)\geq \int_M|\nabla_{\psi}(\varphi-\psi)|_{\psi}^2\omega_{\psi}^n.
\end{equation}
In the above, $|\nabla_{\psi}(\varphi-\psi)|^2_{\psi}=g_{\psi}^{i\bar{j}}(\varphi-\psi)_i(\varphi-\psi)_{\bar{j}}$.
\end{lem}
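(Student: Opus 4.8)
The statement is a standard fact of pluripotential theory (close to results in \cite{BBEGZ}, \cite{Darvas1402}); I would deduce it from the material recalled above together with monotone approximation. First, by the regularization theorem for $\omega_0$-psh functions (B\l ocki--Ko\l odziej), pick a decreasing sequence $\varphi_j\in\mathcal{H}$ with $\varphi_j\downarrow\varphi$ pointwise on $M$ (so also $d_1(\varphi_j,\varphi)\to 0$, cf. Theorem \ref{t2.2new}). Since $\varphi_j$ and $\psi$ are smooth, the telescoping integration by parts already used in (\ref{1.5}) gives, for each $j$,
\[
I(\varphi_j,\psi)=\int_M\sqrt{-1}\,\partial(\varphi_j-\psi)\wedge\bar\partial(\varphi_j-\psi)\wedge\sum_{k=0}^{n-1}\omega_{\varphi_j}^k\wedge\omega_\psi^{n-1-k}.
\]
Each summand is a nonnegative $(n,n)$-form, being the wedge of the semipositive form $\sqrt{-1}\,\partial u\wedge\bar\partial u$ with the positive forms $\omega_{\varphi_j},\omega_\psi$; discarding all but the $k=0$ term yields
\[
I(\varphi_j,\psi)\ \geq\ \int_M\sqrt{-1}\,\partial(\varphi_j-\psi)\wedge\bar\partial(\varphi_j-\psi)\wedge\omega_\psi^{n-1}\ =\ \int_M|\nabla_\psi(\varphi_j-\psi)|_\psi^2\,\omega_\psi^n,
\]
the last equality being the pointwise identity relating $\sqrt{-1}\,\partial u\wedge\bar\partial u\wedge\omega_\psi^{n-1}$ to $|\nabla_\psi u|_\psi^2\,\omega_\psi^n$ (in the normalization of the statement).

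Next I would pass to the limit $j\to\infty$. Writing $I(\varphi_j,\psi)=\int_M\varphi_j\,\omega_\psi^n-\int_M\psi\,\omega_\psi^n-\int_M\varphi_j\,\omega_{\varphi_j}^n+\int_M\psi\,\omega_{\varphi_j}^n$, the first term converges by monotone convergence ($\varphi_j\downarrow\varphi$ against the fixed finite measure $\omega_\psi^n$), while the last two converge by the continuity of the complex Monge--Amp\`ere operator and of the Monge--Amp\`ere energy along decreasing sequences in $\mathcal{E}^1$ (\cite{GZ07}, \cite{BBEGZ}); hence $I(\varphi_j,\psi)\to I(\varphi,\psi)$, and $I(\varphi,\psi)<\infty$ because $\varphi\in\mathcal{E}^1$. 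In particular the quantities $\int_M|\nabla_\psi(\varphi_j-\psi)|_\psi^2\,\omega_\psi^n$ are uniformly bounded; since $\omega_\psi$ is a fixed smooth K\"ahler metric this Dirichlet seminorm is equivalent to the $H^1(M,\omega_0^n)$ one, so $\{\varphi_j\}$ is bounded in $H^1(M,\omega_0^n)$. As $\varphi_j\to\varphi$ in $L^1$, a subsequence converges weakly in $H^1$, and the weak limit is necessarily $\varphi$, whence $\varphi\in H^1(M,\omega_0^n)$. Finally, weak lower semicontinuity of $u\mapsto\int_M|\nabla_\psi u|_\psi^2\,\omega_\psi^n$ together with the displayed inequality gives
\[
\int_M|\nabla_\psi(\varphi-\psi)|_\psi^2\,\omega_\psi^n\ \leq\ \liminf_{j\to\infty}\int_M|\nabla_\psi(\varphi_j-\psi)|_\psi^2\,\omega_\psi^n\ \leq\ \liminf_{j\to\infty}I(\varphi_j,\psi)\ =\ I(\varphi,\psi),
\]
which is exactly (\ref{5.18n}).

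The one step requiring genuine care is the convergence of the Monge--Amp\`ere terms $\int_M\varphi_j\,\omega_{\varphi_j}^n$ and $\int_M\psi\,\omega_{\varphi_j}^n$ along the decreasing approximation: because $\varphi$ need only lie in $\mathcal{E}^1$ and may be unbounded, one cannot simply quote the elementary Bedford--Taylor continuity for bounded potentials but must invoke the stability/continuity of the Monge--Amp\`ere operator on finite-energy classes. Everything else---integration by parts for smooth potentials, the positivity used to drop terms, weak $H^1$-compactness, and lower semicontinuity of the Dirichlet energy---is routine.
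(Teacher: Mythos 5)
Your overall structure matches the paper's: establish (\ref{5.18n}) for smooth potentials via the integration-by-parts identity $I(\varphi,\psi)=\int_M d^c(\varphi-\psi)\wedge d(\varphi-\psi)\wedge\sum_{k=0}^{n-1}\omega_{\varphi}^k\wedge\omega_{\psi}^{n-1-k}$ and discard all but the $k=0$ term; approximate $\varphi\in\mathcal{E}^1$ from above by a decreasing smooth sequence $\varphi_j\downarrow\varphi$ (B\l ocki--Ko\l odziej); show $I(\varphi_j,\psi)\to I(\varphi,\psi)$ to get a uniform $H^1(\omega_\psi^n)$ bound; then conclude by weak compactness, identification of the weak limit with $\varphi$, and lower semicontinuity of the Dirichlet energy. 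The one step where you take a different route is the convergence $I(\varphi_j,\psi)\to I(\varphi,\psi)$: you expand $I$ into its four constituent integrals and argue termwise, invoking monotone convergence for $\int\varphi_j\,\omega_\psi^n$ and $\mathcal{E}^1$-continuity of the Monge--Amp\`ere operator and energy along decreasing sequences for $\int\varphi_j\,\omega_{\varphi_j}^n$ and $\int\psi\,\omega_{\varphi_j}^n$. The paper instead applies the quasi-triangle inequality for $I$ (Lemma \ref{l5.4n}, from \cite{BBEGZ}) together with the equivalence $I\asymp d_1$ (Theorem \ref{t2.3}), giving $|I(\varphi_j,\psi)-I(\varphi,\psi)|\le C_n I(\varphi,\varphi_j)\le\tilde C_n\, d_1(\varphi,\varphi_j)\to 0$ directly, since $d_1(\varphi_j,\varphi)\to 0$ is already on record. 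The paper's version is shorter because the quasi-triangle inequality has been pre-stated as a lemma and bypasses the continuity theory for the Monge--Amp\`ere operator on $\mathcal{E}^1$; your argument is correct but leans on that deeper continuity result directly, which is exactly the subtlety you flag. Either route is valid.
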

\begin{proof}
First we assume that both $\varphi,\,\psi\in\mathcal{H}$. Then we know that
\begin{equation*}
\begin{split}
I(\varphi,\psi)&=\int_M(\varphi-\psi)(\omega_{\psi}^n-\omega_{\varphi}^n)\\
&=\int_Md^c(\varphi-\psi)\wedge d(\varphi-\psi)\wedge\sum_{k=0}^{n-1}\omega_{\varphi}^k\wedge\omega_{\psi}^{n-1-k}\\
&\geq\int_Md^c(\varphi-\psi)\wedge d(\varphi-\psi)\wedge\omega_{\psi}^{n-1}=\int_M|\nabla_{\psi}(\varphi-\psi)|_{\psi}^2\omega_{\psi}^n.
\end{split}
\end{equation*}
So (\ref{5.18n}) holds as long as $\varphi\in\mathcal{H}$.
If $\varphi\in\mathcal{E}^1$, then we can find a sequence $\phi_j\in\mathcal{H}$, such that $\phi_j$ decreases pointwisely to $\varphi$. Such approximation is possible due to the main result of \cite{BK}. 
Also due to Lemma 4.3 of \cite{Darvas1402}, we know that $d_1(\phi_j,\varphi)\rightarrow0$.
This implies that $I(\phi_j,\psi)\rightarrow I(\phi,\psi)$. Indeed, from Lemma \ref{l5.4n}, we know
\begin{equation}
|I(\phi_j,\psi)-I(\varphi,\psi)|\leq C_nI(\varphi,\phi_j)\leq \tilde{C}_nd_1(\varphi,\phi_j)\rightarrow0.
\end{equation}

Since (\ref{5.18n}) holds with $\varphi$ replaced by $\varphi_j$, we see that
\begin{equation}\label{5.20n}
\int_M|\nabla_{\psi}(\phi_j-\psi)|^2_{\psi}\omega_{\psi}^n\leq I(\phi_j,\psi)\rightarrow I(\varphi,\psi).
\end{equation}
From $\sup_jd_1(0,\phi_j)<\infty$, we know that $\sup_j\int_M|\phi_j|dvol_g<\infty$. Now (\ref{5.20n}) shows $\phi_j$ is uniformly bounded in $H^1(M,\omega_{\psi}^n)$.
Hence we can find a subsequence of $\phi_j$
which converges weakly in $H^1(M,\omega_{\psi}^n)$, strongly in $L^2(M,\omega_{\psi}^n)$. Clearly this limit must be $\varphi$. This shows $\varphi\in H^1(M,\omega_{\psi}^n)$, hence also in $H^1(M,\omega_0^n)$. Also we can conclude from (\ref{5.20n}) that
$$
\int_M|\nabla_{\psi}(\varphi-\psi)|^2_{\psi}\omega_{\psi}^n\leq\lim\inf_{j\rightarrow\infty}\int_M|\nabla_{\psi}(\phi_j-\psi)|^2\omega_{\psi}^n\leq\lim\inf_jI(\phi_j,\psi)=I(\varphi,\psi).
$$
\end{proof}

\section{Existence of cscK and geodesic stability}
In this section, we prove Theorem \ref{t1.1new}.
Similar to the definition of $\mathcal{H}_0$, we define
$$
\mathcal{E}^1_0=\mathcal{E}^1\cap\{u:I(u)=0\}.
$$
Here $I(u)$ for $u\in\mathcal{E}^1$ is understood as the continuous extension of the functional $I$ from $\mathcal{H}$ to $\mathcal{E}^1$.
This is possible because of Proposition 4.1 in \cite{Darvas1602}.
Also we notice that for any $u_0$, $u_1\in\mathcal{E}_0^1$, the finite energy geodesic segment (defined by Theorem \ref{t2.2new}) $[0,1]\ni t\rightarrow \mathcal{E}^1$ will actually lie in $\mathcal{E}_0^1.\;$
This follows from the fact that the $I$ functional is affine on $C^{1,1}$ geodesics and $I$ can be continuously extended to the space $\mathcal E^1$.
As before, $\beta\geq0$ is a smooth closed $(1,1)$ form. We will first prove the following result in this section, which covers Theorem \ref{t1.1new}.
\begin{thm}\label{t3.1}
Suppose that either
\begin{enumerate}
\item $\beta>0$ everywhere;\\
or
\item $\beta=0$ everywhere and $Aut_0(M,J)=0$.
\end{enumerate}
Then the following statements are equivalent:
\begin{enumerate}
\item There exists no twisted cscK metric with respect to $\beta$ in $\mathcal{H}_0$.
\item There is an infinite geodesic ray $\rho_t$ with locally finite energy, $t\in[0,\infty)$ in $\mathcal{E}_0^1$, such that the functional $K_{\beta}$ is non-increasing along the ray.
\item For any $\phi\in\mathcal{E}_0^1$ with $K(\phi)<\infty$, there is a locally finite energy geodesic ray starting at $\phi$, such that the functional $K_{\beta}$ is non-increasing along the ray.
\end{enumerate}
In the case $\beta>0$, then from (1) one can additionally conclude $K_{\beta}$ is strictly decreasing in (2) and (3) above.
\end{thm}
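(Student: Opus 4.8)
The plan is to prove the cycle $(3)\Rightarrow(2)\Rightarrow(1)\Rightarrow(3)$, the last implication carrying essentially all of the content. The implication $(3)\Rightarrow(2)$ is immediate: apply $(3)$ with $\phi=0\in\mathcal{E}_0^1$, which has $K(0)<\infty$. For $(2)\Rightarrow(1)$ I argue by contraposition. If there is a twisted cscK metric $\varphi_\beta$ with respect to $\beta$, then Theorem \ref{t4.2n} applies (in both cases, $\beta>0$ and $\beta=0$ with $Aut_0(M,J)=0$); moreover its proof yields the linear coercivity bound $K_\beta(\psi)\geq\epsilon\,d_1(\varphi_\beta,\psi)+K_\beta(\varphi_\beta)$ for $\psi\in\mathcal{H}_0$ with $d_1(\varphi_\beta,\psi)\geq1$, and by $d_1$-density of $\mathcal{H}_0$ in $\mathcal{E}_0^1$ together with $d_1$-lower semicontinuity of $K$ (Theorem \ref{t2.4new}) and $d_1$-continuity of $J_\beta$, the same bound persists on $\mathcal{E}_0^1$. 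Along a unit-speed locally finite energy ray $\rho_t$ in $\mathcal{E}_0^1$ this gives $K_\beta(\rho_t)\geq\epsilon\,t-\epsilon\,d_1(\varphi_\beta,\rho_0)+K_\beta(\varphi_\beta)\to+\infty$, which is incompatible with $K_\beta$ being non-increasing along $\rho_t$ (where $K_\beta(\rho_0)<\infty$ is understood in $(2)$).

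For $(1)\Rightarrow(3)$, fix $\phi\in\mathcal{E}_0^1$ with $K(\phi)<\infty$. Since there is no twisted cscK metric, Theorem \ref{t2.2} forces $K_\beta$ to fail to be $d_1$-proper, so there exist $\psi_i\in\mathcal{H}_0$ with $R_i:=d_1(\phi,\psi_i)\to\infty$ and $K_\beta(\psi_i)\leq C$ for all $i$. Let $\rho^i\colon[0,R_i]\to\mathcal{E}^1$ be the unit-speed finite energy geodesic joining $\phi$ to $\psi_i$ (Theorem \ref{t2.2new}); since $I$ is affine along such geodesics and $d_1$-continuous, $\rho^i$ lies in $\mathcal{E}_0^1$. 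Convexity of $K_\beta=K+J_\beta$ along finite energy geodesics (Theorem \ref{t2.4new} and the convexity of $J_\beta$) gives, for $s\leq R_i$,
\[
K_\beta(\rho^i(s))\ \leq\ \Big(1-\tfrac{s}{R_i}\Big)K_\beta(\phi)+\tfrac{s}{R_i}\,K_\beta(\psi_i)\ \leq\ C_0:=\max\big(K_\beta(\phi),\,C\big).
\]
For each fixed $s$ the points $\rho^i(s)$ lie in a fixed $d_1$-ball about $0$, so $J_\beta(\rho^i(s))$ is bounded (the functional $J_\beta$ being controlled by the $d_1$-distance, cf. Lemma \ref{l2.5}), hence $\sup_i K(\rho^i(s))<\infty$, and Lemma \ref{l2.6new} extracts a $d_1$-convergent subsequence. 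A diagonal argument over $s\in\mathbb{Q}_{\geq0}$, combined with the uniform $1$-Lipschitz property of the $\rho^i$ for $d_1$, produces a unit-speed ray $\rho\colon[0,\infty)\to\mathcal{E}_0^1$ with $\rho(0)=\phi$; by Proposition \ref{p2.4new} each restriction $\rho|_{[0,T]}$ is the finite energy geodesic between its endpoints, so $\rho$ is a locally finite energy geodesic ray. Finally, $d_1$-lower semicontinuity of $K_\beta$ gives $K_\beta(\rho(s))\leq\liminf_i K_\beta(\rho^i(s))\leq C_0$ for all $s$; a convex function on $[0,\infty)$ that is bounded above is non-increasing, so $K_\beta$ is non-increasing along $\rho$, proving $(3)$ and hence the equivalence.

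To upgrade ``non-increasing'' to ``strictly decreasing'' when $\beta>0$, suppose $K_\beta$ is not strictly decreasing along the ray $\rho$ above; being convex and non-increasing on $[0,\infty)$, it would then be constant on a subinterval $[a,b]$, and since both $K$ and $J_\beta$ are convex, $J_\beta$ would be affine along the sub-geodesic $\rho|_{[a,b]}$. Regularizing by $\varepsilon$-geodesics exactly as in the proof of Corollary \ref{c2.6} and using the second-variation formula \eqref{4.8n}, affineness of $J_\beta$ with $\beta>0$ forces the velocity $\dot\rho_s$ to be spatially constant for a.e.\ $s\in(a,b)$; by the geodesic equation $\ddot\rho_s=|\nabla\dot\rho_s|^2$ this makes $\rho|_{[a,b]}$ of the trivial form $\rho(s)=\rho(a)+c(s-a)$. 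But $I$ is affine along geodesics with $I(\rho(s))\equiv0$, which forces $c=0$, hence $\rho$ constant on $[a,b]$, contradicting unit speed. Therefore $K_\beta$ is strictly decreasing, which gives the final assertion.

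The step I expect to be the main obstacle is the passage to the limit in $(1)\Rightarrow(3)$: one must verify that the diagonal limit of the segments $\rho^i$ is genuinely a locally finite energy geodesic ray lying in $\mathcal{E}_0^1$ (this rests on Proposition \ref{p2.4new} and on the continuous, affine extension of $I$) and that the uniform bound $C_0$ is tracked so that convexity alone forces monotonicity; for the $\beta>0$ refinement, the delicate point is making rigorous the strict convexity of $J_\beta$ along merely $C^{1,1}$ (or finite energy) geodesics via the $\varepsilon$-geodesic approximation.
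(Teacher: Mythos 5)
Your overall structure is the same cycle the paper uses: $(3)\Rightarrow(2)$ is immediate, $(2)\Rightarrow(1)$ rests on Theorem~\ref{t4.2n}, and the content is in $(1)\Rightarrow(3)$.

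Where you genuinely diverge from the paper is in how you produce the destabilizing sequence in $(1)\Rightarrow(3)$. The paper re-runs the continuity method: it shows (via Lemmas~\ref{l2.2} and~\ref{l2.7}) that the solutions $\varphi_i$ of~\eqref{2.13nn} along $t_i\nearrow t_*$ satisfy $\sup_i d_1(0,\varphi_i)=\infty$, and then uses the minimizing property of $\varphi_i$ for $t_iK_\beta+(1-t_i)J_{\omega_0}$ together with $J_{\omega_0}(\varphi_i)\geq J_{\omega_0}(0)$ to get the uniform bound $K_\beta(\varphi_i)\leq C$. You instead invoke the contrapositive of Theorem~\ref{t2.2} to conclude that $K_\beta$ is not $d_1$-proper, which hands you a sequence $\psi_i\in\mathcal H_0$ with $d_1(\phi,\psi_i)\to\infty$ and $K_\beta(\psi_i)\leq C$ directly. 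Both routes then feed essentially the same sequence into the same limit construction (unit-speed finite energy geodesic segments, diagonal argument over $\mathbb Q_{\geq 0}$, compactness via Lemma~\ref{l2.6new}, stability via Proposition~\ref{p2.4new}, and lower semicontinuity of $K_\beta$ to conclude convex plus bounded implies non-increasing). Your packaging is cleaner since it treats Theorem~\ref{t2.2} as a black box, but since Theorem~\ref{t2.2} is itself proved by the continuity method, the difference is organizational rather than mathematical. Your $(2)\Rightarrow(1)$ is likewise equivalent to the paper's; the one small imprecision is that lower semicontinuity by itself gives the inequality in the wrong direction when extending the coercivity from $\mathcal H_0$ to $\mathcal E_0^1$ --- what is actually used is that $K|_{\mathcal E^1}$ is the \emph{greatest} lsc extension of $K|_{\mathcal H}$ (Theorem~\ref{t2.4new}), so that the bound against the continuous comparison function $\varepsilon\, d_1+\mathrm{const}$ survives under $\liminf$ over approximating sequences.

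The genuine gap is in your strict-decrease refinement for $\beta>0$. You write ``regularizing by $\varepsilon$-geodesics exactly as in the proof of Corollary~\ref{c2.6}'' and then use the second-variation formula~\eqref{4.8n} to force the velocity to be spatially constant. But that machinery lives in $\mathcal H$: the $\varepsilon$-geodesics of \cite{chen991} join \emph{smooth} potentials, and~\eqref{4.8n} requires a smooth curve. The segment $\rho|_{[a,b]}$ produced in the limit is only a finite energy geodesic in $\mathcal E_0^1$, and its endpoints are not known to lie in $\mathcal H$, so the $\varepsilon$-geodesic regularization does not attach to it; moreover passing the rigidity through a limiting procedure is not routine (a sequence of convex functions converging pointwise to an affine function need not have vanishing second variation along the sequence). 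The paper sidesteps all of this by citing \cite{Darvas1602}, Theorem 4.12, which is exactly the rigidity statement ``$J_\chi$ affine along a finite energy geodesic with $\chi$ K\"ahler forces the endpoints to differ by a constant''; combined with the normalization $I\equiv 0$ this contradicts unit speed. Replace your $\varepsilon$-geodesic step with that citation and the argument closes.
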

\begin{defn}
Let $[0,\infty)\ni t\rightarrow u_t\in\mathcal{E}^1$ be a continuous curve. Then we say $u_t$ is an infinite geodesic ray with locally finite energy, if the following hold:
\begin{enumerate}
\item $d_1(u_t,u_s)=c|t-s|$ for some constant $c>0$ and any $s,\,t\in[0,\infty)$.
\item For any $K>0$, $[0,K]\ni t\rightarrow u_t$ is a finite energy geodesic segment in the sense defined by Theorem \ref{t2.2new}.
\end{enumerate}
\end{defn}

\begin{rem}
Observe that the implication $(3)\Rightarrow (2)$ is trivial.
$(2)\Rightarrow(1)$ follows from Theorem \ref{t4.2n}.
We will use our apriori estimates and the continuity path (\ref{2.12}) to resolve the implication $(1)\Rightarrow(3)$.
 We are partly motivated from arguments in the proof of Theorem 6.5 of \cite{Darvas1602}.
\end{rem}

Next we observe the following lemma:
\begin{lem}
Consider the continuity path (\ref{2.13nn}).
Suppose there is no twisted cscK metric with respect to $\beta$ in 
K\"ahler class $[\omega_0]$.
Denote $t_*=\sup S$, where the set $S$ is defined in (\ref{2.18}). 
Let $S\ni t_i\nearrow t_*$. Denote $\varphi_i$ to be the solution to (\ref{2.12}) with $t=t_i$, normalized so that $I(\varphi_i)=0$.
Then we have $\sup_id_1(0,\varphi_i)=\infty$.
\end{lem}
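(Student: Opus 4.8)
The plan is to argue by contradiction: assume $\sup_i d_1(0,\varphi_i) < \infty$. The idea is that a uniform $d_1$-bound along the continuity path should force the path to close up at $t_*$, contradicting the assumption that no twisted cscK metric exists (which, combined with $t_* = \sup S$ and openness of $S$ from Lemma \ref{l2.2}, forces $t_* < 1$ unless $t_* = 1$ is itself unattained — in either case we will derive a contradiction). Concretely, I would first dispose of the case $t_* = 0$: this cannot happen since $0 \in S$ and Lemma \ref{l2.2} shows $S$ is relatively open in $[0,1]$, so $t_* > 0$. Hence we may assume $t_i \geq \delta > 0$ for all $i$, which is exactly the hypothesis needed to apply the closedness criterion.

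Next, the key step is to invoke Lemma \ref{l2.7}: it states precisely that if $t_i \in S$, $t_i \nearrow t_* > 0$, $\varphi_i$ solves (\ref{2.13nn}) with $t = t_i$ and $I(\varphi_i)=0$, and $\sup_i d_1(0,\varphi_i) < \infty$, then $t_* \in S$. So under our contradiction hypothesis we conclude $t_* \in S$. But then Lemma \ref{l2.2} together with Remark \ref{r2.3} shows that (\ref{2.13nn}) — equivalently, for $\chi = \omega_0$, the path (\ref{2.12}) — has a smooth solution for all $t$ in a neighborhood $(t_* - \delta', t_* + \delta') \cap [0,1)$ of $t_*$, provided $t_* < 1$. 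This would give $t_* + \delta'/2 \in S$ (after also checking all intermediate $t \in [0, t_*+\delta'/2]$ are covered, which follows since $[0,t_*] \subset S$ and the new interval extends past $t_*$), contradicting $t_* = \sup S$. Therefore we must have $t_* = 1$.

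It remains to handle the endpoint $t_* = 1$. Here the point is that $t = 1$ in (\ref{2.13nn}) with $\chi = \omega_0$ reduces to the twisted cscK equation with respect to $\beta$, i.e. (\ref{2.6n}) — the term $(1-t)(tr_\varphi\chi - \underline\chi)$ drops out. Since $1 \in S$ means precisely that this equation is solvable, this would produce a twisted cscK metric with respect to $\beta$, normalized to lie in $\mathcal{H}_0$, contradicting the standing assumption that no such metric exists in $[\omega_0]$. (One should note here that a smooth solution $\varphi$ with $I(\varphi) = 0$ lies in $\mathcal{H}_0$ by definition, so the normalization causes no trouble.) This contradiction completes the argument, so $\sup_i d_1(0,\varphi_i) = \infty$.

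\textbf{Main obstacle.} The genuinely substantive content has already been packaged into Lemma \ref{l2.7} (whose proof rests on Corollary \ref{c2.6}, Lemma \ref{l2.5}, the entropy estimate Theorem \ref{t1.1}, and the minimizing property along the path), so the present lemma is essentially a bookkeeping consequence. The only mild subtlety I would be careful about is the bootstrap at $t_* = 1$: one must confirm that reaching $t_* = 1 \in S$ indeed yields a solution \emph{at} $t = 1$ and not merely for $t$ approaching $1$ — but this is immediate from the definition of $S$ in (\ref{2.18}), which requires solvability for \emph{all} $t \in [0, t_0]$, hence in particular at $t_0 = t_* = 1$. A second point worth a sentence is ensuring $\varphi_i$ can legitimately be taken to solve (\ref{2.13nn}) rather than just (\ref{2.12}); since the lemma statement references (\ref{2.12}) but the surrounding discussion and Lemma \ref{l2.7} work with the more general (\ref{2.13nn}) with $\chi = \omega_0$, I would simply note that these coincide in the relevant normalization and invoke the general form.
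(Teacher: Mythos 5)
Your proposal is correct and follows essentially the same route as the paper: contradiction via Lemma \ref{l2.7} (which packages the hard entropy/compactness work) to get $t_* \in S$, then split into $t_* < 1$ (openness contradiction with $\sup S$) and $t_* = 1$ (contradiction with nonexistence of the twisted cscK metric). The extra care you take about $t_* > 0$, the (\ref{2.12}) vs.\ (\ref{2.13nn}) reference, and the reduction of (\ref{2.13nn}) to (\ref{2.6n}) at $t=1$ are all consistent with what the paper uses implicitly.
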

\begin{proof}

Suppose otherwise, then $\sup_id_1(0,\varphi_i)<\infty$.
We can apply Lemma \ref{l2.7} to conclude $t_*\in S$.
If $t_*<1$, then we conclude from Lemma \ref{l2.2} that $t_*+\delta'\in S$ for some $\delta'>0$ sufficiently small.
This contradicts $t_*=\sup S$.
If $t_*=1$, then $1\in S$. But this will contradict our assumption that there is no cscK metric in $[\omega_0]$.
In either case, the contradiction shows one cannot have $\sup_id_1(0,\varphi_i)<\infty$.
\end{proof}
With the help of above lemma, we are ready to prove $(1)\Rightarrow(3)$ in Theorem \ref{t3.1}.
\begin{proof}
\sloppy Consider the continuity path (\ref{2.13nn}) as in Lemma 6.3,  we know that $\sup_id_1(0,\varphi_i)=\infty$.
Hence we may take a subsequence $\varphi_{i_j}$, such that $d_1(0,\varphi_{i_j})\nearrow\infty$.
We will construct a geodesic ray as described in Theorem \ref{t3.1}, point (2) out of this subsequence $\varphi_{i_j}$.
For simplicity, we will still denote this subsequence by $\varphi_i$.

By Theorem \ref{t2.2new}, there exists a unit speed finite energy $d_1$-geodesic segment connectiong $\phi$ and $\varphi_i$, such that the functional $I$ is affine on the segment.
Indeed, one can check $I$ is affine on $C^{1,1}$ geodesic and the extension to $d_1$-geodesic follows from continuity of the functional $I$(c.f \cite{Darvas1602}, Proposition 4.1).

Denote this geodesic by $c^i:[0,d_1(\phi,\varphi_i)]\rightarrow\mathcal{E}^1$.
Since $I(\phi)=I(\varphi_i)=0$, we know $I=0$ on $c^i$. In other words,  $c^i:[0,d_1(\phi,\varphi_i)]\rightarrow\mathcal{E}_0^1$.
As noted in (\ref{2.27}), we have $$\displaystyle \sup_i\big(t_iK_{\beta}+(1-t_i)J_{\omega_0}\big)(\varphi_i)\leq \max(K_{\beta}(0),J_{\omega_0}(0)).$$
On the other hand, since the functional $J_{\omega_0}$ is convex along $C^{1,1}$ geodesic, and we know $0$ is a critical point of $J_{\omega_0}$, we see that
\begin{equation}
J_{\omega_0}(\varphi_i)\geq J_{\omega_0}(0).
\end{equation}
Therefore
\begin{equation}
K_{\beta}(\varphi_i)\leq\frac{\max(K_{\beta}(0),J_{\omega_0}(0))-(1-t_i)J_{\omega_0}(0)}{t_i}\leq C.
\end{equation}
Hence from the convexity of $K_{\beta}$-energy as remarked before, we obtain for any $l\in[0,d_1(\phi,\varphi_i)]$,
\begin{equation}\label{3.6}
K_{\beta}(c^i(l))\leq(1-\frac{l}{d_1(\phi,\varphi_i)})K_{\beta}(\phi)+\frac{l}{d_1(\phi,\varphi_i)}K_{\beta}(\varphi_i)\leq\max(K_{\beta}(\phi),C).
\end{equation}

Therefore, for each fixed $l$. if we consider the sequence $\{c^i(l)\}_{d_1(\phi,\varphi_i)\geq l}\subset\mathcal{E}^1$, it satisfies the assumption in Lemma \ref{l2.6new}. Indeed, $d_1(\phi,c^i(l))=l,\,\forall i$, which implies $\sup_i|J_{\beta}(c^i(l))|$ uniformly bounded for fixed $l$(by Lemma \ref{l2.5}).
Therefore, we have $K$-energy is uniformly bounded and we may apply Lemma \ref{l2.6new}.

Hence we may take a subsequence $c^{i_j}(l)$, such that $c^{i_j}(l)\rightarrow c^{\infty}(l)$ for some element $c^{\infty}(l)\in\mathcal{E}^1$ as $j\rightarrow\infty$.
Since the functional $I$ is continuous under $d_1$ convergence, we obtain $c^{\infty}(l)\in \mathcal{E}^1_0$ as well.
Clearly we may apply this argument to each $l\in\mathbb{Q}$, then by Cantor's diagnal sequence argument, we can take a subsequence of $\varphi_i$, denoted by $\varphi_{i_j}$, such that 
\begin{equation}\label{5.4new}
c^{i_j}(l)\rightarrow c^{\infty}(l) \textrm{ in $d_1$},\textrm{ as $j\rightarrow\infty$, for any $l\in\mathbb{Q}$.}
\end{equation}
Since $c^{i_j}$ are unit speed geodesic segment, we see that for any $r,\,s\in\mathbb{Q}$, 
with $0\leq r,\,s\leq d_1(\phi,\varphi_{i_j})$, 
we have $d_1(c^{i_j}(r),c^{i_j}(s))=|r-s|$.
Sending $j\rightarrow\infty$ gives 
\begin{equation}\label{3.9}
d_1(c^{\infty}(r),c^{\infty}(s))=|r-s|,\textrm{ for any $0\leq r,\,s\in\mathbb{Q}$.}
\end{equation}
We can then define $c^{\infty}(r)$ for all $r\in\mathbb{R}$ by requiring $c^{\infty}(r)=d_1-\lim_{r_k\in\mathbb{Q},r_k\rightarrow r} c^{\infty}(r_k)$.
From property (\ref{3.9}) it is easy to see this is well defined, i.e, the said limit exists and does not depend on our choice of sequence $r_k$.
Hence $[0,\infty)\ni r\rightarrow c^{\infty}(r)$ is a unit speed geodesic ray in $\mathcal{E}^1_0$.
Besides, if we apply Proposition \ref{p2.4new} to $[0,r_k]$ for any $r_k>0$, $r_k\in\mathbb{Q}$, we know $c^{i_j}(r)\rightarrow u_k(r)$ for any $r\in[0, r_k]$.
Here $[0,r_k]\ni r\rightarrow u_k(r)$ is the finite energy geodesic segment connecting $\phi$ and $c^{\infty}(r_k)$.
Hence we know $c^{\infty}(r)=u_k(r)$ for any $r\in[0,r_k]\cap\mathbb{Q}$, by (\ref{5.4new}). Therefore $c^{\infty}(r)=u_k(r)$ for any $r\in[0,r_k]$ by density.
Therefore, we have shown $c^{\infty}|_{[0,d_1(\phi,c^{\infty}(r))]}$ is the finite energy geodesic segment connecting $\phi$ and $c^{\infty}(r)$ for $r\in\mathbb{Q}$.
It is easy to extend this to all $r\in\mathbb{R}_+$ by rescaling in time and apply Proposition \ref{p2.4new} again.

We can now invoke Theorem 4.7, Proposition 4.5 of \cite{Darvas1602} to conclude $r\longmapsto K(c^{\infty}(r))$, $r\longmapsto J_{\beta}(c^{\infty}(r))$ is convex. Hence $r\longmapsto K_{\beta}(c^{\infty}(r))$ is convex as well.

Now from the lower semi-continuity of $K_{\beta}$-energy under $d_1$-convergence, we obtain from (\ref{3.6}) that
\begin{equation}
K_{\beta}(c^{\infty}(r))\leq\lim\inf_{j\rightarrow\infty}K_{\beta}(c^{i_j}(r))\leq\max(K_{\beta}(\phi),C),\textrm{ for all $r\in\mathbb{Q}$.}
\end{equation}
Use the lower semi-continuity again, we deduce
\begin{equation}
K_{\beta}(c^{\infty}(r))\leq\lim\inf_{k\rightarrow\infty}K_{\beta}(c^{\infty}(r_k))\leq\max(K_{\beta}(\phi),C).
\end{equation}
Therefore, $(0,\infty)\ni r\longmapsto K_{\beta}(c^{\infty}(r))$ is both convex and bounded, this forces $K_{\beta}$-energy must be decreasing along $c^{\infty}$.

To see the ``in addition" part, if $K_{\beta}$ is not strictly decreasing, them from the convexity of $r\longmapsto K_{\beta}(c^{\infty}(r))$, we can conclude that for some $r_0>0$, $K_{\beta}(c^{\infty}(r))$ remains a constant for $r\geq r_0$.
Since both $K$ and $J_{\beta}$ are convex, we know $J_{\beta}$ remains linear for $r\geq r_0$.
Now \cite{Darvas1602}, Theorem 4.12 shows $c^{\infty}(r_1)=c^{\infty}(r_r)+const$ for any $r_1,\,r_2\geq r_0$.
Because of the normalization $I(c^{\infty}(r))=0$, we know $c^{\infty}(r_1)=c^{\infty}(r_2)$ for any $r_1,\,r_2\geq r_0$.
But this contradicts $d_1(c^{\infty}(r_1),c^{\infty}(r_2))=|r_1-r_2|$ for any $r_1,\,r_2\geq0$.
\end{proof}
Finally, the implication $(2)\Rightarrow(1)$ follows immediately from Theorem \ref{t4.2n}.
\begin{proof}
Suppose otherwise, namely there exists a twisted cscK metric with respect to $\beta$ in $\mathcal{H}_0$, denoted by $\varphi^{\beta}$.
Then we can conclude from Theorem \ref{t4.2n} that the twisted $K$-energy $K_{\beta}$ is proper. In particular, $K_{\beta}\rightarrow+\infty$ along any locally finite energy geodesic ray. This contradicts the assumption in (2).
\end{proof}
We can deduce the following immediate consequence of Theorem \ref{t3.1}.
\begin{cor}
Let $0<t_0<1$, and let $\chi$ be a K\"ahler form.
Then the following statements are equivalent:
\begin{enumerate}
\item There is no twisted cscK metric with $t=t_0$ in $\mathcal{H}_0$(i.e solves (\ref{2.12}) with $t=t_0$).
\item There is an infinite geodesic ray $\rho_t$ of locally finite energy,
 $t\in[0,\infty)$ in $\mathcal{E}^1_0$, such that the twisted $K$-energy $K_{\chi,t_0}$(defined by (\ref{2.10})) 
is strictly decreasing along the ray.
\item For any $\phi\in \mathcal{E}_0^1$ with $K(\phi)<\infty$, there is a locally finite energy  geodesic ray  starting at $\phi$, such that the twisted $K$-energy $K_{\chi,t_0}$(defined by (\ref{2.10})) 
is strictly decreasing along the ray.
\end{enumerate}
\end{cor}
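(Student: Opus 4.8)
The plan is to deduce the corollary immediately from Theorem \ref{t3.1} by choosing the twisting form
\[
\beta = \frac{1-t_0}{t_0}\,\chi .
\]
Since $0<t_0<1$ and $\chi$ is a K\"ahler form, $\beta$ is again a K\"ahler form, so we are in case (1) of Theorem \ref{t3.1}. First I would check that equation (\ref{2.6n}) associated to this $\beta$ is exactly the continuity path equation at $t=t_0$: because the assignment $\chi\mapsto\underline{\chi}$ is linear, one has $\underline{\beta}=\frac{1-t_0}{t_0}\underline{\chi}$, hence (\ref{2.6n}) reads $R_\varphi-\underline{R}=tr_\varphi\beta-\underline{\beta}=\frac{1-t_0}{t_0}(tr_\varphi\chi-\underline{\chi})$, and multiplying through by $t_0$ gives precisely (\ref{2.12}) with $t=t_0$. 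Thus statement (1) of the corollary coincides verbatim with statement (1) of Theorem \ref{t3.1}.

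Next I would record the relation between the two energies. From the defining formula (\ref{J-chi}) together with the linearity of $\chi\mapsto\underline{\chi}$ one gets $J_{c\chi}=c\,J_\chi$ for every constant $c>0$; therefore
\[
K_\beta = K + J_\beta = K + \frac{1-t_0}{t_0}\,J_\chi = \frac{1}{t_0}\big(t_0 K + (1-t_0)J_\chi\big) = \frac{1}{t_0}\,K_{\chi,t_0},
\]
with $K_{\chi,t_0}$ as in (\ref{2.10}). Since $t_0>0$, the functionals $K_\beta$ and $K_{\chi,t_0}$ differ by a fixed positive multiplicative constant, so along any locally finite energy geodesic ray $K_\beta$ is non-increasing (resp.\ strictly decreasing) if and only if $K_{\chi,t_0}$ is. Consequently statements (2) and (3) of the corollary are exactly statements (2) and (3) of Theorem \ref{t3.1} for this $\beta$, and because $\beta>0$ the ``in addition'' clause of Theorem \ref{t3.1} upgrades ``non-increasing'' to ``strictly decreasing'', which is precisely what is asserted here. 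I would also note that $K(\phi)<\infty$ if and only if $K_\beta(\phi)<\infty$, since $J_\beta$ is finite (indeed $d_1$-continuous) on $\mathcal{E}^1$, so the hypothesis in point (3) matches that in Theorem \ref{t3.1}(3).

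With these identifications in place the corollary follows at once from Theorem \ref{t3.1}. There is no genuine obstacle here: the only things requiring verification are routine, namely the scalings $J_{c\chi}=cJ_\chi$ and $\underline{c\chi}=c\underline{\chi}$ coming straight from the definitions, and the positivity $\frac{1-t_0}{t_0}>0$, which uses $0<t_0<1$. So the ``proof'' will be little more than spelling out the substitution $\beta=\frac{1-t_0}{t_0}\chi$ and the proportionality $K_{\chi,t_0}=t_0K_\beta$.
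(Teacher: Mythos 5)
Your proposal is correct and matches the paper's intent exactly: the paper presents this corollary with no written proof, declaring it an ``immediate consequence of Theorem \ref{t3.1},'' and the substitution $\beta=\frac{1-t_0}{t_0}\chi$ together with the proportionality $K_\beta=\frac{1}{t_0}K_{\chi,t_0}$ is precisely the identification being taken for granted. The small points you flag --- linearity of $\chi\mapsto\underline{\chi}$ and $\chi\mapsto J_\chi$, positivity of $\frac{1-t_0}{t_0}$, and the agreement of finiteness conditions --- are indeed the only checks needed, so your write-up simply makes explicit what the paper leaves to the reader.
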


Also we can show Theorem \ref{t1.2n} as a consequence.
\begin{proof}
(of Theorem \ref{t1.2n}) First we prove the necessary part.  Assume $(M,[\omega_0])$ admits a cscK metric. 
Denote $\varphi_0$ be the corresponding cscK potential.
Recall we have shown in the proof of Theorem \ref{t4.2n}(the direction existence implies properness) that for all $\psi\in\mathcal{E}^1_0$, with $d_1(\psi,\varphi_0)\geq1$, one has $K(\psi)\geq\eps d_1(\psi,\varphi_0)+K(\varphi_0)$.
Let $\phi\in\mathcal{E}_0^1$ and $\rho:[0,\infty)\ni t\mapsto\mathcal{E}_0^1$ be a locally finite energy geodesic ray initiating from $\phi$. We can assume $\rho(t)$ has unit speed.
Then as long as $d_1(\rho(t),\varphi_0)\geq1$, one has
\begin{equation}
\begin{split}
\frac{K(\rho(t))-K(\phi)}{t}&\geq\frac{\eps d_1(\rho(t),\varphi_0)+K(\varphi_0)-K(\phi)}{t}\\
&\geq\frac{\eps d_1(\rho(t),\phi)-\eps d_1(\phi,\varphi_0)+K(\varphi_0)-K(\phi)}{t}\\
&=\eps-\frac{\eps d_1(\phi,\varphi_0)-K(\varphi_0)+K(\phi)}{t}.
\end{split}
\end{equation}
This implies 
$$
\lim_{t\rightarrow\infty}\inf\frac{K(\rho(t))-K(\phi)}{t}\geq\eps.
$$
In particular this means $\yen([\rho])\geq\eps$. Thus, $(M,[\omega_0])$ is geodesic stable.\\

Now we want to show the converse. We assume $(M,[\omega_0])$ is geodesic stable and we want to prove that there is a cscK metric in the K\"ahler 
class. Suppose otherwise, then according to Theorem \ref{t3.1} with $\beta=0$, point (3), we know that there exists a locally finite energy geodesic ray $\rho:[0, \infty)\ni t\mapsto\mathcal{E}_0^1$, initiating from $\phi\in\mathcal{E}_0^1$ with $K(\phi)<\infty$, such that the $K$-energy is non-increasing.
It is clear that for this geodesic ray, one has $\yen([\rho])\leq0$.
This contradicts the assumption of geodesic stability at $\varphi$. This finishes the proof.
\end{proof}

\noindent Xiuxiong Chen\\
University of Science and Technology of China and Stony Brook University\\

\noindent Jingrui Cheng\\
University of Wisconsin at Madison.

\end{document}